\documentclass[11pt,reqno]{amsart}
\usepackage{url}
\usepackage{bbm}
\usepackage[T1]{fontenc}
\usepackage[table,dvipsnames]{xcolor}

\usepackage{hyperref}
\hypersetup{
	colorlinks   = true,
	citecolor    = blue,
	linkcolor    = blue,
	urlcolor     = Blue
}

\usepackage{amsmath,amsfonts,amsthm,amssymb, comment}
\usepackage{mathtools}
\mathtoolsset{showonlyrefs=true}
\usepackage{algorithm} 
\usepackage{algpseudocode} 
\usepackage{soul}


\usepackage[numbers]{natbib}

\usepackage{mathrsfs}   
\usepackage[normalem]{ulem}
\usepackage{xfrac}
\usepackage[font={scriptsize}]{caption}
\usepackage{environ}
\usepackage{tikz}
\usepackage[margin = 1in]{geometry}
\usepackage{graphicx}
\usetikzlibrary{positioning,arrows.meta}
	\usepackage{tcolorbox}
	
	\usepackage{array}
	\newcolumntype{C}[1]{>{\centering\arraybackslash}m{#1}}
	\newcolumntype{L}[1]{>{\raggedright\arraybackslash}p{#1}}
	\usepackage[inline]{enumitem}
	\usepackage{pifont}
	
	\newcommand{\cmark}{\ding{51}} 
	\newcommand{\xmark}{\ding{55}} 
	\usepackage[justification=centering]{subcaption}
	\usepackage{multicol,verbatim}
	\usepackage{float}

\newcommand{\pl}{\partial}

\usepackage{accents}

\newcommand{\hb}[1]{\color{blue}{#1}}
\newcommand{\mr}[1]{{\color{blue}#1}}
\newcommand{\pc}[1]{{\color{cyan}#1}}
\newcommand{\us}[1]{\textcolor{red}{#1}}
\newcommand{\bxi}{\boldsymbol{\xi}}
\newcommand{\bzeta}{\boldsymbol{\zeta}}

\newcommand{\E}{\mathbb E}
\newcommand{\R}{\mathbb R}

\newcommand{\Q}{\mathbb Q}

\newcommand{\cd}{\mathcal D}
\newcommand{\ce}{\mathcal E}
\newcommand{\cf}{\mathcal F}

\newcommand{\cO}{\mathcal O}

\newcommand{\al}{\alpha}

\newcommand{\laa}{\Lambda}

\newcommand{\lp}{\left(}
\newcommand{\rp}{\right)}
\newcommand{\lc}{\left[}
\newcommand{\rc}{\right]}

\newcommand{\lln}{\left|}
\newcommand{\rrn}{\right|}

\newcommand{\lnn}{\left\|}
\newcommand{\rnn}{\right\|}  

\numberwithin{equation}{section}
\newtheorem{theorem}{Theorem}[section]
\newtheorem{lemma}{Lemma}[section]

\newtheorem{proposition}{Proposition}[section]
\newtheorem{corollary}{Corollary}[section]

\newtheorem{definition}{Definition}[section]

\newtheorem{assumption}{Assumption}[section]
\theoremstyle{remark}
\newtheorem{remark}{Remark}[section]

\newcommand{\bean}{\begin{eqnarray*}}
	\newcommand{\eean}{\end{eqnarray*}}
\newcommand{\ben}{\begin{enumerate}}
	\newcommand{\een}{\end{enumerate}}
\newcommand{\beq}{\begin{equation}}
	\newcommand{\eeq}{\end{equation}}

\title[ESGS: Mitigating Dimension-dependence and Incorporating Decision-dependence]{Complexity Guarantees for Zeroth-order Methods via Exponentially-shifted Gaussian Smoothing: Mitigating Dimension-dependence and Incorporating Decision-dependence}

\author[MINGRUI WANG]{MINGRUI WANG}
\address{(M. Wang) Harold and Inge Marcus Department of Industrial and Manufacturing
	Engineering, The Pennsylvania State University, University Park, PA 16802 United States\\
}
\email{mvw5822@psu.edu}

\author[PRAKASH CHAKRABORTY]{PRAKASH CHAKRABORTY}
\address{(P. Chakraborty) Harold and Inge Marcus Department of Industrial and Manufacturing
	Engineering, The Pennsylvania State University, University Park, PA 16802 United States\\
}
\email{prakashc@psu.edu}

\author[UDAY V. SHANBHAG]{UDAY V. SHANBHAG}
\address{(U. V. Shanbhag) Department of Industrial and Operations Engineering, University of Michigan, Ann Arbor, MI 48109 United States\\
}
\email{udaybag@umich.edu}

\thanks{Wang and Chakraborty acknowledge support from NSF DMS 2153915, DARPA HR0011-25-3-E012 and Korb Early Career Professorship. Shanbhag acknowledge support from AFOSR Grant FA9550-24-1-0259.}
\thanks{Our preliminary research on exponentially-shifted Gaussian smoothing was published in the 2024 Winter Simulation Conference~\cite{wang2024improving}.}
\date{\today}

\begin{document}
	\maketitle
\begin{abstract}
In this paper, we consider two distinct challenges in the resolution of nonsmooth stochastic optimization. Of these, the first pertains to the pronounced dependence of dimension in Gaussian smoothing-enabled zeroth-order schemes, impeding applications to large-scale settings. Second, no unified analysis {exists} for smoothing-enabled stochastic zeroth-order methods, allowing for capturing standard and decision-dependent stochastic optimization. To contend with the first challenge, we introduce a  new exponentially-shifted Gaussian smoothing {\bf esGs} estimator whose moment bounds enjoy a linear dependence on dimension (rather than a quadratic dependence as in standard Gaussian smoothing estimators).   Second, we show that such an estimator can be extended in two distinct ways to address decision-dependent regimes where the underlying densities are either available in closed form or not. Notably, the resulting gradient estimators continue to display a linear dependence in dimension. We then develop an {\bf esGs}-enabled stochastic zeroth-order method applicable to nonsmooth strongly convex, convex, and {non-}convex regimes. Importantly, our guarantees provide improved dimension-dependence in iteration complexities (by a factor of problem dimension $n$) while maintaing similar oracle complexities. In addition,  we also provide novel high probability guarantees   and almost sure sublinear rate guarantees in convex settings, while a new subsequential almost sure convergence guarantee is provided in nonconvex regimes. Preliminary numerics support our theoretical findings  show that the proposed schemes display improved computational times and more refined empirical accuracies. 
\end{abstract}

      \section{Introduction}\label{sec:intro}
		Consider the following optimization problem. 
		\begin{equation}\label{prob}
			\min_{x \in X} \ f(x), \qquad \text{ where }  f(x)\, \triangleq \,  \mathbb{E}_{\bxi}\left[\, F(x,\bxi)\, \right] ,
		\end{equation}  
		where $X \, \subseteq \, \mathbb{R}^n$ is a convex set, $\bxi: \Omega
		\to \mathbb{R}^d$ is a random variable taking realizations denoted by $\xi$,
		$\Xi \, \triangleq \, \left\{ \bxi(\omega) \, \mid \, \omega \in \Omega
		\,\right\}$,  $F: \mathbb{R}^n \times \mathbb{R}^d \to \mathbb{R}$ is a
		real-valued function, $F(\bullet,\xi)$ is convex and $L_0$-Lipschitz continuous
		for almost every $\xi \in \Xi$. Problems of the form \eqref{prob} routinely emerge in data science and machine learning and stochastic gradient descent (SGD) has emerged as the de-facto algorithm of choice in such settings~\cite{RobbinsMonro1951,Bottou2018}. 
		SGD proceeds by inexpensive iterative updates that use noisy but unbiased (or nearly unbiased) gradient samples, as captured by the following update rule, given an initial point $x_0$.
		$$x_{k+1} =  \Pi_X \left[\, x_k - \gamma_k \nabla F(x_k,\xi_k) \, \right], \qquad k \, \ge \, 0.$$
		The appeal of SGD lies in its low per-iteration computational cost, its ability to scale to massive datasets, and its empirical effectiveness in navigating high-dimensional, nonconvex loss landscapes. However, a fundamental assumption underlying SGD 
		lies in the availability of  {an unbiased stochastic gradient estimator} $\nabla F(x, \bxi)$ {of the true gradient $\nabla f(x)$}, 
		that can be evaluated tractably. 
		In many practically important problems, arising in 
		settings involving computer simulations, physical experiments, or when objectives are either non-differentiable or involve black-box models,
		such stochastic gradient estimators are either unavailable or too expensive to compute. 
		This brings forth the need for 
		zeroth-order (ZO) or derivative-free optimization techniques. In this paper, our focus is on  two specific  {directions in the context of stochastic} zeroth-order methods: (i) Avenues for {mitigating dimension-dependence} in gradient estimation ({with implications on complexity guarantees in stochastic optimization}) via convolution-smoothing with Gaussian kernels, and (ii) decision-dependent stochastic optimization {via zeroth-order methods}, each of which is discussed in some depth and we conclude with an articulation of the key challenges. 
		
		
		\subsection{Gradient estimation techniques}
		ZO algorithms typically build gradient surrogates through the use of function evaluations. The simplest techniques are finite-difference schemes that approximate directional derivatives; however, these can scale poorly with dimension~\cite{berahas2022theoretical}. A more scalable alternative is Simultaneous Perturbation Stochastic Approximation (SPSA), which estimates gradients by randomly perturbing all coordinates simultaneously, often requiring only two function evaluations per iteration regardless of the ambient dimension \cite{spall1992multivariate}. The statistical properties of these estimators, such as bias, variance, and higher moments, directly determine optimization performance. Recent work has provided extensive theoretical and empirical comparisons among different gradient approximation techniques, highlighting trade-offs between sample efficiency and noise robustness \cite{berahas2022theoretical}. A theoretically principled approach for ZO optimization relies on smoothing via convolution. Originating in classical analysis (Steklov smoothing) \cite{steklov1}, convolution-based smoothing replaces a potentially nonsmooth or non-Lipschitz function $f$ with a smooth surrogate $f_\eta$ obtained by integrating $f$ against a mollifier or kernel (e.g., a Gaussian). This idea has been effectively used to design algorithms for nonsmooth convex problems \cite{DeFarias08,Farzad1,nemirovski83problem} and for nonconvex stochastic optimization \cite{ghadimi2013stochastic,nesterov2017random}. Our framework builds on a smoothing framework originating in~\cite{steklov1}.   Consider $f_{\eta}$ for $\eta > 0$, the smoothed counterpart of $f$, where 
		\begin{equation}
			f_{\eta}(x) \, \triangleq \, \mathbb{E}_{Z}\left[ \, f(x+{\eta}Z) \, \right], 
		\end{equation}
		where $Z$ is a mean-zero unit-variance random variable taking realizations $z \, \in \,
		\mathbb{R}^n$. Under suitable assumptions, $f_{\eta}$ is
		$\mathcal{O}(\tfrac{1}{\eta})$-smooth; {i.e. $g_{\eta}$ is $\mathcal{O}(\tfrac{1}{\eta})$-Lipschitz}, where $g_{\eta}(x) = \nabla
		f_{\eta}(x) $ and is expectation-valued. When $Z$ is normally distributed 
		with correlation matrix $B^{-1}$,  $\tilde{g}_{\eta}$  denotes a
		gradient estimator of $f_{\eta}$, defined as   
		\begin{equation}\label{eq:twopointgrad}
			\tilde{g}_\eta(x,z) \, \triangleq \, \left(\tfrac{f(x+\eta z)-f(x)}{\eta}\right) B z.
		\end{equation} 
		In practice, one often estimates the gradient of the smoothed function using two-point estimators. Unfortunately, for common Gaussian-based schemes the second moment (and hence variance) of these estimators typically scales as $O(n^2)$ in the problem dimension $n$. This leads to worst-case iteration and sample complexity bounds of $O(n^2 \varepsilon^{-2})$ to reach an $\varepsilon$-accurate solution in nonsmooth convex settings \cite{nesterov2017random}, which is prohibitive for high-dimensional problems. This quadratic dependence has motivated alternative smoothing distributions and structural techniques such as spherical smoothing and coordinate-aware perturbations that aim to reduce the dimensional dependence for nonsmooth, nonconvex, and hierarchical problems \cite{shanbhag2021zeroth,cui2023complexity,qiu2023zeroth,marrinan2026zeroth} (See ~\cite{marrinan2026zeroth} for prior work in nonsmooth nonconvex stochastic optimization). 
		Table~\ref{tab:complexity} compares iterate and oracle
		complexity guarantees of related ZO methods in convex stochastic
		optimization for {some relevant Gaussian smoothing counterparts and is by no means expansive. Yet, most schemes leveraging Gaussian smoothing do so via the vanilla framework while the proposed {\bf esGS} framework is distinct} and leads to the best known iteration complexity for ZO methods in this setting.
		\begin{table}[htb]
			\scriptsize
			\caption{}{Comparison of complexity of zeroth-order methods}
			\centering
			\label{tab:complexity}
			\begin{tabular}{ p{4cm} C{2.3cm} c c  }
				\hline
				&  Iterate    & Oracle     &   \\
				Literature &  complexity & complexity &  nonsmooth\\
				\hline
				\citet{nesterov2017random}  &  $n^2\varepsilon^{-2}$   & $n^2\varepsilon^{-2}$    & \cmark  \\
				\hline
				\citet{ghadimi2013stochastic}   &  $\max\{n\varepsilon^{-1},n\sigma^2\varepsilon^{-2}\} $   &  $\max\{n\varepsilon^{-1},n\sigma^2\varepsilon^{-2}\} $ &  \xmark  \\
				\hline
				\rowcolor{pink}
				This work  &  $n\varepsilon^{-2}$   &  $n^2\varepsilon^{-2}$   & \cmark\\
				\hline
			\end{tabular}
		\end{table}
		\begin{tcolorbox}
			\noindent {\bf Challenge I.} Current Gaussian smoothing estimators (cf.~\cite{nesterov2017random}) are characterized by the bound $\mathbb{E}\left[ \, \left \| \, G(x,\bxi) \, \right\|^2 \, \right] \le \mathcal{O}(n^2 L_0^2)$,  where $f$ is $L_0$-Lipschitz. This leads to an iteration complexity of computing an $\epsilon$-solution of $\mathcal{O}(n^2 \epsilon^{-2})$, significantly limiting such avenues for contending with large-scale constrained regimes where every step may require projecting onto a complicated constraint set. {\em Are there related estimators where moment bounds display an improved dependence on $n$ with matching oracle complexities?}  
		\end{tcolorbox}

		\subsection{Decision-dependent stochastic optimization}
		Recent work has extended the scope of {stochastic optimization models} to more challenging environments in which the data-generating process depends on the decision variable, often referred to as \emph{decision-dependent} stochastic optimization or \emph{performative prediction}~\cite{perdomo2020performative, mendler2020stochastic, drusvyatskiy2023stochastic}. In this setting, the deployed decision $x$ influences the distribution $D(x)$ of future observations, and the resulting optimization problem takes the form
		\begin{align}
			\min_{x \, \in \, X} \, f(x) \, \triangleq \, \mathbb{E}_{\bxi\sim D(x)}\left[\, F(x,\bxi)\, \right]. \label{prob-dd}
		\end{align}
		This formulation captures many modern applications including strategic classification where agents modify features in response to a classifier, recommendation systems that alter user behavior, and economic settings where policy choices change market responses~ \cite{perdomo2020performative}. Decision-dependence leads to a violation of classical SA assumptions because naively estimated gradients are biased by the induced distributional shifts; moreover, the mapping $x \mapsto D(x)$ may be complex or implicitly defined. Consequently, algorithmic analysis requires more delicate stability and equilibrium concepts and new analytical tools to guarantee convergence to meaningful solutions~ \cite{drusvyatskiy2023stochastic}.  It is crucial to note the distinction between \emph{performative stability} and \emph{performative optimality}. The former concerns fixed points of a retrain-deploy loop, while the latter concerns the minimizer of the performative risk $f$ in \eqref{prob-dd}. These can differ substantially and consequently, convergence to a performatively stable point need not imply optimality for \eqref{prob-dd} \cite{perdomo2020performative, miller2021outside}. Early work studied how standard stochastic methods behave when the data distribution depends on deployment. \cite{mendler2020stochastic} compare greedy and lazy deploy SGD, showing the trade-off between tracking a shifting distribution and reducing deployment cost. Complementing this, \cite{drusvyatskiy2023stochastic} model decision-dependence as a structured bias in stochastic updates and show it fades near equilibrium, yielding strong convergence guarantees for a range of first-order methods and enabling algorithms with reduced number of deployments. \cite{cutler2024stochastic} establish asymptotic normality and optimality properties for SA procedures with decision-dependent distributions. More recently, the scope has expanded beyond strongly convex objectives: \cite{li2024stochastic} analyze stochastic optimization schemes for performative prediction under nonconvex losses and provide finite-time convergence guarantees to appropriate stationary-type solution notions.
		Since stable solutions may be suboptimal, there have been efforts to develop methods that explicitly aim at \emph{performative optimality}. \cite{miller2021echo} study structural conditions under which the performative risk becomes convex and propose algorithms exploiting such structure for improved sample complexity while \cite{izzo2021perfgd} propose a performative gradient descent algorithm targeting minimizer of $f$ by combining gradient information with estimate of how induced distribution changes with decision. {More recent work addressing decision-dependent problems via first-order methods may be found in~\cite{he2025decision,hikima2025stochastic}.} {There have been some limited developments using ZO methods, including addressing Markovian performative prediction~\cite{liutwo} as well as some recent work on ZO and Gaussian smoothing in {these} contexts~\cite{ray2022decision,chen2023performative,hikima2025zero}.}
		
		
		\begin{tcolorbox} \noindent{\bf Challenge II.} Few existing stochastic zeroth-order schemes can contend with  decision-dependent stochastic optimization and clean rate and complexity guarantees {for computing perfomatively optimal/stationary solutions} appear to be largely unavailable in convex or nonconvex settings even under smoothness assumptions, severely limiting the reach of such techniques in practical regimes.  
		\end{tcolorbox}

		\subsection{Contributions.} We make the following contributions as part of this work.
		
\begin{enumerate}[label =\textbf{\Roman*.}, wide, labelwidth=!, labelindent=0pt]
	
		\item {\bf  \ul{Exponentially-shifted Gaussian smoothing  {\bf esGs} estimator.}} First, we propose a novel \\ \ul{exponentially-shifted Gaussian smoothing} (\textbf{esGs}) gradient estimator. The estimator is based on a simple change-of-variable argument that perturbs coordinates by scaled exponential random variables composed with a Gaussian kernel. We show that this construction preserves unbiasedness for the gradient of the smoothed surrogate, while dramatically improving {some moment and approximation bounds, vis-a-vis Gaussian smoothing}. {Specifically, it can be shown that  the \textbf{esGs} estimator has a second moment bounded by $O(n)$, an improvement of $\mathcal{O}(n)$ over Gaussian smoothing.} 
		\medskip
		\item {\bf  \ul{Unified zeroth-order framework for standard and decision-dependent settings.}} Second, we present a unified {zeroth-order smoothed gradient} framework that covers both classical decision-independent stochastic problems and the more challenging decision-dependent  counterpart. {In the latter setting, we consider regimes where the decision-dependent distribution is either available in an explicit form or is only available via an oracle.} Within this framework, we show that the favorable moment and bias properties of the \textbf{esGs} estimator persist even when the data distribution depends on the decision variable. This allows us to extend ZO guarantees to a wider class of practical problems where distributional feedback cannot be ignored \cite{perdomo2020performative,drusvyatskiy2023stochastic}, representing a key advance in the regime of ZO methods.
		\medskip
		\item {\bf \ul{Complexity bounds for esGS-enabled stochastic ZO methods for nonsmooth optimization.}} Third, we {employ} the \textbf{esGs} estimator {within our smoothing framework in both nonsmooth convex and nonconvex settings}. {In} convex problems, we provide iteration  complexity bounds with linear dependence in $n$ for computing an $\varepsilon$-solution, improving upon the quadratic dimension dependence of prior Gaussian-smoothing methods. When incorporated within a ZO optimization scheme, employing this estimator directly reduces the iteration complexity for nonsmooth convex optimization from $O(n^2 \varepsilon^{-2})$ to $O(n \varepsilon^{-2})$, improving the dependence on dimension by a factor of $n$ relative to prior bounds obtained via Gaussian smoothing bounds \cite{nesterov2017random}.
		{In more general nonconvex regimes}, we establish guarantees for finding an $\varepsilon$-approximate stationary point, {providing similar improvements in dimension-dependence in iteration complexity and} matching the best-known dependence on accuracy while improving dimension scaling. 
		\medskip
		\item {\bf \ul{High probability guarantees and a.s. rates.}} In addition to the iteration and sample-complexity bounds, we derive a high probability guarantee in convex settings. This provides a pathway for showing that the suboptimality associated with an averaged iterate diminishes at the rate of $\tilde{\mathcal{O}}(\tfrac{n}{\sqrt{K}})$ in an almost sure sense.
\end{enumerate}		
		\medskip
		We complement our theoretical results with numerical experiments that demonstrate substantial empirical improvements over existing ZO methods, particularly in high-dimensional regimes, {largely a consequence of our lower variance gradient estimator}. {Numerical examples in decision-dependent settings validate the convergence claims.} 
		
		\medskip
		
		The remainder of the paper is organized as follows. Section~\ref{sec:Estimator} provides a formal background on convolution-based smoothing and derives the \textbf{esGs} gradient estimator, together with its key statistical properties. Section~\ref{sec:Decision_Dep} develops a unified framework for decision-independent and decision-dependent optimization problems {based on a suitably defined set of} assumptions. In Sections~\ref{sec:conv_theorems} and \ref{sec:Nonconv} we integrate the \textbf{esGs} estimator within a stochastic approximation algorithm and analyze its convergence and complexity for nonsmooth convex and nonconvex problems, respectively. Finally, Section~\ref{sec:numerics} presents numerical experiments validating the proposed method and illustrating its empirical advantages.\\
		
		\noindent {\em Notation.} We establish the notation used throughout the paper. Let $U \subseteq \mathbb{R}^n$ be an open domain.
		\textbf{$L^1(U)$} {represents the} space of Lebesgue integrable functions on the domain $U$ . For a function $g \in L^1(U)$, its $L^1$ norm is defined as
		$$ {\|g\|}_{L^1} \, \triangleq\,  \int_U |g(z)| dz.$$
		\textbf{$C^1(U)$} represents the space of continuously differentiable functions on $U$. For $g \in C^1(U)$, we denote the partial derivative with respect to the $i$-th coordinate as
		$ \pl_i g(x) \triangleq \tfrac{\pl g (x)}{\pl x_i}. $
		The gradient map is defined as
		$ \nabla g(x)\, \triangleq \, \left( \pl_i g(x) \right)_{i=1}^n,$ while  the \textbf{Sobolev} space $W^{1,1}(U)$  {is defined as}
		$$ W^{1,1}(U) \triangleq \left\{ g \in L^1(U) \cap C^1(U) \ \middle| \ \pl_i g \in L^1(U) \text{ for all } i \right\}. $$
		For a measurable function $G$ of a random vector $\bxi$, its $L^2$ norm is defined as 
		$$ \|G\|_{L^2(\xi)} \, \triangleq \, \sqrt{\E_{\bxi} [ \|\, G(\bxi)\, \|^2 ]} .$$ 
		When no ambiguity arises, we abbreviate this as $\|G\|_2$.

		\section{Exponentially-Shifted Gaussian Smoothing}\label{sec:Estimator}
		
		In this section, we initiate our discussion with a recap of the basics of convolution-based smoothing in
		Section~\ref{sec:Molli}. We then proceed to derive an exponentially-shifted
		Gaussian smoothing ({\bf esGs}) estimator in Section~\ref{sec:smoothedgrad}.
		Lipschitzian properties and moment bounds for this estimator are derived in
		Section~\ref{sec:smoothproperties}, allowing us to demonstrate the benefits with
		the standard Gaussian smoothing estimator. The section concludes with
		Section~\ref{sec:esGsestimator}, where we introduce a our proposed exponentially-shifted
		Gaussian smoothing ({\bf esGs}) estimator  and presenting an SA scheme with such an
		estimator.  
		
		\subsection{Convolution and mollification}\label{sec:Molli}
		The convolution of $f$ and $g$, where both $f$ and $g$ are real-valued measurable functions on $\mathbb{R}^n$,  is denoted by 
		$f*g$ and defined as 
		\begin{equation*}
			f*g(x) \,  \triangleq  \, \int_{\mathbb{R}^n} \ f(x-y)g(y)dy,
		\end{equation*}
		for all $x$ such that the integral exists. 
	The following regularity assumption on $f$ will be made in this paper. 
	\begin{assumption}\label{ass:f-Lip}
		$f: \R^n \mapsto \R$ is $L_0$-Lipschitz, i.e., there exists a constant $L_0>0$ such that
		$$
		\lln \, f(x) - f(y)\, \rrn \, \leq \,  L_0 \lnn \, x-y \, \rnn \text{ for all }x,y \in \R^n. 
		$$ 
	\end{assumption}
	The next result is standard, but is included for completeness.
	\begin{lemma}
		Suppose $f$ satisfies Assumption~\ref{ass:f-Lip} and $g \in L^1(\R^n)$. Then $f \ast g$ is Lipschitz continuous with $L_0(f \ast g) = L_0 {\|g\|}_{L^1}$.  
	\end{lemma}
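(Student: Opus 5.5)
The plan is to show first that $f\ast g$ is well defined everywhere, and then obtain the Lipschitz bound by estimating the difference of two convolution integrals pointwise. Note that "with $L_0(f\ast g)=L_0\|g\|_{L^1}$" should be read as exhibiting this quantity as a valid Lipschitz constant, not necessarily the sharpest one.

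\emph{Well-definedness.} Assumption~\ref{ass:f-Lip} gives the linear growth bound
$$|f(x-y)| \le |f(0)| + L_0\|x\| + L_0\|y\|.$$
So $f\ast g(x)$ exists as soon as $\int \|y\|\,|g(y)|\,dy<\infty$. Mere membership $g\in L^1(\R^n)$ does not guarantee this, so the existence question is the main subtlety; the estimate itself is routine. I would handle it in one of two ways:
\begin{itemize}
\item interpret the statement as holding wherever $f\ast g$ is defined, which the paper's definition of convolution already stipulates ("for all $x$ such that the integral exists"); or
\item note that in the paper's applications $g$ is a Gaussian-type density, which has finite first moment, so the convolution exists everywhere.
\end{itemize}
In the proof I would state that we work on the set where the integral converges. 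Once it converges at one point, the Lipschitz bound below shows it converges at every point, because the difference of the two integrands is dominated by the integrable function $L_0\|x-x'\|\,|g|$.

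\emph{Lipschitz estimate.} Fix $x,x'$ with both integrals finite. By linearity of the integral,
\begin{align*}
|f\ast g(x)-f\ast g(x')| &= \Bigl|\int_{\R^n}\bigl(f(x-y)-f(x'-y)\bigr)g(y)\,dy\Bigr| \\
&\le \int_{\R^n}|f(x-y)-f(x'-y)|\,|g(y)|\,dy .
\end{align*}
Applying Assumption~\ref{ass:f-Lip} pointwise in $y$ gives
$$|f(x-y)-f(x'-y)|\le L_0\|(x-y)-(x'-y)\| = L_0\|x-x'\|.$$
Pulling this constant out of the integral yields
$$|f\ast g(x)-f\ast g(x')| \le L_0\|x-x'\|\,\|g\|_{L^1}.$$
This is exactly the claimed Lipschitz constant $L_0\|g\|_{L^1}$.
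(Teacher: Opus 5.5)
Your proof is correct and is essentially the paper's argument: bound $|f\ast g(x)-f\ast g(x')|$ by $\int |f(x-y)-f(x'-y)|\,|g(y)|\,dy$, apply the Lipschitz property pointwise, and integrate to get $L_0\|g\|_{L^1}\|x-x'\|$. Your added remark on well-definedness is correct and is something the paper's proof leaves implicit: $g\in L^1$ alone does not make the integral converge for a linearly growing $f$, but absolute convergence at one point propagates to every point.
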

	\begin{proof}
	Using the Lipschitzian property of $f$, the result follows through the following inequalities.
	\begin{align*}
		\lln \, (f \ast g)(x) - (f \ast g)(y)\, \rrn \, & {\, \le \, } \, \int \lln \, f(x-z) - f(y-z ) \, \rrn\, |\, g(z)\, | dz \, \\
		& \leq \, L_0 \left(\int |g(z)| dz \right)  \|\, x-y\, \|  \, = \,   L_0 {\|g\|}_{L^1} \| \, x - y \, \|. 
		\hspace{1in} \hfill \Box 
	\end{align*}
	\end{proof}
	\vspace{0.1in}
	
	The following is a special case of a classical result on convolution that relates the
	partial derivative of the convolution to the convolution of one of
	the functions and the partial derivative of the second. Note that we do not assume $f$ is $L^1$ or that $g$ is compactly supported.
		\begin{proposition}\label{prop:ConvSmooth}
			Suppose $f$ satisfies Assumption~\ref{ass:f-Lip} and $g \in W^{1,1}(\R^n)$. Then $f \ast g \in C^1(\R^n)$ and $\nabla(f \ast g) = f \ast (\nabla g)$.	
		\end{proposition}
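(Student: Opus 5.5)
Since $f$ is merely Lipschitz (it may grow linearly and need not be integrable) and $g$ need not have compact support, the standard mollifier argument must be replaced by a dominated convergence argument that uses the Lipschitz property. First, the integrals are well defined, but the obvious bound is not enough. We have $|f(x-y)| \le |f(x)| + L_0\|y\|$, so $f\ast g$ and $f\ast \partial_i g$ exist whenever $\|y\| g(y)$ and $\|y\|\,\partial_i g(y)$ are integrable, and membership in $W^{1,1}$ does not give this moment condition. To avoid it, I will work with differences: compare $f(x-y)$ with $f(x_0-y)$, since $|f(x-y)-f(x_0-y)|\le L_0\|x-x_0\|$ is bounded uniformly in $y$.

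\emph{Step 1 (move the derivative onto $f$).} Write
\[
(f\ast g)(x) = \int f(z)\, g(x-z)\,dz ,
\]
and form the difference quotient in direction $e_i$:
\[
\frac{(f\ast g)(x+te_i)-(f\ast g)(x)}{t} = \int \frac{f(x+te_i-y)-f(x-y)}{t}\, g(y)\,dy .
\]
This is the translation form. The integrand is bounded by $L_0|g(y)| \in L^1$, uniformly in $t$. Rademacher's theorem gives $\partial_i f$ a.e., so the integrand converges a.e. in $y$. Dominated convergence then yields
\[
\partial_i(f\ast g) = (\partial_i f)\ast g ,
\]
and the right-hand side is continuous, because $\partial_i f \in L^\infty$ and translation is continuous in $L^1$. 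This gives $f\ast g\in C^1$.

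\emph{Step 2 (integration by parts).} It remains to identify
\[
\int \partial_i f(x-y)\, g(y)\,dy = \int f(x-y)\,\partial_i g(y)\,dy .
\]
This is the step I expect to be the main obstacle, because of the boundary terms at infinity and the possible non-integrability of $f\,\partial_i g$. I would fix $x$ and replace $f$ by $h(y) = f(x-y)-f(x)$. Then $|h(y)|\le L_0\|y\|$, and $\int \partial_i g = 0$ for $g\in W^{1,1}$, which holds by Fubini and the vanishing of the one-dimensional integrals of derivatives of $L^1$ functions with $L^1$ derivative. Next, multiply by a cutoff $\chi_R(y)=\chi(y/R)$, apply integration by parts on the compact support of $\chi_R$ (valid since $h$ is Lipschitz and $g\in C^1$), and let $R\to\infty$. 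The cutoff error is
\[
\int h\, g\, \partial_i\chi_R ,
\]
which is bounded by
\[
\frac{C}{R}\int_{R\le\|y\|\le 2R} L_0\|y\|\,|g| \;\le\; 2CL_0\int_{\|y\|\ge R}|g| \;\to\; 0 .
\]
The other terms converge by dominated convergence, provided $h\,\partial_i g$ is integrable.

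\emph{Step 3 (the integrability gap).} If integrability of $\|y\|\,|\partial_i g|$ fails in general, I would interpret $f\ast\partial_i g$ as the limit of the truncated integrals. Alternatively, I would note that in the application (Gaussian-type kernels) all moments are finite, add that as the mild condition under which the identity $\nabla(f\ast g)=f\ast\nabla g$ is meant, and then conclude.
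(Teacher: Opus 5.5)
Your proof is correct, provided the convolutions in the statement are finite; the statement itself tacitly needs this. It follows a genuinely different route from the paper.

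\textbf{The paper's route.} The paper never differentiates $f$. It puts the difference quotient on $g$, replaces $\tfrac{1}{t}\bigl(g(u+te_i)-g(u)\bigr)$ by $\partial_i g$ at an intermediate point, and shifts variables so that $\partial_i g$ is evaluated at $x-z$. It then absorbs the shift into $f$ via the Lipschitz bound, obtaining
\[
\Bigl|\tfrac{1}{t}\bigl((f\ast g)(x+te_i)-(f\ast g)(x)\bigr)-(f\ast \partial_i g)(x)\Bigr| \le |t|\,L_0\,\|\partial_i g\|_{L^1}.
\]
So $\partial_i(f\ast g)=f\ast\partial_i g$ in a single step, and continuity follows from the preceding lemma applied to $\partial_i g\in L^1$. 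The intermediate point $s_t(x-y)$ depends on $y$, so that ``change of variable'' is best read through $g(u+te_i)-g(u)=\int_0^t\partial_i g(u+se_i)\,ds$ and Fubini.

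\textbf{Your route.} You differentiate $f$ instead. Rademacher plus dominated convergence gives $\partial_i(f\ast g)=(\partial_i f)\ast g$, and the $C^1$ claim then follows from $L^\infty\ast L^1$ continuity without using $\nabla g$ at all. You then move the derivative back onto $g$ by a cutoff integration by parts, after normalizing $f$ to $h$ via $\int\partial_i g=0$. Your annulus estimate $\tfrac{C}{R}\int_{R\le\|y\|\le 2R}L_0\|y\|\,|g|\le 2CL_0\int_{\|y\|\ge R}|g|\to 0$ is exactly right.

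\textbf{What each buys.} The paper's argument is shorter and needs neither Rademacher nor integration by parts. Yours uses more machinery, but it cleanly separates two things the paper's proof passes over: the $C^1$ claim needs only $g\in L^1$, while the formula $f\ast\nabla g$ needs integrability of $f(x-\cdot)\,\partial_i g$.

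\textbf{Two tightenings.}

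First, neither proof can avoid assuming that $f\ast g$ is finite somewhere. For $n=1$, take $f(x)=|x|$ and $g(y)=(1+y^2)^{-1}\in W^{1,1}(\R)$; then $f\ast g\equiv+\infty$. So state explicitly that finiteness at one point $x_0$, which the statement presupposes, propagates to every $x$ through $|f(x-y)|\le|f(x_0-y)|+L_0\|x-x_0\|$. Only then is the difference of convolutions in Step 1 equal to the integral of the differences.

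Second, the hedging in Step 3 is unnecessary. Since $\partial_i g\in L^1$, integrability of $h\,\partial_i g$ is equivalent to absolute convergence of $(f\ast\partial_i g)(x)$. That is exactly the condition for the right-hand side of the identity to be defined at all. So you add no hypothesis beyond what the statement and the paper's proof already use, and for $g=\phi_\eta$ it holds trivially.
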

		\begin{proof}
		With $e_i$ denoting the $i$-th standard basis vector and $t \in \R$, we have by the mean value theorem
		\begin{align*}
			\frac{1}{t} \lp f \ast g (x +t e_i) - f \ast g (x) \rp & = \int f(y) \frac{\lp g(x-y+t e_i) - g(x-y) \rp}{t} dy\\
			& = \int f(y) \frac{\pl g}{\pl x_i} (x -y +s_t(x-y) e_i) dy,
		\end{align*}
		where $0 \leq s_t(x-y) \leq t$. Through a change of variable, we have
		\beq\label{eq:f-st}
		\frac{1}{t} \lp f \ast g (x +t e_i) - f \ast g (x) \rp = \int f(z - s_t(x-y) e_i) \frac{\pl g}{\pl x_i} (x - z) dz. 
		\eeq
		Note by Lipschitzian property of $f$, $f(z)-|t| L_0 \leq f(z - s_t(x-y) e_i) \leq f(z)+|t| L_0$. Using this relation in \eqref{eq:f-st}, we obtain 
		\begin{align*}
			\frac{1}{t} \lp f \ast g (x +t e_i) - f \ast g (x) \rp   & = \int f(z - s_t(x-y) e_i) \frac{\pl g}{\pl x_i} (x - z) dz \\
			& = \int \left(f(z - s_t(x-y) e_i) - f(z) \right) \frac{\pl g}{\pl x_i} (x - z) dz \\
			&~~~+ \int f(z)  \frac{\pl g}{\pl x_i} (x - z) dz \\
			& \le L_0 \int  | t| \frac{\pl g}{\pl x_i} (x - z) dz +  \left(\, f *  \frac{\pl g}{\pl x_i}\,\right) (x) \\
			& = L_0 |t| {\lnn \frac{\pl g}{\pl x_i} \rnn}_{L^1} +  \left(\, f *  \frac{\pl g}{\pl x_i}\,\right) (x).
		\end{align*}
		The reverse inequality follows similarly, leading to the following claim. 
		\begin{align*}
			\lp f \ast \frac{\pl g}{\pl x_i}\rp (x) - |t| L_0 {\lnn \frac{\pl g}{\pl x_i} \rnn}_{L^1} \leq \frac{1}{t} \lp f \ast g (x +t e_i) - f \ast g (x) \rp \leq \lp f \ast \frac{\pl g}{\pl x_i} \rp (x) +|t| L_0  {\lnn \frac{\pl g}{\pl x_i} \rnn}_{L^1}.
		\end{align*}
		Taking limits as $t \to 0$, we now obtain the required result, completing the proof.
		$$
		\frac{\pl }{\pl x_i} \lp f \ast g \rp (x) = \lp f \ast \frac{\pl g}{\pl x_i} \rp (x).
		$$
		\end{proof}
		\medskip
		
		Now consider a nonnegative function $\phi$ on $\mathbb{R}^n$. This function $\phi$ forms the basis for our mollification\footnote{Note mollifiers $\phi$ are usually defined as being compactly supported~\cite{ermoliev1995minimization}. 
			However, here we expand the definition and consider functions like the Gaussian kernel which do not have compact support.}
		and {we} provide its explicit form in {Section~\ref{sec:smoothedgrad}}. Let $\eta>0$ and let $\phi_{\eta}$ be defined as
		\begin{equation}\label{eq:phieta}
			\phi_{\eta}(z) \, \triangleq \, \eta^{-n} \phi\left(\eta^{-1} z\right). 
		\end{equation}
		If $\phi \in L^1$  then $ \int_{\mathbb{R}^n} \phi_\eta(z) dz $ is independent of $ \eta$, which immediately follows by change of variables, i.e., 
		\begin{equation}\label{eq:intphi}
			\int_{\mathbb{R}^n} \phi_\eta(z) dz=\int_{\mathbb{R}^n} \phi\left(\eta^{-1} z\right) \eta^{-n} d z=\int_{\mathbb{R}^n} \phi(v) d v .
		\end{equation}
		We now introduce the smoothing or mollification of $f$ via $\phi$, as referred to as the $\phi_\eta$-mollified $f$, defined as 
		\begin{equation}\label{eq:feta}
			f_\eta(x) \, \triangleq \, f*\phi_\eta(x).
		\end{equation} 
		Mollification preserves strong and weak convexity, which are special cases of $\mu$-convexity. We adopt the definition employed in \cite{mu_convexity} whereby a function is $\mu$-convex if for $u_1, u_2 \in \mathbb{R}^n$,
		$f(\lambda u_1+(1-\lambda) u_2) \leq \lambda f(u_1)+(1-\lambda) f(u_2)-\frac{\lambda(1-\lambda) \mu}{2}\|u_1-u_2\|^{2}$, for any $\lambda \in[0,1].$ When $\mu$ is nonnegative (resp. nonpositive) $f$ is referred to as $\mu$-strongly convex (resp. $\mu$-weakly convex). 
		\begin{lemma}\label{prop:KeepConvex}
			Suppose $\phi$ is smooth satisfying $\int_{\mathbb{R}^n} \phi(u) d u =1$. Let $f$ satisfying Assumption~\ref{ass:f-Lip} be $\mu$-convex, where $\mu \, \in \, \mathbb{R}$. Then $f_\eta=f * \phi_\eta$ is $\mu$-convex  for all $\eta>0$.
		\end{lemma}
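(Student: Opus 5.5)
Write $f_\eta(x)=\int_{\R^n} f(x-y)\phi_\eta(y)\,dy$, so that $f_\eta$ is a weighted average of the translates $f(\cdot-y)$. The plan is to apply the $\mu$-convexity inequality to each translate and then integrate it against $\phi_\eta(y)\,dy$. Two facts make this work. First, translation preserves $\mu$-convexity: for fixed $y$, the function $u\mapsto f(u-y)$ satisfies the defining inequality with the same $\mu$, because $(u_1-y)-(u_2-y)=u_1-u_2$. Second, the quadratic correction term $\tfrac{\lambda(1-\lambda)\mu}{2}\|u_1-u_2\|^2$ does not depend on $y$. Hence integrating it against $\phi_\eta$ returns it multiplied by $\int\phi_\eta=\int\phi=1$, using \eqref{eq:intphi}.

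Concretely, fix $u_1,u_2\in\R^n$ and $\lambda\in[0,1]$. For every $y$,
\begin{align*}
f(\lambda u_1+(1-\lambda)u_2-y)&=f(\lambda(u_1-y)+(1-\lambda)(u_2-y))\\
&\le \lambda f(u_1-y)+(1-\lambda)f(u_2-y)-\tfrac{\lambda(1-\lambda)\mu}{2}\|u_1-u_2\|^2 .
\end{align*}
I will multiply by $\phi_\eta(y)$ and integrate over $\R^n$. This gives $f_\eta(\lambda u_1+(1-\lambda)u_2)\le \lambda f_\eta(u_1)+(1-\lambda)f_\eta(u_2)-\tfrac{\lambda(1-\lambda)\mu}{2}\|u_1-u_2\|^2$, which is the claim. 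The argument is identical for positive and negative $\mu$, since only the sign of the constant term changes.

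The one step needing justification is that integrating the pointwise inequality is legitimate, which requires the integrals to be finite and $\phi_\eta\ge 0$. Nonnegativity is part of the setup, since $\phi$ was introduced as a nonnegative function. For finiteness, Assumption~\ref{ass:f-Lip} gives $|f(u-y)|\le |f(u)|+L_0\|y\|$, so it suffices that $\int\|y\|\phi_\eta(y)\,dy<\infty$. This holds for the Gaussian-type kernels used later, and I would state it as an implicit standing condition, namely that $\phi$ has a finite first moment. This is the same condition under which $f_\eta$ in \eqref{eq:feta} is well defined at all. With that in place, linearity and monotonicity of the integral complete the proof. Smoothness of $\phi$ is not actually needed for this lemma; it matters only for differentiability via Proposition~\ref{prop:ConvSmooth}.
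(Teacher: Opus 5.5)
Your proof is correct and is the paper's argument: write $f_\eta(\lambda u_1+(1-\lambda)u_2)$ as the integral of $f(\lambda(u_1-z)+(1-\lambda)(u_2-z))$ against $\phi_\eta$, apply $\mu$-convexity pointwise, and use $\int\phi_\eta=1$ from \eqref{eq:intphi}. Your remarks on $\phi_\eta\ge 0$ and on the finite first moment needed for the integrals to exist spell out what the paper leaves implicit, and you are right that smoothness of $\phi$ plays no role in this lemma.
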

		\begin{proof} 
		Notice that by definition of $f_{\eta}$, 
		\begin{align*}
			f_\eta\left(\lambda u_1+(1-\lambda) u_2\right)
			& =\int_{\mathbb{R}^n} f(\lambda u_1+(1-\lambda) u_2 - z )\phi_\eta(z)dz \\
			&	=\int_{\mathbb{R}^n} f(\lambda (u_1-z)+(1-\lambda) (u_2-z) )\phi_\eta(z)dz.
		\end{align*}	
		Invoking $\mu$-convexity of $f$
		\begin{align*}		
			&f_\eta\left(\lambda u_1+(1-\lambda) u_2\right)  \leq \int_{\mathbb{R}^n} \left[ \lambda f(u_1-z)+ (1-\lambda)f(u_2-z) \right] \phi_\eta(z)dz\\
			&-\frac{\lambda(1-\lambda) \mu}{2}\|u_1-u_2\|^{2}\int_{\mathbb{R}^n} \phi_\eta(z)dz
			=\lambda f_\eta(u_1)+ (1-\lambda) f_\eta(u_2)- \frac{\lambda(1-\lambda) \mu}{2}\|u_1-u_2\|^{2},
		\end{align*}
		where we use \eqref{eq:intphi} and invoke $\int_{\mathbb{R}^n}\phi_\eta (z)dz=1$ by assumption. We have thus shown $f_\eta$ is $\mu$-convex.\\
		
		\end{proof}
		
		\medskip
		
		Next, we introduce our proposed exponentially-shifted Gaussian smoothing framework.
		
		\subsection{Exponentially-shifted Gaussian Smoothing}\label{sec:smoothedgrad}
		
		Suppose $\phi$, introduced in Section~\ref{sec:Molli}, is chosen to be the standard Gaussian density on $\mathbb{R}^n$, i.e. 
		\begin{equation}\label{eq:phiu}
			\phi(z)  \, \triangleq \,  \tfrac{1}{\sqrt{(2\pi)^n}} e^{-\frac{ \sum_{i=1}^n z_i^2}{2}}.
		\end{equation}
		From \eqref{eq:phieta}, \eqref{eq:phiu}, and the coordinate-wise decomposability of the standard Gaussian, $\phi_{\eta}$ may be defined as
		\begin{equation}\label{eq:phietau}
			\phi_{\eta}(z) \, = \,  \prod_{i=1}^n \rho_\eta(z_i), \mbox{ where } \rho_\eta(z_i) \, \triangleq \, \tfrac{1}{\eta\sqrt{(2\pi)}} e^{-\tfrac{z_i^2}{2\eta^2}} \mbox{ for } i = 1, \cdots, n.
		\end{equation}
		In addition, for any $i \in \{1, \cdots, n\}$, we define $\phi_{\eta}^{(-i)}(z^{-i}) \, \triangleq \,  \displaystyle \prod_{j \ne i}\ \rho_\eta(z_j).$ 
		\begin{remark}\label{rk:phidrvbd}
			Note that $\phi_\eta$ is infinitely smooth and  $\phi_{\eta} \in W^{1,1}(\R^n)$. 
		\end{remark}
		
		\medskip 
		Let  $f_\eta$ denote the mollified function introduced in \eqref{eq:feta}. 
		The following proposition derives the gradient of $f_{\eta}$ via convolution and this  
		representation is crucial for our gradient estimates. Observe that this representation of the gradient leads to the usage of exponential random variables in addition to Gaussian random variables, distinct from the more conventional gradient representation~\cite{nesterov2017random}.
		\begin{proposition}\label{prop:gradfeta}
			Assume $f$ satisfies Assumption~\ref{ass:f-Lip} and $\phi_{\eta}$, $f_{\eta}$ are defined as in \eqref{eq:phieta}, \eqref{eq:feta}, respectively.  
			Then for $1 \leq i \leq n$, the $i$-th partial derivative of $f_\eta$ is given by the following,  
			where $V \sim \mathcal{E}xp(1)$ and $Z \sim \mathcal{N}_n(0, \eta^2 I)$.
			\begin{equation}\label{eq:GradientfetaD}
				\frac{\partial_i f_{\eta}(x)}{\partial x_i} \,  = \, \frac{1}{\eta \sqrt{2\pi}}\mathbb{E}_{V,Z^{-i}}\left[f\left(x_i+\eta\sqrt{2V},x^{-i}-Z^{-i}\right)-f\left(x_i-\eta\sqrt{2V},x^{-i}-Z^{-i}\right)\right].
			\end{equation}
		\end{proposition}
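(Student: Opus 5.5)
By Proposition~\ref{prop:ConvSmooth}, applicable since $\phi_\eta\in W^{1,1}(\R^n)$ (Remark~\ref{rk:phidrvbd}), $f_\eta=f*\phi_\eta$ is $C^1$ and
\[
\partial_i f_\eta(x)=\int_{\R^n} f(x-z)\,\partial_i\phi_\eta(z)\,dz .
\]
The plan is to evaluate this integral using the product structure \eqref{eq:phietau}. Because $\phi_\eta(z)=\rho_\eta(z_i)\phi_\eta^{(-i)}(z^{-i})$, we have $\partial_i\phi_\eta(z)=\rho_\eta'(z_i)\,\phi_\eta^{(-i)}(z^{-i})$ with $\rho_\eta'(t)=-\tfrac{t}{\eta^2}\rho_\eta(t)$. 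Fubini is justified because $|f(x-z)|\le |f(x)|+L_0\|z\|$ and Gaussian moments are finite. It gives
\[
\partial_i f_\eta(x)=\mathbb{E}_{Z^{-i}}\left[\int_{\R} f(x_i-t,\,x^{-i}-Z^{-i})\,\rho_\eta'(t)\,dt\right],
\]
where $Z^{-i}$ has density $\phi_\eta^{(-i)}$, i.e.\ $\mathcal N(0,\eta^2 I_{n-1})$.

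For the inner one-dimensional integral, write $h(t)=f(x_i-t,x^{-i}-z^{-i})$. Since $\rho_\eta'$ is odd, splitting at $0$ and substituting $t\mapsto -t$ on the negative half gives
\[
\int_{\R} h(t)\rho_\eta'(t)\,dt=\int_0^\infty \big(h(-t)-h(t)\big)\,\frac{t}{\eta^2}\rho_\eta(t)\,dt .
\]
Next substitute $t=\eta\sqrt{2v}$, so that $t^2/(2\eta^2)=v$ and $dt=\eta\,(2v)^{-1/2}dv$. Then
\[
\frac{t}{\eta^2}\rho_\eta(t)\,dt=\frac{\eta\sqrt{2v}}{\eta^2}\cdot\frac{e^{-v}}{\eta\sqrt{2\pi}}\cdot\frac{\eta}{\sqrt{2v}}\,dv=\frac{1}{\eta\sqrt{2\pi}}\,e^{-v}\,dv,
\]
which is exactly $\tfrac{1}{\eta\sqrt{2\pi}}$ times the $\mathcal{E}xp(1)$ density. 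The inner integral therefore equals
\[
\frac{1}{\eta\sqrt{2\pi}}\,\mathbb{E}_V\left[f\big(x_i+\eta\sqrt{2V},x^{-i}-z^{-i}\big)-f\big(x_i-\eta\sqrt{2V},x^{-i}-z^{-i}\big)\right].
\]
Combining this with the outer expectation, and using independence to write it as a joint expectation over $(V,Z^{-i})$, yields \eqref{eq:GradientfetaD}.

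The only delicate step is justifying integrability for Fubini and for the split of the $t$-integral. Both follow from the linear growth of $f$ under Assumption~\ref{ass:f-Lip} together with the finite first moments of the Gaussian and exponential distributions. The resulting difference is bounded by $2L_0\eta\sqrt{2V}$, which is integrable.
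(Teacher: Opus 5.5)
Your proposal is correct and follows the paper's proof essentially step for step. Both use Proposition~\ref{prop:ConvSmooth} to move the derivative onto $\phi_\eta$, the product structure of $\phi_\eta$ to isolate the $z_i$-integral, a split of that integral at $0$, and the substitution $v=z_i^2/(2\eta^2)$ that turns $\tfrac{t}{\eta^2}\rho_\eta(t)\,dt$ into $\tfrac{1}{\eta\sqrt{2\pi}}e^{-v}\,dv$. Your explicit integrability check, which the paper leaves implicit, is a useful addition. One small correction there: bounding $|f(x-z)\,\partial_i\phi_\eta(z)|$ actually needs the Gaussian second moment $\int\|z\|\,|z_i|\,\phi_\eta(z)\,dz<\infty$, not just first moments, though this is of course still finite.
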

		\begin{proof}
		Using Remark~\ref{rk:phidrvbd} to check the hypothesis of Proposition~\ref{prop:ConvSmooth}, we have that $f_{\eta} \triangleq f * \phi_{\eta}$. Consequently, if $\tfrac{\partial h(x)}{\partial x_i}$ is denoted by $h^{\prime}_i$, then by Proposition~\ref{prop:ConvSmooth}, for $i = 1, \cdots, n$, we have 
		\begin{equation}\label{eq:partialfeta}
			f_{\eta,i}^{\prime} \, = \, \left( f * \phi_{\eta} \right)_i^{\prime}  \, = \, f *  \phi_{\eta,i}^{\prime}
			\, \implies \, \tfrac{\partial f_{\eta}(x)}{\partial x_i} \, = \, \int_{\mathbb{R}^n} f(x - z) \tfrac{\partial\phi_{\eta}(z)}{\partial z_i}  dz.
		\end{equation}
		It is readily checked that 
		\begin{equation}\label{eq:gradphi1}
			\tfrac{\partial \phi_\eta(z)}{\partial z_i} \, = \, \left(\phi_\eta^{(-i)}\left(z^{-i}\right)\right)\tfrac{\partial \rho_\eta(z_i)}{\partial z_i},   \text{ where } \tfrac{\partial\rho_\eta(z_i)}{\partial z_i}= \tfrac{-z_i}{\eta^3\sqrt{2 \pi}}e^{-\frac{z_i^2}{2\eta^2}}.
		\end{equation}
		This allows us to claim the following from~\eqref{eq:partialfeta}. 
		\begin{equation}\label{eq:inner}
			\tfrac{\partial f_{\eta}(x)}{\partial x_i} \, = \,\int_{\mathbb{R}^{n-1}}\left(\int_{\mathbb{R}} f(x-z) \tfrac{\partial{\rho_\eta}(z_i)}{\partial z_i} dz_i\right) \phi_{\eta}^{(-i)}\left(z^{-i}\right)dz^{-i}.
		\end{equation}
		The inner integral in \eqref{eq:inner} can be split as follows:
		\begin{equation}\label{eq:split}
			\int_{\mathbb{R}} f(x-z) \tfrac{\partial{\rho_\eta}(z_i)}{\partial z_i}dz_i=\int_{0}^{+\infty} f(x-z) \tfrac{\partial{\rho_\eta}(z_i)}{\partial z_i} dz_i + \int_{0}^{+\infty} f(x+z) \tfrac{\partial{\rho_\eta}(-z_i)}{\partial z_i} dz_i.
		\end{equation}
		By change of variables, where $z_i \rightarrow v=\frac{z_i^2}{2\eta^2}$, from \eqref{eq:gradphi1}
		and by defining $h$ as $h(v) \, \triangleq \, e^{-v}$, we obtain 
		\begin{align}\label{eq:innerE}
			\int_{\mathbb{R}} f(x-z) {\tfrac{\partial\rho_\eta(z_i)}{\partial z_i}}dz_i=& \tfrac{1}{\eta\sqrt{2\pi}}\left[\int_{0}^{+\infty} f\left(x_i+\eta\sqrt{2v},x^{-i}-z^{-i}\right)h(v) dv \right. \nonumber\\
			&	\left. -\int_{0}^{+\infty} f\left(x_i-\eta\sqrt{2v},x^{-i}-z^{-i}\right)h(v) dv \right], 
		\end{align}
		where $h$ can be observed as the density function of the $\mathcal{E}xp(1)$ distribution. 
		Plugging \eqref{eq:innerE} into \eqref{eq:inner}, we obtain our desired result \eqref{eq:GradientfetaD}. 
		\end{proof}
		
		\begin{remark}
			{It bears reminding that $\nabla f_{\eta}(x)$ can be expressed as $$\nabla f_{\eta}(x) = \tfrac{1}{\eta} \mathbb{E}_{Z} \left[\, f(x+\eta Z) Z \, \right]= \tfrac{1}{\eta} \mathbb{E}_{Z} \left[\, (f(x+\eta Z) - f(x))Z \right],$$
				where $Z$ is a standard normal~\cite{nesterov2017random}. In Prop.~\ref{prop:gradfeta}, we show that  for any $i \in \{1, \cdots, n \}$, 
				$$[\nabla f_{\eta}(x)]_i = \frac{1}{\eta \sqrt{2\pi}}\mathbb{E}_{V,Z^{-i}}\left[f\left(x_i+\eta\sqrt{2V},x^{-i}-Z^{-i}\right)-f\left(x_i-\eta\sqrt{2V},x^{-i}-Z^{-i}\right)\right],$$ suggesting a new estimator (called an exponentially shifted Gaussian smoothing ({\bf esGs}) estimator) for the gradient that relies on generating both exponential and Gaussian random variables. We will then show that this estimator has significantly improved moment bounds, which has pronounced implications on complexity guarantees and empirical behavior.  }
		\end{remark}
		\subsection{Properties of the smoothed function.}\label{sec:smoothproperties}
		Some of the properties derived below for the smoothed function have been proven in \cite{nesterov2017random}, but are still included below for completeness. Crucial to note is the improved dependence on dimension $n$ arising from this modified estimator. 
		\begin{lemma}\label{Lemma:SmoothProperties} 
			Assume $f$ satisfies Assumption~\ref{ass:f-Lip}, and $ \phi_{\eta}$, $f_{\eta}$ are defined as in \eqref{eq:phieta}, \eqref{eq:feta}, respectively. Then the following hold for any $\eta, \eta^{\prime} > 0$ and any $x,y \in \R^n$. 
			\begin{enumerate}[label=\alph*.]
				\item \label{Lemma:b}$|f_{\eta}(x)-f_{\eta}(y)| \leq L_0\|x-y\|$ 
				\item \label{Lemma:c}$|f_{\eta}(x)-f(x)| \leq L_0\sqrt{n+1}\eta$ 
				\item \label{Lemma:d} If in addition $f$ is convex, then $f_\eta$ is convex and 
				\begin{equation}\label{eq:Lemmad}
					f(x) \leq f_\eta(x) \leq f(x)+L_0\sqrt{n+1}\eta.
				\end{equation}
				\item \label{Lemma:e} $\|f_\eta(x)-f_{\eta^{\prime}}(x)\| \leq L_0\sqrt{n+1}|\eta-\eta^{\prime}|$.
				\item \label{Lemma:f} $\|\nabla f_{\eta}(x) - \nabla f_{\eta}(y)\| \leq \frac{2L_0\sqrt{n}}{\eta \sqrt{2 \pi}}\|x-y\|$. 
				\item \label{Lemma:g} If $f$ is differentiable and $L_1$-smooth, then $\|\nabla f_{\eta}(x) - \nabla f(x)\| \leq L_1\eta \sqrt{n}$. 
			\end{enumerate}
		\end{lemma}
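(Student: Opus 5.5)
Each part reduces to the convolution representation $f_\eta(x)=\mathbb{E}_{U}[f(x-\eta U)]$ with $U\sim\mathcal N_n(0,I)$, combined with the Lipschitz property in Assumption~\ref{ass:f-Lip}. For part~\ref{Lemma:f} we additionally use the coordinatewise formula of Proposition~\ref{prop:gradfeta}.

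\emph{Parts \ref{Lemma:b}--\ref{Lemma:e}.} Part~\ref{Lemma:b} follows from the earlier Lemma on $f\ast g$ with $g=\phi_\eta$, since $\|\phi_\eta\|_{L^1}=1$ by \eqref{eq:intphi}. For part~\ref{Lemma:c}, write $|f_\eta(x)-f(x)|\le \mathbb{E}|f(x-\eta U)-f(x)|\le L_0\eta\,\mathbb{E}\|U\|$. Jensen gives $\mathbb{E}\|U\|\le\sqrt{n}\le\sqrt{n+1}$.

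For part~\ref{Lemma:d}, convexity of $f_\eta$ is Lemma~\ref{prop:KeepConvex} with $\mu=0$. The lower bound $f(x)\le f_\eta(x)$ comes from Jensen, using $\mathbb{E}[x-\eta U]=x$, and the upper bound is part~\ref{Lemma:c}. For part~\ref{Lemma:e}, couple both smoothings through the same $U$: $|f_\eta(x)-f_{\eta'}(x)|\le \mathbb{E}|f(x-\eta U)-f(x-\eta' U)|\le L_0|\eta-\eta'|\,\mathbb{E}\|U\|$.

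\emph{Part \ref{Lemma:g}.} Write $\nabla f_\eta(x)=\mathbb{E}[\nabla f(x-\eta U)]$; this holds by Proposition~\ref{prop:ConvSmooth}, or by dominated differentiation since $\nabla f$ is bounded by $L_0$. Then
\[
\|\nabla f_\eta(x)-\nabla f(x)\|\le L_1\eta\,\mathbb{E}\|U\|\le L_1\eta\sqrt{n}.
\]

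\emph{Part \ref{Lemma:f}} is the main step. Fix $x,y$ and a coordinate $i$. Since $\partial_i f_\eta(x)=\int f(x-z)\,\partial_i\phi_\eta(z)\,dz$ by \eqref{eq:partialfeta}, we get
\[
\bigl|\partial_i f_\eta(x)-\partial_i f_\eta(y)\bigr|\le L_0\|x-y\|\,\|\partial_i\phi_\eta\|_{L^1}.
\]
By the product structure \eqref{eq:phietau}, $\|\partial_i\phi_\eta\|_{L^1}=\int|\rho_\eta'|=2\rho_\eta(0)=\frac{2}{\eta\sqrt{2\pi}}$. Equivalently, this is the Proposition~\ref{prop:gradfeta} formula with $|f(x_i\pm\cdot)-f(y_i\pm\cdot)|\le L_0\|x-y\|$ applied to each of the two terms.

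The difficulty is summing over coordinates without losing a factor: the crude bound $\bigl(\sum_i(\cdot)^2\bigr)^{1/2}$ of these per-coordinate estimates gives $\frac{2L_0\sqrt n}{\eta\sqrt{2\pi}}\|x-y\|$, which is exactly the stated constant. So the plan is simply to apply the per-coordinate Lipschitz bound to each partial derivative and take the Euclidean norm over the $n$ coordinates. The constant $\frac{2}{\eta\sqrt{2\pi}}$ comes from the $\pm$ exponential shift in Proposition~\ref{prop:gradfeta}. A sharper argument is possible but is not needed for the stated inequality.
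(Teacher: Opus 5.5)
Your proposal is correct and follows essentially the same route as the paper. The shared steps are: direct Lipschitz and Jensen estimates for the function-value parts, Lemma~\ref{prop:KeepConvex} for convexity, the per-coordinate bound $\|\partial_i\phi_\eta\|_{L^1}=\frac{2}{\eta\sqrt{2\pi}}$ summed in Euclidean norm for the gradient-Lipschitz part, and averaging $\nabla f(x-z)-\nabla f(x)$ against $\phi_\eta$ for the last part. The only real deviation is that you bound $\mathbb{E}\|U\|\le\sqrt{n}$ by Jensen, whereas the paper computes $\int\|z\|\phi_\eta(z)\,dz=\sqrt{2}\,\eta\,\Gamma(\tfrac{n+1}{2})/\Gamma(\tfrac{n}{2})$ exactly and applies Gautschi's inequality; your version is simpler and even gives $\sqrt{n}$ in place of $\sqrt{n+1}$.
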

		\begin{table}[htb]
			\centering
			\caption{}{Comparison of {gradient estimators enabled with Gaussian smoothing vs {\bf es-Gs} }}
			\scriptsize
			\label{tab:estimators}
			\begin{tabular}{ C{2.7cm} c c c c }
				\hline
				&  &  &  &  \\
				Estimator &  $|f_{\eta}(x)-f(x)| $ & $\|\nabla f_{\eta}(x) - \nabla f_{\eta}(y)\|$ & $\|\nabla f_{\eta}(x) - \nabla f(x)\|$ &$\mathbb{E}[\|g_\eta\|^2] $\\
				\hline
				{Gaussian smoothing}~\cite{nesterov2017random}  &  $L_0\sqrt{n}\eta$   & $\frac{L_0\sqrt{n}}{\eta}\|x-y\|$   &$\frac{L_1\eta}{2}(n+3)^{3/2} $ & $L_0^2(n+4)^2$  \\
				\hline
				\rowcolor{pink}	{exp-shifted Gaussian smoothing} ({\bf esGs})  &  $L_0\sqrt{n+1}\eta$   &  $\frac{2L_0\sqrt{n}}{\eta \sqrt{2 \pi}}\|x-y\|$  & $  L_1\eta \sqrt{n}$& $\frac{4}{\pi}L_0^2n$\\
				\hline
			\end{tabular}
		\end{table}
		\begin{proof}
		\begin{enumerate}[label =\alph*., wide, labelwidth=!, labelindent=0pt]
			\item By definition of $f_\eta$ in \eqref{eq:feta}, triangle inequality for integrals, and Lipschitz continuity of $f$, respectively
			\begin{align*}
				|f_{\eta}(x)-f_{\eta}(y)|&= \left|\int_{\mathbb{R}^n}\left(f(x-z)-f(y-z)\right)\phi_{\eta}(z) d z\right| \leq \int_{\mathbb{R}^n}\left|f(x-z)-f(y-z)\right|\phi_{\eta}(z) d z\\
				&\leq L_0\|x-y\|\int_{\mathbb{R}^n}\phi_{\eta}(z) d z = L_0\|x-y\|.
			\end{align*}
			\item By definition of $f_\eta$ in \eqref{eq:feta}, triangle inequality for integrals and Lipschitz continuity of $f$, respectively
			\begin{align}\label{eq:SmoothProperty3}
				|f_{\eta}(x)-&f(x)|=\left|\int_{\mathbb{R}^n}f(x-z)\phi_{\eta}(z)dz-f(x)\right| =\left|\int_{\mathbb{R}^n}\left(f(x-z)-f(x)\right)\phi_{\eta}(z)dz\right|\nonumber\\
				& \leq \int_{\mathbb{R}^n}\left|f(x-z)-f(x)\right|\phi_{\eta}(z)dz \leq L_0 \int_{\mathbb{R}^n}\|z\|\phi_{\eta}(z)dz,
			\end{align}
			By setting $r=\|z\|=\sqrt{\sum_{i=1}^n z_i^2}$, and denoting $S_{n-1}=\frac{2\pi^{\frac{n}{2}}}{\Gamma(\frac{n}{2})}$ as the surface area of the $n-$dimensional unit sphere, by standard spherical coordinate transformation we obtain,
			\begin{align}\label{eq:Enormu}
				\int_{\mathbb{R}^n}\|z\|\phi_{\eta}(z)dz &=\int_0^{+\infty} r \tfrac{1}{\eta^n(2\pi)^{n/2}}e^{-\frac{r^2}{2\eta^2}}S_{n-1}r^{n-1}dr \stackrel{u=\tfrac{r}{\eta}}{=} \tfrac{\eta \sqrt{2 \pi}}{2^{\frac{n}{2}-1}\Gamma(\tfrac{n}{2})}\int_0^{+\infty}u^n \tfrac{1}{\sqrt{2\pi}}e^{-\frac{u^2}{2}}du\nonumber\\
				&=\tfrac{\eta \sqrt{2 \pi}}{2^{\frac{n}{2}-1}\Gamma(\frac{n}{2})} \tfrac{1}{2}\mathbb{E}_{U \sim \mathcal{N}(0,1)}\left[ \left|U\right|^n\right] = \tfrac{\sqrt{2}\eta\Gamma(\tfrac{n+1}{2})}{\Gamma(\tfrac{n}{2})},
			\end{align}
			where we utilize the fact that $\mathbb{E}_{U \sim \mathcal{N}(0,1)}[|U|^n]=\tfrac{2^{n/2}\Gamma(\frac{n+1}{2})}{\sqrt{\pi}}$ and $U \sim \mathcal{N}(0,1)$. Plugging \eqref{eq:Enormu} in \eqref{eq:SmoothProperty3} and using Gautschi's inequality~\cite{gautschi1959}, whereby $x^{1-s} < \frac{\Gamma(x+1)}{\Gamma(x+s)} < (x+1)^{1-s}$, with $x=\frac{n-1}{2}$ and $s=\frac{1}{2}$, it is readily checked that $|f_\eta(x)-f(x)| \leq L_0\eta\sqrt{n+1}$.
			\item Convexity of $f_\eta$ follows by setting $\mu=0$ in Lemma~\ref{prop:KeepConvex}. By Jensen's inequality and $Z \sim \mathcal{N}_n(0, \eta^2I)$,
			\begin{equation}\label{eq:fetageqf}
				f_\eta(x)=\mathbb{E}[f\left(x-Z\right)]=\mathbb{E}[f(x+Z)] \geq f(x+\mathbb{E}[Z])=f(x).
			\end{equation}
			Now our desired inequality \eqref{eq:Lemmad} is obtained by combining \eqref{eq:fetageqf} and Lemma~\ref{Lemma:SmoothProperties} \eqref{Lemma:c} .
			\item By Lipschitz continuity of $f$ and \eqref{eq:Enormu} we have
			\begin{align*}
				\left|f_{\eta}(x)-f_{\eta^{\prime}}(x)\right|&=\left|\mathbb{E}\left(f(x+\eta Z)-f(x+\eta^{\prime}Z)\right)\right|\\
				&\leq L_0\left|\eta-\eta^\prime\right|\mathbb{E}\|Z\|=\tfrac{\sqrt{2}L_0\Gamma(\tfrac{n+1}{2})}{\Gamma(\tfrac{n}{2})}|\eta-\eta^{\prime}|\leq L_0\sqrt{n+1}|\eta-\eta^{\prime}|,
			\end{align*}
			where $Z$ follows standard normal distribution.
			\item From Proposition~\ref{prop:ConvSmooth} and by Lipschitz continuity of $f$,
			\begin{equation}\label{eq:Lipparfeta}
				\left|\tfrac{\partial f_\eta(x)}{\partial x_i}-\tfrac{\partial f_\eta(y)}{\partial x_i}\right|=\left|\int_\mathbb{R}^n\left(f(x-z)-f(y-z)\right)\tfrac{\partial \phi_\eta(z)}{\partial z_i}dz\right|\le L_0 \|x-y\|\int_{\mathbb{R}^n}\left|\tfrac{\partial \phi_\eta(z)}{\partial z_i}\right|dz
			\end{equation}
			From \eqref{eq:gradphi1},
			\begin{align}\label{eq:intabsphieta}
				\quad \int_{\mathbb{R}^n}\left|\tfrac{\partial \phi_\eta(z)}{\partial z_i}\right|dz & = \int_{\mathbb{R}^n} \phi_{\eta}^{(-i)}(z^{-i}) \left|\tfrac{\partial \rho_\eta(z)}{\partial z_i}\right|  dz 
				= \int_{\mathbb{R}}  \left|\tfrac{\partial \rho_\eta(z)}{\partial z_i}\right|  dz_i \nonumber\\
				& = 
				\tfrac{1}{\eta^2}\mathbb{E}\left[|Z|\right]=\tfrac{1}{\eta}\sqrt{\tfrac{2}{\pi}}
				\implies 
				\left|\tfrac{\partial f_\eta(x)}{\partial x_i}-\tfrac{\partial f_\eta(y)}{\partial x_i}\right|
				\leq \tfrac{1}{\eta}\sqrt{\tfrac{2}{\pi}}{L_0 \|x-y\|}.
			\end{align}
			From \eqref{eq:intabsphieta}, 
			\begin{equation*}
				\|\nabla f_{\eta}(x) - \nabla f_{\eta}(y)\|=\sqrt{\sum_{i=1}^n\left|\tfrac{\partial f_\eta(x)}{\partial x_i}-\tfrac{\partial f_\eta(y)}{\partial x_i}\right|^2}\leq \sqrt{\sum_{i=1}^n\left(\tfrac{2L_0}{\eta\sqrt{2\pi}}\|x-y\|\right)^2}= \tfrac{2L_0\sqrt{n}}{\eta\sqrt{2 \pi}}\|x-y\|.
			\end{equation*}
			\item Since $f \in C^1, \phi_\eta \in L^1$, by Prop.~\ref{prop:ConvSmooth}, $\int_{\mathbb{R}^n}\phi_\eta(z)dz=1$, and Jensen's inequality, 
			\begin{equation*}
				\left|\tfrac{\partial f_\eta(x)}{\partial x_i}-\tfrac{\partial f_\eta(y)}{\partial x_i}\right|
				=\left|\int_{\mathbb{R}^n}\left(\tfrac{\partial f(x-z)}{\partial x_i}-\tfrac{\partial f(x)}{\partial x_i}\right)\phi_\eta(z)dz\right|
				\leq \left(\int_{\mathbb{R}^n}\left|\tfrac{\partial f(x-z)}{\partial x_i}-\tfrac{\partial f(x)}{\partial x_i}\right|^2\phi_\eta(z)dz\right)^{1/2}
			\end{equation*}
			By the above inequality  and by using $\|\nabla f(x-z)-\nabla f(x)\|^2 \leq L_1^2\|z\|^2$
			\begin{align}\label{eq:gradfetaf}
				\left\|\nabla f_{\eta}(x)-\nabla f(x)\right\|^2= {\sum_{i=1}^n	\left(\tfrac{\partial f_\eta(x)}{\partial x_i}-\tfrac{\partial f(x)}{\partial x_i}\right)^2}
				&\leq \int_{\mathbb{R}^n}{\sum_{i=1}^n \left(\tfrac{\partial f(x-z)}{\partial x_i}-\tfrac{\partial f(x)}{\partial x_i}\right)^2}\phi_\eta(z)dz\nonumber\\
				&\leq L_1^2\int_{\mathbb{R}^n}\|z\|^2\phi_{\eta}(z)dz.
			\end{align}
			Now by a spherical transformation,
			\begin{align}\label{eq:Enorm2}
				\int_{\mathbb{R}^n}\|z\|^2\phi_{\eta}(z)dz &=\int_0^{+\infty} r^2 \tfrac{1}{\eta^n(2\pi)^{n/2}}e^{-\tfrac{r^2}{2\eta^2}}S_{n-1}r^{n-1}dr\stackrel{u=\frac{r}{\eta}}{=} \tfrac{\eta^2 \sqrt{2 \pi}}{2^{\frac{n}{2}-1}\Gamma(\tfrac{n}{2})}\int_0^{+\infty}u^{n+1} \tfrac{1}{\sqrt{2\pi}}e^{-\frac{u^2}{2}}du\nonumber\\
				&=\tfrac{\eta \sqrt{2 \pi}}{2^{\frac{n}{2}-1}\Gamma(\tfrac{n}{2})}\cdot \frac{1}{2}\mathbb{E}_{U \sim \mathcal{N}(0,1)}[|U|^{n+1}]=\tfrac{2\eta^2\Gamma(\tfrac{n}{2}+1)}{\Gamma(\tfrac{n}{2})}=\tfrac{2\eta^2\tfrac{n}{2}\Gamma(\tfrac{n}{2})}{\Gamma(\tfrac{n}{2})}=\eta^2 n,
			\end{align}
			where we recall  that $\Gamma(a+1)=a\Gamma(a)$. Substituting \eqref{eq:Enorm2} in \eqref{eq:gradfetaf}, we obtain our result.
		\end{enumerate}
		\end{proof}

		\subsection{Moment bounds for {\bf esGs} gradient estimators}\label{sec:esGsestimator}
		In Proposition~\ref{prop:gradfeta}, we define an  ({\bf esGs}) gradient estimator for $f_{\eta}$, given by {$g_{\eta}{(x,V,Z)} \, \triangleq \, (g_{\eta}^i{(x,V,Z)})_{i=1}^n$, 
			\begin{equation}\label{eq:g_eta}
				g_{\eta}^i{(x,V,Z)} \, \triangleq \, \tfrac{1}{\eta\sqrt{2\pi}}\left[f\left(x_i+\eta\sqrt{2V},x^{-i}-Z^{-i}\right)-f{\left(x_i-\eta\sqrt{2V},x^{-i}-Z^{-i}\right)}\right],
			\end{equation}
			where $u^{-i} \, \triangleq \, (u_j)_{j \ne i}$, 
			$V$ and $Z$ are random variables following $\mathcal{E}{ xp}(1)$ and
			$\mathcal{N}(0,\eta^2I)$ taking realizations $v$ and $z$, respectively. 
			From eq.~\eqref{eq:GradientfetaD} $g_{\eta}$ is an unbiased estimator for $\nabla f_{\eta}$:
			\begin{equation}\label{eq:unbiased-0}\mathbb{E}_{V,Z}\left[{g}_\eta(x,V,Z)\right]=\nabla f_\eta (x).
			\end{equation}
			\begin{remark}
				The ({\bf esGs}) gradient estimator is obtained as a coordinate-wise difference of function values evaluated at points shifted by an {${\eta}$-}scaling of the square-root of an exponential random variable with parameter unity, while maintaining Gaussian smoothing at all other coordinates. It may also be recalled that if $V \sim {\ce}xp(\lambda)$, then $V \sim \mbox{Rayleigh}(1/\sqrt{2\lambda}).$ In addition, the simulation of an exponential random variable is computationally (relatively)  cheap.  
				Though our result is obtained by a direct analysis, we realized that this framework 
				is reminiscent of weak derivatives and their estimators as in
				\citep{FU2006575,jie2022using} or likelihood ratio derivative estimation techniques as in \citep{glynn1989likelihood, glynn1987likelilood}, where 
				derivatives are considered with respect to some parameter of choice. However, 
				we consider derivatives with respect to $z_i$ on the support of
				$\rho_\eta$.  In addition, in
				~\eqref{eq:partialfeta}, the differentiability requirement is transferred from $f$
				to $\phi_\eta$, weakening smoothness requirements on $f$. This is 
				crucial in obtaining \eqref{eq:GradientfetaD}. We now show that second moment bounds of ({\bf esGs}) have distinctly better dimensional dependence.  
			\end{remark}
			\begin{proposition}\label{prop:secmombd-1}
				Assume $f$ satisfies Assumption~\ref{ass:f-Lip}. Then for any $x$ and $\eta > 0$
				\begin{equation}
					\mathbb{E}_{V,Z}[\|{g}_\eta(x,V,Z)\|^2] \leq \tfrac{4}{\pi}L_0^2n,
				\end{equation}
				where $g_{\eta}$ is defined in \eqref{eq:g_eta}
			\end{proposition}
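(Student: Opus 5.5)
The bound follows from a coordinate-wise computation, using the Lipschitz property of $f$ along the $i$-th coordinate direction. The two evaluation points in \eqref{eq:g_eta} share the same coordinates $x^{-i}-Z^{-i}$ and differ only in the $i$-th entry, by $2\eta\sqrt{2V}$. Assumption~\ref{ass:f-Lip} therefore gives, pointwise in $(V,Z)$,
\begin{equation*}
|g_\eta^i(x,V,Z)| \le \tfrac{1}{\eta\sqrt{2\pi}}\, L_0 \cdot 2\eta\sqrt{2V} = \tfrac{2L_0}{\sqrt{\pi}}\sqrt{V}.
\end{equation*}
The factor $\eta$ cancels exactly, and neither the Gaussian perturbation $Z^{-i}$ nor the dimension enters this bound. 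This is the key structural point that separates the estimator from the standard two-point Gaussian estimator \eqref{eq:twopointgrad}, where the difference is multiplied by the full vector $z$ and produces an $\|z\|^2$ factor of order $n$.

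Next I square this bound and take expectations. Since $V\sim\mathcal{E}xp(1)$ has $\mathbb{E}[V]=1$,
\begin{equation*}
\mathbb{E}_{V,Z}\big[(g_\eta^i)^2\big] \le \tfrac{4L_0^2}{\pi}\,\mathbb{E}[V] = \tfrac{4}{\pi}L_0^2 .
\end{equation*}

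Finally I sum over $i=1,\dots,n$ to get $\mathbb{E}\|g_\eta\|^2=\sum_i \mathbb{E}[(g_\eta^i)^2]\le \tfrac{4}{\pi}L_0^2 n$, which is the claim. One point needs care: whether each coordinate uses its own draws $(V,Z)$ or a single shared draw. The argument is indifferent to this, because the bound is pointwise per coordinate and linearity of expectation needs no independence across coordinates.

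I do not expect a real obstacle. The only things to check are that the arguments in \eqref{eq:g_eta} differ solely in coordinate $i$, so the Euclidean distance between them is exactly $2\eta\sqrt{2V}$, and that $\mathbb{E}[V]=1$ for the $\mathcal{E}xp(1)$ law. Integrability is automatic since $V$ has finite first moment.
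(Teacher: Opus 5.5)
Your proof is correct and is essentially the paper's argument. The paper also applies the Lipschitz bound to the two evaluation points, which differ only in coordinate $i$ by $2\eta\sqrt{2V}$, uses $\mathbb{E}[V]=1$ to get $\tfrac{4L_0^2}{\pi}$ per coordinate, and then sums over $i=1,\dots,n$.
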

			\begin{proof}
			Since $f$ is $L_0$-Lipschitz continuous, 
			\begin{align*}
				\mathbb{E}\left[\left|\, g_{\eta}^i\left(x,V,Z \right)\right|^2\right]& = \tfrac{1}{\eta^2 2\pi}\mathbb{E}\left[\left | f \left(x_i+\eta\sqrt{2V},x^{-i}-Z^{-i} \right)-f\left(x_i-\eta\sqrt{2V},x^{-i}-Z^{-i}\right)\right|^2\right]\\
				\leq &\tfrac{1}{\eta^2 2\pi}\mathbb{E}\left[\left|L_0\left\|(2\eta\sqrt{2V},0,\cdots,0)\right\|\right|^2 \right]\leq  \tfrac{4L_0^2}{\pi}\mathbb{E}[V]=\tfrac{4L_0^2}{\pi}.
			\end{align*}
			Then the result follows by noting that
			$
			\mathbb{E}_{V,Z}[\|{g}_\eta(x,V,Z)\|^2]  = \sum_{i=1}^n \mathbb{E}\left[\left|\, g_{\eta}^i\left(x,V,Z \right)\right|^2\right] \leq \tfrac{4}{\pi}L_0^2n
			$.
			\end{proof}
			
			It bears reminding that the {\bf esGs} estimator has two distinctive improvements over the standard {\bf GS} estimator, as seen from Table~\ref{tab:estimators}. 
			
			\noindent (i) First, $\|\nabla f_{\eta}(x) - \nabla f(x)\| \le \mathcal{O}\left(\eta \sqrt{n}\right)$ for any $x$ (in contrast with $\mathcal{O}\left(\eta n^{3/2}\right)$ for standard {\bf GS} estimators). 
			
			\noindent (ii) Second,  $\mathbb{E}_{V,Z}[\|{g}_\eta(x,V,Z)\|^2] \le \mathcal{O}(L_0^2n)$ (as opposed to $\mathcal{O}(L_0^2 n^2)$ for {\bf GS} estimators). \\
			
			In this and the subsequent sections, we adapt our {\bf (esGs)} gradient estimator defined in \eqref{eq:g_eta} to stochastic optimization problems, where the objective is given in the form of an expectation {and the smoothing of $F(\bullet,\xi)$ is carried out}. Consider the constrained (unconstrained when $X=\R^n$) stochastic optimization problem
			\begin{equation}\label{eq:f=EF}
				\min_{x \, \in \, X} f(x) \text{ where }	f(x) \, \triangleq \, \mathbb{E}_{\bxi}\left[\, F(x,\bxi)\, \right]. 
			\end{equation}
			It bears reminding that in this section, we consider the setting where the expectation is taken with respect to a probability distribution that is independent of the decision variable $x$; we refer to this as the ``non-decision-dependent'' setting and this assumption is relaxed in the next section. Recall the definition of $L^2$ norm in the notation defined in Sec.~\ref{sec:intro}. We employ the following assumption on $F$ and $X$ in our future discourse.
			\begin{assumption}\label{asm:ascvgstochastic}
				\label{asm:covstoc1} 
					$F(\bullet, \bxi)$ is a Lipschitz continuous function with respect to the norm $\| \bullet \|_{L^2(\xi)}$ with Lipschitz constant $L_0$, i.e. for any $x, y\, \in \, \R^n$, 
					$
					\left\|\, F(x, \bullet) - F(y, \bullet)\, \right\|_{L^2(\xi)} \, \leq \, L_0 \|\, x-y \, \|.
					$ 
				\end{assumption}
				\begin{remark}\label{rem:EF-Lip}
					By invoking Jensen's inequality and Assumption~\ref{asm:ascvgstochastic}, observe that $f$ defined in \eqref{eq:f=EF} is $L_0$-Lipschitz, and the consequences in Lemma~\ref{Lemma:SmoothProperties} hold true.
				\end{remark}
				
				\smallskip
				
				We thus obtain the following proposition.
				\begin{proposition}\label{cor:gradfeta} 
					Suppose Assumption~\ref{asm:ascvgstochastic} holds, $f$ is defined as in \eqref{eq:f=EF}, and $\phi_{\eta}$, $f_{\eta}$ are defined as in \eqref{eq:phieta}, \eqref{eq:feta} respectively. Then for $1 \leq i \leq n$, the $i$-th partial derivative of $f_\eta$ is given by the following, where $V \sim \mathcal{E}xp(1)$ and $Z \sim \mathcal{N}_n(0, \eta^2 I)$.
					\begin{equation}\label{eq:GradientfetaD_2}
						\frac{\partial_i f_{\eta}(x)}{\partial x_i} \,  = \, \frac{1}{\eta \sqrt{2\pi}}\mathbb{E}_{V,Z, \bxi}\left[F\left(x_i+\eta\sqrt{2V},x^{-i}-Z^{-i}, \bxi\right)-F\left(x_i-\eta\sqrt{2V},x^{-i}-Z^{-i}, \bxi\right)\right]. 
					\end{equation}
				\end{proposition}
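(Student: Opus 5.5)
The plan is to reduce the statement to Proposition~\ref{prop:gradfeta} and then swap the order of integration, so that the expectation over $\bxi$ moves inside the expectation over $(V,Z)$.

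\textbf{Step 1 (apply Proposition~\ref{prop:gradfeta} to $f$).} By Remark~\ref{rem:EF-Lip}, Jensen's inequality and Assumption~\ref{asm:ascvgstochastic} give
\[
|f(x)-f(y)| \le \|F(x,\bullet)-F(y,\bullet)\|_{L^2(\xi)} \le L_0\|x-y\|.
\]
So $f$ satisfies Assumption~\ref{ass:f-Lip}. Proposition~\ref{prop:gradfeta} then applies to $f$ directly and yields
\[
\frac{\partial f_\eta(x)}{\partial x_i} = \frac{1}{\eta\sqrt{2\pi}}\,\mathbb{E}_{V,Z^{-i}}\Big[ f\big(x_i+\eta\sqrt{2V},x^{-i}-Z^{-i}\big) - f\big(x_i-\eta\sqrt{2V},x^{-i}-Z^{-i}\big)\Big].
\]

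\textbf{Step 2 (unfold the definition of $f$).} Write each term as $f(y)=\mathbb{E}_{\bxi}[F(y,\bxi)]$ and combine the two terms inside a single $\bxi$-expectation. This produces an iterated expectation: first over $\bxi$, then over $(V,Z^{-i})$. Since $(V,Z)$ is independent of $\bxi$ in the non-decision-dependent setting, the joint law is a product measure. The iterated expectation therefore becomes $\mathbb{E}_{V,Z,\bxi}$ by Fubini, provided the integrand is jointly integrable.

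\textbf{Step 3 (joint integrability — the main obstacle).} This is the only step that needs care, because Assumption~\ref{asm:ascvgstochastic} gives only an $L^2(\xi)$ Lipschitz bound, not a pointwise one. I will bound the absolute value of the integrand
\[
D \;\triangleq\; F\big(x_i+\eta\sqrt{2V},x^{-i}-Z^{-i},\bxi\big)-F\big(x_i-\eta\sqrt{2V},x^{-i}-Z^{-i},\bxi\big).
\]
For fixed $(v,z)$, Cauchy--Schwarz followed by the assumption gives
\[
\mathbb{E}_{\bxi}|D| \le \|D\|_{L^2(\xi)} \le 2L_0\eta\sqrt{2v}.
\]
Then Tonelli gives
\[
\mathbb{E}_{V,Z,\bxi}|D| \le 2\sqrt{2}\,L_0\eta\,\mathbb{E}[\sqrt{V}] < \infty,
\]
which justifies the use of Fubini in Step~2.

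Two technical points remain. First, joint measurability of $(x,\xi)\mapsto F(x,\xi)$ is implicitly assumed; I would note this in passing. Second, for $\mathbb{E}_{\bxi}F(y,\bxi)$ to be finite I need $F(y,\bullet)\in L^1$ for each $y$. Since $f$ is assumed well defined, it suffices to have this at one point, and the $L^2$-Lipschitz property then propagates it to every point. Once integrability is in hand, formula \eqref{eq:GradientfetaD_2} follows immediately.
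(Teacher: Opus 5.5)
Your proposal is correct and takes the same route as the paper, whose proof is the one-line observation that $f=\mathbb{E}_{\bxi}[F(\bullet,\bxi)]$ combined with Proposition~\ref{prop:gradfeta} and Fubini's theorem. Your Step~3 adds the joint-integrability bound $\mathbb{E}_{V,Z,\bxi}|D|\le 2\sqrt{2}L_0\eta\,\mathbb{E}[\sqrt{V}]<\infty$, derived from the $L^2(\xi)$-Lipschitz assumption, which is the justification for Fubini that the paper leaves implicit.
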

				\begin{proof}
				Since $f(x_i,x_{-i}) = \mathbb{E}_{\bxi}\left[ F(x_i,x_{-i},\bxi) \right]$, by Proposition~\ref{prop:gradfeta} and Fubini's theorem, 
				we observe that \eqref{eq:GradientfetaD_2} holds.
				\end{proof}
				
				\medskip
				{It is worth emphasizing that the oracle requirements for computing this gradient estimator  are no different but computing  a {\bf esGs}-enabled gradient estimator requires first generating an exponential random variable $V$ and a mean-zero Gaussian random variable $Z$ and then evaluating the $n$ components of the estimator via the stochastic function oracle via  \eqref{df:grad_est_i} for $i=1, \hdots, n.$ }
				Thanks to Proposition~\ref{cor:gradfeta} we may now define $\tilde{g}_{\eta}(x,v, z,\xi)$ to denote a gradient estimate of $f_{\eta}(x)$, where the $i$-th component is defined as
				\begin{equation} \label{df:grad_est_i}
					\tilde{g}_{\eta}^i(x,v,z,\xi)\, = \, \tfrac{1}{\eta\sqrt{2\pi}}\left[F\left(x_i+\eta\sqrt{2v},x^{-i}-z^{-i},\xi\right)-F\left(x_i-\eta\sqrt{2v},x^{-i}-z^{-i},\xi\right)\right]
				\end{equation}
				and {$\tilde{g}_{\eta}(x,v,z,\xi)=(\tilde{g}_{\eta}^1,\tilde{g}_{\eta}^2,\cdots,\tilde{g}_{\eta}^n)^T$}. From eq.~\eqref{eq:GradientfetaD_2}, ${g}_\eta$ is an unbiased estimator for $\nabla f_\eta$, i.e.,  
				\begin{equation}\label{eq:tilde-g-unbiased}
					\mathbb{E}_{V,Z,\bxi}\left[\tilde{g}_\eta(x,V,Z,\bxi)\right]=\nabla f_\eta (x).
				\end{equation}
		
		For any given scalar $\eta > 0$, we consider the smoothing of the integrand $F(\bullet,\xi)$, given by $F_{\eta}(\bullet,\xi)$ and defined as
		\begin{equation}
			F_{\eta}(x,\xi)= \int_{\mathbb{R}^n} F(x-z,\xi) \phi_{\eta}(z) dz,
		\end{equation}
		where $\phi_\eta$ is defined in \eqref{eq:phietau}. Note that $\E_{\bxi}[F_{\eta}(x, \bxi)] = f_{\eta}(x)$. Thanks to Proposition~\ref{prop:gradfeta}, $g_\eta(x, V,Z, \bxi)$ is an unbiased estimator of $\nabla F_{\eta}(x,\xi)$.
		As a consequence of Proposition~\ref{prop:secmombd-1}, we obtain that the second moment of $\tilde{g}_\eta$ is bounded and scales with $n$, rather than $n^2$. This represents a key differentiator with standard gradient estimators employed in Gaussian smoothing~\cite{nesterov2017random}. 
		\begin{proposition}\label{prop:secmombd}
			Let Assumption~\ref{asm:ascvgstochastic} hold. Then for any $x \in X$ and $\eta > 0$, we have that
			\begin{equation}
				\mathbb{E}_{V,Z,\bxi} \lc \|\tilde{g}_\eta(x,V,Z,\bxi)\|^2 \rc \leq \tfrac{4}{\pi}L_0^2n.
			\end{equation}
		\end{proposition}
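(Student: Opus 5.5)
The plan is to mirror the proof of Proposition~\ref{prop:secmombd-1} coordinate by coordinate. The only change is that the pointwise Lipschitz bound on $f$ is replaced by the mean-square Lipschitz bound of Assumption~\ref{asm:ascvgstochastic}, applied conditionally on $(V,Z)$. First I use that $\|\tilde g_\eta\|^2=\sum_{i=1}^n |\tilde g_\eta^i|^2$ and linearity of expectation, so it suffices to show
\[
\mathbb{E}_{V,Z,\bxi}\bigl[|\tilde g^i_\eta(x,V,Z,\bxi)|^2\bigr]\le \tfrac{4}{\pi}L_0^2
\]
for each fixed $i$.

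For fixed $i$, I condition on $(V,Z)=(v,z)$. This is legitimate by Fubini/Tonelli, since the integrand is nonnegative and $\bxi$ is independent of $(V,Z)$ in the decision-independent setting. Define the two points
\[
y^{+}=(x_i+\eta\sqrt{2v},x^{-i}-z^{-i}),\qquad y^{-}=(x_i-\eta\sqrt{2v},x^{-i}-z^{-i}).
\]
They differ only in coordinate $i$, so $\|y^+-y^-\|=2\eta\sqrt{2v}$. Assumption~\ref{asm:ascvgstochastic} then gives
\[
\mathbb{E}_{\bxi}\bigl[|F(y^+,\bxi)-F(y^-,\bxi)|^2\bigr]=\|F(y^+,\bullet)-F(y^-,\bullet)\|_{L^2(\xi)}^2\le L_0^2\,8\eta^2 v .
\]
Multiplying by the prefactor $\tfrac{1}{2\pi\eta^2}$, the conditional second moment of $\tilde g^i_\eta$ is at most $\tfrac{4L_0^2}{\pi}v$. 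The point of this step is that the $\eta^2$ cancels and the Gaussian coordinates $z^{-i}$ drop out entirely.

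Next I take expectation over $(V,Z)$. The bound depends only on $V$, and $\mathbb{E}[V]=1$ for $V\sim\mathcal{E}xp(1)$, so each coordinate contributes at most $\tfrac{4}{\pi}L_0^2$. Summing over $i=1,\dots,n$ gives $\tfrac{4}{\pi}L_0^2 n$.

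The only delicate point is the conditioning step. The assumption is a Lipschitz property in $L^2(\xi)$ rather than almost-sure Lipschitz continuity of $F(\bullet,\xi)$, so I must apply it to the inner $\bxi$-expectation before integrating over $(V,Z)$. Applying a pathwise bound would be invalid here. This also requires measurability of $(v,z,\xi)\mapsto F(y^\pm,\xi)$, which I take as implicit in the setup, as the paper does in Proposition~\ref{cor:gradfeta}. Once the order of integration is fixed this way, the remaining steps are routine.
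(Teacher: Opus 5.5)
Your proposal is correct and follows the paper's argument. The paper also bounds each coordinate separately using $\|y^+-y^-\|=2\eta\sqrt{2V}$, lets the $\eta^2$ cancel, uses $\mathbb{E}[V]=1$, and sums over $i$. Your step of conditioning on $(V,Z)$ before applying Assumption~\ref{asm:ascvgstochastic} is slightly more careful than the paper, which writes that step as if $F(\bullet,\xi)$ were pathwise $L_0$-Lipschitz.
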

		\begin{proof}
			Since $F(\bullet,\xi)$ is $L_0$-Lipschitz continuous, 
			\begin{align*}
				\mathbb{E}_{{V,Z,\bxi}}\left[\left|\, \tilde g_{\eta}^i\left(x,V,Z,\bxi\right)\right|^2\right]& = \tfrac{\mathbb{E}_{{V,Z,\bxi}}\left[\left|F\left(x_i+\eta\sqrt{2V},x^{-i}-Z^{-i},\bxi\right)-F\left(x_i-\eta\sqrt{2V},x^{-i}-Z^{-i},\bxi\right)\right|^2\right]}{\eta^2 2\pi}\\
				& \leq \tfrac{1}{\eta^2 2\pi}\mathbb{E}_{{V,Z,\bxi}}\left[\left|L_0^2 \left\|(2\eta\sqrt{2V},0,\cdots,0)\right\|\right|^2 \right] =  \tfrac{4L_0^2}{\pi}\mathbb{E}[V]=\tfrac{4L_0^2}{\pi}.
			\end{align*}
			Then the result follows by noting that $\mathbb{E}\left[\left\|\tilde{g}_{\eta}(x,V,Z,\bxi)\right\|^2\right]=\sum_{i=1}^n \mathbb{E}\left[\left|\tilde{g}_{\eta}^i(x,V,Z,\bxi)\right|^2\right]\leq \tfrac{4nL_0^2}{\pi} .$
			\end{proof}

			\section{Decision-dependent stochastic optimization}\label{sec:Decision_Dep}
			In this section, we consider the decision-dependent stochastic optimization problem, which requires solving the following problem:
			\begin{equation}\label{eq:DDproblem}
				\min_{x \, \in \, X} f(x), \quad \mbox{ where } f(x) \, \triangleq  \, \mathbb{E}_{\bxi \sim \mathcal{D}(x)} \left[\, \hat{F}(x, \bxi)\, \right] \, = \, \int_{\Xi(x)} \hat{F}(x, \xi)p(\xi \mid  x) d\xi,
			\end{equation}
			where $\bxi \in \Xi(x)$ follows a distribution $\mathcal{D}(x)$, dependent on $x$, and $p(\xi \mid x)$ represents the conditional (on $x$) density function of $\bxi$. We consider two distinct avenues for contending with such a problem. First, we examine the setting where the conditional density $p(\bullet \mid x)$ is known to the decision-maker in Section~\ref{sub:p_known} and subsequently examine the regime (in Section~\ref{sub:p_unknown}) where $p(\bullet\mid x)$ is unknown with evaluations of $F(x,\xi)$ obtained through a black-box oracle. Table~\ref{tab:all_estimators} {compares} the moment bounds for the {\bf esGs} estimator in the standard regime {and the two distinct} decision-dependent regimes {with standard Gaussian smoothing.}

			\begin{table}[htb]
				\tiny
				\centering
				\caption{Comparison of moment bounds of {\bf esGs} estimators}
				\label{tab:all_estimators}
				\renewcommand{\arraystretch}{1.5}
				\begin{tabular}{ C{1cm} C{0.2cm} c C{0.8cm} C{0.8cm} c  }
					\hline
					Estimator& {dd} & $g_{\eta}$ & Lip-constant of $f$& $\mathbb{E}[\|\tilde{g}_\eta\|^2] $   &Assumptions on $\hat{F}/F$\\
					\hline
					GS~\cite{nesterov2017random} &   \xmark  & $ \left(\tfrac{f(x+\eta z)-f(x)}{\eta}\right) B z$ & $L_0$   & $L_0^2(n+4)^2$ &$F$ is $L_0$ Lips in $L^2(\bxi)$ norm \\
					\hline
					\rowcolor{gray!10}    esGs  & \xmark   & $\tfrac{1}{\eta\sqrt{2\pi}}\left[f\left(x_i+\eta\sqrt{2v},x^{-i}-z^{-i}\right)-f\left(x_i-\eta\sqrt{2v},x^{-i}-z^{-i}\right)\right]$ &  $L_0$   & $\frac{4}{\pi}L_0^2n$ &$F$ is $L_0$ Lips in $L^2(\bxi)$ norm\\
					\hline
					\rowcolor{gray!30}
					&&&&&$\hat{F}$ is $\hat{L}_f$ Lips in $L^2(\bxi\sim \tilde{p})$ norm, \\
					\rowcolor{gray!30}
					esGs$^{\rm dd}_{\rm a}$  & \cmark & $\tfrac{1}{\eta\sqrt{2\pi}}\left(\frac{\hat{F}(x_{i+}^\eta(V,Z),\xi)p(\xi \mid x_{i+}^\eta(V,Z))}{\tilde{p}(\xi)}-\frac{\hat{F}(x_{i-}^\eta(V,Z),\xi)p(\xi \mid x_{i-}^\eta(V,Z))}{\tilde{p}(\xi)}\right)$ & $\widehat{L}_0$ &  $\frac{4}{\pi}\widehat{L}_0^2n$&$D(p(\xi \mid x) , p(\xi \mid y)) \leq L_\xi^2\|x-y\|^2$,\\
					\rowcolor{gray!30}
					&&&& &$\frac{p(\xi \mid x)}{\tilde{p}(\xi)}\leq M$, $\hat{F}(\bullet,\xi)\leq M_f$ \\
					\hline
					\rowcolor{gray!50}
					esGs$^{\rm dd}_{\rm u}$ &  \cmark   & $\tfrac{1}{\eta\sqrt{2\pi}}\left(\hat{F}\left(x_{i+}^\eta(v,z), \xi_1\right)-\hat{F}\left(x_{i-}^\eta(v,z), \xi_2\right)\right)$&  $L_0$  &  $\frac{4}{\pi}L_0^2n$&$\hat{F}$ is $L_0$ Lips in $L^2(\bxi\sim p)$ norm\\
					\hline
				\end{tabular}
				esGs$^{\rm dd}_{\rm {a}}$: {available} $p(\xi|x)$;      esGs$^{\rm dd}_{\rm {u}}$: Unknown $p(\xi|x)$; $\widehat{L}_0 =\sqrt{2 \lp M^2 \hat{L}_f^2 +M M_f^2L_\xi^2 \rp}$    
			\end{table}
			
			\subsection{Known structure of $p(\xi \mid x)$.}\label{sub:p_known}
			If we do know the {functional form of the} density $p(\xi \mid x)$, then we may choose an appropriate positive density $\tilde{p}(\xi)$  that allows for rewriting the function $f$ as
			\begin{equation}\label{eq:dectonondec}
				f(x)=\int_{\Xi(x)} \hat{F}(x, \xi)p(\xi \mid  x) d\xi=\int_{\Xi(x)} \hat{F}(x, \xi)\frac{p(\xi \mid  x)}{\tilde{p}(\xi)}\tilde{p}(\xi) d\xi=\mathbb{E}_{\bxi \sim \tilde{p}}\left[\tilde{F}(x,\bxi)\right],
			\end{equation}
			where
			\beq\label{eq:tilde-F} 
			\tilde{F}(x,\xi) \, \triangleq \, \hat{F}(x,
			\xi)\frac{p(\xi \mid  x)}{\tilde{p}(\xi)}.
			\eeq
			
			{Throughout} this section, {we impose a ground assumption requiring} existence of a stochastic first-order oracle {\bf SFO$^{\rm dd}_a$}, corresponding to a decision-dependent regime with  an available density $p(\xi\mid x)$.\\
			
			\begin{definition}[{\bf SFO$^{\rm dd}_a$}]  \label{sfo-dda} There exists a stochastic first-order oracle that given an $x$, produces $\hat{F}(x,\xi)$  with known density density $p(\xi\mid x)$.  
			\end{definition}
			
			\medskip
			
			Observe that in \eqref{eq:dectonondec} the problem is transformed to the non-decision-dependent setting.  But this
			may lead to the emergence of nonconvexity in
			$\tilde{F}(\bullet,\xi)$ while $\hat{F}(\bullet,\xi)$ may have been
			convex. To apply Proposition~\ref{prop:gradfeta} and define an \textbf{esGs} gradient estimator, we need to {provide assumptions under which $f$ is Lipschitz continuous. We begin by reminding the reader of the definition of the Kullback-Leibler (KL) divergence between two distributions $\mathbb{P}$ and $\mathbb{Q}$ with $\mathbb{P} \ll \mathbb{Q}$ and densities $p$ and $q$, respectively as 
				\begin{equation}
					D_{{\textnormal{KL}}}(\mathbb{P}  \|  \mathbb{Q}) \, 
					{ = D_{\textnormal{KL}}(p(\bullet), q(\bullet))}
					\triangleq \, \int p(u) \log \left(\frac{p(u)}{q(u)}\right) du,
				\end{equation}
				while the symmetric KL divergence is defined as 
				{\begin{equation}
						D_{{\textnormal{KL}}}^{{\rm sym}}(\mathbb{P},\mathbb{Q}) =	D_{{\textnormal{KL}}}(\mathbb{P} \| \mathbb{Q})+	D_{{\textnormal{KL}}}(\mathbb{Q} \| \mathbb{P}),
				\end{equation}}
				if $\mathbb{P} \ll \mathbb{Q}$ and $\mathbb{Q} \ll \mathbb{P}$. Recall that $\mathbb{P} \ll \mathbb{Q}$ indicates that $\mathbb{P}$ is absolutely continuous with respect to $\mathbb{Q}$.
				We now outline some assumptions on the density, the divergence, and $\hat{F}(\bullet,\xi)$. 
				\begin{assumption}\label{asm:decdep}
					For any $x,y\in\R^n$, the following hold 
					\begin{enumerate}
						\item $p(\xi \mid x)>0$ for any $\xi\in\Xi(x)$.
						\item\label{ass:p-x-y-dist} There exists an $L_\xi$ such that the {symmetric KL divergence} satisfies $D_{{\textnormal{KL}}}^{{\rm sym}}(p(\bullet \mid x) , p(\bullet \mid y)) \leq L_\xi^2\|x-y\|^2$. 
						\item\label{ass:p-ratio-bnd} For any $\xi\in\Xi(x)$, $\frac{p(\xi \mid x)}{\tilde{p}(\xi)}$ is uniformly bounded by $M$.
						\item\label{ass:f-hat-Lip} $\hat{F}(\bullet,\xi)$ is $\hat{L}_f$-Lipschitz continuous in the $L^2(\bxi \sim \tilde{p})$ metric, as specified next, i.e., for any $x, y \, \in \, \R^n$, the following holds.
						$$
						{\lnn \hat{F}(x, \bullet) - \hat{F}(y, \bullet) \rnn}_{L^2(\bxi \sim \tilde{p})} \, \leq \, \hat{L}_f \|x-y\|.
						$$
						Furthermore, $\hat{F}(\bullet,\xi)$ is bounded by $M_f$ for almost every $\xi\in\Xi(\bullet)$. 
					\end{enumerate}
				\end{assumption}
				
				We recall the following Lemma from \cite[Lemma 8.4]{boucheron2013concentration}, which provides a bound on {a symmetrized quadratic divergence} in terms of the KL {divergence}.
				\begin{lemma}\label{lem:d-2-ineq}
					Let $ \mathbb{P} $ and $ \mathbb{Q} $ be distributions on a common measurable space $ (\Omega, \mathcal{A}) $. If $ \mathbb{Q} \ll \mathbb{P} $,
					\begin{equation*}
						d_{2}^{2}(\mathbb{Q}, \mathbb{P})+d_{2}^{2}(\mathbb{P}, \mathbb{Q}) \leq 2 D_{{\textnormal{KL}}}(\mathbb{Q} \| \mathbb{P}),
					\end{equation*}
					where $d_{2}^{2}(\mathbb{Q}, \mathbb{P})$ is defined as
					\begin{equation*}
						d_{2}^{2}(\mathbb{Q}, \mathbb{P}) \, \triangleq \, \int \frac{(p(u)-q(u))_{+}^{2}}{p(u)} d u
					\end{equation*}
					and $p$ and $q$ are the density functions of $\mathbb{P}$ and $\Q$, respectively. 
				\end{lemma}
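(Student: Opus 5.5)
The plan is to reduce the inequality to a pointwise inequality for a single real function, using the likelihood ratio. Since $\mathbb{Q} \ll \mathbb{P}$, we have $q = 0$ almost everywhere on $\{p=0\}$. So all integrals may be restricted to $\{p>0\}$, and we can set $r(u) \triangleq q(u)/p(u) \ge 0$ there.

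\textbf{Step 1: rewrite the three quantities.} Because $\int (q-p)\,du = 0$, we may add the zero term $\int p(1-r)\,du$ to the KL divergence:
\begin{equation*}
D_{\textnormal{KL}}(\mathbb{Q}\|\mathbb{P}) = \int p\,\varphi(r)\,du, \qquad \varphi(r) \triangleq r\log r - r + 1 \ \ (\varphi(0)=1).
\end{equation*}
Likewise the two divergences become
\begin{equation*}
d_2^2(\mathbb{Q},\mathbb{P}) = \int p\,(1-r)_+^2\,du, \qquad d_2^2(\mathbb{P},\mathbb{Q}) = \int p\,\frac{(r-1)_+^2}{r}\,du.
\end{equation*}
In the second identity the integrand vanishes wherever $r \le 1$, including where $q=0$, so there is no division by zero. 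At each point only one of the two integrands is nonzero. It therefore suffices to prove two pointwise bounds:
\begin{equation*}
(1-r)^2 \le 2\varphi(r) \ \text{ for } r\in[0,1], \qquad \frac{(r-1)^2}{r} \le 2\varphi(r) \ \text{ for } r\ge 1.
\end{equation*}
Multiplying by $p$ and integrating then gives the lemma.

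\textbf{Step 2: the case $r\le 1$.} Let $h(r) = 2\varphi(r) - (1-r)^2$. Then $h(1)=0$ and $h'(r) = 2\log r + 2(1-r)$. Since $\log r \le r-1$, we get $h'(r) \le 0$ on $(0,1]$. Hence $h$ is nonincreasing there, so $h(r) \ge h(1) = 0$; the endpoint $r=0$ follows by continuity.

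\textbf{Step 3: the case $r \ge 1$.} Write $(r-1)^2/r = r - 2 + 1/r$ and set
\begin{equation*}
k(r) = 2\varphi(r) - (r-1)^2/r = 2r\log r - 3r + 4 - 1/r.
\end{equation*}
Then $k(1)=0$ and $k'(r) = 2\log r - 1 + r^{-2}$, so $k'(1)=0$. Moreover $k''(r) = 2/r - 2/r^3 \ge 0$ for $r\ge 1$. Hence $k' \ge 0$ and therefore $k \ge 0$ on $[1,\infty)$.

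The only real obstacle is Step 3: the bound for $r>1$ is not visible from $\log r \le r-1$ alone and needs the second-derivative check. Beyond that, one must handle the measure-theoretic conventions on the null sets correctly, which Step 1 does.
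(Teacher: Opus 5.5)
Your proof is correct and complete. The paper itself gives no proof of this lemma. It quotes it from \cite[Lemma 8.4]{boucheron2013concentration} and uses it as a black box in the proof of Lemma~\ref{lem:Epx-py}, so the real difference is that you supply a self-contained derivation where the paper relies on a citation.

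Your reduction works as follows: pass to the likelihood ratio $r=q/p$ on $\{p>0\}$, write $D_{\textnormal{KL}}(\mathbb{Q}\|\mathbb{P})=\int p\,\varphi(r)\,du$, and observe that the left-hand side equals $\int p\,(r-1)^2/\max\{1,r\}\,du$. This makes explicit exactly what is used, namely only $\mathbb{Q}\ll\mathbb{P}$ and the convention $0/0=0$ in the $d_2^2$ integrands, which the citation leaves implicit. The citation, in turn, buys brevity.

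Two minor points:
\begin{itemize}
\item Adding the zero term $\int p(1-r)\,du$ is legitimate even if $D_{\textnormal{KL}}=+\infty$, since $r\log r\ge -1/e$ and $\int p\,|1-r|\,du\le 2$. Alternatively, you can simply dispose of that case as trivial.
\item Your Steps~2--3, including the second-derivative check you flag as the main obstacle, collapse into one line via Taylor's formula with integral remainder. Since $\varphi(1)=\varphi'(1)=0$ and $\varphi''(v)=1/v$, for $t>0$
\[
\varphi(t)=\int_1^t\frac{t-v}{v}\,dv=(t-1)^2\int_0^1\frac{1-s}{1+s(t-1)}\,ds\;\ge\;\frac{(t-1)^2}{2\max\{1,t\}}.
\]
The inequality holds because $1+s(t-1)=(1-s)+st\in(0,\max\{1,t\}]$ for $s\in[0,1]$. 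This gives both of your pointwise bounds at once, and the case $t=0$ follows by continuity.
\end{itemize}
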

				
				Following similar arguments, we obtain that $f$, defined in \eqref{eq:DDproblem}, can be shown to be Lipschitz, a result of proving an intermediate Lipschitzian property as shown next.
				\begin{lemma}\label{lem:Epx-py}
					Under Assumption~\ref{asm:decdep}, the following holds for any $x, y \in \R^n$.
					\begin{align}\label{inter_lip}
						\E_{\bxi \sim \tilde{p}} \left[ \, {\lln \dfrac{p(\bxi | x)}{\tilde{p}(\bxi)} - \dfrac{p(\bxi | y)}{\tilde{p}(\bxi)} \rrn}^2 \, \right] \leq M L_\xi^2 {{\|x-y\|}}^2.
					\end{align}
				\end{lemma}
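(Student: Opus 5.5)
Writing $p_x(\xi)\triangleq p(\xi\mid x)$, the plan is to first rewrite the left-hand side of \eqref{inter_lip} as a weighted $L^2$ distance between the densities and then control it by the symmetric KL divergence via Lemma~\ref{lem:d-2-ineq}. Since $\tilde p>0$, we have
\begin{equation*}
\E_{\bxi \sim \tilde{p}} \left[ \left| \frac{p_x(\bxi)-p_y(\bxi)}{\tilde p(\bxi)}\right|^2 \right] = \int \frac{(p_x(\xi)-p_y(\xi))^2}{\tilde p(\xi)}\, d\xi .
\end{equation*}
We then split the integrand according to the sign of $p_x-p_y$, using $(a-b)^2=(a-b)_+^2+(b-a)_+^2$. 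On each piece we replace the weight $1/\tilde p$ by a multiple of $1/p_y$ or $1/p_x$. The tool is Assumption~\ref{asm:decdep}(\ref{ass:p-ratio-bnd}), which gives $\tilde p(\xi)\ge p_y(\xi)/M$ and hence $1/\tilde p\le M/p_y$; the same bound holds with $p_x$ in place of $p_y$. Strict positivity from Assumption~\ref{asm:decdep}(1) keeps these divisions legitimate.

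Applying this on the two pieces gives
\begin{equation*}
\int \frac{(p_x-p_y)^2}{\tilde p}\,d\xi \le M\int\frac{(p_x-p_y)_+^2}{p_y}\,d\xi + M\int \frac{(p_y-p_x)_+^2}{p_x}\,d\xi = M\left[d_2^2(\mathbb{P}_x,\mathbb{P}_y)+d_2^2(\mathbb{P}_y,\mathbb{P}_x)\right].
\end{equation*}
Here $\mathbb{P}_x$ denotes the law with density $p_x$, and we follow the convention of Lemma~\ref{lem:d-2-ineq} that $d_2^2(\mathbb{Q},\mathbb{P})=\int (p-q)_+^2/p$. The weight can be chosen freely on each piece, so we pick whichever density matches the $d_2^2$ definition.

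Lemma~\ref{lem:d-2-ineq} applies because positivity gives mutual absolute continuity, and it bounds the bracket by $2D_{\rm KL}(\mathbb{P}_x\|\mathbb{P}_y)$. By the same lemma with the roles swapped, the bracket is also bounded by $2D_{\rm KL}(\mathbb{P}_y\|\mathbb{P}_x)$. Averaging the two bounds gives bracket $\le D^{\rm sym}_{\rm KL}(p_x,p_y)\le L_\xi^2\|x-y\|^2$ by Assumption~\ref{asm:decdep}(\ref{ass:p-x-y-dist}). Multiplying by $M$ yields exactly \eqref{inter_lip}.

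The main obstacle is the constant bookkeeping. A naive use of the single-KL bound would leave a factor $2$, so the averaging step is what recovers the constant $M L_\xi^2$ exactly. A second, minor point is support: the supports $\Xi(x)$ and $\Xi(y)$ may differ, so the integrals should be taken over a common space on which both densities and $\tilde p$ are positive. I would note this explicitly rather than dwell on it.
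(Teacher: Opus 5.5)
Your route is the paper's route: rewrite the left side as $\int (p_x-p_y)^2/\tilde p$, split by the sign of $p_x-p_y$, trade $1/\tilde p$ for $M$ times a reciprocal density via Assumption~\ref{asm:decdep}(\ref{ass:p-ratio-bnd}), recognize $d_2^2(\mathbb{P}_x,\mathbb{P}_y)+d_2^2(\mathbb{P}_y,\mathbb{P}_x)$, and finish with Lemma~\ref{lem:d-2-ineq} and Assumption~\ref{asm:decdep}(\ref{ass:p-x-y-dist}). Your explicit averaging of the two one-sided KL bounds is correct, and it spells out a step the paper leaves implicit. However, the equality in your second display is false, because you paired the weights the wrong way round.

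Under the convention $d_2^2(\mathbb{Q},\mathbb{P})=\int (p-q)_+^2/p$, the denominator is the \emph{larger} of the two densities on the set where the integrand is nonzero:
\[
d_2^2(\mathbb{P}_y,\mathbb{P}_x)+d_2^2(\mathbb{P}_x,\mathbb{P}_y)=\int\frac{(p_x-p_y)_+^2}{p_x}\,d\xi+\int\frac{(p_y-p_x)_+^2}{p_y}\,d\xi=\int\frac{(p_x-p_y)^2}{\max(p_x,p_y)}\,d\xi .
\]
You divided by $p_y$ on $\{p_x>p_y\}$ and by $p_x$ on $\{p_y>p_x\}$, i.e.\ by the \emph{smaller} density. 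So your bracket is actually $\int (p_x-p_y)^2/\min(p_x,p_y)\,d\xi$. That quantity lies between $\tfrac12$ and $1$ times the symmetric $\chi^2$-divergence, and it is not controlled by $D^{\rm sym}_{\rm KL}$. For the two-point laws $(\tfrac12,\tfrac12)$ and $(\epsilon,1-\epsilon)$ it is of order $1/\epsilon$, while the symmetric KL divergence is only of order $\log(1/\epsilon)$. So Lemma~\ref{lem:d-2-ineq} cannot be applied to what you actually wrote.

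The fix costs nothing. The ratio bound gives both $1/\tilde p\le M/p_x$ and $1/\tilde p\le M/p_y$ everywhere, so use $M/p_x$ on $\{p_x>p_y\}$ and $M/p_y$ on $\{p_y>p_x\}$. This is exactly the paper's decomposition, and the rest of your argument then goes through unchanged. Your support caveat is harmless: finiteness of $D^{\rm sym}_{\rm KL}$ in Assumption~\ref{asm:decdep}(\ref{ass:p-x-y-dist}) already forces mutual absolute continuity.
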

				
				\begin{proof}
				Observe that
				\begin{align}\label{eq:seclast}
					\notag     \E_{\bxi \sim \tilde{p}} &\left[{\lln \dfrac{p(\bxi | x)}{\tilde{p}(\bxi)} - \dfrac{p(\bxi | y)}{\tilde{p}(\bxi)} \rrn}^2 \right] 
					= 
					\int \left|\frac{p(\xi \mid x)}{\tilde{p}(\xi)}-\frac{p(\xi \mid y)}{\tilde{p}(\xi)}\right|^2 \tilde{p}(\xi)d\xi=\int \frac{\left|p(\xi \mid x)-p(\xi \mid y)\right|^2}{\tilde{p}(\xi)} d\xi\nonumber\\
					&=\int \frac{\left(p(\xi \mid x)-p(\xi \mid y)\right)_+^2}{p(\xi \mid x)}\frac{p(\xi \mid x)}{\tilde{p}(\xi)} d\xi+\int \frac{\left(p(\xi \mid x)-p(\xi \mid y)\right)_-^2}{p(\xi \mid y)}\frac{p(\xi \mid y)}{\tilde{p}(\xi)} d\xi\nonumber\\
					& \leq Md^2_2\left(p(\xi \mid x),p(\xi \mid y)\right)+Md^2_2\left(p(\xi \mid y),p(\xi \mid x)\right),
				\end{align}
				where we have used the fact that $\frac{p(\xi | x)}{\tilde p (\xi)}$ is uniformly bounded by $M$, which follows from Assumption~\ref{asm:decdep}.\ref{ass:p-ratio-bnd}. 
				By Lemma~\ref{lem:d-2-ineq} and Assumption~\ref{asm:decdep}.\ref{ass:p-x-y-dist}, we have that
				\begin{equation}\label{eq:KBbound}
					d^2_2\left(p(\xi \mid x),p(\xi \mid y)\right)+d^2_2\left(p(\xi \mid y),p(\xi \mid x)\right) \leq D_{{\textnormal{KL}}}^{{\rm sym}}\left(p(\xi \mid x),p(\xi \mid y)\right) \leq L_\xi^2{\|x-y\|}^2
				\end{equation}
				Combining \eqref{eq:seclast} and \eqref{eq:KBbound}, we obtain the required bound, given by \eqref{inter_lip}, for any $x, y \in \R^n$.
				\end{proof}
				
				\medskip
				
				We are now ready to prove that $f$ is indeed Lipschitz continuous.
				
				\begin{proposition}\label{lem:F-decdep1-Lip}
					Suppose Assumption~\ref{asm:decdep} holds. Then the following hold. 
					
					\noindent (a) $\tilde{F}(\bullet, \xi)$, defined in \eqref{eq:tilde-F}, is a $L_0$-Lipschitz continuous function with respect to the norm ${\| \bullet \|}_{L^2(\bxi \sim \tilde{p})}$, 
					
					\noindent (b) $f$, defined in \eqref{eq:DDproblem},  is $L_0$-Lipschitz, 
					where $L_0 = \sqrt{2 \lp M^2 \hat{L}_f^2 +M M_f^2L_\xi^2 \rp}$.  
				\end{proposition}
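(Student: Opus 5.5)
The approach is to bound the $L^2(\bxi\sim\tilde p)$ distance between $\tilde F(x,\bullet)$ and $\tilde F(y,\bullet)$ by adding and subtracting a cross term. Writing $r_x(\xi)\triangleq p(\xi\mid x)/\tilde p(\xi)$, we have
\begin{equation*}
\tilde F(x,\xi)-\tilde F(y,\xi) = \big(\hat F(x,\xi)-\hat F(y,\xi)\big)\, r_x(\xi) + \hat F(y,\xi)\big(r_x(\xi)-r_y(\xi)\big).
\end{equation*}
We then apply $(a+b)^2\le 2a^2+2b^2$ and take expectation under $\tilde p$. This splits the squared $L^2$ distance into $2\,\E_{\tilde p}[|\hat F(x,\bxi)-\hat F(y,\bxi)|^2 r_x(\bxi)^2]$ and $2\,\E_{\tilde p}[\hat F(y,\bxi)^2 |r_x(\bxi)-r_y(\bxi)|^2]$.

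For the first term, Assumption~\ref{asm:decdep}.\ref{ass:p-ratio-bnd} gives $r_x\le M$, so the term is at most $2M^2\,\E_{\tilde p}[|\hat F(x,\bxi)-\hat F(y,\bxi)|^2]$. By Assumption~\ref{asm:decdep}.\ref{ass:f-hat-Lip} this is at most $2M^2\hat L_f^2\|x-y\|^2$. For the second term, the uniform bound $|\hat F(y,\xi)|\le M_f$ from Assumption~\ref{asm:decdep}.\ref{ass:f-hat-Lip} pulls out $M_f^2$. Lemma~\ref{lem:Epx-py} then bounds the remaining expectation by $ML_\xi^2\|x-y\|^2$, so the term is at most $2MM_f^2L_\xi^2\|x-y\|^2$. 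Summing and taking square roots yields (a) with $L_0=\sqrt{2(M^2\hat L_f^2+MM_f^2L_\xi^2)}$.

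For (b), use the representation \eqref{eq:dectonondec}, $f(x)=\E_{\bxi\sim\tilde p}[\tilde F(x,\bxi)]$. Then
\begin{equation*}
|f(x)-f(y)|\le \E_{\tilde p}|\tilde F(x,\bxi)-\tilde F(y,\bxi)| \le \|\tilde F(x,\bullet)-\tilde F(y,\bullet)\|_{L^2(\bxi\sim\tilde p)}
\end{equation*}
by Jensen's (or Cauchy--Schwarz) inequality, exactly as in Remark~\ref{rem:EF-Lip}. Combining with (a) gives (b).

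The step needing care is the measure-theoretic bookkeeping for supports. The integrals are over $\Xi(x)$, which may depend on $x$, so before splitting I will treat all integrands as extended by zero (setting $p(\xi\mid x)=0$ off $\Xi(x)$). I will also confirm that $\tilde p>0$ wherever $p(\cdot\mid x)$ or $p(\cdot\mid y)$ is positive, which is implicit in \eqref{eq:dectonondec} and in the bounded-ratio assumption. The boundedness of $\hat F$ must hold $\tilde p$-a.e. in order to pull out $M_f^2$. Beyond that the argument is a direct two-term estimate.
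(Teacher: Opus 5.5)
Your proposal is correct and follows essentially the same route as the paper: the same add-and-subtract of the cross term $\hat{F}(y,\xi)\,p(\xi\mid x)/\tilde{p}(\xi)$, the inequality $(a+b)^2\le 2a^2+2b^2$, the bounds $M$ and $M_f$, and Lemma~\ref{lem:Epx-py} for the density-ratio term in (a), followed by Jensen's inequality applied to the representation \eqref{eq:dectonondec} for (b). Your remarks on extending the densities by zero off $\Xi(x)$ and on needing $\tilde{p}>0$ on the supports are sensible bookkeeping that the paper leaves implicit.
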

				\begin{proof}
				\noindent (a) We observe that by Assumption~\ref{asm:decdep} ${\lln \tilde{F}(x,\xi) - \tilde{F}(y,\xi)\rrn}^2$ can be bounded  as follows for any $x, y \, \in \, \R^n$ 
				\begin{align*}
					{\lln \tilde{F}(x,\xi) - \tilde{F}(y,\xi)\rrn}^2  &=  {\lln \dfrac{\hat{F}(x, \xi)p(\xi | x)}{\tilde{p}(\xi)} - \dfrac{\hat{F}(y, \xi)p(\xi | y)}{\tilde{p}(\xi)} \rrn}^2\\
					&\leq 2 \,  \lp {\lln \dfrac{p(\xi|x)}{\tilde{p}(\xi)} \rrn}^2 {\lln \hat{F}(x, \xi) - \hat{F}(y, \xi) \rrn}^2 + {\lln \hat{F}(y, \xi) \rrn}^2 {\lln \dfrac{p(\xi|x)}{\tilde{p}(\xi)} - \dfrac{p(\xi | y)}{\tilde{p}(\xi)} \rrn}^2 \rp\\
					&\leq 2 \lp M^2  {\lln \hat{F}(x, \xi) - \hat{F}(y, \xi) \rrn}^2 + M_f^2  {\lln \dfrac{p(\xi|x)}{\tilde{p}(\xi)} - \dfrac{p(\xi | y)}{\tilde{p}(\xi)} \rrn}^2 \rp.
				\end{align*}
				Then by taking expectations we have by invoking Assumption~\ref{asm:decdep}.\ref{ass:f-hat-Lip} and Lemma~\ref{lem:Epx-py} that 
				\begin{align*}
					\mathbb{E}_{\bxi \sim \tilde{p}}\left[ {\lln \tilde{F}(x,\xi) - \tilde{F}(y,\xi)\rrn}^2 \right]
					&\leq 2 \lp M^2\mathbb{E}_{{\bxi \sim \tilde{p}}}{\lln \hat{F}(x, \xi) - \hat{F}(y, \xi) \rrn}^2 + M_f^2\mathbb{E}_{{\bxi \sim \tilde{p}}}{\lln \dfrac{p(\xi|x)}{\tilde{p}(\xi)} - \dfrac{p(\xi | y)}{\tilde{p}(\xi)} \rrn}^2 \rp\\
					& \leq 2 \lp M^2\hat{L}_f^2 + M M_f^2 L_\xi^2 \rp {\|x-y\|}^2.
				\end{align*}
					Thus it follows that $\tilde{F}(\bullet,\xi)$ is Lipschitz in the $\|\bullet\|_{L^2(\bxi\sim \tilde{p})}$ metric. 
					
					\noindent (b) 
						Since $f$ is defined as \eqref{eq:DDproblem}, it follows from (a) $f$ is also $L_0$-Lipschitz continuous, where $L_0^2 = 2 {\lp M^2\hat{L}_f^2 + M M_f^2L_\xi^2 \rp}.$
						\end{proof}
						
						\medskip
						
						Note that the above proof for {Lipschitzian requirement on} $f$ is obtained {via} Jensen's inequality from $L^2(\bxi \sim \tilde{p})$ norms, and hence the Lipschitz constant is sub-optimal. Below we provide another proof utilizing only $L^1$ bounds thus obtaining an improved Lipschitz constant.

						\begin{lemma}\label{lem:f-decdep1-Lip}
							Under Assumption~\ref{asm:decdep}, $f$ defined in \eqref{eq:DDproblem} is $\hat{L}_0$-Lipschitz continuous, where $\hat{L}_0=M \hat{L}_f+M_fL_\xi$. Consequently, Lemma~\ref{Lemma:SmoothProperties} holds true with Lipschitz constant $\hat{L}_0$. 
						\end{lemma}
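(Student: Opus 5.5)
Following the decomposition used in Proposition~\ref{lem:F-decdep1-Lip}, but working directly at the level of $f$ rather than $\tilde F(\bullet,\xi)$, we write, for any $x,y\in\R^n$, using \eqref{eq:dectonondec},
\begin{align*}
f(x)-f(y) &= \E_{\bxi\sim\tilde p}\left[\bigl(\hat F(x,\bxi)-\hat F(y,\bxi)\bigr)\frac{p(\bxi\mid x)}{\tilde p(\bxi)}\right] + \E_{\bxi\sim\tilde p}\left[\hat F(y,\bxi)\left(\frac{p(\bxi\mid x)}{\tilde p(\bxi)}-\frac{p(\bxi\mid y)}{\tilde p(\bxi)}\right)\right],
\end{align*}
and then bound the two terms separately using only first moments (together with Jensen/Cauchy--Schwarz applied once, not squared as before). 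This avoids the factor $2$ and the squaring of $M$ that arose in Proposition~\ref{lem:F-decdep1-Lip}.

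For the first term, Assumption~\ref{asm:decdep}.\ref{ass:p-ratio-bnd} gives $|\cdot|\le M\,\E_{\tilde p}|\hat F(x,\bxi)-\hat F(y,\bxi)|$. By Jensen this is at most $M\|\hat F(x,\bullet)-\hat F(y,\bullet)\|_{L^2(\bxi\sim\tilde p)}\le M\hat L_f\|x-y\|$, using Assumption~\ref{asm:decdep}.\ref{ass:f-hat-Lip}. For the second term, boundedness of $\hat F$ by $M_f$ yields the bound $M_f\int|p(\xi\mid x)-p(\xi\mid y)|\,d\xi = 2M_f\,\mathrm{TV}(p(\bullet\mid x),p(\bullet\mid y))$. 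We then control the $L^1$ distance by the KL divergence. By Pinsker's inequality, $\int|p_x-p_y|\le\sqrt{2D_{\rm KL}(p_x\|p_y)}$, and similarly with the arguments swapped. Averaging the two bounds and using concavity of the square root gives $\int|p_x-p_y|\le\sqrt{D^{\rm sym}_{\rm KL}(p_x,p_y)}$. By Assumption~\ref{asm:decdep}.\ref{ass:p-x-y-dist}, this is at most $L_\xi\|x-y\|$, so the second term is bounded by $M_fL_\xi\|x-y\|$. Summing the two bounds yields $|f(x)-f(y)|\le(M\hat L_f+M_fL_\xi)\|x-y\|=\hat L_0\|x-y\|$.

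The only delicate step is getting the constant exactly $L_\xi$ (not $\sqrt2L_\xi$) in the $L^1$ bound. This is why the Pinsker bound must be applied to both directions of KL and then averaged, rather than to a single direction. If instead one prefers to stay within the paper's tools, Lemma~\ref{lem:d-2-ineq} combined with Cauchy--Schwarz can serve as an alternative: split $\int|p_x-p_y|$ into positive and negative parts and write $(p_x-p_y)_+ = \frac{(p_x-p_y)_+}{\sqrt{p_x}}\sqrt{p_x}$. This gives $\int(p_x-p_y)_+\le d_2(p_y,p_x)$ and similarly for the negative part, and then $a+b\le\sqrt{2(a^2+b^2)}$. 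However, the bookkeeping of constants there is looser, so Pinsker is the route I would try first. Finally, since $f$ is $\hat L_0$-Lipschitz, Assumption~\ref{ass:f-Lip} holds with $L_0=\hat L_0$. Lemma~\ref{Lemma:SmoothProperties} uses only that assumption, so it applies verbatim with $\hat L_0$.
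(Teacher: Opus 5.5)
Your proposal is correct and follows essentially the same route as the paper. Both split $f(x)-f(y)$ into two terms. The first is bounded by $M\hat L_f\|x-y\|$ using the density-ratio bound and Jensen's inequality ($L^1(\tilde p)\le L^2(\tilde p)$). The second is bounded by $M_f\int|p(\xi\mid x)-p(\xi\mid y)|\,d\xi$, and Pinsker's inequality is applied in both directions and combined via $\sqrt{a/2}+\sqrt{b/2}\le\sqrt{a+b}$ to obtain the constant exactly $L_\xi$.
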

						\begin{proof}
						Recall from \eqref{eq:DDproblem} that $f(x)=\int_{\Xi} \hat{F}(x, \xi)p(\xi \mid  x) d\xi$. Consequently
						\begin{align}\label{eq:fdecLip1}
							|\, f(x)-f(y)\, |&=\left|\int_{\Xi} \hat{F}(x, \xi)p(\xi \mid  x) d\xi-\int_{\Xi} \hat{F}(y, \xi)p(\xi \mid  y) d\xi\right|\nonumber\\
							& \leq \int_\Xi\left|\hat{F}(x,\xi)-\hat{F}(y,\xi)\right| p(\xi\mid x) d\xi+\int_\Xi\left|\hat{F}(y,\xi)\right|\left|p(\xi \mid x)-p(\xi \mid y)\right|d\xi.
						\end{align}
						By Lipschitz continuity of $\hat{F}(\bullet,\xi)$ and the boundedness of $\frac{p(\xi\mid x)}{\tilde{p}(\xi)}$ from Assumption~\ref{asm:decdep}, we may bound the first term in \eqref{eq:fdecLip1} as follows.
						\beq\label{eq:fdecLip-a}
						\int_\Xi\left|\hat{F}(x,\xi)-\hat{F}(y,\xi)\right| p(\xi\mid x) d\xi=\int_\Xi\left|\hat{F}(x,\xi)-\hat{F}(y,\xi)\right|\tilde{p}(\xi) \frac{p(\xi\mid x)}{\tilde{p}(\xi)} d\xi \, \leq \,  \hat{L}_f M  {\|x-y\|}.
						\eeq
						By Pinsker's Theorem, the total variation distance $D_{\textnormal{TV}}(p(\bullet|x), p(\bullet|y)) =\frac{1}{2} \int |p(\xi|x) - p(\xi|y)| d\xi \leq \sqrt{\frac{1}{2} D_{\textnormal{KL}}(p(\bullet|x), p(\bullet|y))}$. Consequently by the elementary inequality $\sqrt{a/2}+\sqrt{b/2}<\sqrt{a+b}$ and Assumption~\ref{asm:decdep}.\ref{ass:p-x-y-dist}, we obtain
						\beq\label{eq:fdecLip-c}
						\int \lln p(\xi | x) - p(\xi|y) \rrn d\xi \leq \sqrt{D_{{\textnormal{KL}}}^{{\rm sym}}(p(\bullet |x), p(\bullet | y))} \leq L_\xi \|x-y\|. 
						\eeq
						By boundedness of $\hat{F}{(\bullet,\xi)}$ from Assumption~\ref{asm:decdep}.\ref{ass:f-hat-Lip} and \eqref{eq:fdecLip-c}, we obtain that
						\beq\label{eq:fdecLip-b}
						\int_\Xi\left|\hat{F}(y,\xi)\right|\left|p(\xi \mid x)-p(\xi \mid y)\right|d\xi \leq M_f \int \lln p(\xi | x) - p(\xi|y) \rrn d\xi \leq M_f L_\xi {\|x-y\|}.
						\eeq
						Plugging the inequalities \eqref{eq:fdecLip-a} and \eqref{eq:fdecLip-b} in the relation \eqref{eq:fdecLip1} we obtain our desired result.
						\end{proof}
						
						\medskip
						
						The prior Lipschitzian requirements then allow for us to define the {\bf esGs} gradient estimator in  decision-dependent settings. For ease of presentation, let $x_{i+}^\eta{(V,Z)}$ and $x_{i-}^\eta{(V,Z)}$ be defined as 
						\begin{equation}\label{eq:x_i+-}
							x_{i+}^\eta{(V,Z)}:=\left(x_i+\eta\sqrt{2V},x^{-i}-Z^{-i}\right) \mbox{ and }
							x_{i-}^\eta{(V,Z)}:=\left(x_i-\eta\sqrt{2V},x^{-i}-Z^{-i}\right),
						\end{equation}
						respectively. {The next} proposition {provides a pathway for constructing the associated gradient estimator}.
						\begin{proposition}
							Suppose Assumption~\ref{asm:decdep} holds. Let $\phi$ and $\phi_\eta$ be as in \eqref{eq:phiu} and \eqref{eq:phietau}. Then for $1 \leq i \leq n$, the $i$-th partial derivative of $f_\eta$ is given by
							\begin{equation}\label{eq:eta-grad-2}
								\frac{\partial_i f_{\eta}(x)}{\partial x_i} \,  = \, \frac{1}{\eta\sqrt{2\pi}}\mathbb{E}_{V,Z,\bxi {\sim \tilde{p}}}\left[\tilde{F}(x_{i+}^\eta(V,Z),\bxi)-\tilde{F}(x_{i-}^\eta(V,Z),\bxi)\right],
							\end{equation}
							where $V \sim \mathcal{E}xp(1),Z \sim \mathcal{N}_n(0,\eta^2 I)$ and $\bxi$ follows distribution with density $\tilde{p}(\xi)$.
						\end{proposition}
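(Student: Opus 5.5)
The plan is to reduce the decision-dependent problem to the decision-independent setting already treated in Proposition~\ref{cor:gradfeta}, using the change of measure \eqref{eq:dectonondec}.

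\textbf{Step 1: rewrite $f$ as an expectation over a fixed distribution.} By \eqref{eq:dectonondec} and \eqref{eq:tilde-F}, $f(x)=\mathbb{E}_{\bxi\sim\tilde p}[\tilde F(x,\bxi)]$, and here the sampling density $\tilde p$ does not depend on $x$. This representation is legitimate because $p(\xi\mid x)>0$ on $\Xi(x)$ and $\tilde p$ is positive (Assumption~\ref{asm:decdep}). One mild point needs care: the domain $\Xi(x)$ may move with $x$. I would handle it by extending $\hat F(x,\xi)p(\xi\mid x)$ by zero outside $\Xi(x)$. The integral is then over a common $\Xi$ on which $\tilde p>0$, and the ratio bound $M$ continues to hold.

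\textbf{Step 2: verify the hypotheses of Proposition~\ref{cor:gradfeta}.} That proposition needs Assumption~\ref{asm:ascvgstochastic}, with $F$ replaced by $\tilde F$ and the $L^2$ norm taken under $\bxi\sim\tilde p$. This is exactly Proposition~\ref{lem:F-decdep1-Lip}(a): $\tilde F(\bullet,\xi)$ is $L_0$-Lipschitz in $\|\bullet\|_{L^2(\bxi\sim\tilde p)}$ with $L_0=\sqrt{2(M^2\hat L_f^2+MM_f^2L_\xi^2)}$. Part (b) of the same proposition (or Lemma~\ref{lem:f-decdep1-Lip}) gives that $f$ is Lipschitz. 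Hence Assumption~\ref{ass:f-Lip} holds, and Proposition~\ref{prop:gradfeta} applies to $f$ itself. Explicitly,
\[
\partial_i f_\eta(x)=\tfrac{1}{\eta\sqrt{2\pi}}\,\mathbb{E}_{V,Z^{-i}}\big[f(x_{i+}^\eta(V,Z))-f(x_{i-}^\eta(V,Z))\big].
\]

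\textbf{Step 3: interchange expectations.} Substitute $f(\cdot)=\mathbb{E}_{\bxi\sim\tilde p}[\tilde F(\cdot,\bxi)]$ into the display above and apply Fubini's theorem to move $\mathbb{E}_{\bxi}$ outside, which yields \eqref{eq:eta-grad-2}. Adding $Z_i$ to the expectation is harmless, since the integrand does not depend on $Z_i$.

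The only real obstacle is justifying Fubini, which requires integrability of $\tilde F(x_{i\pm}^\eta(V,Z),\bxi)$ under the product measure. I would get this from the uniform bound $|\tilde F(y,\xi)|\le M_f M$, which follows from Assumption~\ref{asm:decdep}.\ref{ass:p-ratio-bnd}--\ref{ass:f-hat-Lip}. This bound makes the integrand bounded, hence absolutely integrable against the probability measure of $(V,Z,\bxi)$. Joint measurability of $(v,z,\xi)\mapsto\tilde F(x_{i\pm}^\eta(v,z),\xi)$ I would take as implicit in the standing assumptions: $\tilde F$ is Carathéodory, continuous in $x$ in the $L^2$ sense and measurable in $\xi$.
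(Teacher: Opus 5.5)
Your proposal is correct and follows the paper's route. The paper's proof just cites Proposition~\ref{lem:F-decdep1-Lip}, so that $\tilde F$ satisfies Assumption~\ref{asm:ascvgstochastic} under $\bxi\sim\tilde p$, together with Proposition~\ref{cor:gradfeta}, whose own proof is Proposition~\ref{prop:gradfeta} plus Fubini, i.e., exactly your Steps 2--3; your integrability bound $|\tilde F|\le MM_f$ and your handling of the moving support $\Xi(x)$ are details the paper leaves implicit.
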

						\begin{proof}
						By Proposition~\ref{lem:F-decdep1-Lip} and Proposition~\ref{cor:gradfeta} we get our desired result.
						\end{proof}}
					
					\medskip
					
					We may now define the the gradient estimator $\tilde{g}_\eta$ with the $i$-th component to be 
					\begin{align}\label{estidec1}
						\tilde{g}^i_\eta&\left(x,V,Z,\xi\right)=\tfrac{1}{\eta\sqrt{2\pi}}\left(\tilde{F}(x_{i+}^\eta(V,Z),\xi)-\tilde{F}(x_{i-}^\eta(V,Z),\xi)\right)\nonumber\\
						&\hspace{2em}\overset{\eqref{eq:tilde-F}}{=}\tfrac{1}{\eta\sqrt{2\pi}}\left(\frac{\hat{F}(x_{i+}^\eta(V,Z),\xi)p(\xi \mid x_{i+}^\eta(V,Z))}{\tilde{p}(\xi)}-\frac{\hat{F}(x_{i-}^\eta(V,Z),\xi)p(\xi \mid x_{i-}^\eta(V,Z))}{\tilde{p}(\xi)}\right).
					\end{align}
					In our proposed scheme, at each iteration, we first generate a {realization} $\xi$ from the user-specified density $\tilde{p}(\xi)$ and then generate  $x_{+}^{\eta}, x_{-}^{\eta}$ specified in \eqref{eq:x_i+-} where $V,Z$ are exponentially and normally distributed, respectively. These objects are then input to the {the stochastic first-order oracle  {\bf SFO$^{\rm dd}_a$} (defined in Definition~\ref{sfo-dda})},  which then gives us $\hat{F}(x_{i+}^\eta(V,Z),\xi)p(\xi \mid x_{i+}^\eta(V,Z))$ and $\hat{F}(x_{i-}^\eta(V,Z),\xi)p(\xi \mid x_{i-}^\eta(V,Z))$. Therefore, we may construct the {\bf esGs} gradient estimator $\tilde{g}^i_\eta\left(x,V,Z,\xi\right)$ at \eqref{estidec1}.\\
					
					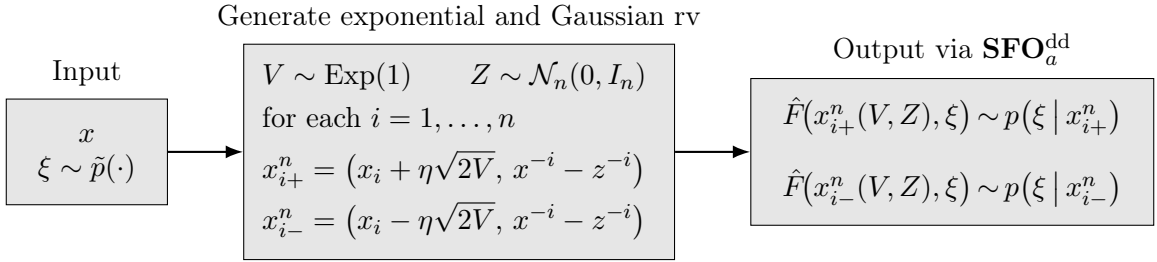
\begin{figure}[h!]
						\centering
						\begin{tikzpicture}[scale = 0.5,
							box/.style={draw, fill=gray!20, rectangle, align=center, inner sep=7pt},
							arrow/.style={-Latex, thick},
							lab/.style={font=\normalsize}
							]
							\node[box] (inp) {$
								\begin{array}{c}
									x\\ 
									\xi \sim \tilde p(\cdot)
								\end{array}
								$};
							\node[lab, above=1pt of inp] {Input};
							\node[box, right=1cm of inp] (orc) {%
								$
								\begin{aligned}
									&V \sim \mathrm{Exp}(1)
									\qquad
									Z \sim \mathcal{N}_n(0,I_n)\\
									&\text{for each } i=1,\ldots,n\\
									&x^{n}_{i+}=\bigl(x_i+\eta\sqrt{2V},\,x^{-i}-z^{-i}\bigr)\\
									&x^{n}_{i-}=\bigl(x_i-\eta\sqrt{2V},\,x^{-i}-z^{-i}\bigr)
								\end{aligned}
								$
							};
							\node[lab, above=1pt of orc] {{Generate exponential and Gaussian rv}};
							\node[box, right=1cm of orc] (out) {$
								\begin{array}{c}
									\hat F\!\bigl(x^{n}_{i+}(V,Z),\xi\bigr){\, \sim \,}
									p\!\left(\xi\,\middle|\,x^{n}_{i+}\right)\\
									\\
									\hat F\!\bigl(x^{n}_{i-}(V,Z),\xi\bigr){\, \sim \,}
									p\!\left(\xi\,\middle|\,x^{n}_{i-}\right)
								\end{array}
								$};
							\node[lab, above=1pt of out] {Output {via {\bf SFO$^{\rm dd}_a$}}};
							\draw[arrow] (inp.east) -- (orc.west);
							\draw[arrow] (orc.east) -- (out.west);
						\end{tikzpicture}
						\caption{Oracle schematic for decision dependent stochastic optimization with known struction of $p(\xi \mid x)$}
						\label{fig:oracle-1}
					\end{figure}

					From \eqref{eq:eta-grad-2}, we get 
					\beq\label{eq:unbiased-dd-1}
					\E_{V,Z, \bxi} [\tilde{g}_{\eta}(x,V,Z, \bxi)] = \nabla f_{\eta}(x).
					\eeq
					We can also derive an analogous second moment bound on the estimator, {which can also be seen to grow linearly with the dimension $n$, akin to the non-DD regime}.
					\begin{proposition}\label{prop:2mombd-DD1}
						Let Assumption~\ref{asm:decdep} holds.  Then for any $x$ and $\eta > 0$,
						\begin{equation*}
							\mathbb{E}_{V,Z,\bxi {\sim \tilde{p}}}\left[\|\tilde{g}_\eta(x,V,Z,\bxi)\|^2\right]\leq {\tfrac{4L_0^2 n}{\pi}}, \mbox{ where } {L_0 \triangleq {\sqrt{2 \lp M^2\hat{L}_f^2 +M M_f^2L_\xi^2 \rp}}},
						\end{equation*}
						{ and } $\tilde g_{\eta}$ is defined as in \eqref{estidec1}.
					\end{proposition}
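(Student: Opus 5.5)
The plan is to reduce Proposition~\ref{prop:2mombd-DD1} to the non-decision-dependent second-moment bound, Proposition~\ref{prop:secmombd}, applied to the reweighted integrand $\tilde F$ from \eqref{eq:tilde-F}. By \eqref{eq:dectonondec}, $f(x)=\E_{\bxi\sim\tilde p}[\tilde F(x,\bxi)]$, where $\tilde p$ does not depend on $x$. By \eqref{estidec1}, the estimator $\tilde g_\eta$ is exactly the \textbf{esGs} estimator \eqref{df:grad_est_i} with $F$ replaced by $\tilde F$ and $\bxi\sim\tilde p$. So the only hypothesis to verify is Assumption~\ref{asm:ascvgstochastic} for $\tilde F$: Lipschitz continuity in the $L^2(\bxi\sim\tilde p)$ norm with constant $L_0=\sqrt{2(M^2\hat L_f^2+MM_f^2L_\xi^2)}$. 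This is precisely Proposition~\ref{lem:F-decdep1-Lip}(a), which rests on Lemma~\ref{lem:Epx-py}.

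For concreteness, I would redo the short computation coordinatewise. Fix $i$ and condition on $(V,Z)$. The two evaluation points $x_{i+}^\eta(V,Z)$ and $x_{i-}^\eta(V,Z)$ from \eqref{eq:x_i+-} are independent of $\bxi\sim\tilde p$ and differ only in coordinate $i$, with $\|x_{i+}^\eta-x_{i-}^\eta\|=2\eta\sqrt{2V}$. Proposition~\ref{lem:F-decdep1-Lip}(a) applied at these two deterministic (given $V,Z$) points yields
\[
\E_{\bxi\sim\tilde p}\bigl[|\tilde F(x_{i+}^\eta,\bxi)-\tilde F(x_{i-}^\eta,\bxi)|^2 \,\big|\, V,Z\bigr]\le L_0^2\cdot 8\eta^2 V.
\]
Multiplying by $\frac{1}{2\pi\eta^2}$ and taking expectation over $V\sim\mathcal Exp(1)$ with $\E[V]=1$ gives $\E|\tilde g^i_\eta|^2\le 4L_0^2/\pi$. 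The $\eta$-dependence cancels and the Gaussian coordinates $Z^{-i}$ play no role. Summing over $i=1,\dots,n$ gives the stated bound $\tfrac{4L_0^2 n}{\pi}$.

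The only delicate point is justifying this conditioning via Fubini/tower property. It requires that $\bxi$ be drawn from the fixed density $\tilde p$, independent of $(V,Z)$, rather than from $p(\cdot\mid x_{i\pm}^\eta)$. This independence is exactly why the importance-weighting rewrite \eqref{eq:dectonondec} is used, so that the Lipschitz bound of Proposition~\ref{lem:F-decdep1-Lip}(a) applies pointwise in $(V,Z)$. Measurability of $(x,\xi)\mapsto\tilde F(x,\xi)$ is implicit in Assumption~\ref{asm:decdep} and is taken for granted.

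I would also note that the sharper $L^1$ constant $\hat L_0$ of Lemma~\ref{lem:f-decdep1-Lip} cannot replace $L_0$ here. A second moment needs the $L^2$ bound, which is why the statement uses $L_0$.
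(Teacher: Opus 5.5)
Your proposal is correct and follows the paper's proof: the paper likewise treats $\tilde g_\eta$ as the non-decision-dependent \textbf{esGs} estimator for $\tilde F$ under the fixed density $\tilde p$. It takes the $L^2(\bxi\sim\tilde p)$-Lipschitz constant $L_0$ from Proposition~\ref{lem:F-decdep1-Lip}(a) and then invokes Proposition~\ref{prop:secmombd}, and your coordinatewise computation (conditioning on $(V,Z)$, using $\|x_{i+}^\eta-x_{i-}^\eta\|^2=8\eta^2V$ and $\E[V]=1$) simply writes out that last step.
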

					\begin{proof}
					Observe the non-decision-dependent representation of $f$ in \eqref{eq:dectonondec} and the Lipschitz coefficient $L_0 = {\sqrt{2 \lp M^2\hat{L}_f^2 +M M_f^2L_\xi^2 \rp}}$ of $\tilde{F}$ in Proposition~\ref{lem:F-decdep1-Lip}-(a). Consequently, from Proposition~\ref{prop:secmombd}, we obtain that
					\begin{equation*}
						\mathbb{E}_{V,Z,\bxi {\sim \tilde{p}}}\left[\|\tilde{g}_\eta(x,V,Z,\bxi)\|^2\right]\leq \tfrac{4L_0^2 n}{\pi},
					\end{equation*}
					which proves our result.
					\end{proof}

					\subsection{Unknown structure of $p(\xi \mid x)$.}\label{sub:p_unknown} 
					In this subsection, we consider the more common setting where the functional form of $p(\bullet \mid x)$ is unavailable. Yet, we proceed to show that similar guarantees can be obtained under some assumptions.  We use the same notation $x_{i+}^\eta{(V,Z)}$ and $x_{i-}^\eta{(V,Z)}$, as defined in \eqref{eq:x_i+-}. Note that in this setting, we do not introduce a density $\tilde{p}(\bullet)$.
					Our first result derives the gradient of $f_{\eta}$ when $p(\xi\mid x)$ is unavailable. 
					Contrary to our exposition in the prequel, we are going to assume existence of a random field ${\bxi} = \{\bxi_{x}, x \in \R^n\}$ which exhibit the marginals $\cd(x)$ or $p(\xi \mid x)$ (when density exists) for every $x \in \R^n$, in addition to a correlation structure we define below. To talk about expectations of functionals with respect to this random field, we will also employ the following notation for expectation of any function $u(x,y,\bxi_x,\bxi_y)$ with respect to $\bxi$, defined as 
					\begin{equation}\label{eq:Expxi12}
						\mathbb{E}_{\bxi_x, \bxi_y} \left[ \, u(x,y,\bxi_x,\bxi_y)\, \right] \, \triangleq \, \int_{\Xi(y)} \ \int_{\Xi(x)} \ u(x,y, \xi_1, \xi_2) p_{\bxi_x, \bxi_y}(\xi_1, \xi_2) d\xi_1 d\xi_2,
					\end{equation}
					where $p_{\bxi_x, \bxi_y}$ is the joint density of $(\bxi_x,\bxi_y)$, with marginals $\bxi_x \sim \mathcal{D}(x),\bxi_y \sim \mathcal{D}(y)$.
					Unless mentioned otherwise, in this subsection, the expectation operator is as defined above. We now outline some assumptions on $\bxi$ and $\hat{F}$, {based on which we may conclude}  the Lipschitzian property of $f$. 
					\begin{assumption}\label{asm:randomfield}
						$\hat{F}(\bullet,\xi_\bullet)$ is $L_0$-Lipschitz continuous in the $L^2(\bxi_\bullet)$ metric, i.e., there exists an $L_0$ such that the following holds for any $x, y \in \R^n$.
						$$
						{\lnn \hat{F}(x, \bxi_x) - \hat{F}(y, \bxi_y) \rnn}_{L^2(\bxi_\bullet)} \, \leq \, L_0 \|x-y\|.
						$$  
					\end{assumption}
					
					In this section, we assume existence of a stochastic first-order oracle {\bf SFO$^{\rm dd}_u$} corresponding to a decision-dependent regime with an unknown density $p(\xi\mid x)$.\\
					
					\begin{definition} [{\bf SFO$^{\rm dd}_u$}] \label{sfo-ddu} There exists a stochastic first-order oracle that given {a collection $x\in {\laa}$, produces $\{ \hat{F}(x,\xi_x); x \in {\laa} \}$},  where {$\{\xi_x ; x \in {\laa} \}$} is generated from an identical but independent random field that satisfies the Assumption~\ref{asm:randomfield} with the marginal distribution $\bxi_x \sim \mathcal{D}(x)$.  
					\end{definition}
					

				\begin{figure}[h]
					\centering
					\begin{tikzpicture}[scale = 0.5,
						box/.style={draw, rectangle, fill=gray!40, align=center, inner sep=7pt},
						arrow/.style={-Latex, thick},
						lab/.style={font=\normalsize}
						]
						\node[box] (inp) {$
							\begin{array}{c}
								x
							\end{array}
							$};
						\node[lab, above=1pt of inp] {Input};
						\node[box, right=1cm of inp] (orc) {%
							$
							\begin{aligned}
								&V \sim \mathrm{Exp}(1)
								\qquad
								Z \sim \mathcal{N}_n(0,I_n)\\
								&\\
								&\text{for each } i=1,\ldots,n\\
								&x^{n}_{i+}=\bigl(x_i+\eta\sqrt{2V},\,x^{-i}-z^{-i}\bigr)\\
								&x^{n}_{i-}=\bigl(x_i-\eta\sqrt{2V},\,x^{-i}-z^{-i}\bigr)\\
								&\\
								&{\xi_{x_{i+}^\eta} \, \sim \,  p_{\xi_{x_{i+}^\eta}}} \mbox{ and } {\xi_{x_{i-}^\eta}  \, \sim \,  p_{\xi_{x_{i-}^\eta}}}
							\end{aligned}
							$
						};
						\node[lab, above=1pt of orc] {Oracle};
						\node[box, right=1cm of orc] (out) {$
							\begin{array}{c}
								\hat{F} \left(x_{i+}^{\eta}, \xi_{x_{i+}^{\eta}} \right)\\
								\\
								\hat{F} \left(x_{i-}^{\eta}, \xi_{x_{i-}^{\eta}} \right)
							\end{array}
							$};
						\node[lab, above=1pt of out] {Output};
						\draw[arrow] (inp.east) -- (orc.west);
						\draw[arrow] (orc.east) -- (out.west);
					\end{tikzpicture}
					\caption{Oracle schematic for decision dependent stochastic optimization with unknown struction of $p(\xi \mid x)$}
					\label{fig:oracle-2}
				\end{figure}
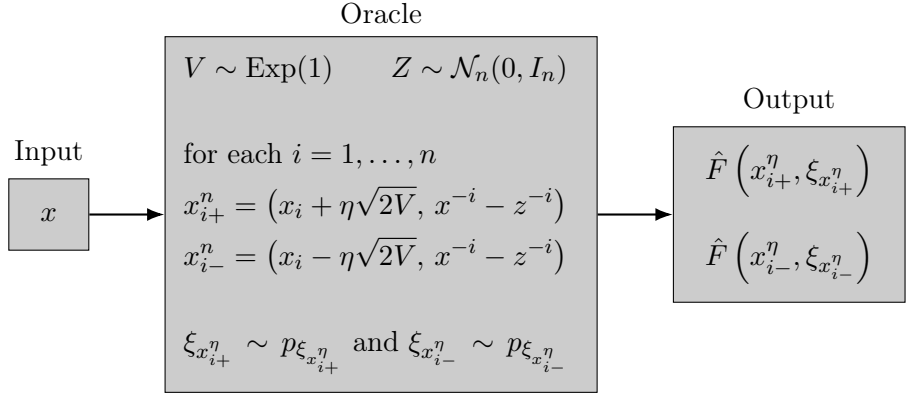

				\begin{remark}\label{ass:f-hat-Lip-DD2}
					Naturally, one might inquire if the prior Assumption ~\ref{asm:randomfield} is vacuous. We observe that under the following correlation and Lipschitzian requirements, this assumption indeed holds. 
					\begin{enumerate}
						\item\label{ass:DD2-xi} For any $x,y\in \R^n$, there exists a constant $c_{\xi}$ such that $\mathbb{E}[\|\bxi_x-\bxi_y\|^2] \, \leq \, c_{\xi} \|x-y\|^2$ for $\bxi_x \sim \mathcal{D}(x),\bxi_y \sim \mathcal{D}(y)$, i.e.,   $\bxi$ is $L_2$-Lipschitz. 
						\item $\hat{F}(x,\xi)$ is $L_\xi$-Lipschitz in $x$ and $\xi$.
					\end{enumerate}
					It is readily checked that if the above two requirements are satisfied then Assumption~\ref{asm:randomfield} will be satisfied with $L_0=L_\xi\sqrt{2+2c_\xi}$. 
					 
				\end{remark}
				\medskip
				Examples of $L_2$-Lipschitz random fields are aplenty. We provide two simple Gaussian examples. For further details please refer to \cite{gaussian-processes-ml}.\\
				\begin{enumerate}[wide, labelindent = 0pt, label = (\roman*)]
					\item {Let $\{\psi_k\}_{k=1}^n$ be deterministic continuous functions and $\{Z_k\}_{k=1}^n$ be independent standard normals}. {If $\{a_k\}_{k=1}^n$ denotes a set of scalars, then }
					$$\bxi_x = \sum_{k=1}^n a_k Z_k \psi_k(x)$$
					is a centered Gaussian random field satisfying the $L_2$-Lipschitz property.
					\item Let $W$ be Gaussian white noise on $\R^d$. Then for a kernel $\phi \in L^2$ with bounded gradient, the Gaussian convolutional random field
					$$\bxi_x = \int \phi(x-u) W(du)$$
					satisfies the $L_2$-Lipschitz property.
				\end{enumerate}
				
				As one may notice, $\bxi_x,\bxi_y$ in Assumption~\ref{asm:randomfield} or Remark~\ref{ass:f-hat-Lip-DD2} cannot be independent. Assumption~\ref{asm:randomfield}  suffices to guarantee the Lipschitz continuity of the function $f$. 
				
				\begin{lemma}\label{lem:f-decdep2-Lip}
					Suppose Assumption~\ref{asm:randomfield} holds. Then $f$, defined in \eqref{eq:DDproblem}, is $L_0$-Lipschitz continuous. 
				\end{lemma}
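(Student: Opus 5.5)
The plan is to express $f(x)-f(y)$ as a single expectation over the joint law of $(\bxi_x,\bxi_y)$ and then use Jensen's inequality together with Assumption~\ref{asm:randomfield}. The marginals of the random field are $\bxi_x\sim\mathcal{D}(x)$ and $\bxi_y\sim\mathcal{D}(y)$. Hence we may write
\begin{equation*}
f(x) = \mathbb{E}_{\bxi_x,\bxi_y}\left[\hat{F}(x,\bxi_x)\right], \qquad f(y) = \mathbb{E}_{\bxi_x,\bxi_y}\left[\hat{F}(y,\bxi_y)\right],
\end{equation*}
with the expectation understood as in \eqref{eq:Expxi12}. This works because integrating out the other coordinate of the joint density $p_{\bxi_x,\bxi_y}$ returns the marginal density $p(\cdot\mid x)$ (resp. $p(\cdot\mid y)$), so each expression agrees with the definition of $f$ in \eqref{eq:DDproblem}. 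Linearity of expectation then gives
\begin{equation*}
f(x)-f(y) = \mathbb{E}_{\bxi_x,\bxi_y}\left[\hat{F}(x,\bxi_x)-\hat{F}(y,\bxi_y)\right].
\end{equation*}

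Next, I bound this difference in two steps. The triangle inequality for integrals, followed by Jensen's (or Cauchy--Schwarz) inequality, yields
\begin{equation*}
|f(x)-f(y)| \le \mathbb{E}\left|\hat{F}(x,\bxi_x)-\hat{F}(y,\bxi_y)\right| \le \left(\mathbb{E}\left|\hat{F}(x,\bxi_x)-\hat{F}(y,\bxi_y)\right|^2\right)^{1/2}.
\end{equation*}
The right-hand side is exactly ${\lnn \hat{F}(x,\bxi_x)-\hat{F}(y,\bxi_y)\rnn}_{L^2(\bxi_\bullet)}$. By Assumption~\ref{asm:randomfield} it is therefore at most $L_0\|x-y\|$, which is the claimed Lipschitz bound.

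The only delicate step is the first one: justifying that both $f(x)$ and $f(y)$ can be written under the same joint expectation. This requires the integrands to be integrable so that Fubini's theorem applies when marginalizing. Integrability follows because Assumption~\ref{asm:randomfield} implicitly requires finite $L^2$ norms, and $f$ itself is assumed finite. With this in place, Lemma~\ref{Lemma:SmoothProperties} then applies to $f$ with constant $L_0$.
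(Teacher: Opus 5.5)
Your proposal is correct and follows essentially the same route as the paper: write $f(x)-f(y)$ as a single expectation over the joint law of $(\bxi_x,\bxi_y)$ in the sense of \eqref{eq:Expxi12}, apply Jensen's inequality to pass to the $L^2(\bxi_\bullet)$ norm, and invoke Assumption~\ref{asm:randomfield}. Your remark that the marginals of the joint density recover $p(\cdot\mid x)$ and $p(\cdot\mid y)$ (via Fubini) only makes explicit a step the paper uses without comment.
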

				\begin{proof} 
				Recall from \eqref{eq:DDproblem} that $f(x)=\int_{\Xi(x)} \hat{F}(x, \xi)p(\xi \mid  x) d\xi$. Consequently, by Jensen's inequality, for any $x, y$, we have  
				\begin{align*}
					|\, f(x)-&f(y)\, |\, = \, \left|\, \mathbb{E}_{\bxi_x, \bxi_y}\left[\, \hat{F}(x,\bxi_x)-\hat{F}(y,\bxi_y)\, \right]\, \right|\\
					&\, \leq \, \sqrt{\E_{\bxi_x,\bxi_y}\left[\, \left| \,\hat{F}(x,\bxi_x)-\hat{F}(y,\bxi_y)\, \right|^2 \, \right]}\, =\, {\lnn \hat{F}(x, \bxi_x) - \hat{F}(y, \bxi_y) \rnn}_{L^2(\bxi_\bullet)} \leq L_0 \|x-y\|.
				\end{align*}
				\end{proof}
				This result allows us to claim that Proposition ~\ref{prop:gradfeta} and Lemma~\ref{Lemma:SmoothProperties} hold true with Lipschitz constant $L_0$. 
				\begin{proposition}\label{prop:DDgradfeta}
					Suppose Assumption~\ref{asm:randomfield} holds. Let $\phi$ and $\phi_\eta$ be as in \eqref{eq:phiu} and \eqref{eq:phietau}. 
					Then for $1 \leq i \leq n$, the $i$-th partial derivative of $f_\eta$ is given by 
					\begin{equation}\label{eq:DDGradientfetaD}
						\tfrac{\partial_i f_{\eta}(x)}{\partial x_i} \,  = \, \tfrac{1}{\eta\sqrt{2\pi}}\mathbb{E}_{{V,Z,\bxi_{x_{i+}^\eta},\bxi_{x_{i-}^\eta}}}\left[\hat{F}\left(x_{i+}^\eta(V,Z), \bxi_{x_{i+}^\eta}\right)-\hat{F}\left(x_{i-}^\eta(V,Z), \bxi_{x_{i-}^\eta}\right)\right],
					\end{equation}
					where the expectation taken over $\bxi_{x_{i+}^\eta},\bxi_{x_{i-}^\eta}$ is in the sense of \eqref{eq:Expxi12} with marginals $ \mathcal{D}(x_{i+}^\eta(V,Z))$ and $\mathcal{D}(x_{i-}^\eta(V,Z))$, respectively. Additionally, $V \sim \mathcal{E}xp(1)$ and $Z\sim \mathcal{N}_n(0,\eta^2 I)$.
				\end{proposition}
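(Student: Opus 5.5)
By Lemma~\ref{lem:f-decdep2-Lip}, Assumption~\ref{asm:randomfield} makes $f$ in \eqref{eq:DDproblem} $L_0$-Lipschitz on $\R^n$. So $f$ satisfies Assumption~\ref{ass:f-Lip}, and Proposition~\ref{prop:gradfeta} applies to $f_\eta = f*\phi_\eta$ verbatim. For each $i$ this gives
\begin{equation*}
\tfrac{\partial_i f_\eta(x)}{\partial x_i} = \tfrac{1}{\eta\sqrt{2\pi}}\,\mathbb{E}_{V,Z^{-i}}\left[f\big(x_{i+}^\eta(V,Z)\big) - f\big(x_{i-}^\eta(V,Z)\big)\right],
\end{equation*}
with $x_{i\pm}^\eta$ as in \eqref{eq:x_i+-}. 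Since $x_{i\pm}^\eta$ depend only on $V$ and $Z^{-i}$, this is the same as an expectation over $V$ and the full $Z$.

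The next step is to rewrite the integrand using the random field. For a fixed realization $(v,z)$, set $u=x_{i+}^\eta(v,z)$ and $w=x_{i-}^\eta(v,z)$. Because $\bxi_u \sim \mathcal{D}(u)$ and $\bxi_w\sim\mathcal{D}(w)$, we have $f(u)=\mathbb{E}_{\bxi_u}[\hat F(u,\bxi_u)]$ and $f(w)=\mathbb{E}_{\bxi_w}[\hat F(w,\bxi_w)]$. In the joint-law notation \eqref{eq:Expxi12} each of these is a marginal integral of the joint density $p_{\bxi_u,\bxi_w}$. Linearity of expectation then gives
\begin{equation*}
f(u)-f(w)=\mathbb{E}_{\bxi_u,\bxi_w}\left[\hat F(u,\bxi_u)-\hat F(w,\bxi_w)\right].
\end{equation*}
This difference is finite for every fixed $(v,z)$: Assumption~\ref{asm:randomfield} puts it in $L^2$, and in fact gives the bound $L_0\|u-w\| = 2L_0\eta\sqrt{2v}$.

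The remaining step is to combine the inner expectation over $(\bxi_{x_{i+}^\eta},\bxi_{x_{i-}^\eta})$, taken conditionally on $(V,Z)$, with the outer expectation over $(V,Z)$ into a single iterated expectation. This is the only delicate point. The field is drawn independently of $(V,Z)$, per the oracle in Definition~\ref{sfo-ddu}, so the conditional law of the pair given $(V,Z)=(v,z)$ is the joint law at the points $x_{i\pm}^\eta(v,z)$. To justify exchanging the order of integration I would use Fubini--Tonelli with an integrable dominating function. Jensen's inequality together with Assumption~\ref{asm:randomfield} gives
\begin{equation*}
\mathbb{E}\left|\hat F(x_{i+}^\eta,\bxi_{x_{i+}^\eta})-\hat F(x_{i-}^\eta,\bxi_{x_{i-}^\eta})\right| \le 2\sqrt{2}L_0\eta\sqrt{V},
\end{equation*}
and the right-hand side is integrable because $\mathbb{E}\sqrt{V}<\infty$. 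Joint measurability of $(v,z)\mapsto$ the field's law is needed for Fubini, and I would take it as implicit in the random-field construction. Substituting the rewritten integrand into the formula from Proposition~\ref{prop:gradfeta} yields \eqref{eq:DDGradientfetaD}.
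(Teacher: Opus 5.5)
Your proposal is correct and follows the paper's proof. Lemma~\ref{lem:f-decdep2-Lip} gives Lipschitz continuity of $f$, Proposition~\ref{prop:gradfeta} then expresses the partial derivative through $f(x_{i\pm}^\eta(V,Z))$, and each of these values is rewritten as an expectation of $\hat{F}$ over the random field. The paper calls this last step immediate, so your domination bound $2\sqrt{2}L_0\eta\sqrt{V}$ justifying the Fubini interchange adds rigor without changing the route.
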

				\begin{proof}
				By invoking Lemma~\ref{lem:f-decdep2-Lip}, we have that $f$ is $L_0$-Lipschitz continuous. Therefore. we may apply Proposition~\ref{prop:gradfeta}. By \eqref{eq:GradientfetaD} and using the representation of $f$ in \eqref{eq:DDproblem} it is immediate that
				\begin{align*}
					\tfrac{\partial_i f_{\eta}(x)}{\partial x_i}&=
					\, \tfrac{1}{\eta \sqrt{2\pi}}\mathbb{E}_{V,Z}\left[f\left(x_{i+}^{\eta}(V,Z)\right)-f\left(x_{i-}^{\eta}(V,Z) \right)\right] \\
					&=\tfrac{1}{\eta\sqrt{2\pi}}\mathbb{E}_{V,Z,\bxi_{x_{i+}^\eta},\bxi_{x_{i-}^\eta}}\left[\hat{F}\left(x_{i+}^\eta(V,Z), \bxi_{x_{i+}^\eta}\right)-\hat{F}\left(x_{i-}^\eta(V,Z), \bxi_{x_{i-}^\eta}\right)\right].
				\end{align*}	
				\end{proof}
				
				Denote $\bxi_{x_+^\eta}=(\bxi_{x_{i+}^\eta})_{1\leq i\leq n}$ and $\bxi_{x_-^\eta}=(\bxi_{x_{i-}^\eta})_{1\leq i\leq n}$. Let us consider a new gradient estimator $\tilde{g}_{\eta}$, where the $i$-th component $\tilde{g}_{\eta}^i$ is defined as 
				\begin{equation*}
					\tilde g_{\eta}^i \left(x,v,z,\xi_1,\xi_2\right) \, = \,\tfrac{1}{\eta\sqrt{2\pi}}\left(\hat{F}\left(x_{i+}^\eta(v,z), \xi_1\right)-\hat{F}\left(x_{i-}^\eta(v,z), \xi_2\right)\right)
				\end{equation*}
				and $\xi_1$ and $\xi_2$ denote realizations of $\bxi_{x_+^\eta}$ and $\bxi_{x_-^\eta}$, respectively. As a result of Proposition~\ref{prop:DDgradfeta}, $\tilde{g}_{\eta}$ is an unbiased estimate of $\nabla f_{\eta}$, as qualified in: 
				\beq\label{eq:unbiasedd-dd-2}
				\E_{V,Z,\bxi_{x_+^\eta},\bxi_{x_-^\eta}}  \left[\, \tilde{g}_{\eta}(x,V,Z,\bxi_{x_{+}}, \bxi_{x_{-}}) \, \right]  \, = \, \nabla f_{\eta}(x).
				\eeq
				This then allows for deriving an analogous second moment bound on the estimator {with a similar dependence (linear) on $n$ as in the prior two estimators.}
				\begin{proposition}\label{prop:DDsecmombd}
					Suppose Assumption~\ref{asm:randomfield} holds. Then for any $x\in \R^n$ and $\eta > 0$,
					\begin{equation}
						\mathbb{E}\left[\, \|\tilde{g}_\eta(x,V,Z,\bxi_{x_{+}^{\eta}},\bxi_{x_{-}^{\eta}})\|^2\, \right] \, \leq \, \tfrac{4L_0^2n}{\pi}.
					\end{equation}
				\end{proposition}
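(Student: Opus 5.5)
The argument follows the proof of Proposition~\ref{prop:secmombd} coordinate by coordinate, with Assumption~\ref{asm:randomfield} taking the place of the $L^2(\bxi)$-Lipschitz property of $F$. First, the second moment of the vector splits as a sum of coordinate moments:
\[
\mathbb{E}\big[\|\tilde g_\eta\|^2\big]=\sum_{i=1}^n \mathbb{E}\big[|\tilde g_\eta^i|^2\big].
\]
So it suffices to show $\mathbb{E}[|\tilde g_\eta^i|^2]\le 4L_0^2/\pi$ for each $i$.

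Next, fix $i$ and condition on $(V,Z)=(v,z)$. By the oracle {\bf SFO$^{\rm dd}_u$} in Definition~\ref{sfo-ddu}, the pair $(\bxi_{x_{i+}^\eta},\bxi_{x_{i-}^\eta})$ is drawn from the random field at the two points $x_{i+}^\eta(v,z)$ and $x_{i-}^\eta(v,z)$, with the joint law in \eqref{eq:Expxi12}. Assumption~\ref{asm:randomfield} applied at these two points then gives
\[
\mathbb{E}_{\bxi_{x_{i+}^\eta},\bxi_{x_{i-}^\eta}}\Big[\big|\hat F(x_{i+}^\eta,\bxi_{x_{i+}^\eta})-\hat F(x_{i-}^\eta,\bxi_{x_{i-}^\eta})\big|^2\Big]\le L_0^2\,\|x_{i+}^\eta-x_{i-}^\eta\|^2 .
\]
The two points differ only in coordinate $i$, so $\|x_{i+}^\eta-x_{i-}^\eta\|^2=8\eta^2 v$.

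Dividing by $2\pi\eta^2$ gives the conditional bound $\tfrac{4L_0^2}{\pi}v$. Taking expectation over $(V,Z)$ by the tower property and using $\mathbb{E}[V]=1$ yields $\mathbb{E}[|\tilde g_\eta^i|^2]\le 4L_0^2/\pi$. Summing over $i=1,\dots,n$ gives $4L_0^2 n/\pi$.

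The only delicate step is the conditioning. We must check that, given $(V,Z)$, the field samples are coupled at the two evaluation points as Assumption~\ref{asm:randomfield} requires; they cannot be independent draws. We must also check that $(V,Z)$ is independent of the field, so that the conditional bound is valid and Fubini lets us integrate it over $(V,Z)$. Both follow from the oracle definition, which uses a random field independent of $(V,Z)$. Measurability of the integrand in $(v,z)$ is routine.
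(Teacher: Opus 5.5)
Your proof is correct and matches the paper's argument: split $\mathbb{E}\|\tilde g_\eta\|^2$ into coordinate second moments, apply Assumption~\ref{asm:randomfield} at the pair $x_{i\pm}^\eta(V,Z)$, which differ only in coordinate $i$ so that $\|x_{i+}^\eta-x_{i-}^\eta\|^2=8\eta^2 V$, then divide by $2\pi\eta^2$ and use $\mathbb{E}[V]=1$. The paper relies only implicitly on the random field being independent of $(V,Z)$, which you state explicitly and which is exactly what justifies applying the assumption conditionally.
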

				\begin{proof}
				Consider an $L_\xi$-Lipschitz continuous $\hat{F}$. Then 
				\begin{align*}
					& \quad \mathbb{E}_{V,Z,\bxi_{x_{i+}^\eta},\bxi_{x_{i-}^\eta}}\left[\left|\, \tilde g_{\eta}^i\left(x,V,Z,\bxi_{x_{i+}^\eta},\bxi_{x_{i-}^\eta}\right)\right|^2\right] \\
					& = \tfrac{1}{\eta^2 2\pi}\mathbb{E}_{V,Z,\bxi_{x_{i+}^\eta},\bxi_{x_{i-}^\eta}}\left[\left|\hat{F}\left(x_{i+}^\eta(V,Z), \bxi_{x_{i+}^\eta}\right)-\hat{F}\left(x_{i-}^\eta(V,Z), \bxi_{x_{i-}^\eta}\right)\right|^2\right]\\
					&= \tfrac{1}{\eta^2 2\pi} \mathbb{E}_{V,Z} \left\|\hat{F}\left(x_{i+}^\eta(V,Z), \bxi_{x_{i+}^\eta}\right)-\hat{F}\left(x_{i-}^\eta(V,Z), \bxi_{x_{i-}^\eta}\right)\right\|_{L^2(\bxi_\bullet)}^2.
				\end{align*}
				Then by Assumption~\ref{asm:randomfield} we have
				$$
				\mathbb{E}_{V,Z,\bxi_{x_{i+}^\eta},\bxi_{x_{i-}^\eta}}\left[\left|\, \tilde g_{\eta}^i\left(x,V,Z,\bxi_{x_{i+}^\eta},\bxi_{x_{i-}^\eta}\right)\right|^2\right]
				\leq \tfrac{1}{\eta^2 2\pi}L_0^2 \mathbb{E}_{V,Z} \left[\, \left|x_{i+}^\eta(V,Z)-x_{i-}^\eta(V,Z)\right|^2 \, \right] =\frac{4L_0^2}{\pi}.
				$$
				Then the result follows by noting that $\mathbb{E}[\|\tilde{g}_{\eta}(x,V,Z,\bxi_{x_{i+}^\eta},\bxi_{x_{i-}^\eta})\|^2]=\sum_{i=1}^n \mathbb{E}[|\tilde{g}_{\eta}^i(x,V,Z,\bxi_{x_{i+}^\eta},\bxi_{x_{i-}^\eta})|^2]$.
				
				\end{proof}
				
				\begin{remark}
					The need for Assumption~\ref{asm:randomfield}.\ref{ass:DD2-xi} is clear from the proof of Proposition~\ref{prop:DDsecmombd}. Simply speaking, one requires that $\hat{F}(x_{+}^{\eta}(V,Z),\bxi_{x_{+}})$ and $\hat{F}(x_{-}^{\eta}(V,Z),\bxi_{x_{-}})$ be correlated in order to bound the second moment of $\tilde g_{\eta}$. One way to achieve that is by imposing Assumption~\ref{asm:randomfield}.\ref{ass:DD2-xi} on the random field $\{\bxi_x; x \in \R^n\}$. Note that in Section~\ref{sub:p_known}, such dependence is also needed but created artificially between $\tilde F(x_{+}^{\eta}(V,Z), \bxi)$ and $\tilde F(x_{-}^{\eta}(V,Z), \bxi)$ as evident in the definition of $\tilde g_{\eta}$ in \eqref{estidec1}. 
				\end{remark}
				
				\subsection{Optima, stationary points, and performatively stable points}
				Early work in decision-dependent stochastic optimization problems, as outlined in ~\cite{drusvyatskiy2023stochastic, perdomo2020performative}, focused on an examination of a  ``performative stable point'' or an ``equilibrium'' point.  In general, such a point is distinct from either an optimal solution or a stationary point.  We recap the three distinct  definitions.
				
				\medskip
				
				\begin{definition}\label{def:eq}
					Consider the decision-dependent problem \eqref{eq:DDproblem}. 
					
					\noindent (i) $x^{\rm opt}$ is an optimal solution of \eqref{eq:DDproblem}, if 
					\begin{align}
						f(x^{\rm opt}) \, \le \, f(x) = \mathbb{E}_{\bxi \sim {\mathcal{D}}(x)} \left[ F(x,\bxi) \right], \quad \forall x \, \in \, X.
					\end{align}
					\noindent (ii) If $f$ is C$^1$ on an open neighborhood containing $X$, then $x^{\rm stat}$ is a stationary point of \eqref{eq:DDproblem} if 
					\begin{align}
						\nabla_x f(x)^\top (y - x) \, \ge \, 0, \quad \forall y \, \in \, X.
					\end{align}
					\noindent (iii)  $x^{\rm ps} \in X$ is called a performatively stable point  of  \eqref{eq:DDproblem} if
					\begin{equation}
						x^{\rm ps} \, \in \, {\displaystyle \text{arg}\hspace{-0.01in}\min_{x \, \in \, X}} \, \mathbb{E}_{\bxi \sim \mathcal{D}({x^{\rm ps}})} \left[\hat{F}(x, \bxi)\right]. 
					\end{equation} 
				\end{definition}
				{It may be noted that a performatively stable point, denoted by}  $x^{\rm ps}$, satisfies a fixed-point property by definition. While the relationship between optima and stationary points is well studied in the optimization literature, we point out two other relationships, one of which is less obvious, showing that if the distribution function satisfies a suitably defined ${\beta}$-sensitivity with respect to $x$, then under suitable strong convexity and Lipschitzian requirements, then $\|x^{\rm opt}-x^{\rm ps}\| = \mathcal{O}({\beta})$. {However, such a parameter $\beta$ is idiosyncratic to the problem setting, suggesting that $x^{\rm opt}$ and $x^{\rm ps}$ need not be close to zero.  } 
				
				\begin{lemma} Consider the decision-dependent problem \eqref{prob-dd}. Then the following hold.
					
					\noindent (a) $f(x^{\rm opt}) \, \le \, f(x^{\rm ps}).$
					
					\noindent (b) Suppose $F(x,\xi)$ is $\gamma$-strongly convex in $x$ for any $\xi$ and $L_{\xi}$-Lipschitz in $\xi$ for any $x$. Further, suppose ${\mathcal{D}}(\cdot)$ is ${\beta}$-sensitive, i.e., if $W_1({\mathcal{D}}(x),{\mathcal{D}}(x^\prime)) \le {\beta} \| x-x^\prime\|$ for any $x,x^{\prime} \in X$. Then for any $x^{\rm opt}$ and any $x^{\rm ps}$, 
					$$\| x^{\rm opt}-x^{\rm ps} \| \, \le \, \frac{2L_{\xi} {\beta}}{\gamma}.$$  
				\end{lemma}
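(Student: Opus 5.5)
Part (a) follows directly from the definitions. Since $x^{\rm ps} \in X$ and $x^{\rm opt}$ minimizes the performative risk $f(x) = \mathbb{E}_{\bxi\sim\mathcal{D}(x)}[F(x,\bxi)]$ over $X$, Definition~\ref{def:eq}(i) with $x = x^{\rm ps}$ gives $f(x^{\rm opt}) \le f(x^{\rm ps})$. No property of $x^{\rm ps}$ beyond feasibility is used.

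For part (b), I would compare $x^{\rm ps}$ and $x^{\rm opt}$ through the \emph{frozen} objective $\varphi(x) \triangleq \mathbb{E}_{\bxi\sim\mathcal{D}(x^{\rm ps})}[F(x,\bxi)]$. Two preliminary observations drive the argument.
\begin{enumerate}
\item $\varphi$ is $\gamma$-strongly convex, since strong convexity is preserved under expectation. By Definition~\ref{def:eq}(iii), $x^{\rm ps}$ minimizes $\varphi$ over the convex set $X$. The first-order characterization of a constrained minimizer of a strongly convex function then gives the quadratic growth bound
$$\varphi(x) - \varphi(x^{\rm ps}) \ge \tfrac{\gamma}{2}\|x-x^{\rm ps}\|^2 \quad \text{for all } x \in X.$$
\item $F(x,\cdot)$ is $L_\xi$-Lipschitz and $\mathcal{D}$ is $\beta$-sensitive. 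By Kantorovich--Rubinstein duality, for any fixed $x$ and any $y, y'$,
$$\bigl|\mathbb{E}_{\mathcal{D}(y)}[F(x,\bxi)] - \mathbb{E}_{\mathcal{D}(y')}[F(x,\bxi)]\bigr| \le L_\xi \beta \|y-y'\|.$$
\end{enumerate}

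Next I would chain these bounds. Set $d = \|x^{\rm opt}-x^{\rm ps}\|$. Applying the quadratic growth bound at $x = x^{\rm opt}$ gives
$$\tfrac{\gamma}{2} d^2 \le \varphi(x^{\rm opt}) - \varphi(x^{\rm ps}).$$
The second term is $\varphi(x^{\rm ps}) = f(x^{\rm ps})$. For the first term, the duality bound gives $\varphi(x^{\rm opt}) \le f(x^{\rm opt}) + L_\xi\beta d$. Combining these with part (a),
$$\tfrac{\gamma}{2}d^2 \le f(x^{\rm opt}) - f(x^{\rm ps}) + L_\xi\beta d \le L_\xi \beta d.$$
This yields $d \le 2L_\xi\beta/\gamma$, which is exactly the claimed constant.

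The main point to check is the quadratic growth step: $x^{\rm ps}$ minimizes $\varphi$ only over $X$, not over all of $\mathbb{R}^n$. Strong convexity still gives it, because for $\lambda\in(0,1)$ the $\mu$-convexity inequality at $\lambda x + (1-\lambda)x^{\rm ps} \in X$ together with minimality yields $\lambda(\varphi(x)-\varphi(x^{\rm ps})) \ge \tfrac{\lambda(1-\lambda)\gamma}{2}\|x-x^{\rm ps}\|^2$. Dividing by $\lambda$ and letting $\lambda\downarrow 0$ gives the bound without any differentiability. Two further items need care. The Kantorovich--Rubinstein step requires $\mathcal{D}(\cdot)$ to have finite first moments so that $W_1$ is well defined. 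Finally, the problem statement writes $F$ in \eqref{prob-dd} and $\hat F$ in Definition~\ref{def:eq}(iii), and these should be identified as the same function.
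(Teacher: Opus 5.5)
Your proposal is correct and follows essentially the paper's route. Part (a) is the direct consequence of optimality that the paper omits, and for part (b) the paper cites \cite[Theorem 4.3]{perdomo2020performative}, whose argument is your chain: quadratic growth of the frozen objective at $x^{\rm ps}$, the Kantorovich--Rubinstein bound $L_\xi\beta\|x^{\rm opt}-x^{\rm ps}\|$, and $f(x^{\rm opt})\le f(x^{\rm ps})$. Your derivation of quadratic growth from the $\mu$-convexity inequality alone handles the constraint set $X$ cleanly without any differentiability.
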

				\begin{proof} (i) Omitted; (ii) See \cite[Theorem 4.3]{perdomo2020performative}.
				\end{proof}
				
				\medskip
				
				More generally, absent strong assumptions on the distribution and the stringent convexity requirements, a performatively stable solution $x^{\rm ps}$ may be significantly poorer than the optimal solution $x^{\rm opt}$ and may 
				have no corresponding stationarity guarantee. Consequently, the definition of an optimal point requires careful consideration.
				Observe that
				\begin{align}
					f_\eta(x) & =\int f(y) \phi_\eta(x-y) \, d y  =\int \mathbb{E}\left[\hat{F}\left(y, \bxi_y\right)\right] \phi_\eta(x-y) \, d y \nonumber\\
					& \stackrel{\text{Fubini}}{=}\underbrace{\mathbb{E}\left[\int \hat{F}\left(y, \bxi_y\right) \phi_\eta(x-y) \, d y\right]}_{\, =\mathbb{E}_{\bxi {\sim {\mathcal{D}}(y)}} \left[\bar{F}_\eta(x, \bxi)\right]} \label{eq:DDeqf1}\\
					& =\mathbb{E}\left[\int \hat{F}\left(x-y, \bxi_{x-y}\right) \phi_\eta(y) \, d y\right] \nonumber
					=\mathbb{E}_{\bxi}\left[\mathbb{E}_{Z_\eta}\left[\hat{F}\left(x-Z_\eta, \bxi_{x-Z_\eta}\right)\right]\right]\notag \\
					& =\mathbb{E}_{{\bxi \sim {\mathcal{D}}(x-Z_{\eta}), Z_{\eta}}}\left[\hat{F}\left(x-Z_\eta, \bxi\right)\right], \label{eq:DDeqf2}
				\end{align}
				where $Z_\eta$ denotes a random variable with probability density function $\phi_\eta$. Observe that this representation reveals that the decision dependence is no longer local (i.e. at $x$) but is instead  spread over the feasible region through $x-Z_{\eta}$.
				This necessitates a new definition of equilibrium for the smoothed problem, provided next.  \\
				
				\begin{definition}\label{def:etasmeq}
					$\bar{x}_\eta$ is a $\eta$-smoothed {performatively stable point} of the decision-dependent problem \eqref{eq:DDproblem} if
					\begin{equation*}
						\bar{x}_\eta  \, {\in} \, \arg\min_{x\in X}\E_{\bxi}\left[\E_{Z_{\eta}}\left[\hat{F}\left(x-Z_\eta,\bxi_{\bar{x}_\eta-Z_\eta}\right)\right]\right]. 
					\end{equation*} 
					
				\end{definition}
				
				\medskip
				
				A natural question is whether the $\eta$-smoothed {perfomatively stable point} is an approximate {performatively stable point} for the original problem. We show below that this is indeed the case with the following enhanced version of Assumption~\ref{asm:randomfield}.
				\begin{proposition}
					Suppose $\varepsilon > 0$ and there exists an $\bar{L}$ such that the following holds for any $x, y \in \R^n$.
					\begin{equation}\label{eq:asmenhance}
						{\lnn \hat{F}(x_1, \bxi_{x_2}) - \hat{F}(y_1, \bxi_{y_2}) \rnn}_{L^2(\bxi_\bullet)} \, \leq \, \frac{\bar{L}}{2} \left(\|x_1-y_1\|+\|x_2-y_2\|\right).
					\end{equation} Let $\bar{x}_\eta$ be an $\eta$-smoothed {performatively stable point} as in Definition~\ref{def:etasmeq} with $\eta \leq \tfrac{\varepsilon}{2\bar{L}\sqrt{n+1}}$. Then $\bar{x}_\eta$ is an $\varepsilon$-{performatively stable point} of the original problem $\min f(x)= \mathbb{E}_{\bxi \sim \mathcal{D}(x)} [\hat{F}(x, \bxi)]$.
				\end{proposition}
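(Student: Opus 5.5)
The plan is to compare the two objectives directly: the one defining the $\eta$-smoothed performatively stable point, and the one appearing in performative stability of the original problem. Both are evaluated with the distribution frozen at $\bar{x}_\eta$. I take an $\varepsilon$-performatively stable point to mean that $\bar{x}_\eta$ is an $\varepsilon$-minimizer of $x \mapsto \mathbb{E}_{\bxi \sim \mathcal{D}(\bar{x}_\eta)}[\hat{F}(x,\bxi)]$ over $X$. Define, for $x \in X$,
\begin{equation*}
G(x) \triangleq \mathbb{E}\left[\hat{F}(x,\bxi_{\bar{x}_\eta})\right], \qquad G_\eta(x) \triangleq \mathbb{E}_{Z_\eta}\,\mathbb{E}_{\bxi}\left[\hat{F}\left(x-Z_\eta,\bxi_{\bar{x}_\eta-Z_\eta}\right)\right].
\end{equation*}
By Definition~\ref{def:etasmeq}, $\bar{x}_\eta$ minimizes $G_\eta$ over $X$. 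The goal is $G(\bar{x}_\eta) \le \inf_{x\in X} G(x) + \varepsilon$.

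The key step is a uniform approximation bound $\sup_{x}|G_\eta(x)-G(x)| \le \varepsilon/2$. Fix $x$ and a realization $z$ of $Z_\eta$. The random-field expectation is understood in the coupled sense of \eqref{eq:Expxi12}. Jensen's inequality (as in the proof of Lemma~\ref{lem:f-decdep2-Lip}) and the enhanced Lipschitz condition \eqref{eq:asmenhance} give
\begin{equation*}
\left|\mathbb{E}\left[\hat{F}(x-z,\bxi_{\bar{x}_\eta-z}) - \hat{F}(x,\bxi_{\bar{x}_\eta})\right]\right| \le \left\|\hat{F}(x-z,\bxi_{\bar{x}_\eta-z}) - \hat{F}(x,\bxi_{\bar{x}_\eta})\right\|_{L^2(\bxi_\bullet)} \le \tfrac{\bar{L}}{2}\left(\|z\|+\|z\|\right) = \bar{L}\|z\|.
\end{equation*}
Here the first argument is shifted by $z$ and the location of the field is also shifted by $z$, which is why both terms in \eqref{eq:asmenhance} contribute. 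Integrating against $\phi_\eta$ and using \eqref{eq:Enormu} together with Gautschi's inequality, exactly as in Lemma~\ref{Lemma:SmoothProperties}(b), yields $|G_\eta(x)-G(x)| \le \bar{L}\,\mathbb{E}\|Z_\eta\| \le \bar{L}\sqrt{n+1}\,\eta$. This is at most $\varepsilon/2$ by the choice of $\eta$.

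The conclusion then follows by a three-step sandwich. For any $x \in X$,
\begin{equation*}
G(\bar{x}_\eta) \le G_\eta(\bar{x}_\eta) + \tfrac{\varepsilon}{2} \le G_\eta(x) + \tfrac{\varepsilon}{2} \le G(x) + \varepsilon .
\end{equation*}
Hence $\bar{x}_\eta$ is an $\varepsilon$-minimizer of $\mathbb{E}_{\bxi\sim\mathcal{D}(\bar{x}_\eta)}[\hat{F}(\cdot,\bxi)]$, i.e. an $\varepsilon$-performatively stable point.

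The main obstacle I expect is justifying the pointwise-in-$z$ bound rigorously. The quantities $\bxi_{\bar{x}_\eta - z}$ and $\bxi_{\bar{x}_\eta}$ must be taken jointly from the random field, so that \eqref{eq:asmenhance} applies, even though $G$ and $G_\eta$ each depend only on marginals. One also needs Fubini to exchange $\mathbb{E}_{Z_\eta}$ with the field expectation. This exchange is legitimate given integrability from the Lipschitz bound, which grows at most linearly in $\|z\|$ and is therefore integrable against the Gaussian density.
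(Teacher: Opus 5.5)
Your proposal is correct and is essentially the paper's proof, which has the same three steps: $f(\bar{x}_\eta)\le f_\eta(\bar{x}_\eta)+\bar{L}\eta\sqrt{n+1}$ via part (b) of Lemma~\ref{Lemma:SmoothProperties}, then the minimizing property of $\bar{x}_\eta$, then $|G_\eta(x)-G(x)|\le \bar{L}\,\mathbb{E}\|Z_\eta\|\le\bar{L}\eta\sqrt{n+1}$ from the enhanced condition with both arguments shifted by $Z_\eta$. Since $f(\bar{x}_\eta)=G(\bar{x}_\eta)$ and $f_\eta(\bar{x}_\eta)=G_\eta(\bar{x}_\eta)$, your uniform bound evaluated at $x=\bar{x}_\eta$ reproduces the paper's first step, so the two arguments coincide.
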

				\begin{proof}
				Observe that by Lemma~\ref{Lemma:SmoothProperties} \eqref{Lemma:c} for any $x \in X$ and by definition of the smoothing, we have
				\begin{align}\label{eq:fxbareta}
					\notag	f(\bar{x}_\eta) & = \mathbb{E}_{\bxi} \left[\hat{F}(\bar{x}_\eta, \bxi_{\bar{x}_\eta})\right] \,\leq \, f_\eta(\bar{x}_\eta)+\bar{L}\eta\sqrt{n+1}\\
					& =\E_{\bxi}\left[\E_{Z_\eta}\left[\hat{F}\left(\bar{x}_\eta-Z_\eta, \bxi_{\bar{x}_\eta-Z_\eta}\right)\right] \right]+\bar{L}\eta \sqrt{n+1}.
				\end{align}
				By definition of $\bar x_{\eta}$ and Fubini's theorem, the following holds for any $x \in X$,	
				\begin{equation}\label{eq:xbareta}
					\E_{\bxi}\left[\E_{Z_\eta}\left[\hat{F}\left(\bar{x}_\eta-Z_\eta, \bxi_{\bar{x}_\eta-Z_\eta}\right)\right] \right]\leq \E_{Z_\eta}\left[\E_{\bxi}\left[\hat{F}\left(x-Z_\eta, \bxi_{\bar{x}_\eta-Z_\eta}\right)\right] \right].
				\end{equation}	
				Furthermore, by Jensen's inequality we have for any $x \in X$
				\begin{align*}
					& \left|\E_{Z_\eta}\left[\E_{\bxi}\left[\hat{F}\left(x-Z_\eta, \bxi_{\bar{x}_\eta-Z_\eta}\right)\right]\right]-\E_{\bxi}\left[\hat{F}(x,\bxi_{\bar{x}_\eta})\right]\right|  \\
					&~~~~ \leq \E_{Z_\eta}\left[\, \left\|\hat{F}\left(x-Z_\eta, \bxi_{\bar{x}_\eta-Z_\eta}\right)-\hat{F}(x,\bxi_{\bar{x}_\eta})\right\|_{L^2(\bxi_{\bullet})} \, \right] \\
					&~~~~ \overset{\eqref{eq:asmenhance}}{\leq} \bar{L}\E_{Z_\eta}\left\|Z_\eta\right\|.
				\end{align*}
				Using the above inequality and following same argument as in the proof of Lemma~\ref{Lemma:SmoothProperties} \eqref{Lemma:c} we have 
				\begin{equation}\label{eq:xbaretaJen}
					\E_{Z_\eta}\left[\E_{\bxi}\left[\hat{F}\left(x-Z_\eta, \bxi_{\bar{x}_\eta-Z_\eta}\right)\right]\right]\leq\E_{\bxi}\left[\hat{F}(x,\bxi_{\bar{x}_\eta})\right]+\bar{L}\eta \sqrt{n+1}.
				\end{equation}
				Combining \eqref{eq:fxbareta}, \eqref{eq:xbareta} and \eqref{eq:xbaretaJen}, the following holds for any $x \in X$,
			$$
			f(\bar{x}_\eta)\leq\E_{\bxi}\left[F(x,\bxi_{\bar{x}_\eta})\right]+2\bar{L}\eta \sqrt{n+1}=\E_{\bxi\sim{\cd(\bar{x}_\eta})}\left[\hat{F}(x,\bxi)\right]+2\bar{L}\eta \sqrt{n+1},
			$$
			proving the result.
			\end{proof}
			
			{In contrast with some of the prior work in decision-dependent stochastic optimization~\cite{drusvyatskiy2023stochastic},  our focus lies on computing either stationary points or optima of the original problem, as observed in the subsequent sections. We conclude this section with an example where we observe that performatively stable points can indeed depart significantly from optima.}

			\subsection{Example}\label{subsec:marketproblem}
			In this section, we present an example that demonstrates how the assumptions outlined in Sections~\ref{sub:p_known} and~\ref{sub:p_unknown} can be met. This example also serves as an illustration of the distinction between an optimal point and an equilibrium point.  For the first case with known structure of $p(\xi | x)$ (Sec.~\ref{sub:p_known}), the KL-divergence needs to satisfy Assumption~\ref{asm:decdep} (2).
			For the second case with unknown structure of $p(\xi | x)$ (Sec.~\ref{sub:p_unknown}), the assumption was imposed on the oracle itself. One way to satisfy Assumption~\ref{asm:randomfield} is by assuming the oracle can give us two correlated random variables $\bxi_x$ and $\bxi_y$, satisfying the correlation
			\begin{equation}\label{eq:rho}
				\rho({x,y})  = \frac{\mathbb{E}[\bxi_x\bxi_y]-\mathbb{E}[\bxi_x]\mathbb{E}[\bxi_y]}{\sqrt{\textnormal{Var}[\bxi_x]}\sqrt{\textnormal{Var}[\bxi_y]}}
				\geq \frac{\frac{1}{2}\left(\mathbb{E}[\bxi_x^2]+\mathbb{E}[\bxi_y^2]-c_\xi{\|x-y\|}^2\right)-\mathbb{E}[\bxi_x]\mathbb{E}[\bxi_y]}{\sqrt{\textnormal{Var}[\bxi_x]}\sqrt{\textnormal{Var}[\bxi_y]}},
			\end{equation} 
			{where the inequality is a result of employing $\mathbb{E} [ \|\bxi_x - \bxi_y\|^2 ] \le  c_\xi\|x-y\|^2.$} In this case Remark~\ref{ass:f-hat-Lip-DD2}~\eqref{ass:DD2-xi} is satisfied. 
			We now consider a specific example from \cite{wang2023constrained}.\\
			
			\noindent \underline{Market Problem with Normal Noise.} 
			Consider the market problem in \cite{wang2023constrained}, given by the following optimization problem.
			\begin{equation}\label{eq:mkproblem}
				\min_{x_1, x_2} \quad -x_1\left(\mathbb{E}\left[\bzeta_1\left(x_1\right)\right]-a_1 x_1\right)-x_2\left(\mathbb{E}\left[\bzeta_2\right]-a_2 x_2\right).
			\end{equation}
			where ${\bzeta_1(x_1)} \sim \mathcal{N}(a+{\beta} x_1, \sigma^2)$ and ${\bzeta_2} \sim \mathcal{U}[L_2, R_2]$. Plugging in the expectation, we have the explicit form of the problem
			\begin{align}\label{eq:mkproblem2}
				\min_{x_1,x_2}  \quad &-x_1\left(a+{\beta} x_1-a_1 x_1\right)-x_2\left((L_2+R_2)/2-a_2 x_2\right)\nonumber\\
				{\, \equiv \, }  \min_{x_1, x_2} \quad & \ {\begin{pmatrix} x_1 \\ x_2 
					\end{pmatrix}^\top} \begin{pmatrix}
					a_1-{\beta} & 0\\
					0 & a_2
				\end{pmatrix}\begin{pmatrix}
					x_1\\ x_2
				\end{pmatrix}-\begin{pmatrix}
					a \\ (L_2+R_2)/2
				\end{pmatrix}^\top \begin{pmatrix}
					x_1\\ x_2
				\end{pmatrix},
			\end{align}
			which is a strongly convex optimization problem {if $a_1 - \beta > 0$ and $a_2 >0$}. Thus it has an unique optimal point. 
			
			\noindent {\em Case 1: Known density $p(\xi \mid x)$.} Here, it is easy to check that the density of $\zeta_1$ is Lipschitz in $x_1$ since the first derivative is bounded. Moreover, {by noting that $\sigma_x = \sigma_y$,} the KL-divergence of the normal distribution is 
			\begin{equation*}
				\frac{1}{2}\left(\left(\frac{\sigma_y}{\sigma_x}\right)^2+\frac{\left(\mu_x-\mu_y\right)^2}{\sigma_x^2}-1+\ln \left(\frac{\sigma_x^2}{\sigma_y^2}\right)\right)=\frac{{\beta}^2 (x-y)^2}{2\sigma^2},
			\end{equation*}
			which satisfies {Assumption~\ref{asm:decdep}~(\ref{ass:p-x-y-dist}).}  We {may} choose $\tilde{p}(\xi)$ to be the zero mean normal distribution with variance $\sigma^2$, same as the variance of $\zeta_1$. In this case the ratio $\frac{p(\xi\mid x)}{\tilde{p}(\xi)}$ will stay in a reasonable range.
			
			\noindent {\em Case 2: Unknown density $p(\xi \mid x)$.} In the second case, from \eqref{eq:rho} we require 
			\begin{align*}
				\rho({x,y}) &\geq \frac{\frac{1}{2}\left(\mathbb{E}[{\bxi_x}^2]+\mathbb{E}[{\bxi_y^2}]-c_\xi{\|x-y\|}^2\right)-\mathbb{E}[\bxi_x]\mathbb{E}[\bxi_y]}{\sqrt{\mathbb{E}[\bxi_x^2]-\left(\mathbb{E}[\bxi_x]\right)^2}\sqrt{\mathbb{E}[\bxi_y^2]-\left(\mathbb{E}[\bxi_y]\right)^2}}\\
				&= \frac{1}{\sigma^2}\left[\frac{1}{2} \left((a+{\beta} x)^2+\sigma^2+(a+{\beta} y)^2+\sigma^2\right)-\frac{c_\xi}{2}{\|x-y\|}^2-(a+{\beta} x)(a+{\beta} y)\right]\\
				&=1-\frac{c_\xi-{\beta}^2}{2\sigma^2}(x-y)^2.
			\end{align*}
			With carefully chosen constants, the right hand side of the above inequality is always less than $1$. When applying optimization algorithm in this case, one can ask the oracle to simulate correlated random variables with correlation $\rho=\max(1-\frac{c_\xi-{\beta}^2}{2\sigma^2}(x^+-x^-)^2,-1)$. 
			
			
			\begin{remark}
				In the {quadratic} problem \eqref{eq:mkproblem2},  the unique optimal point is given by  
				$$x^*={\begin{pmatrix} \tfrac{a}{2(a_1-{\beta})} \\ \tfrac{L_2+R_2}{4} \end{pmatrix}}.$$ 
				However, there exist an unique performatively stable solution  of \eqref{eq:mkproblem} with our setting. Indeed,
				\begin{align*}
					(\bar{x}_1,\bar{x}_2) &= \text{arg} \min_{x_1,x_2} \left(-x_1\left(\mathbb{E}\left[\zeta_1\left(\bar{x}_1\right)\right]-a_1 x_1-x_2\left(\mathbb{E}\left[\bzeta_2\right]-a_2 x_2\right)\right) \right)\\
					\implies 	(\bar{x}_1,\bar{x}_2)	& = \begin{pmatrix}
						\tfrac{a}{2a_1-{\beta}} \\ \tfrac{L_2+R_2}{4}
					\end{pmatrix}.
				\end{align*}
				Observe that $x^*$ and $\bar{x}$ may differ significantly unless $\beta = 0$. Our schemes are guaranteed to converge to the optimal point $x^*_1$ under some conditions. 
			\end{remark}
			\section{{Nonsmooth convex optimization via stochastic zeroth-order methods} }\label{sec:conv_theorems} 
			In this section, we present a unified  stochastic zeroth-order framework enabled by {\bf esGs} smoothing that can contend with {the standard non-decision-dependent setting as well as the decision-dependent regime}. Recall the function $f$ and the optimization problem given by either \eqref{eq:f=EF} (in the standard setting) or \eqref{eq:DDproblem} (in the decision-dependent setting). In this section, we require that $f$ be convex.
			\begin{assumption}\label{asm:covstoc2}
				\begin{enumerate}[label=\arabic*., leftmargin=*]
					\item[]
					\item\label{asm:cvx1}
					\begin{enumerate}[label=(\alph*), leftmargin=*, nosep]
						\item\label{asm:cvx1a} In the non-decision dependent setting of Section~\ref{sec:esGsestimator}, in addition to Assumption~\ref{asm:ascvgstochastic}, we assume that $F(\bullet,\xi)$ is a convex function for every $\xi \in \Xi$.
						\item\label{asm:cvx1b} In the decision-dependent setting of Section~\ref{sec:Decision_Dep}, in addition to Assumption~\ref{asm:decdep} or Assumption~\ref{asm:randomfield}, we assume $f$ to be convex.
					\end{enumerate}
					\item\label{asm:covstoc3}  $X \subseteq \mathbb{R}^n$ is a convex set. 
				\end{enumerate}
			\end{assumption}
			Note that under Assumption~\ref{asm:covstoc2}.\ref{asm:cvx1}\ref{asm:cvx1a} (Assumption~\ref{asm:covstoc2}.\ref{asm:cvx1}\ref{asm:cvx1b}, resp.), by Remark~\ref{rem:EF-Lip} (Lemmas~\ref{lem:F-decdep1-Lip} or~\ref{lem:f-decdep2-Lip}, resp.), the function $f$ is $L_0$-Lipschitz continuous. 
			Recall the gradient estimator $\tilde{g}_{\eta}$ which is unbiased due to the previously derived relations \eqref{eq:tilde-g-unbiased}, \eqref{eq:unbiased-dd-1} and \eqref{eq:unbiasedd-dd-2}.
			Moreover, for the non-decision-dependent problem and decision-dependent problem with known structure of $p(\xi | x)$, Proposition~\ref{prop:secmombd} (Propositions~\ref{prop:2mombd-DD1}, resp.) implies that
			\begin{equation*}
				\mathbb{E}_{V, Z, \bxi} \lc \tilde{g}_{\eta}(x, V, Z, \bxi) \rc \leq \tfrac{4L_0^2 n}{\pi}.
			\end{equation*}
			While in the decision-dependent problem with unknown structure of $p(\xi | x)$, Proposition~\ref{prop:DDsecmombd} yields that
			\begin{equation*}
				\mathbb{E}_{V, Z, \bxi_{x_{+}},\bxi_{x_{-}}} \lc \|\tilde{g}_{\eta}(x,V,Z,\bxi_{x_{+}},\bxi_{x_{-}})\|^2 \rc \leq \tfrac{4L_0^2n}{\pi}.
			\end{equation*}
			For the purpose of writing, in the following of this paper we will abuse the notation of $\tilde{g}_{\eta}(x, V, Z, \bxi)$ for all cases even when $\tilde{g}_\eta(x,V,Z,\bxi_{x_{+}},\bxi_{x_{-}})$ would be more precise. We have provided a summary of our guarantees obtained for nonsmooth convex optimization. 
			
			\begin{table}[htb]
				\scriptsize
				\centering
				\caption{}{{Complexity guarantees} in convex {settings}}
				\renewcommand{\arraystretch}{2}
				\begin{tabular}{ C{3cm} c C{2cm}  C{1.8cm} C{1.8cm} C{2.7cm} }
					\hline
					Parameters&Convergence metric&  Convergence rate &Iteration complexity&Improvement {in iteration complexity} &Assumptions addition to Asm~\ref{asm:covstoc2} \\
					\hline
					$\gamma_k=\eta_k=\tfrac{1}{\sqrt{n(k+1)}}$,  $\bar{x}_{K} \triangleq \tfrac{\sum_{k=0}^{{K-1}} \gamma_{k} x_{k}}{\sum_{k=0}^{{K-1}}\gamma_{k}}$  & $\mathbb{E}\left[f\left(\bar{x}_{K}\right)-f^{*}\right]$  & $\mathcal{O}\left(\tfrac{\sqrt{n} \ln(K)}{\sqrt{K}}\right)$&$\mathcal{O}(n\varepsilon^{-2})$&${\mathcal{O}(n)}$&-\\
					\hline
					$\gamma_k=\eta_k=1/(k+1)^{1/2}$ &  $ f(\bar{x}_K)-f(x^*) \quad a.s.$& $\mathcal{O}\left(\frac{n\ln (K)}{\sqrt{K}}\right) $&$\mathcal{O}(n^2\varepsilon^{-2})$&-&$X$ is bounded \\
					\hline
					$ \gamma_{k}=\tfrac{\theta}{k} $, where $ \theta>1 /  \mu $   & $\mathbb{E}\left[\left\|x_{k}-x^{*}\right\|^{2}\right] $ &$ \mathcal{O}\left(\tfrac{n}{k}\right)$&$\mathcal{O}(n\varepsilon^{-1})$&${\mathcal{O}(n)}$&Asm~\ref{asm:covstoc4}, $f$ is $\mu$-strongly convex\\
					\hline
				\end{tabular}
				\vspace{0.1em}
				
				{\textit{Remark:} Sample complexity scales as $n$ times the iteration complexity.}
			\end{table}
			
			\subsection{Zeroth-order stochasic gradient descent.}\label{sec:zo-method}
			If $\Pi_X[u]$ denotes the Euclidean projection of  $u$ onto the set $X$, 
			consider the following update rule for generating $x_{k+1}$, given $x_0 \in X$, for any $k \, \ge \, 0$.
			\begin{equation}\label{algstoc}
				x_{k+1}\, =\, \Pi_{X}\left[\, x_{k}-\gamma_{k}\,
				\tilde{g}_{\eta_k}(x_k, V_k, Z_k, \bxi_k)\, \right]\, 
			\end{equation}
			where $\eta_k > 0$, $\tilde{g}_{\eta_k}$ denotes the gradient estimator in either the standard or the decision-dependent setting, and $f_{\eta}$ is defined in \eqref{eq:feta} Here $\{V_k\}_{k=0}^N$ are i.i.d $\mathcal{E}xp(1)$ and $\{Z_k\}_{k=0}^N$ are i.i.d. $\mathcal{N}_n(0,\eta^2I)$ with realizations $\{v_k\}$ and $\{z_k\}$, respectively.
			Denote by $\cf_k$ the filtration generated by the initial iterate $x_0$ and the random variates $\{V_i, Z_i, \bxi_i\}_{i=1}^{k-1}$. Note that with this definition of $\cf_k$, $x_k \in \cf_k$. For brevity, in the following we will simply use $\tilde{g}_{\eta_k}(x_k)$ to stand for the random variable $\tilde{g}_{\eta_k}(x_k,V_k,Z_k,\bxi_k)$.
			From \eqref{eq:tilde-g-unbiased}, \eqref{eq:unbiased-dd-1} and \eqref{eq:unbiasedd-dd-2} note that $\tilde{g}_{\eta_k}(x_k)$ is an unbiased  estimator for $\nabla f_{\eta_k}(x_k)$. In addition, since $x_k \in \cf_k$, the conditional expectation  
			$\mathbb{E}[ \, w_k \, \mid \, \mathcal{F}_k] = 0$ almost surely, where $w_k = \tilde{g}_{\eta_k}(x_k) - \nabla f_{\eta_k}(x_k)$. 
			Let us now introduce an assumption on the terms of our stochastic approximation algorithm \eqref{algstoc}
			
			\begin{assumption} \label{asm:covstoc4}
				The sequences $\{\gamma_{k},\eta_k\}$ are positive, satisfying \( \sum_{k=0}^{\infty} \gamma_{k}=\infty \), \( \sum_{k=0}^{\infty} \gamma_{k}^{2}<\infty \) and $ \sum_{k=0}^{\infty} \gamma_k\eta_k < \infty $.
		\end{assumption}
		\medskip
		Akin to~\cite{Farzad1}, we derive consistency claims for Algo.~\eqref{algstoc}; this is a unified framework that captures the non-decision-dependent problem \eqref{eq:f=EF} and the decision-dependent problem \eqref{eq:DDproblem}. 
		
		\medskip
		
		\begin{proposition}[{\bf a.s. convergence}]\label{prop:ascvgstochastic}
			If Assumptions~\ref{asm:covstoc2} and \ref{asm:covstoc4} hold, and the optimal set $X^*$ is nonempty, the sequence $\{x_k\}$ generated by \eqref{algstoc} converges almost surely to some point $x^* \in X^*$.
		\end{proposition}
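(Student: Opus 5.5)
The plan is a Robbins--Siegmund supermartingale argument on the squared distance to an arbitrary optimal point, followed by the standard ``every optimal point has a convergent distance'' step.

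Fix any $x^* \in X^*$. By nonexpansiveness of $\Pi_X$ and $x^* = \Pi_X[x^*]$,
\begin{equation*}
\|x_{k+1}-x^*\|^2 \le \|x_k-x^*\|^2 - 2\gamma_k \tilde g_{\eta_k}(x_k)^\top (x_k-x^*) + \gamma_k^2 \|\tilde g_{\eta_k}(x_k)\|^2 .
\end{equation*}
Take conditional expectations given $\mathcal{F}_k$. Unbiasedness (\eqref{eq:tilde-g-unbiased}, \eqref{eq:unbiased-dd-1}, \eqref{eq:unbiasedd-dd-2}) replaces $\tilde g_{\eta_k}(x_k)$ by $\nabla f_{\eta_k}(x_k)$ in the cross term. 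The second-moment bounds (Propositions~\ref{prop:secmombd}, \ref{prop:2mombd-DD1}, \ref{prop:DDsecmombd}) bound the last term by $\gamma_k^2 \tfrac{4L_0^2 n}{\pi}$.

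The cross term is handled by convexity. By Lemma~\ref{Lemma:SmoothProperties}\eqref{Lemma:d}, $f_{\eta_k}$ is convex, so
\begin{equation*}
\nabla f_{\eta_k}(x_k)^\top(x_k-x^*) \ge f_{\eta_k}(x_k)-f_{\eta_k}(x^*).
\end{equation*}
By \eqref{eq:Lemmad}, $f_{\eta_k}(x_k) \ge f(x_k)$ and $f_{\eta_k}(x^*) \le f^* + L_0\sqrt{n+1}\,\eta_k$. Combining these gives
\begin{equation*}
\mathbb{E}[\|x_{k+1}-x^*\|^2 \mid \mathcal{F}_k] \le \|x_k-x^*\|^2 - 2\gamma_k (f(x_k)-f^*) + 2L_0\sqrt{n+1}\,\gamma_k\eta_k + \tfrac{4L_0^2 n}{\pi}\gamma_k^2 .
\end{equation*}
The decision-dependent case needs more care, since only $f$ (not $F(\cdot,\xi)$) is assumed convex there. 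This is precisely why the argument routes through $f_{\eta_k}$ and Lemma~\ref{prop:KeepConvex}, with the Lipschitz constant supplied by Lemmas~\ref{lem:F-decdep1-Lip} and~\ref{lem:f-decdep2-Lip}.

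Under Assumption~\ref{asm:covstoc4} the additive terms are summable, and $f(x_k)-f^*\ge 0$ since $x_k\in X$. The Robbins--Siegmund lemma then yields two facts almost surely: $\|x_k-x^*\|$ converges, and $\sum_k \gamma_k (f(x_k)-f^*) < \infty$. Since $\sum\gamma_k=\infty$, the latter gives $\liminf_k f(x_k) = f^*$ a.s.

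The main obstacle is the final step, because the almost-sure event from Robbins--Siegmund depends on $x^*$ and $X^*$ may be uncountable. I would take a countable dense subset $\{q_j\}$ of $X^*$ and intersect the countably many full-measure events; on the intersection, $\|x_k-q_j\|$ converges for every $j$ and $\liminf f(x_k)=f^*$.

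On this event, $\{x_k\}$ is bounded, so there is a subsequence along which $f(x_{k_\ell})\to f^*$ and $x_{k_\ell}\to \hat x$. The limit $\hat x$ lies in $X$ (closedness of $X$, implicitly needed for the projection) and, by continuity of $f$, in $X^*$. Approximate $\hat x$ by some $q_j$ with $\|q_j-\hat x\|<\epsilon$. Convergence of $\|x_k-q_j\|$ forces its limit to be at most $\epsilon$, hence $\limsup_k\|x_k-\hat x\|\le 2\epsilon$. Letting $\epsilon\to0$ gives $x_k\to\hat x\in X^*$ almost surely.
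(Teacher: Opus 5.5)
Your proposal is correct and follows the paper's proof essentially step for step: nonexpansiveness of the projection, convexity of $f_{\eta_k}$ together with $f \le f_{\eta_k} \le f + L_0\sqrt{n+1}\,\eta_k$, the $\tfrac{4L_0^2 n}{\pi}$ second-moment bound, Robbins--Siegmund, and then a cluster-point argument. Your countable dense subset of $X^*$, together with the explicit use of closedness of $X$, makes rigorous the final step that the paper leaves informal: the paper pairs the a.s. convergence of $\|x_k - x^*\|$ for a fixed $x^*$ with a random limit point, without addressing that the exceptional null set depends on $x^*$.
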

		\begin{proof}
		By the non-expansive property of projection, for any $x^* \in X^*$ and $k \geq0$,
		\begin{align*}
			\|x_{k+1}-x^*\|^2 &\leq \|x_k-x^*-\gamma_k \left(\nabla f_{\eta_k}(x_k)+w_k\right)\|^2\\
			&=\|x_k-x^*\|^2-2\gamma_k (\nabla f_{\eta_k}(x_k)+w_k)^{\top}(x_k-x^*)+ \gamma_k^2\|\nabla f_{\eta_k}(x_k)+w_k\|^2.
		\end{align*}
		By Lemma~\ref{prop:KeepConvex}, $f_{\eta_k}$ is convex for any $\eta_k >0$. Consequently, for any $k \ge 0$,
		\begin{align*}
			\|x_{k+1}-x^*\|^2 &\leq  \|x_k-x^*\|^2-2\gamma_k \left(f_{\eta_k}(x_k)-f_{\eta_k}(x^*)\right)-2\gamma_k w_k^{\top}(x_k-x^*)\\
			&+ \gamma_k^2\left\| \tilde{g}_{\eta_k}(x_k,v_k,z_k, \xi_k)\right\|^2.
		\end{align*}
		Moreover, by Lemma~\ref{Lemma:SmoothProperties} \eqref{Lemma:d},  $f(x_k) \leq f_{\eta_k}(x_k)$ and $f_{\eta_k}(x^*) \leq f(x^*)+L_0\sqrt{n+1}\eta_k$, implying
		\begin{align}\label{eq:xk-x*}
			\|x_{k+1}-x^*\|^2 &\leq  \|x_k-x^*\|^2-2\gamma_k \left(f(x_k)-f(x^*)\right)+2L_0\sqrt{n+1}\gamma_k\eta_k-2\gamma_k w_k^{\top}(x_k-x^*)\nonumber\\
			& + \gamma_k^2\left\| {\tilde{g}_{\eta_k}}(x_k,v_i,z_i, \xi_k)\right\|^2.
		\end{align}
		Taking conditional expectations of both sides with respect to ${V_k}$, ${Z_k}$, and $\bxi_k$, given $\mathcal{F}_k$, we get
		\begin{align} \label{eq:Exkx*}
			\mathbb{E}[\|x_{k+1}-x^*\|^2 \mid \mathcal{F}_k] &\leq  \|x_k-x^*\|^2-2\gamma_k \left(f(x_k)-f(x^*)\right)+2L_0\sqrt{n+1}\gamma_k\eta_k\nonumber\\
			&+ \gamma_k^2\mathbb{E}[\| {\tilde{g}_{\eta_k}}(x_k,V_k,Z_k, \bxi_k)\|^2 \mid \mathcal{F}_k]\nonumber\\
			&\leq \|x_k-x^*\|^2-2\gamma_k \left(f(x_k)-f(x^*)\right)+2L_0\sqrt{n+1}\gamma_k\eta_k+ \tfrac{4L_0^2n}{ \pi}\gamma_k^2,
		\end{align}
		where \eqref{eq:Exkx*} follows from Prop.~\ref{prop:secmombd}. Applying the Robbins-Siegmund Lemma~\cite{robbins1971convergence}, for any $x^* \in X^*$,  the sequence $\{\|x_k-x^*\|\}$ converges and 
		$\sum_{k=0}^\infty \gamma_k \left(f(x_k)-f(x^*)\right) < \infty$ a.s.. The latter implies that $ \liminf _{k \rightarrow \infty} f\left(x_{k}\right)=f^{*} $ a.s. in view of the condition $ \sum_{k=0}^{\infty} \gamma_{k}=\infty$. Therefore, there exists a subsequence $\{x_{k_j}\}$ of $\{x_k\}$ such that $f(x_{k_j}) \rightarrow f^*$ a.s. By the continuity of $f$, the sublevel sets $\{x \in \mathbb{R}^{n}: f(x) \leq c\}$ are closed, implying that all cluster points of $\{x_{k_j}\}$ belong to the optimal set $X^*$. Therefore there exists a subsubsequence of $\{x_k\}$ that converges to some point in $X^*$ a.s. Combining with the a.s. convergence of $\{\|x_k-x^*\|\}$, the entire sequence $\{x_k\}$ converges to a point in $X^*$ a.s. .
		\end{proof}
		
		\smallskip
		
		Having derived a.s. convergence, we may then show that the function value of an averaged sequence admits non-asymptotic rate guarantees. Notably, this claim holds for both the non decision-dependent setting as well as the decision-dependent regime.
		
		\medskip
		
		\begin{proposition}[{\bf Rate of convergence under diminishing $\eta_k$}]\label{prop:convex_zogd_rate}
			Suppose Assumption~\ref{asm:covstoc2} 
			holds and the optimal set $X^*$ is nonempty. Suppose $\gamma_k=\eta_k=\tfrac{1}{\sqrt{n(k+1)}}$,  $\bar{x}_{K} \triangleq \tfrac{\sum_{k=0}^{{K-1}} \gamma_{k} x_{k}}{\sum_{k=0}^{{K-1}}\gamma_{k}}$ and $a_0=\|x_0-x^*\|$. Then, for all $K$,
			\begin{equation*}
				\mathbb{E}\left[f\left(\bar{x}_{K}\right)-f^{*}\right] \leq 
				\tfrac{a_{0}^{2}{\sqrt{n}}+\left(2L_0{\frac{\sqrt{n+1}}{\sqrt{n}}}+\frac{4L_0^2{\sqrt{n}}}{ \pi}\right)\left(1+\ln ({K})\right)}{{4} \sqrt{K+{1}}}
				\lesssim \mathcal{O}\left(\tfrac{\sqrt{n} \ln(K)}{\sqrt{K}}\right).
			\end{equation*}
		\end{proposition}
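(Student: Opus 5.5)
The plan is to reuse the one-step inequality \eqref{eq:xk-x*} derived in the proof of Proposition~\ref{prop:ascvgstochastic}, take full expectations, and telescope with the weights $\gamma_k$. Taking expectations of \eqref{eq:Exkx*} (tower property, $\mathbb{E}[w_k^\top(x_k-x^*)]=0$ since $x_k\in\cf_k$) and using the unified moment bound $\mathbb{E}[\|\tilde g_{\eta_k}(x_k)\|^2\mid\cf_k]\le \tfrac{4L_0^2 n}{\pi}$ (Propositions~\ref{prop:secmombd}, \ref{prop:2mombd-DD1}, \ref{prop:DDsecmombd}), I obtain, with $a_k\triangleq \mathbb{E}\|x_k-x^*\|^2$,
\begin{equation*}
2\gamma_k\,\mathbb{E}[f(x_k)-f^*]\le a_k-a_{k+1}+2L_0\sqrt{n+1}\,\gamma_k\eta_k+\tfrac{4L_0^2 n}{\pi}\gamma_k^2 .
\end{equation*}
Summing from $k=0$ to $K-1$ and dropping $a_K\ge 0$ gives $2\sum_k\gamma_k\mathbb{E}[f(x_k)-f^*]\le a_0^2+\bigl(2L_0\sqrt{n+1}+\tfrac{4L_0^2n}{\pi}\bigr)\sum_k\gamma_k^2$, since $\gamma_k\eta_k=\gamma_k^2$ under the choice $\gamma_k=\eta_k$.

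Next, convexity of $f$ (Assumption~\ref{asm:covstoc2}) and Jensen's inequality applied to the weighted average $\bar x_K$ yield $\mathbb{E}[f(\bar x_K)-f^*]\le \frac{\sum_k\gamma_k\mathbb{E}[f(x_k)-f^*]}{\sum_k\gamma_k}$; note $\bar x_K\in X$ by convexity of $X$. So it remains to estimate the two step-size sums with $\gamma_k=1/\sqrt{n(k+1)}$: $\sum_{k=0}^{K-1}\gamma_k^2=\frac1n\sum_{k=1}^{K}\frac1k\le \frac{1+\ln K}{n}$, and $\sum_{k=0}^{K-1}\gamma_k=\frac{1}{\sqrt n}\sum_{k=1}^K k^{-1/2}\ge \frac{2(\sqrt{K+1}-1)}{\sqrt n}$ via the integral comparison $\int_1^{K+1}t^{-1/2}dt$.

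Combining, $\mathbb{E}[f(\bar x_K)-f^*]\le \frac{\sqrt n\,a_0^2+\bigl(2L_0\frac{\sqrt{n+1}}{\sqrt n}+\frac{4L_0^2\sqrt n}{\pi}\bigr)(1+\ln K)}{4(\sqrt{K+1}-1)}$ after multiplying numerator and denominator by $\sqrt n$. The main obstacle I expect is purely bookkeeping of constants: the stated denominator $4\sqrt{K+1}$ rather than $4(\sqrt{K+1}-1)$ requires a slightly sharper lower bound on $\sum_{k=1}^K k^{-1/2}$ (e.g. $\ge \sqrt{K+1}$ for $K\ge$ small, or a different normalization of the telescoped constant); I would first try $\sum_{k=1}^K k^{-1/2}\ge 2\sqrt{K+1}-2\ge \sqrt{K+1}$ for $K\ge 3$ and absorb small $K$ separately, accepting that only the order $\mathcal{O}(\sqrt n\ln K/\sqrt K)$ is essential. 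The final rate then follows since $\sqrt{n+1}/\sqrt n\le\sqrt2$.
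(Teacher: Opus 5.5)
Your argument follows the paper's proof step for step. Both take the expected one-step recursion, telescope, apply Jensen to the $\gamma_k$-weighted average, and use the integral comparisons $\sum_{k<K}\gamma_k^2\le \frac{1+\ln K}{n}$ and $\sum_{k<K}\gamma_k\ge \frac{1}{\sqrt n}\int_1^{K+1}t^{-1/2}\,dt=\frac{2\sqrt{K+1}-2}{\sqrt n}$. Your resulting bound $\mathbb{E}[f(\bar x_K)-f^*]\le N/\bigl(4(\sqrt{K+1}-1)\bigr)$ is correct, where $N$ is the displayed numerator. The paper reaches the denominator $4\sqrt{K+1}$ only by replacing $2\sqrt{K+1}-2$ with $2\sqrt{K+1}$ in its last display, which is an inequality in the wrong direction.

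The gap is in your proposed repair. Using $\sum_{k=1}^K k^{-1/2}\ge \sqrt{K+1}$ (valid for $K\ge 3$) gives $N/(2\sqrt{K+1})$, not $N/(4\sqrt{K+1})$, so you lose a factor of $2$. No sharper lower bound can close this, because $\sum_{k=1}^K k^{-1/2}<2\sqrt K<2\sqrt{K+1}$. ``Absorbing small $K$ separately'' is also impossible, because the displayed inequality is false at $K=1$. Take $n=1$, $F(x,\xi)=|x|$ (so $L_0=1$), $X=\mathbb{R}$, and $x_0=2$. Then $\bar x_1=x_0$, the left side equals $2$, and the right side equals $(4+2\sqrt2+4/\pi)/(4\sqrt2)\approx 1.43$.

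The correct conclusion of your argument, and of the paper's, is the bound with denominator $4(\sqrt{K+1}-1)$, or $2\sqrt{K+1}$ for $K\ge 3$. This still yields the claimed $\mathcal{O}(\sqrt n\,(1+\ln K)/\sqrt K)$ rate, and you should state it that way rather than try to recover the paper's constant. One minor slip: you define $a_k=\mathbb{E}\|x_k-x^*\|^2$ but then write $a_0^2$; with the paper's $a_0=\|x_0-x^*\|$ the telescoped term should be $a_0^2$ consistently.
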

		\begin{proof}
		Let us rewrite equation~\eqref{eq:Exkx*} from the previous proof.
		\begin{equation*}
			\mathbb{E}[\|x_{k+1}-x^*\|^2 \mid \mathcal{F}_k] \leq  \|x_k-x^*\|^2-2\gamma_k \left(f(x_k)-f(x^*)\right)+2L_0\sqrt{n+1}\gamma_k\eta_k+\tfrac{4L_0^2n}{\pi}\gamma_k^2.
		\end{equation*}
		Taking unconditional expectations, we have
		\begin{equation*}
			2\mathbb{E}[\gamma_k \left(f(x_k)-f(x^*)\right)] \leq  \mathbb{E}[\|x_{k}-x^*\|^2 ]-\mathbb{E}[\|x_{k+1}-x^*\|^2 ]+2L_0\sqrt{n+1}\gamma_k\eta_k+ \tfrac{4L_0^2n}{\pi}\gamma_k^2.
		\end{equation*}
		Summing over $k=0,\cdots, {K-1}$,
		\begin{equation}\label{eq:sumfkf*}
			\sum_{k=0}^{{K-1}}2\mathbb{E}[\gamma_k \left(f(x_k)-f(x^*)\right)] \leq \|x_{0}-x^*\|^2 +\sum_{k=0}^{{K-1}}\left(2L_0\sqrt{n+1}\gamma_k\eta_k+\tfrac{4L_0^2n}{\pi}\gamma_k^2\right).
		\end{equation}
		Since $\bar{x}_{K} \triangleq \tfrac{\sum_{k=0}^{{K-1}} \gamma_{k} x_{k}}{\sum_{k=0}^{{K-1}}\gamma_{k}}$ and $f$ is convex, we can apply Jensen's inequality to \eqref{eq:sumfkf*} to obtain
		\begin{equation}\label{eq:ieqfkf*}
			2\mathbb{E}[\left(f(\bar{x}_K)-f(x^*)\right)] \leq 2 \tfrac{\sum_{k=0}^{{K-1}} \mathbb{E}\left[\gamma_{k}\left(f\left(x_{k}\right)-f^{*}\right)\right]}{\sum_{k=0}^{{K-1}} \gamma_{k}} \leq \tfrac{a_{0}^{2}+ \sum_{k=0}^{{K-1}}\left(2L_0\sqrt{n+1}\gamma_k\eta_k+\frac{4L_0^2n}{ \pi}\gamma_k^2\right)}{\sum_{k=0}^{{K-1}} \gamma_{k}} .
		\end{equation}
		Recall that $\gamma_k=\eta_k={\tfrac{1}{\sqrt{n(k+1)}}}$ for any $k$, implying that $\sum_{k=0}^{{K-1}} \gamma_{k} \geq \tfrac{1}{\sqrt{n}}(\int_{1}^{K+{1}} \tfrac{1}{\sqrt{x}} d x)$, while $\sum_{k=0}^{{K-1}} \gamma_k\eta_k=  \sum_{k=0}^{{K-1}} \gamma_{k}^2 \leq \tfrac{1}{n}(1+\int_{1}^{{K}} \tfrac{1}{x} d x)$. Using these inequalities in \eqref{eq:ieqfkf*}, our desired result follows as shown next.  
		\begin{equation*}
			2\mathbb{E}[\left(f(\bar{x}_K)-f(x^*)\right)] \leq \tfrac{a_{0}^{2}+{\frac{1}{n}}\left(2L_0\sqrt{n+1}+\frac{4L_0^2n}{ \pi}\right) \left(1+\int_{1}^{{K}} \frac{1}{x} d x\right)}{\frac{1}{\sqrt{n}}\left(\int_{1}^{K+{1}} \frac{1}{\sqrt{x}} d x\right)} \leq \tfrac{a_{0}^{2}{\sqrt{n}}+\left(2L_0{\frac{\sqrt{n+1}}{\sqrt{n}}}+\frac{4L_0^2{\sqrt{n}}}{ \pi}\right)\left(1+\ln ({K})\right)}{2 \sqrt{K+{1}}}.
		\end{equation*}
		\end{proof}
		\smallskip
		From the above analysis, one can easily derive the following when $\eta_k = \eta$ for all $k$.
		\smallskip
		
		\begin{corollary}[Rate of convergence under constant $\eta$]\label{Coro:2}
			Suppose the assumptions of Prop.~\ref{prop:convex_zogd_rate} hold and 
			let $\eta_k=\eta$ for any $k$. For any $K>0$, denote $S_K=\sum_{k=0}^{{K-1}} \gamma_k$.
			
			\noindent (i)  Then the following holds for any $K > 0$. 
			\begin{equation*}
				\mathbb{E}[f(\bar{x}_K)]-f(x^*) \leq \tfrac{1}{S_K}\sum_{k=0}^{K-1}\mathbb{E}[\gamma_k \left(f(x_k)-f(x^*)\right)] \leq L_0\sqrt{n+1}\eta +\tfrac{1}{S_K}\left[\tfrac{a_0^2}{2}+\tfrac{2L_0^2n}{\pi}\sum_{k=0}^{K-1}\gamma_k^2\right].
			\end{equation*}
			\noindent (ii) Suppose for a given $K > 0$, $\gamma_k=\tfrac{R}{L_0 \sqrt{nK}}$ and $\eta = \tfrac{1}{\sqrt{nK}}$ where $R>0$ is a constant. The required iteration and sample complexities (in function evaluations) for ensuring $\E\left[f(\bar{x}_K)\right]-f(x^\ast)\leq \varepsilon $ are $\cO(n\varepsilon^{-2})$ and $\cO(n^2 \varepsilon^{-2})$, respectively.
			
		\end{corollary}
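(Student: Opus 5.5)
For part (i), I will repeat the argument of Proposition~\ref{prop:convex_zogd_rate} with $\eta_k \equiv \eta$. The starting point is the conditional recursion \eqref{eq:Exkx*}, which holds for any positive $\gamma_k,\eta_k$:
\[
\mathbb{E}[\|x_{k+1}-x^*\|^2\mid\mathcal{F}_k]\le \|x_k-x^*\|^2-2\gamma_k(f(x_k)-f(x^*))+2L_0\sqrt{n+1}\gamma_k\eta+\tfrac{4L_0^2n}{\pi}\gamma_k^2 .
\]
The bias term comes from Lemma~\ref{Lemma:SmoothProperties}\eqref{Lemma:d}, and the variance term from Proposition~\ref{prop:secmombd} (or Propositions~\ref{prop:2mombd-DD1} and \ref{prop:DDsecmombd} in the decision-dependent cases).

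The steps are as follows.
\begin{enumerate}
\item Take total expectations.
\item Telescope over $k=0,\dots,K-1$ and drop the nonnegative term $\mathbb{E}\|x_K-x^*\|^2$.
\item Divide by $2S_K$. The bias contribution becomes $\frac{2L_0\sqrt{n+1}\,\eta\,S_K}{2S_K}=L_0\sqrt{n+1}\,\eta$, while the remaining terms give $\frac{1}{S_K}\big[\frac{a_0^2}{2}+\frac{2L_0^2n}{\pi}\sum_k\gamma_k^2\big]$.
\end{enumerate}
The first inequality in (i) is Jensen's inequality for the convex $f$ applied to the weighted average $\bar x_K$, exactly as in \eqref{eq:ieqfkf*}.

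For part (ii), I will substitute $\gamma_k=\frac{R}{L_0\sqrt{nK}}$ and $\eta=\frac{1}{\sqrt{nK}}$ into (i). Then $S_K=\frac{R\sqrt K}{L_0\sqrt n}$ and $\sum_k\gamma_k^2=\frac{R^2}{L_0^2 n}$, so
\[
\mathbb{E}[f(\bar x_K)]-f^*\le \frac{L_0\sqrt{n+1}}{\sqrt{nK}}+\frac{L_0\sqrt n}{R\sqrt K}\Big(\frac{a_0^2}{2}+\frac{2R^2}{\pi}\Big).
\]
The first term is $\mathcal{O}(L_0/\sqrt K)$ because $\sqrt{(n+1)/n}\le\sqrt2$. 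The second term is $\mathcal{O}(\sqrt{n}/\sqrt{K})$, with the constant $L_0(a_0^2/(2R)+2R/\pi)$; choosing $R$ proportional to $a_0$ optimizes it but is not needed. Requiring $C\sqrt{n/K}\le\varepsilon$ gives $K=\mathcal{O}(n\varepsilon^{-2})$. Each iteration evaluates $\tilde g_\eta$, which needs $2n$ function evaluations (two per coordinate, see \eqref{df:grad_est_i}), so the sample complexity is $\mathcal{O}(n^2\varepsilon^{-2})$.

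I expect no step to be a real obstacle. The one point to check is that the bias term enters as $\gamma_k\eta$ and not as a summable sequence. With constant $\eta$ it no longer vanishes after averaging and leaves the floor $L_0\sqrt{n+1}\,\eta$. This is why $\eta$ has to be tied to $K$, and the choice $\eta=1/\sqrt{nK}$ makes this floor $\mathcal{O}(1/\sqrt K)$, dominated by the $\sqrt{n/K}$ term. The a.s.\ convergence hypothesis of Assumption~\ref{asm:covstoc4} is not used here; only the one-step recursion is needed.
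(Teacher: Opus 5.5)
Your proposal is correct and follows the route the paper intends. The paper gives no separate proof; it only says the corollary follows from the analysis of Proposition~\ref{prop:convex_zogd_rate}. That analysis is what you carry out: take expectations in \eqref{eq:Exkx*} with $\eta_k\equiv\eta$, telescope, divide by $2S_K$, and apply Jensen's inequality. Your substitution $S_K=\tfrac{R\sqrt{K}}{L_0\sqrt{n}}$ and $\sum_k\gamma_k^2=\tfrac{R^2}{L_0^2 n}$, together with the count of $2n$ function evaluations per iteration, then correctly yields the $\mathcal{O}(n\varepsilon^{-2})$ iteration and $\mathcal{O}(n^2\varepsilon^{-2})$ sample complexities.
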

		\begin{remark}
			{Comparing} Cor.~\ref{Coro:2} to \cite[Thm 6]{nesterov2017random}, it can be observed that our scheme yields $\varepsilon$-error in $\mathcal{O}(n\varepsilon^{-2})$ iterations, as opposed to $\mathcal{O}(n^2 \varepsilon^{-2})$ in~\cite{nesterov2017random} with no degradation in sample complexity. This has a pronounced impact when the projection operation onto a set $X$ is computationally expensive. In general, this operation requires solving a potentially large-scale strongly convex optimization problem with potentially nonlinear constraints. {In fact, these benefits are supported by the numerics, as we observe later in this paper. } 
		\end{remark}
		We provide an intermediate lemma for claiming convergence of a suitable recursion~\cite{polyak1987introduction}.
		\begin{lemma}\label{lemma:stronglycov} 
			Let \( \left\{v_{k}\right\} \) be a sequence of nonnegative random variables, where \( \mathbb{E}\left[v_{0}\right]<\infty \), and let \( \left\{u_{k}\right\} \) and \( \left\{\mu_{k}\right\} \) be deterministic scalar sequences such that the following hold.
			\begin{enumerate}[label=(\roman*)]
				\item \( \mathbb{E}\left[v_{k+1} \mid v_{0}, \ldots, v_{k}\right] \leq\left(1-u_{k}\right) v_{k}+\mu_{k} \) a.s. for all \( k \geq 0 \);
				\item \( 0 \leq u_{k} \leq 1,  \mu_{k} \geq 0 \), for all \( k \geq 0 \);
				\item \( \sum_{k=0}^{\infty} u_{k}=\infty,  \sum_{k=0}^{\infty} \mu_{k}<\infty,  \lim _{k \rightarrow \infty} \frac{\mu_{k}}{u_{k}}=0 \).
			\end{enumerate}
			Then, \( v_{k} \rightarrow 0 \) almost surely as \( k \rightarrow \infty \).
		\end{lemma}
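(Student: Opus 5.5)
The plan is to reduce the statement to the Robbins--Siegmund lemma~\cite{robbins1971convergence} and then use the extra condition $\mu_k/u_k\to 0$ to pin the limit at zero. By (i) and (ii),
\[
\mathbb{E}[v_{k+1}\mid v_0,\ldots,v_k]\le v_k - u_k v_k + \mu_k .
\]
Since $v_k\ge 0$, $u_k v_k\ge 0$ and $\sum_k \mu_k<\infty$, Robbins--Siegmund yields two facts almost surely: $v_k$ converges to some finite random limit $v_\infty\ge 0$, and $\sum_k u_k v_k<\infty$. The hypothesis $\mathbb{E}[v_0]<\infty$ also lets us take expectations and iterate. This gives $\mathbb{E}[v_k]\le \mathbb{E}[v_0]+\sum_j\mu_j<\infty$, so the $v_k$ are integrable and the conditional expectations are well defined.

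Next we identify the limit. Suppose that on an event of positive probability $v_\infty>0$. On that event there are $\delta>0$ and $k_0$ such that $v_k\ge \delta$ for all $k\ge k_0$. Then $\sum_{k\ge k_0}u_kv_k\ge \delta\sum_{k\ge k_0}u_k=\infty$ by (iii), which contradicts $\sum_k u_kv_k<\infty$ a.s. Hence $v_\infty=0$ almost surely, i.e.\ $v_k\to 0$ a.s.

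This argument does not actually use $\mu_k/u_k\to 0$; that condition is what gives $\mathbb{E}[v_k]\to 0$. The main step to check is therefore only that the conditioning in Robbins--Siegmund matches: it is stated for a filtration, and the natural filtration $\sigma(v_0,\ldots,v_k)$ works directly. If one instead wants a self-contained route avoiding Robbins--Siegmund, I would first show convergence in mean. Taking expectations gives $e_{k+1}\le(1-u_k)e_k+\mu_k$ with $e_k=\mathbb{E}[v_k]$, and Polyak's deterministic lemma uses $\mu_k/u_k\to0$ and $\sum u_k=\infty$ to get $e_k\to0$. The supermartingale $v_k+\sum_{j\ge k}\mu_j$ then converges a.s. by the nonnegative supermartingale convergence theorem, and Fatou's lemma forces the limit to be $0$ a.s. Either route finishes the proof; I would present the Robbins--Siegmund version as primary.
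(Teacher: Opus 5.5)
Your proof is correct. The paper does not prove this lemma at all; it only cites it from Polyak~\cite{polyak1987introduction}, so you are supplying an argument where the paper has none.

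Your primary route differs from the classical one. You apply Robbins--Siegmund with $b_k=u_kv_k$ and $c_k=\mu_k$, which gives $v_k\to v_\infty$ and $\sum_k u_kv_k<\infty$ a.s. You then use $\sum_k u_k=\infty$ to force $v_\infty=0$. This reuses the tool the paper already invokes in Proposition~\ref{prop:ascvgstochastic}, and it shows the conclusion holds under weaker hypotheses. It needs neither $\mu_k/u_k\to 0$, nor $u_k\le 1$ (only $u_k\ge 0$, so that $u_kv_k\ge 0$), nor $\mathbb{E}[v_0]<\infty$, since Robbins--Siegmund requires no integrability. In particular, the check of $\lim_k(\cdots)/(\mu\gamma_k)=0$ in Proposition~\ref{prop:Strongconvex} is superfluous, and so is the unverified requirement $\mu\gamma_k\le 1$.

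Your secondary route is the classical argument and explains why the ratio hypothesis appears in the cited statement. It proceeds by convergence in mean through the deterministic recursion, then the nonnegative supermartingale $v_k+\sum_{j\ge k}\mu_j$, then Fatou. It has the side benefit of also giving $\mathbb{E}[v_k]\to 0$.

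One small correction: you say the ratio condition ``is what gives $\mathbb{E}[v_k]\to 0$,'' but even that is unnecessary under the stated hypotheses. With $e_k=\mathbb{E}[v_k]$, the quantity $e_k+\sum_{j\ge k}\mu_j$ is nonincreasing, so $e_k$ converges. Also $\sum_k u_ke_k\le e_0+\sum_k\mu_k<\infty$, so the limit must be $0$ because $\sum_k u_k=\infty$.
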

		\begin{proposition}[{\bf a.s. convergence for strongly convex $f$}]\label{prop:Strongconvex}
			Suppose Assumptions~\ref{asm:covstoc2} and \ref{asm:covstoc4} hold, and the optimal set $X^*$ is nonempty. If, in addition, $f$ is $\mu$-strongly convex\footnote{note in the non-decision dependent setting of Section~\ref{sec:esGsestimator}, we can assume that $F(\bullet, \xi)$ is $\mu$-strongly convex to obtain $f = \E_{\bxi}[F(\bullet, \bxi)]$ is $\mu$-strongly convex.} on $X$ almost surely, the sequence $\{x_k\}$ generated by \eqref{algstoc} converges almost surely to a unique optimal solution $x^*$.
		\end{proposition}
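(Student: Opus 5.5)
We plan to reuse the basic recursion from the proof of Proposition~\ref{prop:ascvgstochastic}, but now exploit strong convexity to obtain a contraction in $\|x_k-x^*\|^2$, and then invoke Lemma~\ref{lemma:stronglycov}. Since $f$ is $\mu$-strongly convex and $X$ is closed and convex, the optimal set $X^*$ is the singleton $\{x^*\}$. By Lemma~\ref{prop:KeepConvex} with $\mu>0$, each $f_{\eta_k}$ is also $\mu$-strongly convex. Hence
\[
\nabla f_{\eta_k}(x_k)^\top(x_k-x^*) \ge f_{\eta_k}(x_k)-f_{\eta_k}(x^*)+\tfrac{\mu}{2}\|x_k-x^*\|^2 .
\]

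Expanding $\|x_{k+1}-x^*\|^2$ via nonexpansivity of $\Pi_X$ exactly as in \eqref{eq:xk-x*}, we use $f(x_k)\le f_{\eta_k}(x_k)$ and $f_{\eta_k}(x^*)\le f(x^*)+L_0\sqrt{n+1}\eta_k$ from Lemma~\ref{Lemma:SmoothProperties}\eqref{Lemma:d}. We then take conditional expectations given $\mathcal{F}_k$ (so the $w_k$ term vanishes) and apply the second-moment bound $\tfrac{4L_0^2n}{\pi}$. Dropping the nonnegative term $2\gamma_k(f(x_k)-f(x^*))$, this yields
\[
\mathbb{E}[\|x_{k+1}-x^*\|^2\mid\mathcal{F}_k]\le (1-\mu\gamma_k)\|x_k-x^*\|^2+2L_0\sqrt{n+1}\gamma_k\eta_k+\tfrac{4L_0^2n}{\pi}\gamma_k^2 .
\]

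Next we set $v_k=\|x_k-x^*\|^2$, $u_k=\mu\gamma_k$ and $\mu_k=2L_0\sqrt{n+1}\gamma_k\eta_k+\tfrac{4L_0^2n}{\pi}\gamma_k^2$, and check the hypotheses of Lemma~\ref{lemma:stronglycov}.
\begin{itemize}
\item $\sum u_k=\infty$ follows from $\sum\gamma_k=\infty$.
\item $\sum\mu_k<\infty$ follows from $\sum\gamma_k^2<\infty$ and $\sum\gamma_k\eta_k<\infty$ (Assumption~\ref{asm:covstoc4}).
\item $u_k\le 1$ holds for all large $k$, since $\gamma_k\to0$. We start the recursion at such an index $k_0$; the early indices are harmless because $\mathbb{E}[v_{k_0}]<\infty$ by the bounded second moments. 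Conditioning on $\mathcal{F}_k$ rather than on $v_0,\dots,v_k$ is fine by the tower property.
\end{itemize}

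The main obstacle is the ratio condition $\mu_k/u_k=\tfrac{1}{\mu}\big(2L_0\sqrt{n+1}\eta_k+\tfrac{4L_0^2n}{\pi}\gamma_k\big)\to0$. The $\gamma_k$ part is automatic, but $\eta_k\to0$ is not explicitly implied by $\sum\gamma_k\eta_k<\infty$. If it is not available, we avoid the ratio condition altogether and use Robbins--Siegmund instead, as in Proposition~\ref{prop:ascvgstochastic}:
\begin{itemize}
\item The recursion above, with the summable $\mu_k$, shows that $v_k$ converges a.s. and that $\sum_k\gamma_k\big(\mu v_k+2(f(x_k)-f^*)\big)<\infty$ a.s.
\item Since $\sum\gamma_k=\infty$, this forces $\liminf_k v_k=0$.
\item Together with the a.s. convergence of $v_k$, we conclude $v_k\to0$ a.s., that is, $x_k\to x^*$ almost surely.
\end{itemize}
Under the additional natural condition $\eta_k\to0$, the route through Lemma~\ref{lemma:stronglycov} applies directly.
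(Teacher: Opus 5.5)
Your proposal is correct. Your main line is the paper's own proof: the same contraction recursion $\mathbb{E}[v_{k+1}\mid\mathcal{F}_k]\le(1-\mu\gamma_k)v_k+\mu_k$, followed by Lemma~\ref{lemma:stronglycov}. The difference is in how the lemma's hypotheses are handled. The paper simply asserts $\mu_k/u_k\to0$ and never checks $u_k=\mu\gamma_k\le 1$. As you correctly note, since $\gamma_k\to0$ the ratio condition is equivalent to $\eta_k\to0$, and Assumption~\ref{asm:covstoc4} does not imply this. For example, take $\gamma_k=1/(k+1)$ and $\eta_k=(k+1)^{-1/2}$, except $\eta_k=1$ whenever $k+1$ is a power of $2$; then $\sum_k\gamma_k\eta_k<\infty$ but $\eta_k\not\to0$.

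So the paper's argument tacitly needs an extra hypothesis. Your Robbins--Siegmund fallback works under exactly the stated assumptions: $v_k$ converges a.s., $\sum_k\gamma_k v_k<\infty$ a.s., and $\sum_k\gamma_k=\infty$ forces the limit to be $0$. That fallback is really Proposition~\ref{prop:ascvgstochastic} specialized to the singleton $X^*=\{x^*\}$, so strong convexity is only needed for uniqueness and quadratic growth.

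In fact you can drop Lemma~\ref{prop:KeepConvex} entirely. Keep the term $2\gamma_k(f(x_k)-f^*)$ from the merely convex recursion, and use the growth bound $f(x)-f^*\ge\frac{\mu}{2}\|x-x^*\|^2$ for $x\in X$, which follows from $\mu$-convexity of $f$ on $X$ and optimality of $x^*$. This matters because Lemma~\ref{prop:KeepConvex} requires $\mu$-convexity of $f$ on all of $\mathbb{R}^n$ (the Gaussian kernel has full support), whereas the statement literally assumes it only on $X$.

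What the paper's contraction form buys is not the a.s.\ claim itself but the subsequent $\mathcal{O}(n/k)$ mean-square rate, which is built on the same recursion via Lemma~\ref{Lemma:ak}.
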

		\begin{proof}
		By the non-expansive property of the Euclidean projection, for any $x^* \in X^*$ and $k>0$,
		\begin{align*}
			\|x_{k+1}-x^*\|^2 &\leq \|x_k-x^*-\gamma_k \left(\nabla f_{\eta_k}(x_k)+w_k\right)\|^2\\
			&=\|x_k-x^*\|^2-2\gamma_k (\nabla f_{\eta_k}(x_k)+w_k)^{\top}(x_k-x^*)+ \gamma_k^2\|\nabla f_{\eta_k}(x_k)+w_k\|^2.
		\end{align*}
		Since $f$ is $\mu$-strongly convex, we have 
		\begin{align*}
			\|x_{k+1}-x^*\|^2 &\leq  \|x_k-x^*\|^2-2\gamma_k \left(f_{\eta_k}(x_k)-f_{\eta_k}(x^*)\right)-\mu\gamma_k\|x_k-x^*\|^2\\
			&-2\gamma_k w_k^{\top}(x_k-x^*) + \gamma_k^2\|\tilde{g}_{\eta_k}(x_k,v_k,z_k,\xi_k)\|^2.
		\end{align*}
		Moreover, by Lemma~\ref{Lemma:SmoothProperties} \eqref{Lemma:d} we have $f(x_k) \leq f_{\eta_k}(x_k)$ and $f_{\eta_k}(x^*) \leq f(x^*)+L_0\sqrt{n+1}\eta_k$. Thus
		\begin{align}\label{eq:norm2xkx*}
			\|x_{k+1}-x^*\|^2 &\leq  \|x_k-x^*\|^2-2\gamma_k \left(f(x_k)-f(x^*)\right)+2L_0\sqrt{n+1}\gamma_k\eta_k-\mu\gamma_k\|x_k-x^*\|^2\nonumber\\
			&-2\gamma_k w_k^{\top}(x_k-x^*)+ \gamma_k^2\| \tilde{g}_{\eta_k}(x_k,v_k,z_k,\xi_k)\|^2.
		\end{align}
		Since $x^*$ is optimal, $f(x_k)-f(x^*) \geq 0$. Thus, from \eqref{eq:norm2xkx*}, {we obtain} 
		\begin{align*}
			\|x_{k+1}-x^*\|^2 &\leq  (1-\mu \gamma_k)\|x_k-x^*\|^2+2L_0\sqrt{n+1}\gamma_k\eta_k
			-2\gamma_k w_k^{\top}(x_k-x^*)\\
			&+ \gamma_k^2\| {\tilde{g}_{\eta_k}}(x_k,v_k,z_k,\xi_k)\|^2.
		\end{align*}
		Taking expectations of both sides with respect to {${V_k}$, ${Z_k}$ and $\bxi_k$}, given $\mathcal{F}_k$, we obtain
		\begin{align}\label{eq:norm2xkx*cond}
			\mathbb{E}[\|x_{k+1}-x^*\|^2 \mid \mathcal{F}_k] &\leq  (1-\mu\gamma_k)\|x_k-x^*\|^2+2L_0\sqrt{n+1}\gamma_k\eta_k\nonumber\\
			&\quad \quad \quad \quad \quad+ \gamma_k^2\mathbb{E}\left[\| {\tilde{g}_{\eta_k}}(x_k,V_k,Z_k,\bxi_k)\|^2 \mid \mathcal{F}_k\right]\nonumber\\
			&\leq (1-\mu\gamma_k)\|x_k-x^*\|^2+2L_0\sqrt{n+1}\gamma_k\eta_k+ \tfrac{4L_0^2n}{ \pi}\gamma_k^2.
		\end{align}
		From Assumption~\ref{asm:covstoc4}, $\sum_{k=0}^\infty \gamma_k=\infty$ and $\sum_{k=0}^\infty (2L_0\sqrt{n+1}\gamma_k\eta_k+ \frac{4L_0^2n}{ \pi}\gamma_k^2)<\infty$. In addition note,\\
		$\lim _{k \rightarrow \infty}(2L_0\sqrt{n+1}\gamma_k\eta_k+ \frac{4L_0^2n}{ \pi}\gamma_k^2)/\mu\gamma_k \rightarrow 0$. Thus by Lemma~\ref{lemma:stronglycov} the result holds.
		\end{proof}
		
		\begin{lemma}~\cite{shapiro2014lectures}\label{Lemma:ak}
			Consider the following recursion: \( {b}_{k+1} \leq(1-2 c \theta / k) {b}_{k}+\frac{1}{2} \theta^{2} M^{2} / k^{2} \), where \( \theta \) and \( M \) are positive constants, \( {b}_{k} \geq 0 \), and \( (1-2 c \theta)<0 \). Then for \( k \geq 1 \), we have that
			$2 {b}_{k} \leq \frac{\max \left(\frac{\theta^{2}}{2 c \theta-\lfloor 2 c \theta\rfloor} M^{2}, 2 {b}_{1}\right)}{k}$.
		\end{lemma}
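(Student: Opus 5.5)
The plan is induction on $k$. I would first prove the claim with the constant $C\triangleq \max\left(\tfrac{\theta^2M^2}{2c\theta-1},\,2b_1\right)$, which is the form in \cite{shapiro2014lectures}, and then compare it with the constant in the statement. Since $2c\theta>1$ we have $\lfloor 2c\theta\rfloor\ge 1$, so $0\le 2c\theta-\lfloor 2c\theta\rfloor\le 2c\theta-1$. Hence $\tfrac{\theta^2}{2c\theta-\lfloor 2c\theta\rfloor}M^2\ge \tfrac{\theta^2M^2}{2c\theta-1}$, and the bound is vacuous when $2c\theta$ is an integer. So a bound of the form $2b_k\le C/k$ with the smaller constant would imply the stated one.

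The base case $k=1$ holds because $C\ge 2b_1$. For the inductive step, assume $2b_k\le C/k$ and multiply the recursion by $2$:
\begin{equation*}
2b_{k+1}\le \left(1-\tfrac{2c\theta}{k}\right)2b_k+\tfrac{\theta^2M^2}{k^2}.
\end{equation*}
\emph{Regime $k\ge 2c\theta$.} Here the coefficient $1-2c\theta/k$ is nonnegative, so the inductive hypothesis can be substituted directly. The target $2b_{k+1}\le C/(k+1)$ then follows from
\begin{equation*}
\tfrac{C}{k}-\tfrac{2c\theta C}{k^2}+\tfrac{\theta^2M^2}{k^2}\le \tfrac{C}{k+1}.
\end{equation*}
Using $\tfrac{C}{k}-\tfrac{C}{k+1}=\tfrac{C}{k(k+1)}$, this is equivalent to $\theta^2M^2\le C\left(2c\theta-\tfrac{k}{k+1}\right)$. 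That holds because $C(2c\theta-1)\ge\theta^2M^2$ and $\tfrac{k}{k+1}<1$. This is the routine part.

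\emph{Regime $k<2c\theta$, i.e.\ $k\le\lfloor 2c\theta\rfloor$.} This is the step I expect to be the main obstacle. The coefficient $1-2c\theta/k$ is negative, so the inductive hypothesis is useless. Nonnegativity of $b_k$ only gives $2b_{k+1}\le \theta^2M^2/k^2$, and we must check that this is at most $C'/(k+1)$ for the constant $C'$ that is actually used.

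With $C$ alone this can fail at $k=1$, since it would need $2(2c\theta-1)\le 1$. I suspect this is precisely why the statement uses $2c\theta-\lfloor 2c\theta\rfloor$, which can be small and thus make the constant large. I would therefore first try to prove
\begin{equation*}
\tfrac{\theta^2M^2}{k^2}\le \tfrac{\theta^2M^2}{(2c\theta-\lfloor 2c\theta\rfloor)(k+1)}\qquad\text{for } 1\le k\le\lfloor 2c\theta\rfloor.
\end{equation*}
This reduces to $(k+1)(2c\theta-\lfloor 2c\theta\rfloor)\le k^2$. It holds easily for $k\ge 2$, since then $k+1\le k^2$ and the fractional part is below $1$. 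At $k=1$ it needs the fractional part to be at most $1/2$. If that fails, my fallback is to treat $k=1,2$ directly from the recursion, using the freedom in $\max(\cdot,2b_1)$, or else to restrict the lemma to indices $k\ge\lfloor 2c\theta\rfloor$. In that case the first regime applies from then on and the induction is anchored at $k_0=\lceil 2c\theta\rceil$.

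After these cases are settled, I would run the second-regime induction with the stated constant, which dominates $C$, so the computation in the first regime still applies. This closes the induction and gives $2b_k\le \max\left(\tfrac{\theta^2}{2c\theta-\lfloor 2c\theta\rfloor}M^2,\,2b_1\right)/k$ for all $k\ge1$.
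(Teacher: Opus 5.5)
Your two-regime induction is the right skeleton. The step for $k\ge 2c\theta$ is correct, and so is the negative-coefficient step bounding $2b_{k+1}$ for $2\le k\le\lfloor 2c\theta\rfloor$. The paper gives no proof of its own here; it only cites \cite{shapiro2014lectures}. The gap is the case you flag yourself: bounding $2b_2$ when $r\triangleq 2c\theta-\lfloor 2c\theta\rfloor>1/2$. Neither fallback closes it.

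Part of what is missing is the idea that explains the floor in the constant. Because $b_{k+1}\ge 0$, for $k<2c\theta$ the recursion also bounds $b_k$ \emph{from above}: $(2c\theta/k-1)\,b_k\le \theta^2M^2/(2k^2)$. Equivalently, $2b_k\le \theta^2M^2/\bigl(k(2c\theta-k)\bigr)\le \theta^2M^2/(kr)$ for every $k\le\lfloor 2c\theta\rfloor$, assuming $2c\theta\notin\mathbb{Z}$. When $\lfloor 2c\theta\rfloor\ge 2$, this gives $2b_2\le \theta^2M^2/(2r)$, which is at most half the stated constant. Your argument then completes the proof.

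When $\lfloor 2c\theta\rfloor=1$ and $2c\theta\in(3/2,2)$, however, nothing can close the step, because the statement is false there. Take $\theta=M=1$, $2c=1.9$, $b_1=0$, $b_2=\tfrac12$, and $b_k=0$ for $k\ge 3$. The recursion holds with equality at $k=1$, and for $k\ge 2$ its right-hand side is positive since $1-1.9/k>0$. Yet $2b_2=1$, while the claimed bound is $\tfrac12\max\{1/0.9,\,0\}=5/9$.

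So the ``freedom in $\max(\cdot,2b_1)$'' is illusory, because $b_1$ may vanish. Anchoring at $k_0=\lceil 2c\theta\rceil$ proves a different statement, and for $k_0=2$ its base case is exactly the inequality that fails.

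What your argument does prove, essentially verbatim, is the corrected bound $2b_k\le \max\{\theta^2M^2/(2c\theta-1),\,2\theta^2M^2,\,2b_1\}/k$ for all $k\ge 1$. The inequality $(k+1)/k^2\le 2$ handles every negative-coefficient step.

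Two further points:
\begin{itemize}
\item For integer $2c\theta$ the stated constant is undefined, not vacuous.
\item The constant $\theta^2M^2/(2c\theta-1)$ of \cite{shapiro2014lectures}, which the paper's subsequent strongly convex rate uses, also fails for every $2c\theta>3/2$, as your remark about $C$ partly anticipated.
\end{itemize}
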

		We now prove a ZO variant of an analogous result from~\cite{shapiro2014lectures}.
		\begin{proposition}[{\bf Convergence in mean and rate statement under strong convexity}]  
			Under the setting of Proposition~\ref{prop:Strongconvex}. In addition, suppose $a_0 = {\|x_0-x^{*}\|} $ and \( \gamma_{k}=\tfrac{\theta}{k} \), where \( \theta>1 /  \mu \). Then the sequence \( \left\{x_{k}\right\} \) converges to \( x^{*} \) in mean and
			\begin{equation*}
				\mathbb{E}\left[\left\|x_{k}-x^{*}\right\|^{2}\right] \leq \tfrac{\max \left\{\theta^{2} {(4L_0\sqrt{n+1}+ \frac{8L_0^2n}{ \pi})}(\mu \theta-1)^{-1}, {a_0^2} \right\}}{k}{\lesssim \mathcal{O}\left(\tfrac{n}{k}\right)}.
			\end{equation*}
		\end{proposition}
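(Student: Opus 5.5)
Starting from the conditional recursion \eqref{eq:norm2xkx*cond} established in the proof of Proposition~\ref{prop:Strongconvex}, I will take unconditional expectations and set $b_k \triangleq \tfrac12\mathbb{E}[\|x_k-x^*\|^2]$. This gives
\begin{equation*}
	\mathbb{E}[\|x_{k+1}-x^*\|^2] \leq (1-\mu\gamma_k)\mathbb{E}[\|x_k-x^*\|^2] + 2L_0\sqrt{n+1}\gamma_k\eta_k + \tfrac{4L_0^2 n}{\pi}\gamma_k^2 .
\end{equation*}
The goal is to cast this in the form of Lemma~\ref{Lemma:ak} with $c=\mu/2$, since then $1-2c\theta = 1-\mu\theta<0$ matches the hypothesis $\theta>1/\mu$.

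The main obstacle is the smoothing bias term $2L_0\sqrt{n+1}\gamma_k\eta_k$, which must decay like $1/k^2$. Since the statement does not fix $\eta_k$, I will choose $\eta_k \le \gamma_k = \theta/k$; I expect the intended choice is $\eta_k=\gamma_k$, consistent with the earlier propositions. With this choice,
\begin{equation*}
	2L_0\sqrt{n+1}\gamma_k\eta_k + \tfrac{4L_0^2n}{\pi}\gamma_k^2 \le \tfrac{\theta^2}{k^2}\Big(2L_0\sqrt{n+1}+\tfrac{4L_0^2 n}{\pi}\Big) = \tfrac{1}{2}\theta^2 M^2/k^2,
\end{equation*}
where $M^2 \triangleq 4L_0\sqrt{n+1}+\tfrac{8L_0^2n}{\pi}$. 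This is exactly the constant appearing in the claim. Dividing the recursion by two, I will then verify that it has the form $b_{k+1}\le(1-2c\theta/k)b_k+\tfrac12\theta^2M^2/k^2$ up to a harmless factor-of-two normalization. To keep the constants exact, I will apply the lemma directly to $a_k=\mathbb{E}\|x_k-x^*\|^2$ with $M^2$ redefined accordingly.

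Lemma~\ref{Lemma:ak} then yields $a_k \le \max\{\theta^2M^2/(\mu\theta-\lfloor\mu\theta\rfloor),\, a_1\}/k$. It remains to reconcile two things with the stated bound. First, I will replace $\mu\theta-\lfloor\mu\theta\rfloor$ by $\mu\theta-1$; the cleanest way is to run the elementary induction behind Lemma~\ref{Lemma:ak} directly. Assuming $a_k\le Q/k$, the recursion gives $a_{k+1}\le Q/k - (\mu\theta Q - \theta^2M^2)/k^2$ (up to the normalization of $M$). This is at most $Q/(k+1)$ whenever $Q\ge \theta^2M^2/(\mu\theta-1)$. Second, I will replace $a_1$ by $a_0$. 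With indexing starting at $k=1$ and $\gamma_k=\theta/k$, I identify the initial iterate with $a_0=\|x_0-x^*\|$ as the base of the induction, so the base case $a_1\le Q$ holds with $Q=\max\{\cdot,a_0^2\}$. Convergence in mean then follows immediately because the bound tends to zero, and the $\mathcal{O}(n/k)$ rate follows since $M^2=\mathcal{O}(n)$.
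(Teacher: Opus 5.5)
Your route is the paper's. You take unconditional expectations in \eqref{eq:norm2xkx*cond} and set $\eta_k=\gamma_k=\theta/k$, so that $a_k\triangleq\mathbb{E}\|x_k-x^*\|^2$ satisfies $a_{k+1}\le(1-\mu\theta/k)a_k+\tfrac12\theta^2M^2/k^2$ with $M^2=4L_0\sqrt{n+1}+\tfrac{8L_0^2n}{\pi}$. This is already exactly the form of Lemma~\ref{Lemma:ak} with $2c=\mu$, so no halving is needed. You then make the same relabeling of the first iterate as $a_0$ that the paper makes.

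Where you go beyond the paper, the step fails. In your direct induction you substitute $a_k\le Q/k$ into $(1-\mu\theta/k)a_k$. But since $\theta>1/\mu$, this coefficient is negative for every $k<\mu\theta$, and in particular at $k=1$, so the substitution reverses the inequality. In that regime the only valid move is to drop the nonpositive term, which gives $a_{k+1}\le\tfrac12\theta^2M^2/k^2$. That is $\le Q/(k+1)$ for all such $k$ only if $Q\ge\theta^2M^2$. With $Q=\max\{\theta^2M^2(\mu\theta-1)^{-1},a_0^2\}$ this holds when $\mu\theta\le 2$. For $\mu\theta>2$, however, the recursion alone cannot deliver the stated constant: an initial value $0$ followed by $\tfrac12\theta^2M^2$ satisfies the recursion, yet exceeds $Q/2=\theta^2M^2/(2(\mu\theta-1))$.

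The repair is to split the induction.
\begin{itemize}
\item For $k\le\mu\theta$, drop the nonpositive term and use $\tfrac12\theta^2M^2/k^2\le\theta^2M^2/(k+1)$, which holds for all $k\ge1$.
\item For $k>\mu\theta$, where $0<1-\mu\theta/k<1$, your computation goes through and needs only $Q(\mu\theta-1)\ge\tfrac12\theta^2M^2$.
\end{itemize}
Together these give $\mathbb{E}\|x_k-x^*\|^2\le\max\{\theta^2M^2\max\{1,(\mu\theta-1)^{-1}\},a_0^2\}/k$. This is exactly the stated bound when $\theta\in(1/\mu,2/\mu]$, and it is still $\mathcal{O}(n/k)$ otherwise. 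Alternatively, shift the steps to $\gamma_k=\theta/(k+k_0)$ with $k_0\ge\mu\theta$ so the factor never turns negative.

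The paper's own proof does not resolve this either. It simply invokes Lemma~\ref{Lemma:ak}, whose denominator $2c\theta-\lfloor 2c\theta\rfloor$ (the fractional part of $\mu\theta$) does not match $\mu\theta-1$, and which has the same issue with the negative factor at small $k$. You were right to flag the mismatch, but your fix needs this extra case.
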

		\begin{proof}
		Recall the recursion obtained from \eqref{eq:norm2xkx*cond}:
		\begin{equation*}
			\mathbb{E}\left[\left\|x_{k+1}-x^{*}\right\|^{2} \mid \mathcal{F}_{k}\right] \leq (1-\mu\gamma_k)\|x_k-x^*\|^2+2L_0\sqrt{n+1}\gamma_k\eta_k+ \tfrac{4L_0^2n}{ \pi}\gamma_k^2.
		\end{equation*}
		By setting $\eta_k=\gamma_k=\frac{\theta}{k}$, it follows that by taking unconditional expectations we have that
		\begin{equation*}
			\mathbb{E}\left[\left\|x_{k+1}-x^{*}\right\|^{2}\right] \leq (1-\mu\gamma_k)\mathbb{E}\left[\|x_k-x^*\|^2\right]+(2L_0\sqrt{n+1}+ \tfrac{4L_0^2n}{ \pi})\gamma_k^2.
		\end{equation*}	
		Choosing \( \theta>1 /  \mu \) and \( M^{2} / 2=(2L_0\sqrt{n+1}+ \frac{4L_0^2n}{ \pi}) \) in Lemma~\ref{Lemma:ak}, {and noting that $a_0^2 = {\|x_0-x^{*}\|}^2 $}, we obtain the result for all \( k \).
		\end{proof}
		
		\subsection{{High probability guarantees}}
		{In this subsection, we provide a high probability guarantee for a general choice of steplength and smoothing parameter sequences. These statements are then refined for a specific choice of steplengths. We proceed to show that such guarantees pave the way for developing a rate of convergence provided in an almost-sure sense (rather than for the mean sub-optimality).}   
		\begin{proposition}[\bf{Concentration inequality for function value}]\label{prop:concentration}
			{Let Assumption~\ref{asm:covstoc2} hold and assume $X$ is bounded with a finite-valued diameter $D_X$, where$D_X \, \triangleq \, \sup_{x, x' \in X} \| x - x'\|$ denote the diameter of  $X$.}
			\begin{itemize}
				\item[(i)] Denote $I_{A_K}=\sqrt{8\left(1+\frac{4n}{\pi}\right)D_X^2L_0^2\sum_{k=0}^{K-1}\gamma_k^2}$ and $I_{B_K}=\frac{4nL_0^2}{\pi} \sum_{k=0}^{K-1}\gamma_k^2$. Then {for any $\lambda > 0$,}
				\begin{equation*}
					\mathbb{P}\left(f(\bar{x}_K)-f(x^*)\geq \frac{D_X^2+\sum_{k=0}^{K-1}2L_0\sqrt{n+1}\gamma_k\eta_k 
						+\lambda\left(I_{A_K}+I_{B_K}\right)}{2\sum_{k=0}^{K-1} \gamma_k}\right)\leq \frac{1}{\lambda}+\frac{1}{\lambda^2}.
				\end{equation*}
			\end{itemize}
			{Suppose that the sub-Gaussian property, defined as\footnote{By Jensen's inequality this implies the $L^2$ Lipschitz property of $F$.}
				\begin{equation}\label{eq:sub-Gaussian}
					\E_{\bxi_k}\left[\exp\left(\|F(x,\bxi_k)-F(y,\bxi_k)\|^2\right)\right]\leq \exp\left(L_0^2\|x-y\|^2\right),
				\end{equation}
				is satisfied for (ii), (iii), and (iv)}.
			\begin{itemize}
				\item[(ii)]  There exists a constant $\al \in (0,1)$ only depending on $n$ such that for any $\varepsilon> 0$, 
				\begin{equation*}
					\mathbb{P}\left(f(\bar{x}_K)-f(x^*)\geq \frac{c_{D,K,n}+\varepsilon+\sqrt{\frac{2nc_Lc_K}{\al}\varepsilon}}{2\sum_{k=0}^{K-1} \gamma_k}\right)\leq \exp\left(-\varepsilon/3\right)+\left(\frac{\varepsilon}{c_nc_K}\right)^{c_K}\exp\left(c_K-\varepsilon/c_n\right),
				\end{equation*}
				where $c_n=\frac{4nL_0^2}{\pi}$, $c_L=8D_X^2L_0^2,$ $c_K=\sum_{k=0}^{K-1}\gamma_k^2$ and $c_{D,K,n} =D_X^2+\sum_{k=0}^{K-1}2L_0\sqrt{n+1}\gamma_k\eta_k+\sqrt{\frac{2nc_Lc_K}{\al}}$. 
				\item[(iii)] Suppose $\gamma_k=\eta_k=1/(k+1)^{1/2}$ and $\varepsilon=\max\{4c_nc_K,4c_K\}$. {Then for any positive integer $K$},  we have
				\begin{equation}\label{eq:concenfinal}
					\mathbb{P}\left(f(\bar{x}_K)-f(x^*)\geq\frac{Cn\ln(K)}{\sqrt{K}}\right)\leq K^{-4/3}+K^{\ln(4)-3},
				\end{equation}
				where $C>0$ is a constant independent of $n,K$.	
				\item[(iv)]
				Suppose {$\gamma_k=\eta_k=1/(k+1)^{1/2}$} and $c_K = \frac{\varepsilon}{2c_n}.$  Then for any $\varepsilon> 0$, 
				\begin{align*}
					& \mathbb{P}\left(f(\bar{x}_K)-f(x^*)\geq \frac{\varepsilon}{4c_n}\left(D_X^2 +\sqrt{\tfrac{c_Ln\varepsilon}{\al c_n}}+ \varepsilon\left(\left(2 L_0\sqrt{(n+1)}\right)+1+ \left(\sqrt{\tfrac{2nc_L}{2\al c_n}}\right)\right)\right)\right) \\
					&~~~~\le \exp\left(-\varepsilon/3\right)+\left(\frac{e}{2}\right)^{-(\frac{\varepsilon}{2c_n})}. 
				\end{align*}
			\end{itemize}
		\end{proposition}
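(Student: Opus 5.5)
The plan is to start from the pathwise one-step inequality \eqref{eq:xk-x*} from the proof of Proposition~\ref{prop:ascvgstochastic}, which holds before any expectation is taken. Sum it over $k=0,\dots,K-1$, drop $\|x_K-x^*\|^2\ge 0$, bound $\|x_0-x^*\|^2\le D_X^2$, and use convexity of $f$ (Jensen) on $\bar x_K$. Writing $S_K=\sum_{k<K}\gamma_k$, this gives the deterministic decomposition
\begin{equation*}
2S_K\bigl(f(\bar x_K)-f(x^*)\bigr)\le D_X^2+\sum_{k=0}^{K-1}2L_0\sqrt{n+1}\,\gamma_k\eta_k + A_K + B_K,
\end{equation*}
where $A_K\triangleq -2\sum_{k}\gamma_k w_k^\top(x_k-x^*)$ is a martingale with respect to $\{\mathcal F_k\}$ and $B_K\triangleq\sum_k\gamma_k^2\|\tilde g_{\eta_k}(x_k)\|^2\ge 0$. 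Every part of the proposition then reduces to a tail bound on $A_K$ and a tail bound on $B_K$, combined by a union bound.

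For (i), use second moments only. Orthogonality of martingale increments and $\|x_k-x^*\|\le D_X$ give
\begin{equation*}
\mathbb E[A_K^2]\le 4D_X^2\sum_k\gamma_k^2\,\mathbb E\|w_k\|^2 .
\end{equation*}
Next use $\|w_k\|^2\le 2\|\tilde g\|^2+2\|\nabla f_{\eta_k}\|^2$, Proposition~\ref{prop:secmombd} (or its decision-dependent analogues), and $\|\nabla f_{\eta}\|\le L_0$ from Lemma~\ref{Lemma:SmoothProperties}(a). Together these yield $\mathbb E[A_K^2]\le I_{A_K}^2$. Chebyshev then gives $\mathbb P(A_K\ge\lambda I_{A_K})\le\lambda^{-2}$. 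For the other term, $\mathbb E[B_K]\le I_{B_K}$, so Markov gives $\mathbb P(B_K\ge\lambda I_{B_K})\le\lambda^{-1}$. A union bound finishes (i).

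For (ii) the goal is exponential tails, and the key step, which I expect to be the main obstacle, is a conditional moment-generating-function bound for $\|\tilde g_{\eta_k}(x_k)\|^2$. Each coordinate satisfies
\begin{equation*}
|\tilde g^i|^2=\tfrac{1}{2\pi\eta^2}\,|\Delta F|^2,
\end{equation*}
where $\Delta F$ is a difference of $F$-values at two points at distance $2\eta\sqrt{2V}$. Conditioning on $V$, the sub-Gaussian hypothesis \eqref{eq:sub-Gaussian} (rescaled via Jensen/H\"older) bounds $\mathbb E\exp(t|\tilde g^i|^2\mid V)$ by $\exp(c\,t L_0^2 V)$. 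Integrating against $\mathcal Exp(1)$ is finite only when the resulting multiplier is below $1$, since $\mathbb E e^{sV}=(1-s)^{-1}$. Summing over the $n$ coordinates then needs H\"older across coordinates, because they share $V$ and $Z$. This is where a constant $\alpha\in(0,1)$ depending only on $n$ will appear, giving a bound of the form $\mathbb E[\exp(\alpha\|w_k\|^2/n\,\cdot)\mid\mathcal F_k]\le e$.

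With this MGF bound in hand, the two tails in (ii) follow as below.
\begin{itemize}
\item[(a)] For $A_K$, apply the Lan--Nemirovski--Shapiro martingale inequality for conditionally sub-Gaussian increments with proxy $\sigma_k^2=\tfrac{n}{\alpha}c_L\gamma_k^2$. This gives $\mathbb P\bigl(A_K\ge\sqrt{2nc_Lc_K/\alpha}\,(1+\sqrt{\varepsilon})\bigr)\le e^{-\varepsilon/3}$ after reparametrizing.
\item[(b)] For $B_K$, which is dominated by a gamma-type sum with mean about $c_nc_K$, apply a Chernoff bound optimized over the exponent. This produces $(\varepsilon/(c_nc_K))^{c_K}e^{c_K-\varepsilon/c_n}$.
\end{itemize}
Finally, collect the deterministic terms into $c_{D,K,n}$.

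Parts (iii) and (iv) are substitutions into (ii). With $\gamma_k=\eta_k=(k+1)^{-1/2}$ we have $S_K\ge\sqrt K$ up to constants, $c_K\le 1+\ln K$, and $\sum\gamma_k\eta_k=c_K$. In (iii), taking $\varepsilon=\max\{4c_nc_K,4c_K\}\asymp n\ln K$ makes the numerator $\mathcal O(n\ln K)$. The exponential terms become $e^{-4c_K/3}\le K^{-4/3}$ and $4^{c_K}e^{-3c_K}\le K^{\ln 4-3}$. For (iv), set $c_K=\varepsilon/(2c_n)$ and simplify in the same way, using $(2)^{c_K}e^{c_K-2c_K}=(e/2)^{-c_K}$.
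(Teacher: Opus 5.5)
Your route is the paper's: sum the pathwise inequality \eqref{eq:xk-x*} and split it into the martingale part $A_K$ and the nonnegative part $B_K$. For (i) you use second moments with Chebyshev and Markov. For (ii) you use a conditional exponential-moment bound that feeds the Lan--Nemirovski--Shapiro inequality with $\sigma_k^2=nc_L\gamma_k^2/\alpha$, plus a Chernoff bound with $t=1/c_n-c_K/\varepsilon$. Parts (iii) and (iv) are then substitutions. Part (i) is fine as written.

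\textbf{The gap is the step you flagged, and your proposed fix does not work.} Write $\Delta_iF$ for the difference of the two $F$-values in \eqref{df:grad_est_i}, so that $|\tilde g^i_{\eta_k}|^2=|\Delta_iF|^2/(2\pi\eta_k^2)$. Since $\|A_k\|^2/\sigma_k^2\le\tfrac{\alpha}{nL_0^2}(\|\tilde g_{\eta_k}\|^2+L_0^2)$, the requirement $\mathbb{E}[\exp(\|A_k\|^2/\sigma_k^2)\mid\mathcal{F}_k]\le e$ needs a bound on $\mathbb{E}[\exp(\tfrac{\alpha}{nL_0^2}\|\tilde g_{\eta_k}\|^2)\mid\mathcal{F}_k]$. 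After H\"older over the $n$ coordinates, this means bounding $\mathbb{E}\exp(s_k|\Delta_iF|^2)$ with $s_k=\alpha/(2\pi L_0^2\eta_k^2)$. But \eqref{eq:sub-Gaussian} controls only the exponent $s=1$. Jensen, $\mathbb{E}e^{sY}\le(\mathbb{E}e^{Y})^s$, holds only for $s\le 1$, and H\"older across coordinates multiplies the exponent by $n$ rather than lowering it. So ``rescaled via Jensen/H\"older'' fails as soon as $\eta_k$ is small, and in (iii) $s_k\asymp k\to\infty$. The paper's \eqref{eq:gkbd} asserts the same bound without a valid derivation, so you have inherited its weak point rather than introduced a new one.

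\textbf{How to repair it.} The argument goes through under the pointwise bound $|F(x,\xi)-F(y,\xi)|\le L_0\|x-y\|$ for a.e.\ $\xi$. Then $|\tilde g^i_{\eta_k}|^2\le 4L_0^2V_k/\pi$ surely, so $\|\tilde g_{\eta_k}\|^2\le c_nV_k$. No H\"older step is needed, and both MGF computations reduce to $\mathbb{E}e^{sV}=(1-s)^{-1}$.

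In the convex non-decision-dependent case, this pointwise bound actually follows from \eqref{eq:sub-Gaussian} imposed for all $x,y\in\mathbb{R}^n$:
\begin{itemize}
\item Along a unit ray, the quotients $(F(x+tu,\xi)-F(x,\xi))/t$ increase in $t$ to the recession slope.
\item Markov gives $\mathbb{P}\bigl(F(x+tu,\xi)-F(x,\xi)\ge(L_0+\delta)t\bigr)\le e^{-(2L_0\delta+\delta^2)t^2}$.
\item Since these events increase with $t$, the recession slope exceeds $L_0+\delta$ with probability zero. Using a countable dense set of directions, every subgradient of $F(\cdot,\xi)$ has norm at most $L_0$ a.s., which is the Lipschitz property.
\end{itemize}
In the decision-dependent cases, $\tilde F(\cdot,\xi)$ need not be convex, so the pointwise bound must be assumed.

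\textbf{Two further points.} With the pointwise bound, $\mathbb{E}[\exp(\|A_k\|^2/\sigma_k^2)\mid\mathcal{F}_k]\le e^{\alpha/n}(1-4\alpha/\pi)^{-1}$, which is at most $e$ for an absolute constant such as $\alpha=0.3$. You need $\alpha$ independent of $n$ (not the output of a coordinate-H\"older step) so that the numerator in (iii) is $\mathcal O(n\ln K)$ with $C$ free of $n$. Also, the Chernoff step for $B_K$ uses $\mathbb{E}e^{tc_n\gamma_k^2V}\le(1-c_nt)^{-\gamma_k^2}$, which requires $\gamma_k\le 1$.
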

		
		\begin{proof}
		\emph{(i)} Recall from \eqref{eq:xk-x*}, we have
		\begin{align}
			2\gamma_k \left(f(x_k)-f(x^*)\right)\leq&   \|x_k-x^*\|^2-\|x_{k+1}-x^*\|^2+2L_0\sqrt{n+1}\gamma_k\eta_k-2\gamma_k w_k^{\top}(x_k-x^*)\nonumber\\
			& + \gamma_k^2\left\| {\tilde{g}_{\eta_k}}(x_k,V_k,Z_k, \bxi_k)\right\|^2.
		\end{align}
		Summing over $k=0,\cdots, {K-1}$,
		\begin{align}\label{eq:fk-f*}
			2\sum_{k=0}^{K-1}\gamma_k \left(f(x_k)-f(x^*)\right) \leq& \|x_0-x^*\|^2-\|x_{K}-x^*\|^2+\sum_{k=0}^{K-1}2L_0\sqrt{n+1}\gamma_k\eta_k\nonumber\\
			&-\sum_{k=0}^{K-1}2\gamma_k w_k^{\top}(x_k-x^*)+\sum_{k=0}^{K-1}\gamma_k^2\left\| {\tilde{g}_{\eta_k}}(x_k,V_k,Z_k, \bxi_k)\right\|^2.
		\end{align}
		Let $A_k$ and $B_k$ be defined as $A_k \triangleq -2\gamma_k w_k^{\top}(x_k-x^*)$ and $B_k \triangleq \gamma_k^2\left\| {\tilde{g}_{\eta_k}}(x_k,V_k,Z_k, \bxi_k)\right\|^2$, respectively. We first obtain {a large-deviation bound} for the partial sums of $A_k$  and $B_k$.  In order to obtain a large-deviation result for partial sums of $A_k$, note that from the $L_0$-Lipschitz property of $f$ and $f_{\eta_k}$ (by Lemma~\ref{Lemma:SmoothProperties}.\ref{Lemma:b}), $\|\nabla f_{\eta_k}(x_k)\| \leq L_0$. Thus, for any sequence of $\sigma_k$ we have using $\|x_k - x^{\ast}\| \leq D_X$ that
		\begin{equation*}
			\E \lc {\|A_k\|^2}  \rc \leq\E \left[ 4D_X^2\gamma_k^2\|w_k\|^2  \right]\leq\E \lc 8D_X^2\gamma_k^2\left(L_0^2+\|\tilde{g}_{\eta_k}(x_k,V_k,Z_k, \bxi_k) \|^2\right)\rc,
		\end{equation*} 
		where we have used the elementary inequality $(a+b)^2 \leq 2(a^2 + b^2)$. By  employing Proposition~\ref{prop:secmombd} ({decision-independent setting}) {or} \ref{prop:2mombd-DD1} ({decision-dependent setting}), we have
		\begin{equation*}
			\E \lc {\|A_k\|^2}  \rc  \leq 8\left(1+\frac{4n}{\pi}\right)D_X^2L_0^2\gamma_k^2.
		\end{equation*}
		By \cite[Lem 2.3]{ghadimi2013stochastic} (see also \cite[Thm 2.1]{juditsky2008large}) we obtain
		\begin{equation}\label{eq:pbAk1}
			\mathbb{P}\left(\left\|\sum_{k=0}^{K-1}A_k\right\|\geq \lambda\sqrt{8\left(1+\frac{4n}{\pi}\right)D_X^2L_0^2\sum_{k=0}^{K-1}\gamma_k^2}\right)\leq \frac{1}{\lambda^2}.
		\end{equation}{We may get an analogous moment bound for} the partial sums of $B_k$ by {leveraging} the second moment bound of the \textbf{esGs} estimator, {as shown next.} 
		\begin{equation*}
			\E \lc {\sum_{k=0}^{K-1}B_k}  \rc \leq \frac{4nL_0^2}{\pi} \sum_{k=0}^{K-1}\gamma_k^2.
		\end{equation*}
		By the Markov inequality, we obtain
		\begin{equation}\label{eq:pbBk1}
			\mathbb{P}\left(\sum_{k=0}^{K-1}B_k\geq \lambda\frac{4nL_0^2}{\pi} \sum_{k=0}^{K-1}\gamma_k^2\right)\leq \frac{1}{\lambda}.
		\end{equation}
		By Jensen's inequality and \eqref{eq:fk-f*} we have 
		\begin{align}\label{eq:fxK-fx*1}
			f(\bar{x}_K)-f(x^*)&\leq\frac{\sum_{k=0}^{K-1}\gamma_k \left(f(x_k)-f(x^*)\right)}{\sum_{k=0}^{K-1} \gamma_k} \nonumber\\
			&\leq \frac{1}{2\sum_{k=0}^{K-1} \gamma_k}\left[D_X^2+\sum_{k=0}^{K-1}2L_0\sqrt{n+1}\gamma_k\eta_k 
			+\sum_{k=0}^{K-1}A_k+\sum_{k=0}^{K-1}B_k\right].
		\end{align}
		Denote $I_{A_K}=\sqrt{8\left(1+\frac{4n}{\pi}\right)D_X^2L_0^2\sum_{k=0}^{K-1}\gamma_k^2}$ and $I_{B_K}=\frac{4nL_0^2}{\pi} \sum_{k=0}^{K-1}\gamma_k^2$. From \eqref{eq:pbAk1} - \eqref{eq:fxK-fx*1} we have
		\begin{align*}
			\mathbb{P}&\left(f(\bar{x}_K)-f(x^*)\geq \frac{D_X^2+\sum_{k=0}^{K-1}2L_0\sqrt{n+1}\gamma_k\eta_k 
				+\lambda\left(I_{A_K}+I_{B_K}\right)}{2\sum_{k=0}^{K-1} \gamma_k}\right)\\
			&\leq \mathbb{P}\left(\left\|\sum_{k=0}^{K-1}A_k\right\|\geq \lambda I_{A_K}\right)+	\mathbb{P}\left(\sum_{k=0}^{K-1}B_k\geq \lambda I_{B_K}\right)\leq \frac{1}{\lambda}+\frac{1}{\lambda^2}.
		\end{align*}

		\emph{(ii):} By Assumption~\ref{asm:covstoc2}.\ref{asm:cvx1} and \eqref{eq:sub-Gaussian}, we have
		\begin{align}\label{eq:gkbd} \notag
			\E_{\bxi_k}  \lc \exp\left(\left\| {\tilde{g}_{\eta_k}}(x_k,V_k,Z_k, \bxi_k)\right\|^2 \right) \mid \cf_k \rc 
			&= \sum_{i=1}^n \E_{\bxi_k} \left[\exp\left(\tfrac{1}{ 2\pi \eta_k^2} \left|F\left(x_k+\eta_k\sqrt{2V_k},x_k^{-i}-Z_k^{-i},\bxi_k\right) \right. \right. \right. \\
			& \left. \left. \left. -F\left(x_k-\eta_k\sqrt{2V_k},x_k^{-i}-Z_k^{-i},\bxi_k\right)\right|^2\right) \mid \cf_k \right] \nonumber \\
			&\leq \exp\left(\tfrac{4nL_0^2}{\pi}V_k\right).
		\end{align}
		In order to obtain a large-deviation result for partial sums of $A_k$, note that from the $L_0$-Lipschitz property of $f$ and $f_{\eta_k}$ (by Lemma~\ref{Lemma:SmoothProperties}.\ref{Lemma:b}), $\|\nabla f_{\eta_k}(x_k)\| \leq L_0$. Thus, for any sequence $\left\{\sigma_k\right\}$, by leveraging $\|x_k - x^{\ast}\| \leq D_X$, we have 
		\begin{align*}
			\E_{\bxi_k} \lc \exp\left({\|A_k\|^2/\sigma_k^2} \right) \mid \cf_k \rc &\leq\E_{\bxi_k} \left[ \exp\left({4D_X^2\gamma_k^2\|w_k\|^2/\sigma_k^2}\right) \mid \cf_k \right]\\
			&\leq\E_{\bxi_k} \lc \exp\left({8D_X^2\gamma_k^2\left(L_0^2+\|\tilde{g}_{\eta_k}(x_k,V_k,Z_k, \bxi_k) \|^2\right)/\sigma_k^2}\right)\mid \cf_k \rc,
		\end{align*} 
		where we have used the elementary inequality $(a+b)^2 \leq 2(a^2 + b^2)$.
		Applying \eqref{eq:gkbd} we have
		\begin{equation*}
			\E_{\bxi_k}\left[ \exp\left({\|A_k\|^2/\sigma_k^2}\right)\mid \cf_k \right]
			\leq\exp\left(\frac{c_L\gamma_k^2}{\sigma_k^2}\right)\exp\left(\frac{4nc_L\gamma_k^2 V_k}{\pi\sigma_k^2}\right),
		\end{equation*}
		where $c_L=8D_X^2L_0^2$. Since $\{V_k\}_{k \geq 1}$ are i.i.d. random variables following $\mathcal{E}xp(1)$ and noting that $\E [\exp(t V_k)] = 1/(1-t)$ for $t < 1$, we have for large enough $\sigma_k$, 
		\begin{equation*}
			\E \left[\exp\left({\|A_k\|^2/\sigma_k^2}\right) \mid \cf_k \right]\leq\exp\left(\frac{c_L\gamma_k^2}{\sigma_k^2}\right)\left(1-\frac{4nc_L\gamma_k^2}{\pi\sigma_k^2}\right)^{-1}.
		\end{equation*}
		It is readily checked that for small enough $\al > 0$, $e^{\al-1} \leq1-\frac{4n}{\pi}\al$. Therefore, by choosing $\sigma_k=\frac{\sqrt{nc_L}\gamma_k}{\sqrt{\al}},$ 
		\begin{equation}\label{eq:exp1}
			\E \left[ \exp\left({\|A_k\|^2/\sigma_k^2}\right) \mid \cf_k \right]\leq \exp(1).
		\end{equation}
		Since $\mathbb{E}\left[A_k \mid \mathcal{F}_{k}\right]=0$, together with \eqref{eq:exp1} we can apply \cite[Lem 2.3]{ghadimi2013stochastic} (see also \cite[Thm 2.1]{juditsky2008large}) to obtain
		\begin{equation}\label{eq:pbAk}
			\mathbb{P}\left(\left\|\sum_{k=0}^{K-1}A_k\right\| \geq \sqrt{2}(1+\lambda) \sqrt{\sum_{k=0}^{K-1} \sigma_k^2}\right)\leq \exp\left(-\lambda^2/3\right).
		\end{equation}
		
		We now proceed towards obtaining an analogous large-deviation result for the partial sums of $B_k$. Observe from \eqref{eq:gkbd} we have
		\begin{equation*}
			\E_{\bxi_k} \lc \exp(B_k)\mid \cf_k \rc \leq \exp(c_n\gamma_k^2 V_k),
		\end{equation*} 
		where $c_n=\frac{4nL_0^2}{\pi}$. From the moment generating function of an exponential random variable and Jensen's inequality,  for any $t < 1/c_n$ and $\gamma_k<1$, we have
		\begin{equation}\label{eq:mgfexp}
			\E \lc \exp(tB_k)\mid \cf_k \rc \leq \E \lc\exp(tc_n\gamma_k^2 V_k)\rc\leq\left(\E\exp\left(tc_nV_k\right)\right)^{\gamma_k^2}=(1-c_nt)^{-\gamma_k^2}.
		\end{equation}
		Consider the moment generating function of {the partial sum of $B_k$ as expressed next.}
		{\small
			\begin{equation}\label{eq:MGFbd}
				\E\left(\exp\left(t\sum_{k=0}^{K-1}B_k\right)\right)=\E \lc\E\left[\exp\left(t\sum_{k=0}^{K-1}B_k \right)\mid \cf_{K-1}\right] \rc=\E\left[\exp\left(t\sum_{k=0}^{K-2}B_k \right)\E\lc\exp\lp tB_{K-1}\rp\mid \cf_{K-1}\rc\right].
		\end{equation}}
		Applying \eqref{eq:mgfexp} to \eqref{eq:MGFbd} we have
		\begin{equation}\label{eq:iterBk}
			\E\left(\exp\left(t\sum_{k=0}^{K-1}B_k\right)\right)\leq (1-c_nt)^{-\gamma_{K-1}^2}\E\lc\exp\left(t\sum_{k=0}^{K-2}B_k \right)\rc.
		\end{equation}
		Iterating \eqref{eq:iterBk} from $k=K-1$ to $k=0$ we obtain
		\begin{equation}\label{eq:prodMGF}
			\E\left(\exp\left(t\sum_{k=0}^{K-1}B_k\right)\right)\leq\left(1-c_nt\right)^{-\sum_{k=0}^{K-1}\gamma_k^2}.
		\end{equation}
		Therefore, combining \eqref{eq:prodMGF} and a Chernoff bound, we obtain 
		\begin{equation}\label{eq:min_t}
			\mathbb{P}\left(\sum_{k=0}^{K-1}B_k \geq \varepsilon\right)\leq \E\left(\exp\left(t\sum_{k=0}^{K-1}B_k\right)\right)\exp(-t\varepsilon)\leq\left(1-c_nt\right)^{-\sum_{k=0}^{K-1}\gamma_k^2}\exp(-t\varepsilon).
		\end{equation}
		Minimizing the right hand side of \eqref{eq:min_t}  over $t$, the following holds by choosing $t=\frac{1}{c_n}-\frac{\sum_{k=0}^{K-1}\gamma_k^2}{\varepsilon}$s.
		\begin{equation}\label{eq:Bk}
			\mathbb{P}\left(\sum_{k=0}^{K-1}B_k \geq \varepsilon\right)\leq \left(\frac{\varepsilon}{c_n\sum_{k=0}^{K-1}\gamma_k^2}\right)^{\sum_{k=0}^{K-1}\gamma_k^2}\exp\left(\sum_{k=0}^{K-1}\gamma_k^2-\varepsilon/c_n\right).
		\end{equation}
		From \eqref{eq:pbAk}, \eqref{eq:Bk} and \eqref{eq:fxK-fx*1} we have
		\begin{align*}
			\mathbb{P}&\left(f(\bar{x}_K)-f(x^*)\geq \frac{D_X^2+\sum_{k=0}^{K-1}2L_0\sqrt{n+1}\gamma_k\eta_k 
				+\varepsilon+\sqrt{2}(1+\lambda) \sqrt{\frac{nc_L}{\al}\sum_{k=0}^{K-1} \gamma_i^2}}{2\sum_{k=0}^{K-1} \gamma_k}\right)\\
			&\leq \mathbb{P}\left(\left\|\sum_{k=0}^{K-1}A_k\right\| \geq \sqrt{2}(1+\lambda) \sqrt{\frac{nc_L}{\al}\sum_{k=0}^{K-1} \gamma_i^2}\right)+\mathbb{P}\left(\sum_{k=0}^{K-1}B_k \geq \varepsilon\right)\\
			&\leq \exp\left(-\lambda^2/3\right)+\left(\frac{\varepsilon}{c_n\sum_{k=0}^{K-1}\gamma_k^2}\right)^{\sum_{k=0}^{K-1}\gamma_k^2}\exp\left(\sum_{k=0}^{K-1}\gamma_k^2-\varepsilon/c_n\right).
		\end{align*}
		{\em (iii)} If $c_K = \sum_{k=0}^{K-1}\gamma_k^2$ and $c_{D,K,n} =D_X^2+\sum_{k=0}^{K-1}2L_0\sqrt{n+1}\gamma_k\eta_k+\sqrt{\frac{2nc_Lc_K}{\al}}$, by setting $\lambda=\sqrt{\varepsilon}$, we have
		\begin{equation}\label{eq:conceneps}
			\mathbb{P}\left(f(\bar{x}_K)-f(x^*)\geq \frac{c_{D,K,n}+\varepsilon+\sqrt{\frac{2nc_Lc_K}{\al}\varepsilon}}{2\sum_{k=0}^{K-1} \gamma_k}\right)\leq \exp\left(-\varepsilon/3\right)+\left(\frac{\varepsilon}{c_nc_K}\right)^{c_K}\exp\left(c_K-\varepsilon/c_n\right).
		\end{equation}

		Since $\gamma_k$ is not summable. we get a concentration inequality with exponential rate.
		Choosing $\gamma_k=\eta_k=1/(k+1)^{1/2}$, $\varepsilon=\max\{4c_nc_K,4c_K\}$, we know that $c_K \simeq \ln(K), \sum_{k=0}^{K-1}\gamma_k\eta_k\simeq \ln(K)$ and $\sum_{k=0}^{K-1}\gamma_k\simeq \sqrt{K}$. Therefore, applying \eqref{eq:conceneps} we have for large enough $K$ there exist a constant $C>0$ independent of $n,K$ such that
		\begin{align}\label{eq:BigOconcen}
			\notag\mathbb{P}\left(f(\bar{x}_K)-f(x^*)\geq\frac{Cn\ln(K)}{\sqrt{K}}\right) & \le 
			\mathbb{P}\left(f(\bar{x}_K)-f(x^*)\geq \frac{c_{D,K,n}+\varepsilon+\sqrt{\frac{2nc_Lc_K}{\al}\varepsilon}}{2\sum_{k=0}^{K-1} \gamma_k}\right)  \\
			& \leq \exp\left(-\varepsilon/3\right)+\left(\frac{\varepsilon}{c_nc_K}\right)^{c_K}\exp\left(c_K-\varepsilon/c_n\right).
		\end{align}
		Since $\varepsilon\geq4c_K$ and $c_K\geq \ln(K)$, the first term of the right hand side of \eqref{eq:BigOconcen} becomes
		\begin{equation}\label{eq:concen1}
			\exp\left(-\varepsilon/3\right)\leq\exp\left(\frac{-4c_K}{3}\right)\leq \exp\left(\frac{-4}{3}\ln (K)\right)=K^{-4/3}.
		\end{equation}
		Set $u=\max\{4,4/c_n\}$, then $\varepsilon=uc_nc_K$. Substituting in the last term of \eqref{eq:BigOconcen} we have
		\begin{equation}\label{eq:concen2}
			\left(\frac{\varepsilon}{c_nc_K}\right)^{c_K}\exp\left(c_K-\varepsilon/c_n\right)=u^{c_K}\exp\left(c_K-uc_K\right)=\exp\left(\left(\ln (u)+1-u\right)c_K\right).
		\end{equation}
		The function $h(u)=\ln(u)+1-u$ has negative derivative $h^{\prime}(u)=1/u-1$ for $u>1$. Since $u\geq4$, $h(u)\leq\ln(4)-3$. Therefore from \eqref{eq:concen2} we have 
		\begin{equation}\label{eq:concen3}
			\left(\frac{\varepsilon}{c_nc_K}\right)^{c_K}\exp\left(c_K-\varepsilon/c_n\right)\leq\exp\left((\ln(4)-3)c_K\right)\leq\exp\left((\ln(4)-3)\ln(K)\right)=K^{\ln(4)-3},
		\end{equation}
		where we use $\ln(4)-3<-1$ and $c_K\geq \ln(K)$ in the last inequality. Plugging \eqref{eq:concen1}-\eqref{eq:concen3} into \eqref{eq:BigOconcen} we get our desired result.
		
		\noindent (iv) Suppose $c_K = \frac{\varepsilon}{2c_n}.$ Then 
		\begin{align}
			\notag	\mathbb{P}\left(f(\bar{x}_K)-f(x^*)\geq \frac{c_{D,K,n}+\varepsilon+\sqrt{\frac{2nc_Lc_K}{\al}\varepsilon}}{2\sum_{k=0}^{K-1} \gamma_k}\right)& \leq \exp\left(-\varepsilon/3\right)+\left(\frac{\varepsilon}{c_nc_K}\right)^{c_K}\exp\left(c_K-\varepsilon/c_n\right) \\
			\notag  & \leq \exp\left(-\varepsilon/3\right)+2^{(\frac{\varepsilon}{2c_n})}\exp\left(-\varepsilon/2c_n\right) \\
			\notag & \leq \exp\left(-\varepsilon/3\right)\\
			\notag  &~~~+\left(\frac{2}{e}\right)^{(\frac{\varepsilon}{2c_n})}\exp\left(\varepsilon/2c_n\right)\exp\left(-\varepsilon/2c_n\right) \\
			\label{hp:b1}            & = \exp\left(-\varepsilon/3\right)+\left(\frac{e}{2}\right)^{-(\frac{\varepsilon}{2c_n})}.
		\end{align}
		Observe that since $\eta_k = \gamma_k$, \begin{align*}
			c_{D,K,n} & = D_X^2 + 2 L_0\sqrt{(n+1)}\sum_{k=0}^{K-1}\gamma_k \eta_k  +\sqrt{\frac{2nc_Lc_K}{\al}} 
			=D_X^2 + 2 L_0\sqrt{(n+1)}\underbrace{\sum_{k=0}^{K-1}\gamma_k^2}_{\triangleq c_K} + \sqrt{\frac{c_L n \varepsilon}{\al c_n}} \\
			& = D_X^2 + 2 L_0\sqrt{(n+1)}\frac{\varepsilon}{2c_n}+ \sqrt{\frac{c_Ln\varepsilon}{\al c_n}}. 
		\end{align*}
		Observe that 
		\begin{align}
			\frac{\varepsilon}{2c_n} = c_K = \sum_{k=0}^{K-1}\gamma_k^2 \le \ln(K) \implies K \ge e^{\frac{\varepsilon}{2c_n}}.  
		\end{align}
		We may then get a lower bound on the probability, as shown next.
		{\small
		\begin{align}
			\notag & \mathbb{P}\left(f(\bar{x}_K)-f(x^*)\geq \frac{c_{D,K,n}+\sqrt{\frac{c_L\varepsilon}{\al}}+\varepsilon+\sqrt{\frac{2nc_Lc_K}{\al}\varepsilon}}{2\sum_{k=0}^{K-1} \gamma_k}\right) \\
			\notag    &~~~\ge 
			\mathbb{P}\left(f(\bar{x}_K)-f(x^*)\geq \frac{D_X^2 +\sqrt{\frac{c_Ln\varepsilon}{\al c_n}}+ \varepsilon\left(\left(2 L_0\sqrt{(n+1)}\right)+1+ \left(\sqrt{\frac{2nc_L}{2\al c_n}}\right)\right)}{2e^{\frac{\varepsilon}{2c_n}}}\right) \\
			\label{hp:b2}    &\overset{e^{(-u)} \le u}{\ge} 
			\mathbb{P}\left(f(\bar{x}_K)-f(x^*)\geq \frac{\varepsilon}{4c_n}\left(D_X^2 +\sqrt{\tfrac{c_Ln\varepsilon}{\al c_n}}+ \varepsilon\left(\left(2 L_0\sqrt{(n+1)}\right)+1+ \left(\sqrt{\tfrac{2nc_L}{2\al c_n}}\right)\right)\right)\right). 
		\end{align}}
		{By leveraging \eqref{hp:b1} and \eqref{hp:b2},}
		\begin{align*}
			& \mathbb{P}\left(f(\bar{x}_K)-f(x^*)\geq \frac{\varepsilon}{4c_n}\left(D_X^2 +\sqrt{\tfrac{c_Ln\varepsilon}{\al c_n}}+ \varepsilon\left(\left(2 L_0\sqrt{(n+1)}\right)+1+ \left(\sqrt{\tfrac{2nc_L}{2\al c_n}}\right)\right)\right)\right) \\
			&~~~~\le \exp\left(-\varepsilon/3\right)+\left(\frac{e}{2}\right)^{-(\frac{\varepsilon}{2c_n})}. 
		\end{align*}

	\end{proof}
	\begin{remark}
		Note that one may be able to relax the assumption on the boundedness of $X$ in Proposition~\ref{prop:concentration}. In the non-decision dependent setting of Section~\ref{sec:esGsestimator}, for example, one can assume that $F$ is strongly convex in $x$, Lipschitz continuous on $\xi$, and $\xi$ satisfies a \emph{Gaussian concentration property}. In this case, \cite[Thm 2.2]{frikha2012concentration} yields a concentration inequality for {$\|x_k-x^*\|$}. 
	\end{remark}
	
	{We conclude this section with a result that leverages the Borel-Cantelli Lemma to obtain an almost-sure convergence rate guarantee.}
	
	\begin{corollary}
		Under the setting of Proposition~\ref{prop:concentration}, the following holds for any $K > 0.$
		\begin{equation*}
			f(\bar{x}_K)-f(x^*)\, = \, \mathcal{O}\left(\frac{n \ln K}{\sqrt{K}}\right) \quad a.s.
		\end{equation*}
	\end{corollary}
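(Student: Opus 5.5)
The plan is to apply the first Borel--Cantelli lemma to the tail bound in Proposition~\ref{prop:concentration}(iii). Fix the choice $\gamma_k=\eta_k=1/(k+1)^{1/2}$ and the associated $\varepsilon=\max\{4c_nc_K,4c_K\}$ used there. For each positive integer $K$, define the event
\[
E_K \triangleq \left\{ f(\bar{x}_K)-f(x^*) \geq \frac{Cn\ln(K)}{\sqrt{K}} \right\},
\]
where $C>0$ is the constant furnished by \eqref{eq:concenfinal}, which is independent of $n$ and $K$. By \eqref{eq:concenfinal},
\[
\mathbb{P}(E_K) \le K^{-4/3}+K^{\ln(4)-3}.
\]

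The key observation is that both exponents are strictly less than $-1$: we have $-4/3<-1$ and $\ln(4)-3\approx -1.61<-1$. Hence
\[
\sum_{K\ge1}\mathbb{P}(E_K)\le \sum_{K\ge 1}\left(K^{-4/3}+K^{\ln 4-3}\right)<\infty,
\]
since both are convergent $p$-series.

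By Borel--Cantelli, $\mathbb{P}(\limsup_K E_K)=0$. Thus for almost every $\omega$ there is a finite $K_0(\omega)$ such that
\[
f(\bar{x}_K)-f(x^*)<\frac{Cn\ln K}{\sqrt{K}} \quad \text{for all } K\ge K_0(\omega).
\]
For the finitely many $K<K_0(\omega)$, boundedness of $X$ and $L_0$-Lipschitz continuity of $f$ give $f(\bar{x}_K)-f(x^*)\le L_0D_X$. This is absorbed into a path-dependent constant, giving $f(\bar{x}_K)-f(x^*)=\mathcal{O}(n\ln K/\sqrt{K})$ almost surely.

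The main obstacle is that Proposition~\ref{prop:concentration}(iii) was proved for ``large enough $K$''; the constant $C$ there arises from the asymptotics $c_K\simeq \ln K$, $\sum_k\gamma_k\simeq\sqrt K$, and from $c_K\ge \ln K$. I would first check that these inequalities hold for all $K\ge 2$ after enlarging $C$. If they do not, it suffices that the bound holds for $K\ge K_1$ with $K_1$ deterministic, since Borel--Cantelli only needs summability of the tail $\sum_{K\ge K_1}\mathbb{P}(E_K)$.

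Two degenerate points must also be handled. The expression $\ln K$ vanishes at $K=1$, so I would restrict to $K\ge2$ or treat that term separately. Beyond that, the $\mathcal{O}(\cdot)$ constant is random (it depends on $\omega$ via $K_0$), which is the standard meaning of an almost-sure rate and should be stated explicitly.
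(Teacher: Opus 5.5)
Your proposal is correct and follows the paper's argument: Proposition~\ref{prop:concentration}(iii) gives the summable bound $K^{-4/3}+K^{\ln 4-3}$, with both exponents below $-1$, and the Borel--Cantelli lemma then yields the almost-sure rate. Your additional bookkeeping only makes explicit what the paper leaves implicit: excluding $K=1$ (where $\ln K=0$), absorbing the finitely many exceptional indices via $f(\bar{x}_K)-f(x^*)\le L_0D_X$, and noting that summability over a deterministic tail $K\ge K_1$ suffices.
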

	\begin{proof}
	By \eqref{eq:concenfinal} from Proposition~\ref{prop:concentration}{(iii)}, we have
	\begin{equation}
		\mathbb{P}\left(f(\bar{x}_K)-f(x^*)\geq\frac{Cn\ln(K)}{\sqrt{K}}\right)\leq K^{-4/3}+K^{\ln(4)-3}.
	\end{equation}
	Observe that the right hand side of the above inequality is summable. Recall that $C$ is a constant independent of $n$ and $K$. Therefore, by the Borel-Cantelli lemma, we have
	\begin{equation*}
		\mathbb{P}\left(\, f(\bar{x}_K)-f(x^*)\geq \frac{Cn\ln K}{\sqrt{K}} \quad {\mbox{infinitely often} }\, \right)\, =\, 0.
	\end{equation*}
	That is, except for finitely many $K$, 
	\begin{equation*}
		f(\bar{x}_K)-f(x^*)=\mathcal{O}\left(\frac{n \ln K}{\sqrt{K}}\right) \quad a.s.
	\end{equation*}
	which is the desired result.
	\end{proof}

	\section{Addressing {nonsmooth nonconvex stochastic optimization}}\label{sec:Nonconv}
	In this section, we consider the unconstrained nonsmooth and nonconvex problem, applying the update rule \eqref{algstoc} with $X=\R^n$. {Specifically, given an $x_0 \in \R^n$, let $\{x_k\}$ be generated as per the following update rule, given an $x_0 \in \mathbb{R}^n.$}
	\begin{equation}\label{eq:updateRn}
		x_{k+1}\, = \, x_{k}-\gamma_{k}\tilde{g}_{\eta_k}(x_k, V_k, Z_k, \bxi_k).
	\end{equation}
	In Section~\ref{sec:5.1}, we introduce the nonconvex and nonsmooth setting and provide rate guarantees and asymptotic statements in Sections~\ref{sec:5.2} and ~\ref{sec:5.3}. respectively.
	
	\subsection{Background and assumptions}\label{sec:5.1}
	We recall some key results in Clarke's nonsmooth calculus~\cite{clarke98} which allow for articulating stationarity conditions in nonsmooth nonconvex regimes. At the heart of this analysis is the directional derivative,  defined next. \\
	\begin{definition}[Directional derivatives and Clarke generalized gradients~\cite{clarke98}]\label{Def:Calrkesubgrad}  
		The directional derivative of $h$ at $x$ in a direction $v$ is defined as 
		\begin{equation}
			h^{\circ}(x,v) \triangleq  \limsup_{y \to x, t \downarrow 0} \left(\frac{h(y+tv)-h(y)}{t}\right).
		\end{equation} 
		The Clarke generalized gradient (or Clarke subdifferential) of a function $h$ at $x$ is defined as follows.
		\begin{equation}
			\partial h(x) \, \triangleq \, \left\{ \, \zeta \, \in \, \mathbb{R}^n\, \mid\, h^{\circ}(x,v) \, \geq \,  \zeta^\top v, \quad \forall v \, \in \, \mathbb{R}^n\,\right\}.
		\end{equation}
		In other words, 
		$$h^{\circ}(x,v) =  \sup_{g \in \partial h(x)}  \ g^{\top}v.$$ 
	\end{definition}
	If $h$ is $C^1$ at $x$, the Clarke
	generalized gradient reduces to the standard gradient, {i.e.,} $\partial h(x)
	= \nabla_{x} h(x).$ If $x$ is a local minimizer of $h$, then we have that $0
	\in \partial h(x)$. In fact, this claim can be extended to convex constrained
	regimes, {i.e.,} if $x$ is a local minimizer of $\displaystyle \min_{ \in
		X} \ h(x)$, then $x$ satisfies $0 \in \partial h(x) + \mathcal{N}_X(x)$,
	where $\mathcal{N}_{X}(x)$ denotes the normal cone of $X$ defined at
	$x$~\cite{clarke98}. Furthermore, if $h$ is locally Lipschitz on an open set
	{$\mathcal{C}$} containing $X$, then $h$ is differentiable almost everywhere
	on $C$ by Rademacher's theorem~\cite{clarke98}. Suppose $C_h$ denotes the set
	of points where $h$ is not differentiable. We now provide some properties of
	the Clarke generalized gradient.   
	
	\begin{proposition}[Properties of Clarke generalized gradients~\cite{clarke98}] \em
		Suppose $h$ is $L_0$-Lipschitz continuous on $\mathbb{R}^n$. Then the following hold {for any $x \in \mathbb{R}^n$}.
		\begin{enumerate}
			\item[(i)] $\partial h(x)$ is a nonempty, convex, and compact  set and $\|g \| \leq {L_0}$ for any $g \in \partial h(x)$. 
			\item[(ii)] $h$ is differentiable almost everywhere.  (iii) $\partial h(x)$ is an upper-semicontinuous map defined as 
			$\partial h(x) \, \triangleq \, \mbox{conv}\left\{\, g \, \mid \, g \, = \, \lim_{k \to \infty} \nabla_{x} h(x_k), C_h \, \not \, \owns x_k \, \to \, x\, \right\}.$ $\Box$
		\end{enumerate}
	\end{proposition}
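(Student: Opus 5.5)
The plan is to derive all three properties directly from Definition~\ref{Def:Calrkesubgrad}, using the global $L_0$-Lipschitz bound on $h$; only (ii) and the gradient-limit formula in (iii) need external machinery. I would first establish the basic properties of $h^{\circ}(x,\cdot)$. For any $y$ and $t>0$, $|h(y+tv)-h(y)|/t \le L_0\|v\|$, so $h^{\circ}(x,v)$ is finite with $|h^{\circ}(x,v)|\le L_0\|v\|$. Positive homogeneity in $v$ is immediate from the definition. Subadditivity follows by writing
\[
h(y+t(v+w))-h(y) = \bigl[h(y+tw+tv)-h(y+tw)\bigr] + \bigl[h(y+tw)-h(y)\bigr],
\]
and noting that $y+tw\to x$ as $y\to x$, $t\downarrow 0$, so the $\limsup$ of each bracket divided by $t$ is at most $h^{\circ}(x,v)$ and $h^{\circ}(x,w)$ respectively. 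Hence $h^{\circ}(x,\cdot)$ is a finite sublinear function.

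Part (i) then follows from the definition of $\partial h(x)$. The set $\partial h(x)$ is an intersection of closed half-spaces $\{\zeta : \zeta^\top v\le h^{\circ}(x,v)\}$, so it is closed and convex. Taking $v=\zeta/\|\zeta\|$ gives $\|\zeta\|\le h^{\circ}(x,v)\le L_0$ for every $\zeta\in\partial h(x)$, so the set is bounded, hence compact. Nonemptiness and the support-function identity $h^{\circ}(x,v)=\sup_{g\in\partial h(x)} g^\top v$ come from Hahn--Banach applied to the sublinear functional $h^{\circ}(x,\cdot)$: any linear functional defined on $\mathrm{span}\{v\}$ by $sv\mapsto s\,h^{\circ}(x,v)$ is dominated by $h^{\circ}(x,\cdot)$ on that line, and it extends to a linear map $\zeta^\top(\cdot)$ dominated everywhere.

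For (ii), I would simply invoke Rademacher's theorem, which states that a Lipschitz function on $\mathbb{R}^n$ is differentiable Lebesgue-almost everywhere.

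For (iii), upper semicontinuity comes first. If $x_k\to x$, $g_k\in\partial h(x_k)$ and $g_k\to g$, then $g_k^\top v\le h^{\circ}(x_k,v)$, and a diagonal argument shows $\limsup_k h^{\circ}(x_k,v)\le h^{\circ}(x,v)$. Hence $g\in\partial h(x)$; combined with the uniform bound $L_0$, this yields upper semicontinuity.

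The gradient-limit representation is the main obstacle. The inclusion $\supseteq$ is easy: at a differentiability point $x_k$, $\nabla h(x_k)\in\partial h(x_k)$, so by closedness of the graph and convexity of $\partial h(x)$, limits of gradients and their convex hull lie in $\partial h(x)$. For $\subseteq$, let $S$ be the convex hull. The set $S$ is compact, since the gradients are bounded by $L_0$. I would argue by support functions, showing $h^{\circ}(x,v)\le \sup_{g\in S}g^\top v$ for every $v$. Fix $v$ and any set $N$ of measure zero containing $C_h$. By Fubini, for almost every $y$ near $x$ the segment $\{y+sv: s\in[0,t]\}$ meets $N$ only in a null set of $s$. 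Along such a segment,
\[
h(y+tv)-h(y)=\int_0^t \nabla h(y+sv)^\top v\,ds.
\]
Each integrand value is within $\epsilon$ of $\sup_{g\in S}g^\top v$ once $y$ and $t$ are small, by definition of $S$ and compactness. By continuity of $h$, the restriction to such generic $y$ does not change the $\limsup$, and the desired bound follows. This is precisely Clarke's theorem, so in the paper I would cite \cite{clarke98} for this step rather than reproduce the measure-theoretic details.
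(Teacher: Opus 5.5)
Your proposal is correct and follows the standard route of \cite{clarke98}: sublinearity of $h^{\circ}(x,\cdot)$, Hahn--Banach for nonemptiness and the support-function identity, Rademacher for (ii), a closed graph for upper semicontinuity, and the Fubini argument for the gradient formula; the paper gives no proof of its own and simply defers to that reference, so there is nothing further to compare. Two small repairs are needed: first, in the Fubini step the integrand $\nabla h(y+sv)^\top v$ is not ``within $\epsilon$'' of $\sup_{g\in S}g^\top v$ (take $h(x)=|x|$ at $x=0$, $v=1$), but is only bounded above by $\sup_{g\in S}g^\top v+\epsilon$ at differentiability points near $x$, via a subsequence argument on the bounded gradients, and this one-sided bound is all you use; second, compactness of $S$ needs the set of limiting gradients to be closed (a diagonal argument), not just the bound $L_0$, and you use that closedness when passing from the support-function inequality to $\partial h(x)\subseteq S$.
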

	We may also define the $\delta$-Clarke generalized gradient~\cite{goldstein77} as   
	\begin{equation}
		\partial_{\delta} h(x) \, \triangleq \, \mbox{conv}\left\{ \, \zeta\, \mid \, \zeta \, \in \, \partial h(y), \left\|\, x-y\, \right\|\, \leq \, \delta \,\right\} = \mbox{conv} \left\{ \, \bigcup_{\| y-x\| \le \delta } \partial h(y) \, \right\}. 
	\end{equation}
	Notice that $\delta$-Clarke generalized gradient of $h$ at $x$ is given by the convex hull of elements in the Clarke generalized gradient of vectors within a distance $\delta$ of $x$. {This has particular relevance in the current context since it has been shown that a stationarity point of the $\eta$-smoothed problem satisfies the enlarged stationarity conditions with parameter $\eta$. This is captured by the next lemma~\cite{goldstein77}.
		\begin{lemma}
			Consider  a problem, where $f$ is a locally Lipschitz continuous function. For any $\eta > 0$ and any $x \in \mathbb{R}^n$,  $\nabla f_{\eta}(x) \in \partial_{\eta} f(x).$ 
	\end{lemma}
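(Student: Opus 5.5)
The plan is to express $\nabla f_\eta(x)$ as an average of genuine gradients of $f$ at points near $x$, and then use the fact that an average of elements of a convex set stays in its closure. First, by Rademacher's theorem (property (ii) of Clarke generalized gradients), $f$ is differentiable almost everywhere. Local Lipschitz continuity bounds the difference quotients, so dominated convergence lets us differentiate under the integral in $f_\eta(x)=\int f(x-z)\phi_\eta(z)\,dz$. This gives
\[
\nabla f_\eta(x)=\int_{\mathbb{R}^n}\nabla f(x-z)\,\phi_\eta(z)\,dz ,
\]
where the integrand is defined for almost every $z$. This is the same computation as in the proof of Lemma~\ref{Lemma:SmoothProperties}(g), but we now use only a.e. differentiability instead of $C^1$. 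An alternative is to start from Proposition~\ref{prop:ConvSmooth} and integrate by parts.

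Second, at every point $y$ where $f$ is differentiable, the gradient is itself a limit of gradients, so $\nabla f(y)\in\partial f(y)$ by property (iii). Now suppose the mass of $\phi_\eta$ lies in the ball $\{\|z\|\le\eta\}$. Then almost every value of the integrand lies in $\bigcup_{\|y-x\|\le\eta}\partial f(y)$, which is contained in $\partial_\eta f(x)$. Since $\phi_\eta$ is a probability density, $\nabla f_\eta(x)$ is the barycenter of a probability measure carried by this set.

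Third, we must conclude that the barycenter lies in $\partial_\eta f(x)$ itself and not merely in its closure. The set $\bigcup_{\|y-x\|\le\eta}\partial f(y)$ is bounded by property (i). It is also closed, because $\partial f$ is upper semicontinuous with compact values and the ball is compact. Hence it is compact, and the convex hull of a compact set in $\mathbb{R}^n$ is compact (Carathéodory). A barycenter of a probability measure on a compact set lies in its closed convex hull, so it lies in $\partial_\eta f(x)$. I would justify this last fact by a separating-hyperplane argument: if the barycenter were outside the closed convex hull, some linear functional would separate it strictly, and integrating that functional gives a contradiction.

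The main obstacle is the support condition. For the Gaussian kernel in \eqref{eq:phietau}, $\phi_\eta$ is not supported in the $\eta$-ball, so the second step fails as written. The lemma, which is Goldstein's, is really a statement about kernels supported in the $\eta$-ball, for example uniform smoothing on that ball. For the Gaussian kernel I would first try to prove an approximate version: split the integral over $\{\|z\|\le r\}$ and its complement. The inner part lies in $\partial_r f(x)$ up to normalization. The outer part has norm at most $L_0\,\mathbb{P}(\|Z\|>r)$, which is exponentially small once $r$ is a constant multiple of $\eta\sqrt{n}$. This gives that $\nabla f_\eta(x)$ is within a small distance of $\partial_{r} f(x)$, with $r=\mathcal{O}(\eta\sqrt{n})$. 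An exact inclusion with radius $\eta$ should therefore be stated for a compactly supported mollifier, and I would flag this discrepancy explicitly.
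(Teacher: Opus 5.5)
Your diagnosis is correct, and it is the central point: for the paper's $f_\eta$, which uses the Gaussian $\phi_\eta$, the inclusion is false as stated. Take $n=1$, $f(x)=|x|$ and $x=2\eta$. Then $f_\eta'(x)=\mathbb{E}[\operatorname{sgn}(x-Z)]=2\Phi(2)-1\approx 0.95$, where $\Phi$ is the standard normal distribution function. On the other hand, every $y$ with $|y-2\eta|\le\eta$ is a differentiability point with $f'(y)=1$, so $\partial_\eta f(2\eta)=\{1\}$.

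More generally, for any $r>0$ and $x>r$ one has $\partial_r f(x)=\{1\}$ but $f_\eta'(x)=2\Phi(x/\eta)-1<1$. Because the Gaussian has full support, no exact inclusion holds at any finite radius, so an approximate statement like yours is the best available. The paper gives no argument of its own, only the citation to Goldstein, so there is no proof to compare against. Your argument for a kernel supported in the closed $\eta$-ball is the standard one and is correct. Its steps are the a.e.\ representation $\nabla f_\eta(x)=\int\nabla f(x-z)\phi_\eta(z)\,dz$, the inclusion $\nabla f(y)\in\partial f(y)$ at differentiability points, compactness of $\bigcup_{\|y-x\|\le\eta}\partial f(y)$ via the closed graph of $\partial f$, and separation for the barycenter.

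Two small tightenings. First, for merely locally Lipschitz $f$, bound $\bigcup_{\|y-x\|\le\eta}\partial f(y)$ using the Lipschitz constant of $f$ on a neighborhood of the ball, since property (i) is stated for globally Lipschitz functions. For the Gaussian kernel, differentiating under the integral needs the global $L_0$ bound (or a growth condition), which the paper does assume.

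Second, in the approximate version, renormalizing the inner part costs a second term. Let $g_{\rm in}$ and $g_{\rm out}$ denote the normalized averages of $\nabla f(x-z)$ over $\{\|z\|\le r\}$ and its complement, and let $p=\mathbb{P}(\|Z\|\le r)$. Then $g_{\rm in}\in\partial_r f(x)$ and $\nabla f_\eta(x)-g_{\rm in}=(1-p)(g_{\rm out}-g_{\rm in})$, so $\operatorname{dist}(\nabla f_\eta(x),\partial_r f(x))\le 2L_0(1-p)$. Gaussian concentration of the $1$-Lipschitz map $z\mapsto\|z\|$ then gives $1-p\le e^{-t^2/2}$ for $r=\eta(\sqrt{n}+t)$.

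For the paper, this means a small $\|\nabla f_\eta(x)\|$ certifies only approximate Goldstein stationarity at radius $\mathcal{O}(\eta\sqrt{n})$, not $\eta$. The asymptotic Clarke-stationarity result of Section~\ref{sec:5.3} goes through $\partial_\phi f$ and Proposition~\ref{prop:phiinclarke}, and does not rely on this lemma.
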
}
	
	Next, we consider the set of cluster points associated with sequences associated with the gradients of the smoothed functions.

	\begin{definition}\label{Def:subgradmoli}{\cite[Def 4.1.]{ermoliev1995minimization}}
		Let $f: \mathbb{R}^n \rightarrow \mathbb{R}$ be locally integrable and let $\left\{\, f_k:=f_{\eta_k} \, \mid \, k \in \mathbf{N}\right\}$ be a sequence of $C^1$ mollified functions obtained from $f$ by convolution with the sequence of mollifiers $\left\{\phi_k:=\phi_{\eta_k}: \mathbb{R}^n \rightarrow \mathbb{R}_{+} \, \mid \, k \in \mathbf{N}\right\}$, where $\eta_k \downarrow 0$ as $k \rightarrow \infty$.  The subgradient set of $\phi$-mollifier of $f$ at $x$ is denoted by $\partial_\phi f(x)$ and represents  the cluster points of all possible sequences $\left\{\nabla f_k\left(x_k\right)\right\}$ such that $x_k \rightarrow x$, as captured by the following definition\footnote{Here $\operatorname{Limsup}$ is the outer limit (limit superior) of sets. $\operatorname{Limsup}_{n\rightarrow\infty}C_n$ consists of all cluster points of all possible sequences $y_n$ where each $y_n\in C_n$.}.
		$$
		\partial_\phi f(x):=\mbox{Limsup}_{k \rightarrow \infty}\left\{\nabla f_k\left(x_k\right) \mid x_k \rightarrow x\right\} 
		$$
	\end{definition}
	
	\medskip
	
	Observe that that the mollifiers are such that the mollified functions $f_k$ are smooth (of class $C^1$ ) as would be the case if the mollifiers $\phi_k$ are smooth.
	We now borrow an important result from \cite[Thm 4.11]{ermoliev1995minimization} which is stated for compactly supported mollifiers $\phi$. This result clarifies the relationship between $\partial_{\phi} f(x)$ and $\partial f(x)$, where the latter subdifferential is in the Clarke sense. 
	However, we let the patient reader check that the result is readily extended to our setting with $\phi$ given by \eqref{eq:phiu}. 
	\begin{proposition}\label{prop:phiinclarke}
		If $f: \mathbb{R}^n \rightarrow \mathbb{R}$ is lower semicontinuous and locally integrable, then
		$$
		\mathrm{conv} \left(\partial_\phi f(x) \right) \,  \subset 
		\partial f(x).
		$$ 
	\end{proposition}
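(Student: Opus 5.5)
The plan is to use the support-function description of the Clarke generalized gradient in Definition~\ref{Def:Calrkesubgrad}. Since $\partial f(x)=\{\zeta \mid \zeta^\top v\le f^{\circ}(x,v)\ \forall v\}$ is an intersection of closed half-spaces, it is closed and convex. It therefore suffices to show $\partial_\phi f(x)\subset\partial f(x)$; the inclusion $\mathrm{conv}(\partial_\phi f(x))\subset\partial f(x)$ then follows automatically. So I fix a cluster point $g$ of a sequence $\{\nabla f_k(x_k)\}$ with $x_k\to x$ and $\eta_k\downarrow 0$, and a direction $v$. The goal is $g^\top v\le f^{\circ}(x,v)$. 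If $f^{\circ}(x,v)=+\infty$ there is nothing to prove, which matters here because $f$ is only lower semicontinuous and locally integrable. I will therefore assume $f^{\circ}(x,v)<\infty$.

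\textbf{Step 1 (localize the difference quotient).} Since $f_k\in C^1$ (Proposition~\ref{prop:ConvSmooth}, or its locally integrable analogue), I write
$$\nabla f_k(x_k)^\top v=\lim_{t\downarrow 0}\int_{\mathbb{R}^n}\frac{f(x_k-z+tv)-f(x_k-z)}{t}\,\phi_{\eta_k}(z)\,dz.$$
Fix $\delta>0$ and $\tau>0$, and set
$$S(\delta,\tau)\triangleq\sup\Big\{\tfrac{f(y+tv)-f(y)}{t}\ \Big|\ \|y-x\|\le 2\delta,\ 0<t<\tau\Big\}.$$
By definition of the limsup, $S(\delta,\tau)\downarrow f^{\circ}(x,v)$ as $\delta,\tau\downarrow 0$, and in particular $S$ is finite for small $\delta,\tau$. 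I split the integral into $\{\|z\|\le\delta\}$ and its complement. For $k$ large, $\|x_k-x\|\le\delta$, so on the ball every point $y=x_k-z$ satisfies $\|y-x\|\le 2\delta$. Hence for $t<\tau$ the inner part is at most $S(\delta,\tau)\int_{\|z\|\le\delta}\phi_{\eta_k}$ when $S\ge 0$, and at most $S(\delta,\tau)$ plus a term proportional to the tail mass otherwise.

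\textbf{Step 2 (tails, the main obstacle).} Unlike the compactly supported mollifiers of \cite{ermoliev1995minimization}, the Gaussian $\phi_{\eta_k}$ charges all of $\mathbb{R}^n$. Moreover, the difference quotient of a merely locally integrable $f$ need not be controlled outside the ball. To handle this I will not bound the quotient pointwise. Instead, before letting $t\downarrow0$, I move the derivative onto the kernel on the tail region via the identity $\nabla f_k=f\ast\nabla\phi_{\eta_k}$ (Proposition~\ref{prop:ConvSmooth}). This produces the term
$$\int_{\|z\|>\delta}f(x_k-z)\,\nabla\phi_{\eta_k}(z)^\top v\,dz$$
together with a boundary term on $\{\|z\|=\delta\}$ weighted by $\phi_{\eta_k}\big|_{\|z\|=\delta}\sim\eta_k^{-n}e^{-\delta^2/2\eta_k^2}$. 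Both vanish as $\eta_k\to0$ provided $f$ grows slower than $e^{c\|y\|^2}$ for every $c>0$. This growth condition holds in every setting of the paper, since $f$ is $L_0$-Lipschitz by Remark~\ref{rem:EF-Lip} and Lemmas~\ref{lem:f-decdep1-Lip} and~\ref{lem:f-decdep2-Lip}. I would add it explicitly as the mild extra hypothesis needed to extend \cite[Thm 4.11]{ermoliev1995minimization}; this is precisely the "readily extended" step the statement leaves to the reader. The lower semicontinuity of $f$ is used to guarantee that $f$ is bounded below on the compact ball, so the inner integral is well defined and the sup bound of Step 1 can be integrated.

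\textbf{Step 3 (pass to limits).} Combining Steps 1 and 2 gives
$$\nabla f_k(x_k)^\top v\le S(\delta,\tau)\,(1+o_k(1))+o_k(1).$$
Letting $k\to\infty$ along the subsequence that realizes $g$ yields $g^\top v\le S(\delta,\tau)$. Letting $\delta,\tau\downarrow0$ then gives $g^\top v\le f^{\circ}(x,v)$. Since $v$ was arbitrary, $g\in\partial f(x)$. By the closedness and convexity noted at the outset, this establishes $\mathrm{conv}(\partial_\phi f(x))\subset\partial f(x)$.
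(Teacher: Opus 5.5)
The paper gives no argument of its own for this statement. It cites \cite[Thm 4.11]{ermoliev1995minimization}, which is for compactly supported mollifiers, and asserts that the Gaussian case is routine. Your support-function route is therefore a genuine attempt to supply the missing extension. The reduction to $\partial_\phi f(x)\subset\partial f(x)$ via closedness and convexity is correct, and so are Steps 1 and 3. You are also right that a growth hypothesis must be added, since otherwise $f\ast\phi_\eta$ need not exist.

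The gap is in Step 2, and it comes from cutting at the sharp sphere $\|z\|=\delta$. After the change of variables $w=z-tv$, the tail at finite $t$ becomes
\[
\int_{\|w\|>\delta} f(x_k-w)\,\frac{\phi_{\eta_k}(w+tv)-\phi_{\eta_k}(w)}{t}\,dw
+\frac1t\Big[\int_{A_t}-\int_{B_t}\Big] f(x_k-w)\,\phi_{\eta_k}(w+tv)\,dw,
\]
where $A_t=\{\|w\|\le\delta<\|w+tv\|\}$ and $B_t=\{\|w+tv\|\le\delta<\|w\|\}$ lie in a shell of width $O(t)$ around $\|w\|=\delta$. The first piece is harmless. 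The boundary piece is only bounded by $C\eta_k^{-n}e^{-\delta^2/(8\eta_k^2)}\cdot\tfrac1t\int_{\delta-t\|v\|\le\|w\|\le\delta+t\|v\|}|f(x_k-w)|\,dw$.

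For $f\in L^1_{\rm loc}$, this shell average need not stay bounded as $t\downarrow0$. Its formal limit is a surface integral of the trace of $f$ on the sphere $\{\|y-x_k\|=\delta\}$. That sphere is a null set, and a locally integrable function has no controlled values there. Growth at infinity says nothing about a compact sphere, and lower semicontinuity only bounds $f$ from below. So your claim that ``both vanish provided $f$ grows slower than $e^{c\|y\|^2}$'' is not justified.

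The fix is to avoid a sharp boundary altogether.
\begin{itemize}
\item Take $\chi\in C_c^\infty(\mathbb{R}^n)$ with $0\le\chi\le1$, $\chi=1$ on $\{\|z\|\le\delta/2\}$, and $\chi=0$ outside $\{\|z\|\le\delta\}$.
\item Using $\nabla f_k=f\ast\nabla\phi_{\eta_k}$, write $\nabla f_k(x_k)^\top v=\int f(x_k-w)\nabla(\chi\phi_{\eta_k})(w)^\top v\,dw+\int f(x_k-w)\nabla((1-\chi)\phi_{\eta_k})(w)^\top v\,dw$.
\item The first integral equals $\lim_{t\downarrow0}\int \tfrac{f(x_k-z+tv)-f(x_k-z)}{t}\,\chi(z)\phi_{\eta_k}(z)\,dz$. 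Dominated convergence applies because $\chi\phi_{\eta_k}$ is smooth with compact support. Hence it is at most $S(\delta,\tau)\int\chi\phi_{\eta_k}$ once $\|x_k-x\|\le\delta$.
\item The second integrand is supported in $\{\|w\|\ge\delta/2\}$ and bounded by $C_\delta\eta_k^{-n-2}(1+\|w\|)e^{-\|w\|^2/(2\eta_k^2)}|f(x_k-w)|$. It therefore tends to $0$ under your growth condition.
\end{itemize}
Step 3 then goes through unchanged.

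Two side remarks. First, the repaired argument never uses lower semicontinuity, since local integrability already makes the inner integral well defined. Second, in the setting where the paper actually applies the result ($f$ globally $L_0$-Lipschitz), no integration by parts is needed. There the difference quotient is bounded by $L_0\|v\|$ everywhere, so the tail contributes at most $L_0\|v\|\int_{\|z\|>\delta}\phi_{\eta_k}\to0$.
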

	{The above results are essential as we proceed to show that we may develop an efficient scheme for computing an approximate stationary point of $f_{\eta}$ and relating it to $f$ via the enlarged Clarke generalized gradient or by showing that the sequences converges asymptotically to} $\mbox{conv} \, {\left(\partial_\phi f(x)\right)} \,$ and via the above inclusion, it can be claimed that this element is indeed a Clarke stationary point of $f$. 
	
	\medskip
	
	We conclude with a review of the  assumptions employed in this section.
	\begin{assumption}\label{asm:non-convex} Assumption~\ref{asm:ascvgstochastic} (non-DD), Assumption~\ref{asm:decdep} (DD), or Assumption~\ref{asm:decdep} hold. 
	\end{assumption}
	Note that under Assumption~\ref{asm:non-convex} in the non-DD regime (Assumption~\ref{asm:non-convex} DD regime, respectively), by Remark~\ref{rem:EF-Lip} (Lemmas~\ref{lem:F-decdep1-Lip} or~\ref{lem:f-decdep2-Lip}, respectively) the function $f$ is $L_0$-Lipschitz continuous. Moreover, Proposition~\ref{prop:secmombd} (Propositions~\ref{prop:2mombd-DD1} or~\ref{prop:DDsecmombd}, respectively) implies that
	\begin{equation*}
		\mathbb{E}_{V_k, Z_k, \bxi_k} \left[\, \tilde{g}_{\eta_k}(x_k, V_k, Z_k, \bxi_k)\, \right] \, \leq \, \frac{4L_0^2 n}{\pi}.
	\end{equation*}

	\subsection{Convergence gurantees in nonsmooth nonconvex settings.}\label{sec:5.2}
	We now present rate guarantees for the expected stationarity residual. 
	\begin{table}[H]
		\scriptsize
		\centering
		\caption{}{{Complexity guarantees} in nonconvex settings}
		\renewcommand{\arraystretch}{2}
		\begin{tabular}{ c C{3cm} c c c c}
			\hline
			Parameters&Convergence metric&  Convergence rate &Iteration complexity&Improvement &Assumption\\
			\hline
			$\gamma_k = \frac{\eta}{L_0 \sqrt{n}\sqrt{k+1}}$& $\mathbb{E}_{R_K, \bxi}\left[\|\nabla f_{\eta}(x_{R_K})\|^2\right]$ &\eqref{eq:msefetak2}&${\tilde{\mathcal{O}}}\left(\, \left(L_0^2 n \eta^{-1} + L_0^4 n^2 \right) \varepsilon^{-4} \, \right)$&${\mathcal{O}(n)}$&Asm.~\ref{asm:non-convex}\\          \hline
			- &  $ x_k \rightarrow x^*$ in probability and $0 \in \partial f(x^*)\, a.s.$& - &-&-&Asm.~\ref{asm:non-convex}-\ref{asm:etagammanoncvx}\\
			\hline
		\end{tabular}
		\vspace{0.2em}
		
		{\textit{Remark:} Sample complexity scales as $n$ times the iteration complexity.}
	\end{table}
	\begin{proposition}\label{prop:gradgotozero}
		Suppose Assumption~\ref{asm:non-convex} holds. Consider the sequence $\left\{\, x_k\, \right\}$ generated by \eqref{eq:updateRn}. Then the following hold. 
		
		\noindent (a) For any $K > 0$,
		\begin{align}
			\notag	\frac{\sum_{k=0}^{K-1}\gamma_k\mathbb{E}\left[\, \left\|\nabla f_{\eta_k}(x_k)\right\|^2\, \right]}{\sum_{k=0}^{K-1}\gamma_k} &\leq \frac{1}{\sum_{k=0}^{K-1}\gamma_k} \left[f(x_0)-f^*+L_0\sqrt{n+1}\sum_{k=1}^{K-1}|\eta_k-\eta_{k-1}|\right.\\
			&~~~\left.+L_0\sqrt{n+1}(\eta_0+\eta_K)  +\frac{4L_0^3n^{\frac{3}{2}}}{\sqrt{2}\pi^{\frac{3}{2}}}\sum_{k=0}^{K-1}\frac{\gamma_k^2}{\eta_k} \right].
		\end{align}
		(b) Suppose $R_K$ denotes a random variable taking values in $\{0, 1, \cdots, K-1\}$ with probability mass function $\mathbb{P}[R_K = j] = \frac{\gamma_j}{\sum_{\ell=0}^{K-1} \gamma_\ell}$. Then we have for any $K > 0$,
		\begin{align} 
			\mathbb{E}_{R_j, \bxi} \left[ \| \nabla f_{\eta_{R_K}} (x_{R_K}) \|^2 \right] &\leq \frac{1}{\sum_{k=0}^{K-1}\gamma_k} \left[f(x_0)-f^*+L_0\sqrt{n+1}\sum_{k=1}^{K-1}|\eta_k-\eta_{k-1}|\right.\\
			&~~~\left.+L_0\sqrt{n+1}(\eta_0+\eta_K)  +\frac{4L_0^3n^{\frac{3}{2}}}{\sqrt{2}\pi^{\frac{3}{2}}}\sum_{k=0}^{K-1}\frac{\gamma_k^2}{\eta_k} \right],
		\end{align}
	\end{proposition}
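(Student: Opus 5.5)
We follow the standard descent-lemma argument for smooth nonconvex SGD, applied to the time-varying smoothed functions $f_{\eta_k}$. By Lemma~\ref{Lemma:SmoothProperties}\eqref{Lemma:f}, $f_{\eta_k}$ has an $L_{\eta_k}$-Lipschitz gradient with $L_{\eta_k}=\tfrac{2L_0\sqrt{n}}{\eta_k\sqrt{2\pi}}$. Applying the descent lemma to the update \eqref{eq:updateRn} gives
\begin{equation*}
f_{\eta_k}(x_{k+1}) \le f_{\eta_k}(x_k) - \gamma_k \nabla f_{\eta_k}(x_k)^\top \tilde g_{\eta_k}(x_k) + \tfrac{L_0\sqrt{n}}{\eta_k\sqrt{2\pi}}\gamma_k^2\|\tilde g_{\eta_k}(x_k)\|^2 .
\end{equation*}
We then take conditional expectations given $\mathcal F_k$. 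Unbiasedness (\eqref{eq:tilde-g-unbiased}, \eqref{eq:unbiased-dd-1} or \eqref{eq:unbiasedd-dd-2}) turns the middle term into $-\gamma_k\|\nabla f_{\eta_k}(x_k)\|^2$. The second-moment bound $\tfrac{4L_0^2 n}{\pi}$ (Propositions~\ref{prop:secmombd}, \ref{prop:2mombd-DD1} or \ref{prop:DDsecmombd}, depending on the regime) bounds the last term by $\tfrac{L_0\sqrt n}{\eta_k\sqrt{2\pi}}\cdot\tfrac{4L_0^2n}{\pi}\gamma_k^2=\tfrac{4L_0^3n^{3/2}}{\sqrt2\pi^{3/2}}\tfrac{\gamma_k^2}{\eta_k}$, which is exactly the constant in the claim.

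The main obstacle is that the smoothing parameter changes between iterations, so the inequality does not telescope directly. We handle this by writing $f_{\eta_k}(x_{k+1}) = f_{\eta_{k+1}}(x_{k+1}) + (f_{\eta_k}(x_{k+1})-f_{\eta_{k+1}}(x_{k+1}))$ and bounding the correction by Lemma~\ref{Lemma:SmoothProperties}\eqref{Lemma:e}, which gives $|f_{\eta_k}(y)-f_{\eta_{k+1}}(y)|\le L_0\sqrt{n+1}|\eta_k-\eta_{k+1}|$. Taking total expectations and summing over $k=0,\dots,K-1$ then yields
\begin{equation*}
\sum_{k=0}^{K-1}\gamma_k\mathbb E\|\nabla f_{\eta_k}(x_k)\|^2 \le f_{\eta_0}(x_0)-\mathbb E f_{\eta_K}(x_K) + L_0\sqrt{n+1}\sum |\eta_k-\eta_{k\pm1}| + \tfrac{4L_0^3n^{3/2}}{\sqrt2\pi^{3/2}}\sum_{k=0}^{K-1}\tfrac{\gamma_k^2}{\eta_k}.
\end{equation*}
Some care is needed with the indexing so that the sum of increments matches the stated $\sum_{k=1}^{K-1}|\eta_k-\eta_{k-1}|$. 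If the telescoping is arranged to stop at $f_{\eta_{K-1}}(x_K)$, any leftover endpoint term is absorbed into the boundary terms handled next.

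For the boundary terms, Lemma~\ref{Lemma:SmoothProperties}\eqref{Lemma:c} gives $f_{\eta_0}(x_0)\le f(x_0)+L_0\sqrt{n+1}\eta_0$ and $f_{\eta_K}(x_K)\ge f(x_K)-L_0\sqrt{n+1}\eta_K\ge f^*-L_0\sqrt{n+1}\eta_K$. This produces the terms $f(x_0)-f^*+L_0\sqrt{n+1}(\eta_0+\eta_K)$. Dividing by $\sum_{k=0}^{K-1}\gamma_k$ gives (a).

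For (b), we note that $\mathbb E_{R_K}[\,\cdot\,]$ with the stated probability mass function is exactly the $\gamma$-weighted average on the left of (a). Conditioning on $R_K$, which is drawn independently of the iterates, and applying Fubini, the bound in (b) follows immediately from (a).
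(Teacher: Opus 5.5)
Your route is the paper's own: the descent lemma for $f_{\eta_k}$ with gradient-Lipschitz constant $\tfrac{2L_0\sqrt n}{\eta_k\sqrt{2\pi}}$, and unbiasedness plus the $\tfrac{4L_0^2n}{\pi}$ second-moment bound to produce the $\tfrac{4L_0^3n^{3/2}}{\sqrt2\pi^{3/2}}\tfrac{\gamma_k^2}{\eta_k}$ term. You then use the estimate $|f_\eta-f_{\eta'}|\le L_0\sqrt{n+1}\,|\eta-\eta'|$ for the drift in the smoothing parameter, and obtain part (b) as a reweighting of (a). All of this is fine.

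The only step that does not go through as written is the endpoint bookkeeping you defer with ``absorbed into the boundary terms''. Summing exactly gives
\begin{equation*}
\sum_{k=0}^{K-1}\bigl(\mathbb{E} f_{\eta_k}(x_k)-\mathbb{E} f_{\eta_k}(x_{k+1})\bigr) = f_{\eta_0}(x_0)+\sum_{k=1}^{K-1}\mathbb{E}\bigl[f_{\eta_k}(x_k)-f_{\eta_{k-1}}(x_k)\bigr]-\mathbb{E} f_{\eta_{K-1}}(x_K).
\end{equation*}
Suppose you bound the last term with $|f_\eta-f|\le L_0\sqrt{n+1}\,\eta$. Then you get $L_0\sqrt{n+1}\,\eta_{K-1}$, not $\eta_K$.

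Suppose instead you pass to $f_{\eta_K}(x_K)$, as in your primary decomposition. Then you pick up the extra increment $L_0\sqrt{n+1}\,|\eta_K-\eta_{K-1}|$, so the increment sum runs to $K$.

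For decreasing $\eta_k$, both versions exceed the stated right-hand side by $L_0\sqrt{n+1}\,(\eta_{K-1}-\eta_K)$. Nothing in the boundary terms can absorb this.

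The missing observation is that the terminal point needs no $\eta$-penalty at all. Since $f\ge f^*$ on $\mathbb{R}^n$ and $\phi_\eta$ is a probability density, $f_\eta(x)=\int f(x-z)\phi_\eta(z)\,dz\ge f^*$ for every $x$ and $\eta$. Hence $\mathbb{E} f_{\eta_{K-1}}(x_K)\ge f^*$.

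With the exact telescoping above, this gives the bound with boundary cost only $L_0\sqrt{n+1}\,\eta_0$. The stated $L_0\sqrt{n+1}\,\eta_K$ is then pure slack.

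The paper's written proof has the same slip: it writes the terminal term as $\mathbb{E} f_{\eta_K}(x_{K+1})$. So this one-line fix is needed there as well.
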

	\begin{proof}
	\noindent (a) Since $f$ is $L_0$-Lipschitz continuous, by Lemma~\ref{Lemma:SmoothProperties}, $f_\eta$ has Lipschitz gradient with Lipschitz constant $L_1(f_\eta)= \frac{2L_0\sqrt{n}}{\eta\sqrt{2 \pi}}$. Then we have
	\begin{equation*}
		f_{\eta_k}(x_{k+1})\, \leq \, f_{\eta_k}(x_k) -  \gamma_k \nabla f_{\eta_k}^\top \tilde{g}_{\eta_k}(x_k,v_k,z_k,\xi_k) +\frac{L_1(f_{\eta_k})}{2}\gamma_k^2 \left\|\, \tilde{g}_{\eta_k}(x_k,v_k,z_k,\xi_k)\, \right\|^2.
	\end{equation*}
	Taking conditional expectation with respect to $\mathcal{F}_k$ and applying Proposition~\ref{prop:secmombd}, we have 
	\begin{align}\label{eq:expnoncvx}
		\mathbb{E}\left[f_{\eta_k}(x_{k+1}) \mid \mathcal{F}_k\right] &\leq f_{\eta_k}(x_k) -  \gamma_k \mathbb{E}\left[\, \left\|\nabla f_{\eta_k}(x_k)\right\|^2 \right] +\frac{L_0\sqrt{n}}{\eta_k\sqrt{2\pi}}\gamma_k^2 \mathbb{E} \left[\, \|\tilde{g}_{\eta_k}(x_k,V_k,Z_k,\bxi_k)\|^2 \, \mid \cf_k \right] \nonumber\\
		&\leq f_{\eta_k}(x_k) -  \gamma_k \mathbb{E} \left[\left\|\nabla f_{\eta_k}(x_k)\right\|^2 \right] +\frac{4L_0^3n^{\frac{3}{2}}}{\eta_k\sqrt{2}\pi^{\frac{3}{2}}}\gamma_k^2. 
	\end{align}
	Taking unconditional expectations and rearranging the order
	\begin{equation}
		\gamma_k \mathbb{E}\left[\, \left\|\nabla f_{\eta_k}(x_k)\right\|^2 \right] \,	\leq \, \mathbb{E}\left[f_{\eta_k}(x_k)\right]- \mathbb{E}\left[f_{\eta_k}(x_{k+1})\right]  +\frac{4L_0^3n^{\frac{3}{2}}}{\eta_k\sqrt{2}\pi^{\frac{3}{2}}}\gamma_k^2. 
	\end{equation}
	Summing both sides from $k=0$ to $K-1$, we have 
	\begin{align}
		\sum_{k=0}^{K-1}\gamma_k\mathbb{E}\left[\, \left\|\nabla f_{\eta_k}(x_k)\right\|^2 \, \right] \, \leq \,  &\left[\sum_{k=1}^{K-1}\mathbb{E}\left(f_{\eta_k}(x_k)-f_{\eta_{k-1}}(x_k)\right)\right.\nonumber\\
		&\left.+\left(f_{\eta_0}(x_0)-\mathbb{E}\left(f_{\eta_K}(x_{K+1})\right)\right)+\frac{4L_0^3n^{\frac{3}{2}}}{\sqrt{2}\pi^{\frac{3}{2}}}\sum_{k=0}^{K-1}\frac{\gamma_k^2}{\eta_k} \right].
	\end{align}
	Recall that $f(x) \geq f^*$ for any $ x \in \mathbb{R}^n$. By Lemma~\ref{Lemma:SmoothProperties} (\ref{Lemma:c}), we know 
	\begin{equation*}
		\left(f_{\eta_0}(x_0)-\mathbb{E}\left(f_{\eta_K}(x_{K+1})\right)\right) =  f_{\eta_0}(x_0)-f^*+L_0\sqrt{n+1}\eta_K \leq  f(x_0) - f^* + L_0 \sqrt{n+1}(\eta_0 +\eta_K).
	\end{equation*}
	Applying Lemma~\ref{Lemma:SmoothProperties} (\ref{Lemma:e}) we have
	\begin{align}\label{eq: sumgrad}
		\notag	\sum_{k=0}^{K-1}\gamma_k\mathbb{E}\left[\,  \left\|\nabla f_{\eta_k}(x_k)\right\|^2 \, \right] & \leq   \left[  f(x_0)-f^* + L_0\sqrt{n+1}\sum_{k=1}^{K-1}|\eta_k-\eta_{k-1}| \right. \\
		& ~~~+ \left.L_0\sqrt{n+1}(\eta_0+\eta_K) +\frac{4L_0^3n^{\frac{3}{2}}}{\sqrt{2}\pi^{\frac{3}{2}}}\sum_{k=0}^{K-1}\frac{\gamma_k^2}{\eta_k} \, \right].
	\end{align}
	Dividing both sides by $\sum_{k=0}^{K-1}\gamma_k$, we obtain
	\begin{align}\label{eq:msgraderror}
		\notag	\frac{\sum_{k=0}^{K-1}\gamma_k\mathbb{E}\left[\, \left\|\nabla f_{\eta_k}(x_k)\right\|^2\, \right]}{\sum_{k=0}^{K-1}\gamma_k} &\leq \frac{1}{\sum_{k=0}^{K-1}\gamma_k} \left[f(x_0)-f^*+L_0\sqrt{n+1}\sum_{k=1}^{K-1}|\eta_k-\eta_{k-1}|\right.\\
		&~~~\left. +L_0\sqrt{n+1}(\eta_0+\eta_K)+\frac{4L_0^3n^{\frac{3}{2}}}{\sqrt{2}\pi^{\frac{3}{2}}}\sum_{k=0}^{K-1}\frac{\gamma_k^2}{\eta_k} \right].
	\end{align}
	\noindent (b) Suppose $R_K$ denotes a random variable taking values in $\{0, 1, \cdots, K-1\}$ with probability mass function $\mathbb{P}[R_K = j] = \frac{\gamma_j}{\sum_{\ell=0}^{K-1} \gamma_\ell}$. Then we have for any $K > 0$, 
	\begin{align*} 
		\mathbb{E}_{R_K, \bxi} \left[ \| \nabla f_{\eta_{R_K}} (x_{R_K}) \|^2 \right] &\leq \frac{1}{\sum_{k=0}^{K-1}\gamma_k} \left[f(x_0)-f^*+L_0\sqrt{n+1}\sum_{k=1}^{K-1}|\eta_k-\eta_{k-1}|\right.\\
		&~~~\left.+L_0\sqrt{n+1}(\eta_0+\eta_K)  +\frac{4L_0^3n^{\frac{3}{2}}}{\sqrt{2}\pi^{\frac{3}{2}}}\sum_{k=0}^{K-1}\frac{\gamma_k^2}{\eta_k} \right],
	\end{align*}
	where $\mathbb{E}_{R_K,\bxi}[\bullet]$ denotes the appropriately defined expectation over the joint space.
	
	\end{proof}
	\medskip
	
	{We now utilize the above result to derive formal rate guarantees in nonsmooth nonsmooth regimes, where we employ a constant sequence $\{\eta_k\}$ with $\eta_k = \eta$ for any $k$}. In such a case, we may articulate an $(\eta,\varepsilon)$-stationary point as a vector $x$ satisfying 
	\begin{equation}
		\mathbb{E}_{R_K, \bxi}\left[\|\nabla f_{\eta}(x_{R_K})\|^2\right] \le \varepsilon^2.
	\end{equation}
		We denote by {$K_{\eta,\varepsilon}$ and $S_{\eta,\varepsilon}$} the corresponding minimum number of steps and {oracle calls} required to achieve this bound. We provide a supporting lemma (proven via the Lambert function) which allows us to prove the following result (cf.~\cite{jalilzdeh22smoothed}).\\
		
		\begin{lemma}\label{lambert} Suppose $\frac{a+b\ln(K)}{K} \le \varepsilon$ for some $a, b > 0$. Then $K = \mathcal{O}\left(\frac{1}{\varepsilon}\ln\left(\frac{1}{\varepsilon}\right)\right)$. 
		\end{lemma}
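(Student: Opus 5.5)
The lemma should be read as saying that the smallest $K$ with $\frac{a+b\ln K}{K}\le\varepsilon$ is $\mathcal{O}\bigl(\tfrac{1}{\varepsilon}\ln\tfrac{1}{\varepsilon}\bigr)$, with constants depending on $a,b$. The plan is to show directly that the explicit choice $K^\ast \triangleq \bigl\lceil \tfrac{c}{\varepsilon}\ln\tfrac{1}{\varepsilon}\bigr\rceil$, for a constant $c=c(a,b)$, already satisfies the inequality once $\varepsilon$ is small. Since $K\mapsto \frac{a+b\ln K}{K}$ is decreasing for $K\ge e^{1-a/b}$, every $K\ge K^\ast$ then works as well, so the minimal $K$ is at most $K^\ast$. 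For $\varepsilon$ bounded away from zero, say $\varepsilon\ge\varepsilon_0$, the minimal $K$ is a constant and is trivially absorbed into the $\mathcal{O}(\cdot)$. One can equivalently view the threshold through the Lambert function: $K=b\varepsilon^{-1}\ln K + a\varepsilon^{-1}$ is solved by $K=-\tfrac{b}{\varepsilon}W_{-1}\bigl(-\tfrac{\varepsilon}{b}e^{-a/b}\bigr)$, and then $|W_{-1}(-y)|\le \ln(1/y)+\ln\ln(1/y)+\mathcal{O}(1)$ as $y\downarrow0$ gives the claim. I will instead give the elementary argument, which avoids the asymptotics of $W_{-1}$.

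The key steps are as follows. Write $L\triangleq\ln(1/\varepsilon)$ and assume $\varepsilon<e^{-1}$, so that $L>1$. For $K^\ast\ge \tfrac{c}{\varepsilon}L$ we have $\ln K^\ast\le \ln(c+1)+L+\ln L\le C_1 L$ for a constant $C_1$ depending only on $c$; here we use $\ln L\le L$ and $L>1$. It follows that
$$\frac{a+b\ln K^\ast}{K^\ast}\le \frac{\varepsilon\,(a+bC_1L)}{cL}\le \frac{\varepsilon\,(a+bC_1)}{c},$$
where the last inequality uses $L\ge1$. It then suffices to choose $c\ge a+bC_1$. This choice is consistent, because $C_1$ grows only like $\ln c$, so $c - bC_1(c)\to\infty$ as $c\to\infty$.

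The main obstacle is this mild circularity: the constant $C_1$ depends on $c$ through $\ln(c+1)$. I will handle it by fixing, for example, $c = \max\{1,\,2a + 4b\ln(4b+2)\}$ (or simply noting that $c-bC_1(c)$ is unbounded above) and checking the inequality $c\ge a+b(1+\ln(c+1)+1)$ directly. The remaining care points are routine bookkeeping: absorbing the ceiling via $K^\ast\le \tfrac{c}{\varepsilon}L+1\le(c+1)\tfrac{L}{\varepsilon}$, and making sure $K^\ast$ lies in the region where the function is monotone, which holds for small $\varepsilon$.
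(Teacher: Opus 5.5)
Your proof is correct, but it follows a different route from the paper. The paper gives no argument beyond the remark that the lemma is ``proven via the Lambert function''. That route solves the threshold equation $\varepsilon K=a+b\ln K$ exactly as $K=-\tfrac{b}{\varepsilon}W_{-1}\bigl(-\tfrac{\varepsilon}{b}e^{-a/b}\bigr)$ and reads off the rate from $-W_{-1}(-y)=\ln\tfrac1y+\ln\ln\tfrac1y+o(1)$ as $y\downarrow0$, which is the alternative you mention and set aside.

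Your reading of the statement as a bound on the smallest admissible $K$ is the right repair, since literally every large $K$ satisfies the hypothesis. The bookkeeping holds up:
\begin{itemize}
\item with $C_1=2+\ln(c+1)$, the requirement becomes $c\ge a+2b+b\ln(c+1)$;
\item your choice $c=\max\{1,\,2a+4b\ln(4b+2)\}$ does satisfy this requirement;
\item $K^\ast\ge ce>e^{1-a/b}$ places you in the region where $K\mapsto\tfrac{a+b\ln K}{K}$ is decreasing, so every $K\ge K^\ast$ works, which is the form a complexity bound actually needs.
\end{itemize}

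The two routes buy different things. Yours gives a self-contained, non-asymptotic bound with an explicit constant, valid for all $\varepsilon<e^{-1}$, and needs no facts about $W_{-1}$. The Lambert route identifies the threshold essentially exactly, $K_{\min}\approx\tfrac{a}{\varepsilon}+\tfrac{b}{\varepsilon}\ln\tfrac{b}{\varepsilon}+\tfrac{b}{\varepsilon}\ln\ln\tfrac{b}{\varepsilon}$. There $\ln b$ and $\ln\tfrac1\varepsilon$ add, and $a$ carries no logarithm. Your bound instead multiplies $\ln\tfrac1\varepsilon$ by roughly $2a+4b\ln(4b+2)$, so it has a cruder dependence on $(a,b)$.

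This difference does not matter for the lemma as stated, where $a$ and $b$ are fixed. It is mildly relevant in the paper's application, where $a$ and $b$ carry $n$, $L_0$ and $\eta^{-1}$. There $\sqrt{K}$ replaces $K$, which your argument handles through the substitution $J=\sqrt{K}$, using $\ln K=2\ln J$.

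One small correction: drop or restrict the remark about $\varepsilon\ge\varepsilon_0$ being ``trivially absorbed''. For $\varepsilon\ge1$ one has $\tfrac1\varepsilon\ln\tfrac1\varepsilon\le0$, so the claim fails literally. The $\mathcal{O}(\cdot)$ should be read as an asymptotic statement as $\varepsilon\downarrow0$.
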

		
		\begin{proposition}
			Suppose Assumption~\ref{asm:non-convex} holds. 
			\allowdisplaybreaks
			Suppose $\gamma_k = \frac{\eta}{L_0 \sqrt{n}\sqrt{k+1}}$ and $\eta_k = \eta$ for any $k$, then the {iteration and sample-complexity} of computing an $(\eta,\varepsilon)$-stationary point are
			\begin{equation*}
				K_{\eta,\varepsilon} = {\tilde{\mathcal{O}}}\left(\, \left(L_0^2 n \eta^{-1} + L_0^4 n^2 \right) \varepsilon^{-4} \, \right) \mbox{ and  }  
				{S_{\eta,\varepsilon}} = {\tilde{\mathcal{O}}}\left(\, {n}\left(L_0^2 n \eta^{-1} + L_0^4 n^2 \right) \varepsilon^{-4} \, \right), \mbox{ respectively} \footnotemark.
			\end{equation*}
			\footnotetext{Note that $\tilde{\mathcal{O}}(\bullet)$ is a counterpart of $\mathcal{O}$ that suppresses poly-logarithmic factors.}
		\end{proposition}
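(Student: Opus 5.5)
The plan is to specialize Proposition~\ref{prop:gradgotozero}(b) to $\eta_k \equiv \eta$ and $\gamma_k = \tfrac{\eta}{L_0\sqrt{n}\sqrt{k+1}}$. The terms of the bound then simplify as follows.

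\begin{itemize}
\item The drift term $L_0\sqrt{n+1}\sum_{k=1}^{K-1}|\eta_k-\eta_{k-1}|$ vanishes.
\item The boundary term becomes $2L_0\sqrt{n+1}\,\eta$.
\item The variance term becomes
\[
\frac{4L_0^3 n^{3/2}}{\sqrt{2}\pi^{3/2}}\cdot\frac{1}{\eta}\sum_{k=0}^{K-1}\gamma_k^2
= \frac{4L_0^3 n^{3/2}}{\sqrt{2}\pi^{3/2}}\cdot\frac{\eta}{L_0^2 n}\sum_{k=0}^{K-1}\frac{1}{k+1}
\le c_1 L_0\sqrt{n}\,\eta\,(1+\ln K).
\]
\item The denominator satisfies $\sum_{k=0}^{K-1}\gamma_k \ge \tfrac{\eta}{L_0\sqrt{n}}\cdot 2(\sqrt{K+1}-1) \gtrsim \tfrac{\eta\sqrt{K}}{L_0\sqrt{n}}$, by the same integral comparisons used in the proof of Proposition~\ref{prop:convex_zogd_rate}.
\end{itemize}

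Putting these together gives a bound of the form
\[
\mathbb{E}_{R_K,\bxi}\big[\|\nabla f_\eta(x_{R_K})\|^2\big] \le \frac{L_0\sqrt{n}\,\big(f(x_0)-f^*\big)}{\eta\sqrt{K}} + \frac{c_2 L_0^2 n\,(1+\ln K)}{\sqrt{K}}.
\]
Here I used $\sqrt{n+1}\le\sqrt{2n}$, so the $2L_0\sqrt{n+1}\,\eta$ boundary term contributes only $\mathcal{O}(L_0^2 n)$ after multiplying by $\tfrac{L_0\sqrt{n}}{\eta}$. This is the explicit rate I would record as \eqref{eq:msefetak2}.

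To obtain the complexity, I set the right-hand side to at most $\varepsilon^2$. Writing $s=\sqrt{K}$, the condition reads $\tfrac{a + b\ln s}{s}\le \varepsilon^2$ with
\[
a = \mathcal{O}\big(L_0\sqrt{n}\,\eta^{-1}(f(x_0)-f^*) + L_0^2 n\big), \qquad b=\mathcal{O}(L_0^2 n).
\]
Lemma~\ref{lambert}, applied with $\varepsilon^2$ in place of $\varepsilon$, gives $\sqrt{K} = \tilde{\mathcal{O}}\big((a+b)\varepsilon^{-2}\big)$, and hence $K=\tilde{\mathcal{O}}\big((a+b)^2\varepsilon^{-4}\big)$. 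Squaring produces terms of order $L_0^2 n\eta^{-2}$, $L_0^3 n^{3/2}\eta^{-1}$ and $L_0^4 n^2$, treating $f(x_0)-f^*$ as a constant. Matching the stated form $(L_0^2 n\eta^{-1}+L_0^4n^2)\varepsilon^{-4}$ requires some bookkeeping of the $\eta$-dependence. I expect to absorb the cross term via $2xy\le x^2+y^2$, and to treat $\eta$ as a fixed (say $\eta\le1$) parameter absorbed in the $\tilde{\mathcal O}$ and constant conventions.

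The sample complexity then follows immediately. Each iteration evaluates $2n$ function values, one pair per coordinate in \eqref{df:grad_est_i}, so $S_{\eta,\varepsilon}=2n\,K_{\eta,\varepsilon}$.

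The main obstacle is the final algebra reconciling the squared $\eta^{-1}$ dependence with the stated $\eta^{-1}$ term. I would handle it by being explicit about which quantities are regarded as constants, and note in the write-up that the dominant, dimension-relevant term is $L_0^4 n^2\varepsilon^{-4}$.
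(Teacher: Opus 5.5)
Your route is exactly the paper's: specialize Proposition~\ref{prop:gradgotozero}(b) to $\eta_k\equiv\eta$, use $\sum_{k<K}\gamma_k\ge \eta\sqrt{K}/(L_0\sqrt{n})$ and $\sum_{k<K}\gamma_k^2/\eta\le \eta(1+\ln K)/(L_0^2 n)$ to reach \eqref{eq:msefetak2}, invoke Lemma~\ref{lambert}, and multiply by the $2n$ oracle calls per iteration. Everything up to the squaring step is correct. The obstacle you flag is genuine, not a bookkeeping nuisance. For the right-hand side of \eqref{eq:msefetak2} to be at most $\varepsilon^2$, the term $L_0\sqrt{n}\,(f(x_0)-f^*)/(\eta\sqrt{K})$ alone requires $K\ge L_0^2 n\,(f(x_0)-f^*)^2\eta^{-2}\varepsilon^{-4}$. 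So the argument proves
$K_{\eta,\varepsilon}=\tilde{\mathcal{O}}\big((L_0^2 n\,(f(x_0)-f^*)^2\eta^{-2}+L_0^4 n^2)\varepsilon^{-4}\big)$,
not the displayed $\eta^{-1}$ (the cross term of order $L_0^3 n^{3/2}\eta^{-1}$ is absorbed by AM--GM, as you suggest). The paper's proof reaches the same \eqref{eq:msefetak2} and then writes down the $\eta^{-1}$ form after citing Lemma~\ref{lambert}, without carrying out this squaring. The mismatch sits in the statement, not in your derivation.

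What would fail is your plan to reconcile the two.
\begin{itemize}
\item Restricting to $\eta\le 1$ goes the wrong way, since then $\eta^{-2}\ge\eta^{-1}$; the displayed form would follow only for $\eta\ge 1$.
\item Absorbing $\eta$ into the constants makes the displayed $\eta$-dependence vacuous.
\item No sharper analysis recovers $\eta^{-1}$ for this step size. Since $\|\nabla f_\eta\|\le L_0$, the drift part of $x_K-x_0$ has norm at most $L_0\sum_{k<K}\gamma_k\le 2\eta\sqrt{K/n}$, while the martingale part has second moment $\mathcal{O}(\eta^2\ln K)$. Take $n=1$, $f(x)=L_0|x|$ and $x_0=D\gg\eta$. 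The iterates then remain, with high probability, where $|f_\eta'|\ge L_0/2$ for order $D^2\eta^{-2}$ iterations. Hence the $\gamma_k$-weighted average of $|f_\eta'(x_k)|^2$ cannot drop below $\varepsilon^2$ before order $D^2L_0^4\eta^{-2}\varepsilon^{-4}$ iterations.
\end{itemize}
You should therefore conclude with the $\eta^{-2}$ bound, together with $S_{\eta,\varepsilon}=2nK_{\eta,\varepsilon}$. Its dependence on $n$, which is the point of the result, is exactly the one claimed.
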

		
		\begin{proof} 
		Recall equation~\eqref{eq:msgraderror}, by choosing $\gamma_k = \frac{\eta}{L_0 \sqrt{n}\sqrt{k+1}}$ and $\eta_k = \eta$ for any $k$, we have 
		\begin{align}\label{eq:msefetak2}
			\notag
			\mathbb{E}_{R_K,\bxi}\left[\, \left\|\nabla f_{\eta}(x_{R_K})\right\|^2\, \right]  \, & \leq \, \frac{1}{\sum_{k=0}^{K-1}\gamma_k} \left[f(x_0)-f^*+L_0\sqrt{n+1}\sum_{k=0}^{K-1}|\eta_k-\eta_{k-1}|\right. \\
			~~~~&\left. + L_0\sqrt{n+1}(\eta_0+\eta_K) +\frac{4L_0^3n^{\frac{3}{2}}}{\sqrt{2}\pi^{\frac{3}{2}}}\sum_{k=0}^{K-1}\frac{\gamma_k^2}{\eta_k} \right] \nonumber\\
			\notag& = \, \frac{1}{\sum_{k=0}^{K-1}\gamma_k} \left[f(x_0)-f^*+ 2L_0\sqrt{n+1}\eta +\frac{4L_0^3n^{\frac{3}{2}}}{\sqrt{2}\pi^{\frac{3}{2}}}\sum_{k=0}^{K-1}\frac{\gamma_k^2}{\eta} \right] \\
			\notag  & \le \, \frac{1}{ \frac{\eta}{L_0 \sqrt{n} }\sqrt{K}} \left[f(x_0)-f^*+ 2L_0\sqrt{n+1}\eta +\frac{4L_0^3n^{\frac{3}{2}}}{\sqrt{2}\pi^{\frac{3}{2}}}\sum_{k=0}^{K-1}\frac{\eta \ln(K)}{L_0^2n} \right] \\
			& = \, \frac{1}{ \frac{\eta}{L_0 \sqrt{n} }\sqrt{K}} \left[f(x_0)-f^*+ 2L_0\sqrt{n+1}\eta +\frac{4\ln(K)L_0\sqrt{n} \eta}{\sqrt{2}\pi^{\frac{3}{2}}} \right].
		\end{align}
		It follows that the iteration complexity required to ensure that  $\mathbb{E}_{R_K,\bxi}\left[\, \left\|\nabla f_{\eta}(x_{R_K})\right\|^2\, \right] \le \varepsilon^2$, is given by  
		\begin{equation*}
			K_{\eta,\varepsilon} = \mathcal{O}\left(\, \left(L_0^2 n \eta^{-1} + L_0^4 n^2 \right)\left(\ln\left(L_0^2 n \eta^{-1} + L_0^4 n^2 \right)\right)^2 \varepsilon^{-4}\left(\ln(\varepsilon^{-4})\right)^2\, \right),  
		\end{equation*}
		where we have invoked Lemma~\ref{lambert}. Consequently, the corresponding sample-complexity of computing an $(\eta,\varepsilon)$-stationary point 
		is $\tilde{\mathcal{O}}(nK_{\eta,\varepsilon}). $
		\end{proof}

				\begin{remark} {\em Observe that the dependence on $n$ is  significantly better than that obtained by Nesterov when employing Gaussian smoothing~\cite{nesterov2017random}.  Observe that the sample-complexity matches that in recent work leveraging spherical smoothing (see~\cite{marrinan2026zeroth} and the references therein), requiring a single-sample at every step, while the dimension dependence is slightly worse than that obtained via a mini-batch scheme reliant on spherical smoothing.  }
				\end{remark} 

				\subsection{{Asymptotic a.s. convergence}} \label{sec:5.3}
				{In much of the prior work, there appear to be no asymptotic almost sure convergence guarantees to the set of Clarke-stationary points. The key challenge lies in driving the sequence $\{\eta_k\}$ to zero and is precisely the concern that is addressed in this subsection.} At this stage, we remind the reader of the definition of subgradient set of $\phi$-mollifier of $f$ in Definition~\ref{Def:subgradmoli} {and proceed to prove almost sure convergence to the set of Clarke stationary points in a subsequential sense}. 
				\begin{assumption}\label{asm:etagammanoncvx}
					Suppose the sequences $\left\{ \gamma_k \right\}$ and $\left\{ \eta_k\right\}$ are such that  $\sum_{k=1}^\infty|\eta_k-\eta_{k-1}| < \infty$, $\sum_{k=0}^\infty\frac{\gamma_k^2}{\eta_k} < \infty$ and $\sum_{k=0}^\infty\gamma_k = \infty$. 
				\end{assumption}
				\begin{remark}\label{Rk: etagammanoncvx}
					For the choice of sequences $\left\{\eta_k\right\}$ and $\left\{\gamma_k\right\}$ for satisfying Assumption~\ref{asm:etagammanoncvx}, one option is to set $\eta_k=\tfrac{1}{(k+1)^\beta}, \gamma_k=\tfrac{1}{(k+1)^\al}$ such that $0 <\beta<1 , 0<\al < 1$ and $2\al-\beta>1$. In this case it is easy to see that $ \sum_{k=0}^\infty\tfrac{\gamma_k^2}{\eta_k} < \infty$ and $\sum_{k=0}^\infty\gamma_k = \infty$. Now let us proceed to show that $\sum_{k=1}^\infty \ |\eta_k-\eta_{k-1}| < \infty$. Since function $x^\beta$ has a decreasing derivative, by the mean value theorem we have that 
					\begin{equation*}
						\frac{u^\beta-(u-1)^\beta}{1} \leq \beta(u-1)^{\beta-1}
					\end{equation*}
					Therefore, by invoking the above bound in the deriving a bound on $\sum_{k=1}^\infty|\eta_k-\eta_{k-1}|$, we obtain
					\begin{align}\label{eq:etak-etak-1}
						\sum_{k=1}^\infty|\eta_k-\eta_{k-1}| & = \sum_{k=2}^{\infty}\left|\frac{1}{k^\beta}-\frac{1}{(k-1)^\beta}\right| \leq \sum_{k=2}^\infty \notag \frac{k^\beta-(k-1)^\beta}{k^\beta (k-1)^{\beta}} \\
						& \leq \sum_{k=2}^\infty\frac{\beta}{k^\beta (k-1)} < \sum_{k=2}^\infty\frac{\beta}{(k-1)^{1+\beta}} < \infty.
					\end{align}
				\end{remark}	
				\begin{proposition}
					Suppose Assumption~\ref{asm:non-convex} - \ref{asm:etagammanoncvx} hold. In addition, suppose any subsequence of $(x_k)$ generated from \eqref{eq:updateRn} has an almost sure convergent subsubsequence. Then there exists a cluster point $x^{\ast}$ of $(x_k)$ such that $0 \in \partial f(x^*)$ almost surely, that is, $x^*$ is a stationary point almost surely.
				\end{proposition}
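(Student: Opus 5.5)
The plan is to start from the descent inequality \eqref{eq:expnoncvx} established in the proof of Proposition~\ref{prop:gradgotozero}(a). Since $f_{\eta_k}$ has Lipschitz gradient with constant $\tfrac{2L_0\sqrt{n}}{\eta_k\sqrt{2\pi}}$ (Lemma~\ref{Lemma:SmoothProperties}\eqref{Lemma:f}), we have
\begin{equation*}
\mathbb{E}\left[f_{\eta_k}(x_{k+1}) \mid \mathcal{F}_k\right] \leq f_{\eta_k}(x_k) - \gamma_k \|\nabla f_{\eta_k}(x_k)\|^2 + \tfrac{4L_0^3 n^{3/2}}{\sqrt{2}\pi^{3/2}}\tfrac{\gamma_k^2}{\eta_k}.
\end{equation*}
This inequality holds in conditional form, with the squared norm evaluated at the $\mathcal{F}_k$-measurable point $x_k$. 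To switch the smoothing parameter, I would write $f_{\eta_{k+1}}(x_{k+1}) \le f_{\eta_k}(x_{k+1}) + L_0\sqrt{n+1}|\eta_{k+1}-\eta_k|$, using Lemma~\ref{Lemma:SmoothProperties}\eqref{Lemma:e}. I would then set $v_k \triangleq f_{\eta_k}(x_k) - f^* + L_0\sqrt{n+1}\eta_k \ge 0$, which is nonnegative by Lemma~\ref{Lemma:SmoothProperties}\eqref{Lemma:c} provided $f^* > -\infty$. This gives
\begin{equation*}
\mathbb{E}[v_{k+1}\mid \mathcal{F}_k] \le v_k - \gamma_k\|\nabla f_{\eta_k}(x_k)\|^2 + \beta_k,
\end{equation*}
with $\beta_k \triangleq L_0\sqrt{n+1}\big(|\eta_{k+1}-\eta_k| + |\eta_{k+1}-\eta_k|\big) + c\,\gamma_k^2/\eta_k$. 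Here the second term absorbs the change in the shift $L_0\sqrt{n+1}\eta_k$. Assumption~\ref{asm:etagammanoncvx} makes $\sum_k \beta_k < \infty$, so the Robbins--Siegmund lemma~\cite{robbins1971convergence} yields $\sum_k \gamma_k \|\nabla f_{\eta_k}(x_k)\|^2 < \infty$ almost surely. Since $\sum_k\gamma_k = \infty$, it follows that $\liminf_k \|\nabla f_{\eta_k}(x_k)\| = 0$ almost surely. Hence there is a (random) subsequence $\{k_j\}$ along which $\nabla f_{\eta_{k_j}}(x_{k_j}) \to 0$ almost surely.

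Next I would invoke the hypothesis that every subsequence has an almost surely convergent subsubsequence. Applied to $\{x_{k_j}\}$, it gives a further subsequence $\{x_{k_{j_\ell}}\}$ converging almost surely to some $x^*$, which is a cluster point of $(x_k)$. Along this subsubsequence we still have $\nabla f_{\eta_{k_{j_\ell}}}(x_{k_{j_\ell}}) \to 0$. We also need $\eta_k \downarrow 0$, which Definition~\ref{Def:subgradmoli} requires. Summability of $|\eta_k-\eta_{k-1}|$ only ensures that $\eta_k$ converges, so I would add, as is implicit in the section and satisfied by Remark~\ref{Rk: etagammanoncvx}, that $\eta_k \to 0$ (monotonically). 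With this, $0$ is a cluster point of $\{\nabla f_{\eta_k}(x_k)\}$ along a sequence $x_k \to x^*$, so $0 \in \partial_\phi f(x^*)$ by Definition~\ref{Def:subgradmoli}. Proposition~\ref{prop:phiinclarke} then gives $0 \in \mathrm{conv}(\partial_\phi f(x^*)) \subset \partial f(x^*)$; its hypotheses hold because $f$ is $L_0$-Lipschitz (via Remark~\ref{rem:EF-Lip} or Lemmas~\ref{lem:F-decdep1-Lip}/\ref{lem:f-decdep2-Lip}), hence continuous and locally integrable.

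I expect the main obstacle to be measurability and pathwise bookkeeping rather than estimates. The subsequence $\{k_j\}$ is random, whereas the subsubsequence hypothesis is stated for subsequences and almost sure convergence. I would handle this by working pathwise on the probability-one event where Robbins--Siegmund holds. On that event the hypothesis should be read as sequential relative compactness of the path, so that the chosen subsequence of that path has a convergent subsubsequence. A secondary point is that Definition~\ref{Def:subgradmoli} refers to the full sequence $f_k$ evaluated at points $x_k \to x$, while we only control a subsequence. This is harmless: a subsequence $\eta_{k_{j_\ell}} \downarrow 0$ is itself a valid mollifier sequence, and the Limsup defining $\partial_\phi f$ already collects cluster points along subsequences. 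The extension of Proposition~\ref{prop:phiinclarke} to the Gaussian kernel is taken as granted, as the paper indicates.
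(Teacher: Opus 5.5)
\textbf{Gap: the hypothesis is applied to a random subsequence.} Your Robbins--Siegmund step is sound. It gives $\sum_k\gamma_k\|\nabla f_{\eta_k}(x_k)\|^2<\infty$ a.s., but this only yields, for each $\omega$, an $\omega$-dependent index set $\{k_j(\omega)\}$ along which the smoothed gradients vanish. The hypothesis of the proposition has the quantifiers in the order ``for every (deterministic) subsequence there is a subsubsequence converging almost surely.'' It therefore cannot be applied to a subsequence chosen pathwise.

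Your repair reads the hypothesis as ``almost surely, every subsequence of the path $(x_k(\omega))$ has a convergent subsubsequence,'' i.e.\ a.s.\ boundedness of the iterates. That swaps the quantifiers and is a strictly stronger assumption, not bookkeeping. For example, take independent random vectors with $\mathbb{P}(x_k=k e_1)=1/k$ and $x_k=0$ otherwise. They satisfy the stated condition: any subsequence has a subsubsequence with $\sum_l 1/k_{j_l}<\infty$, which converges to $0$ a.s.\ by Borel--Cantelli. Yet their paths are a.s.\ unbounded by the second Borel--Cantelli lemma. As written, your argument proves the proposition only under the extra hypothesis that $\{x_k\}$ is a.s.\ bounded.

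\textbf{Fix: extract a deterministic subsequence, as the paper does.} Take total expectations in your recursion, or use Proposition~\ref{prop:gradgotozero}(a) directly. This gives $\sum_k\gamma_k\,\mathbb{E}\|\nabla f_{\eta_k}(x_k)\|^2\le v_0+\sum_k\beta_k<\infty$. Since $\sum_k\gamma_k=\infty$, it follows that $\liminf_k\mathbb{E}\|\nabla f_{\eta_k}(x_k)\|^2=0$.

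Choose deterministic indices $k_j$ with $\mathbb{E}\|\nabla f_{\eta_{k_j}}(x_{k_j})\|^2\le 2^{-j}$. Then $\nabla f_{\eta_{k_j}}(x_{k_j})\to 0$ a.s.; the paper reaches the same point via Markov's inequality, convergence in probability, and a further subsequence. Now the stated hypothesis applies to $\{x_{k_j}\}$ and gives a deterministic subsubsequence with $x_{k_{j_l}}\to x^*$ a.s. The rest of your argument (Definition~\ref{Def:subgradmoli} and the inclusion $\mathrm{conv}(\partial_\phi f)\subset\partial f$) then goes through unchanged.

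\textbf{Your remark on $\eta_k\to 0$ is correct.} Assumption~\ref{asm:etagammanoncvx} only makes $\{\eta_k\}$ convergent, so $\eta_k\to 0$ must be added for Definition~\ref{Def:subgradmoli} to apply. This affects the paper's proof equally: if $\eta_k\to\eta_\infty>0$, one would only obtain $\nabla f_{\eta_\infty}(x^*)=0$, i.e.\ $0\in\partial_{\eta_\infty}f(x^*)$ rather than Clarke stationarity.
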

				\begin{proof}
				By Proposition~\ref{prop:gradgotozero}.(a) we have 
				\begin{align}\label{eq:Enablafeta}
					\frac{\sum_{k=0}^{K-1}\gamma_k\mathbb{E}\left[\, \left\|\nabla f_{\eta_k}(x_k)\right\|^2\, \right]}{\sum_{k=0}^{K-1}\gamma_k} &\leq \frac{1}{\sum_{k=0}^{K-1}\gamma_k} \left[f(x_0)-f^*+L_0\sqrt{n+1}\sum_{k=1}^K|\eta_k-\eta_{k-1}|\right.\nonumber\\
					&~~~\left.+L_0\sqrt{n+1}(\eta_0+\eta_K)  +\frac{4L_0^3n^{\frac{3}{2}}}{\sqrt{2}\pi^{\frac{3}{2}}}\sum_{k=0}^{K-1}\frac{\gamma_k^2}{\eta_k} \right].
				\end{align}
				By Assumption~\ref{asm:etagammanoncvx} we know the numerator of the right hand side of \eqref{eq:Enablafeta} is bounded by a constant $C$. Therefore we have 
				\begin{equation}\label{eq:Enablafto0}
					\frac{\sum_{k=0}^{K-1}\gamma_k\mathbb{E}\left[\, \left\|\nabla f_{\eta_k}(x_k)\right\|^2\, \right]}{\sum_{k=0}^{K-1}\gamma_k} \leq\frac{C}{\sum_{k=0}^{K-1}\gamma_k}\rightarrow 0 ,
				\end{equation}
				as $K\rightarrow\infty$. We {may} conclude that there exists a subsequence {$\left\{x_{k_j}\right\}$} such that $\E[\|\nabla f_{\eta_{k_j}}(x_{k_j})\|^2]\rightarrow 0$. If not, then there exists a positive constant $r$ such that $$ \liminf_{k \rightarrow \infty}\E\left[\|\nabla f_{\eta_{k}}(x_{k})\|^2\right]=r>0.$$ This implies that 
				\begin{equation*}
					\lim_{k\rightarrow\infty}\frac{\sum_{k=0}^{K-1}\gamma_k\mathbb{E}\left[\, \left\|\nabla f_{\eta_k}(x_k)\right\|^2\, \right]}{\sum_{k=0}^{K-1}\gamma_k}\geq r>0,
				\end{equation*}
				which {leads to} a contradiction {with} \eqref{eq:Enablafto0}. By  the Markov inequality, we have for any $\varepsilon>0$, we have 
				\begin{equation*}
					\mathbb{P}\left(\|\nabla f_{\eta_{k_j}}(x_{k_j})\|>\varepsilon\right)\leq \frac{\E\left[\|\nabla f_{\eta_{k_j}}(x_{k_j})\|^2\right]}{\varepsilon^2}\rightarrow0,
				\end{equation*}
				as $j\rightarrow\infty$. Since $\varepsilon$ is arbitrary, it follows that $\nabla f_{\eta_{k_j}}(x_{k_j}) \rightarrow 0$ in probability. Therefore there exists a subsequence {with index set  $\{ l_j\}$, where $\left\{l_j\right\}  \, \subseteq \left\{ k_j\right\}$} such that 
				\begin{equation}\label{eq:nablafa.s.}
					\nabla f_{\eta_{l_j}}(x_{l_j}) \rightarrow 0, \quad\text{a.s.} 
				\end{equation}
				By 
				our assumption that every subsequence has an almost sure convergent subsubsequence, there exists a subsequence $(m_j)\subset(l_j)$ such that
				\begin{equation}\label{eq:xmja.s.}
					x_{m_j} \rightarrow x^*, \quad\text{a.s.}
				\end{equation}
				By \eqref{eq:nablafa.s.}-\eqref{eq:xmja.s.} we know that zero is a cluster point of $\nabla f_{\eta_{m_j}}(x_{m_j})$ such that $x_{m_j}\rightarrow x^{*}$. From Definition~\ref{Def:subgradmoli}, we have $	0 \in \partial_\phi f(x^*)$ a.s. Then by Proposition~\ref{prop:phiinclarke} we have $0 \in \partial f(x^*)$ a.s. 
				\end{proof}

				\section{Numerics}\label{sec:numerics}
				In this section, we {present numerics to examine the performance of the schemes in }  the non-decision-dependent setting in Section~\ref{subsec:nonDDnum} and extend our focus to decision-dependent setting in Section~\ref{subsec:DD}.
				\subsection{Non-Decision-dependent regime}\label{subsec:nonDDnum}
				In this section, we provide some numerics, comparing the proposed scheme with three ZO schemes. {Specifically, we compare our proposed {\bf esGs} scheme with Gaussian smoothing counterparts~\cite{nesterov2017random}, spherical smoothing~\cite{cui2023complexity}, and simultaneous perturbation stochastic approximation (SPSA)~\cite{spall1992multivariate}.} To ensure parity, we terminate
				all algorithms after the same number of calls to the function oracle; specifically, we ran $200$ iterations of our method and $200n$ steps  of the other methods. We generated $20$ replications of each scheme. For each problem considered, we provided a tabulation of error given by the empirical average of either $\mathbb{E}[f(\bar{x}_K)-f^*]$ or $\mathbb{E}[f(x_K) - f^*]$ over the replications while ``time'' represents the average runtime in \texttt{Matlab}. 
				\begin{figure}[htb]
					\centering
					\begin{minipage}{0.48\textwidth}
						\centering
						\begin{subfigure}{0.48\textwidth}
							\includegraphics[width=\linewidth]{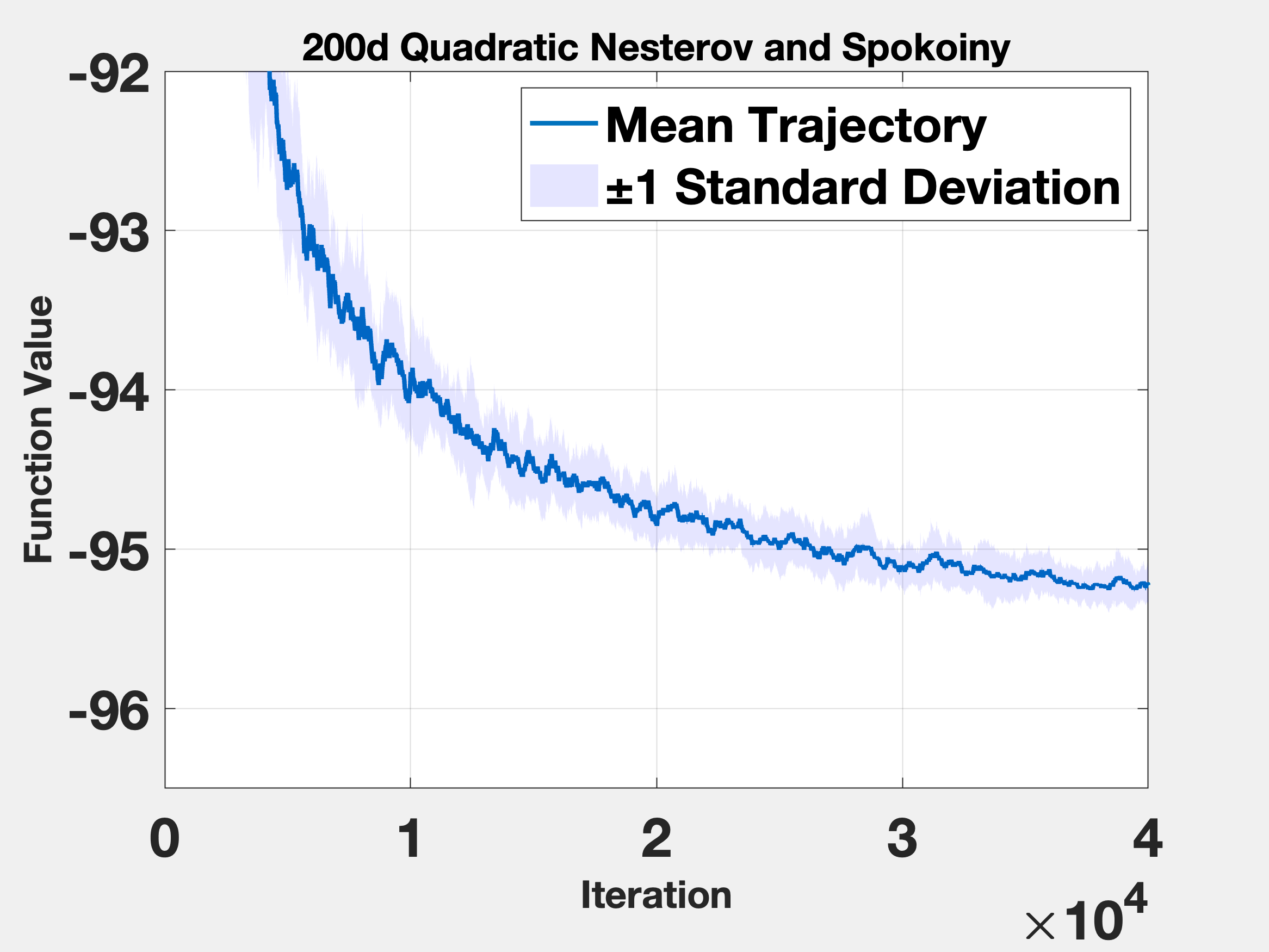}
							\caption{(a)}
							\label{fig:a}
						\end{subfigure}
						\hfill
						\begin{subfigure}{0.48\textwidth}
							\includegraphics[width=\linewidth]{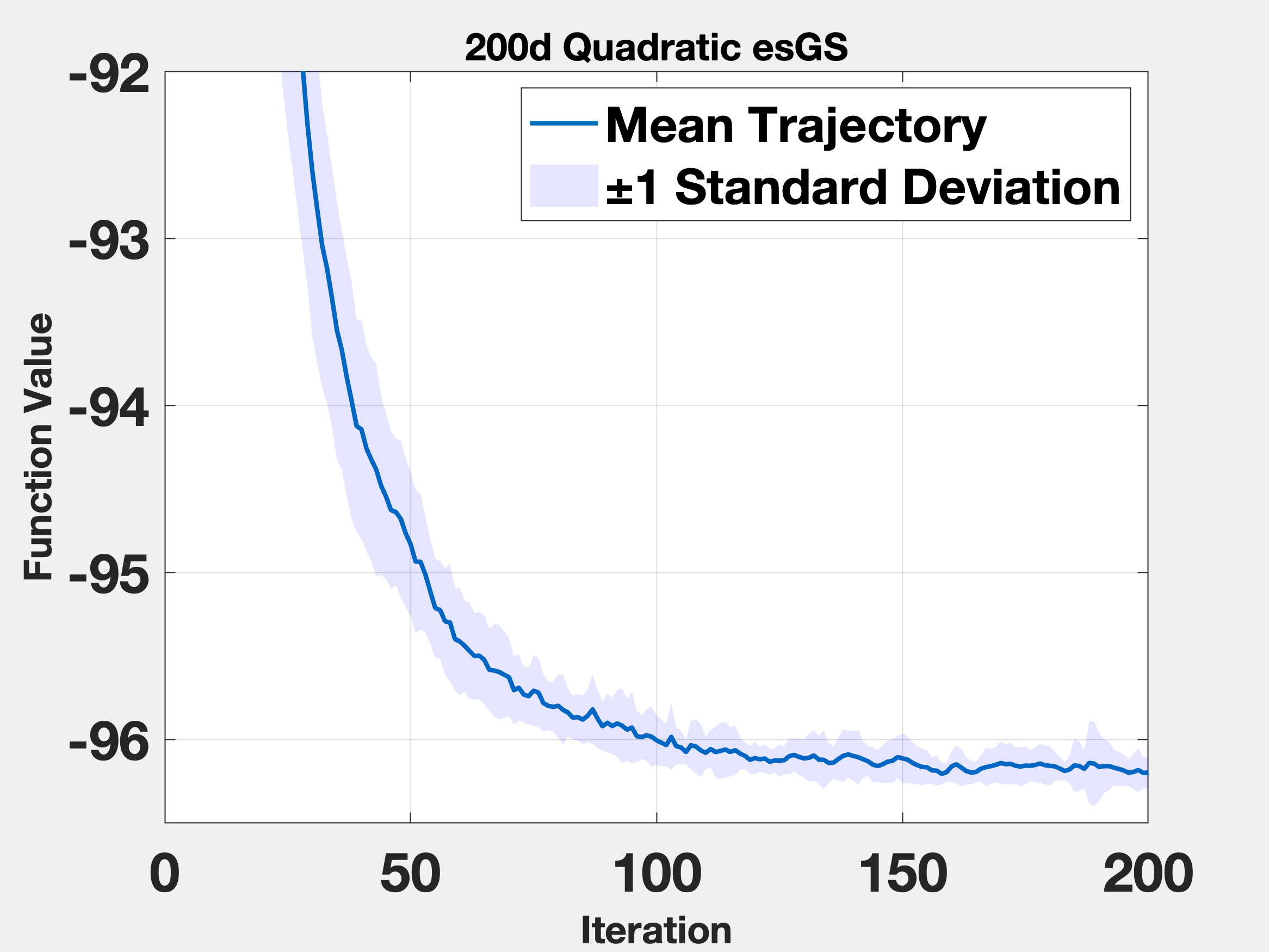}
							\caption{(b)}
							\label{fig:b}
						\end{subfigure}	
					\end{minipage}\hfill
					\begin{minipage}{0.51\textwidth}
						\centering
						\fontsize{5}{6}\selectfont
						\captionsetup{width=\textwidth}
						\captionof{table}{{$\ell_1$-regularized strongly} convex quadratic obj.}
						\label{tab:QuadP}
						\begin{tabular}{ |p{0.25cm} |p{0.53cm}|p{0.65cm}|p{0.53cm}|p{0.63cm}|p{0.53cm}|p{0.63cm}| p{0.53cm}| p{0.53cm}|}
							\hline
							&  \multicolumn{2}{C{1.6cm}|}{\citet{nesterov2017random}} & \multicolumn{2}{c}{\citet{cui2023complexity}}& \multicolumn{2}{|c|}{\citet{spall1992multivariate} }& \multicolumn{2}{c|}{esGs} \\
							\hline
							$n$ &error&time&error&time&error&time&error&time\\
							\hline
							10  & 0.0700   & 0.0096  & 0.0558&0.0092&0.0562&0.0114&0.0280&0.0044 \\
							\hline
							20 & 0.1580  & 0.0390 &0.1444 &0.0293&0.1334&0.0350&0.0285&0.0092 \\
							\hline
							50  &  0.4047  &0.5408   &0.3273 &0.4965&0.3432&0.5224&0.0383&0.0436\\
							\hline
							100 &0.6066 & 3.0719  &0.6360 &3.0741&0.6337&3.0199&0.0615&0.1178\\
							\hline
							150 & 0.8232   & 11.5668  & 0.8070&12.6310&0.8680&11.9770&0.0828&0.2913\\
							\hline
							200 &  1.0683  & 39.0659  & 1.0025&38.0015&1.0228&37.4888&0.0955&0.6088\\
							\hline
							250 & 1.1982   & 69.4609  &1.1731 &71.8910&1.1298&70.4420&0.1185&0.9991\\
							\hline
						\end{tabular}
					\end{minipage}
				\end{figure}
				
				\underline{Stochastic quadratic objectives {with $\ell_1$-regularization}.}
				Our first problem is a nonsmooth convex stochastic quadratic optimization problem in which $$F(x,\xi) =\tfrac{1}{2}x^\top \hat{Q}x+b(\xi)^\top x+ 0.5\|x\|_1,$$ with$X \triangleq [-1,1]^n$, $\hat{Q}=Q+W${($Q \in \mathbb{S}^n_{++}$)}, $b(\xi)=b+\xi$, {$\bxi \sim \mathcal{N}(0,1)$}, and $W=\tfrac{1}{n}B^\top B$ where the components of $B$ are generated from $\mathcal{N}(0,0.01)$. All algorithms utilize the same steplength and smoothing parameters $\gamma_k=\eta_k=1/(\text{norm}(Q)\sqrt{{k}})$ and a deterministic starting point $x_0=(5,5,5,5,5,0,0,\cdots)$.  

				\underline{Piecewise-linear strongly convex and convex objectives.}
				In our second and third problems, we consider a strongly convex and convex piecewise-linear objective given by $$F(x,\xi)=\phi\left(\sum_{i=1}^n\left(\frac{i}{n}+\xi_i\right)x_i\right)+\tfrac{\mu}{2}\|x\|^2$$
				 where {$\mu = 1$ (strongly convex) or $\mu = 0$ (convex)}, $X \triangleq \{ \, x \, \mid \,\|x\| \leq 1\,\}$, $\phi(t)=\max_{1 \leq j \leq m}(v_j+s_jt)$, and $\xi_i \sim \mathcal{N}(0,1)$. We choose $\{v_j\}_{j=1}^5=\{0.2,0.3,0.6,0.5,0.8\}$, and $ \{s_j\}_{j=1}^5 =\{0.9,0.2,0.1,0.5,0.5\}$. All of the algorithms employ the same steplength and smoothing sequences given by $\gamma_k=\eta_k=1/k^{0.52}.$ 
				\begin{figure}[htbp]
					\centering
					\begin{minipage}{0.48\textwidth}
						\centering
						\begin{subfigure}{0.48\textwidth}
							\includegraphics[width=\linewidth]{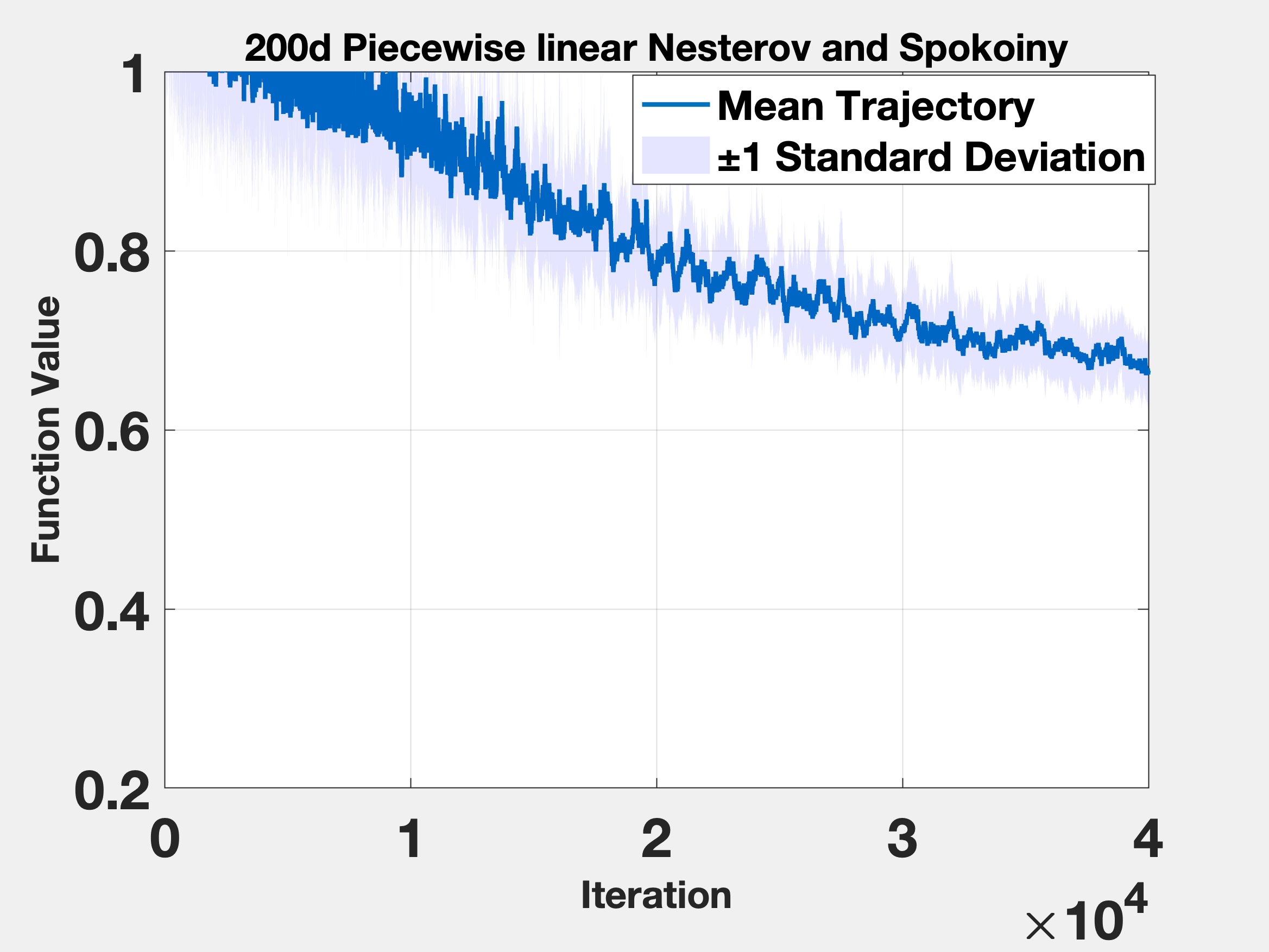}
							\caption{(c)}
							\label{fig:c}
						\end{subfigure}
						\hfill
						\begin{subfigure}{0.48\textwidth}
							\includegraphics[width=\linewidth]{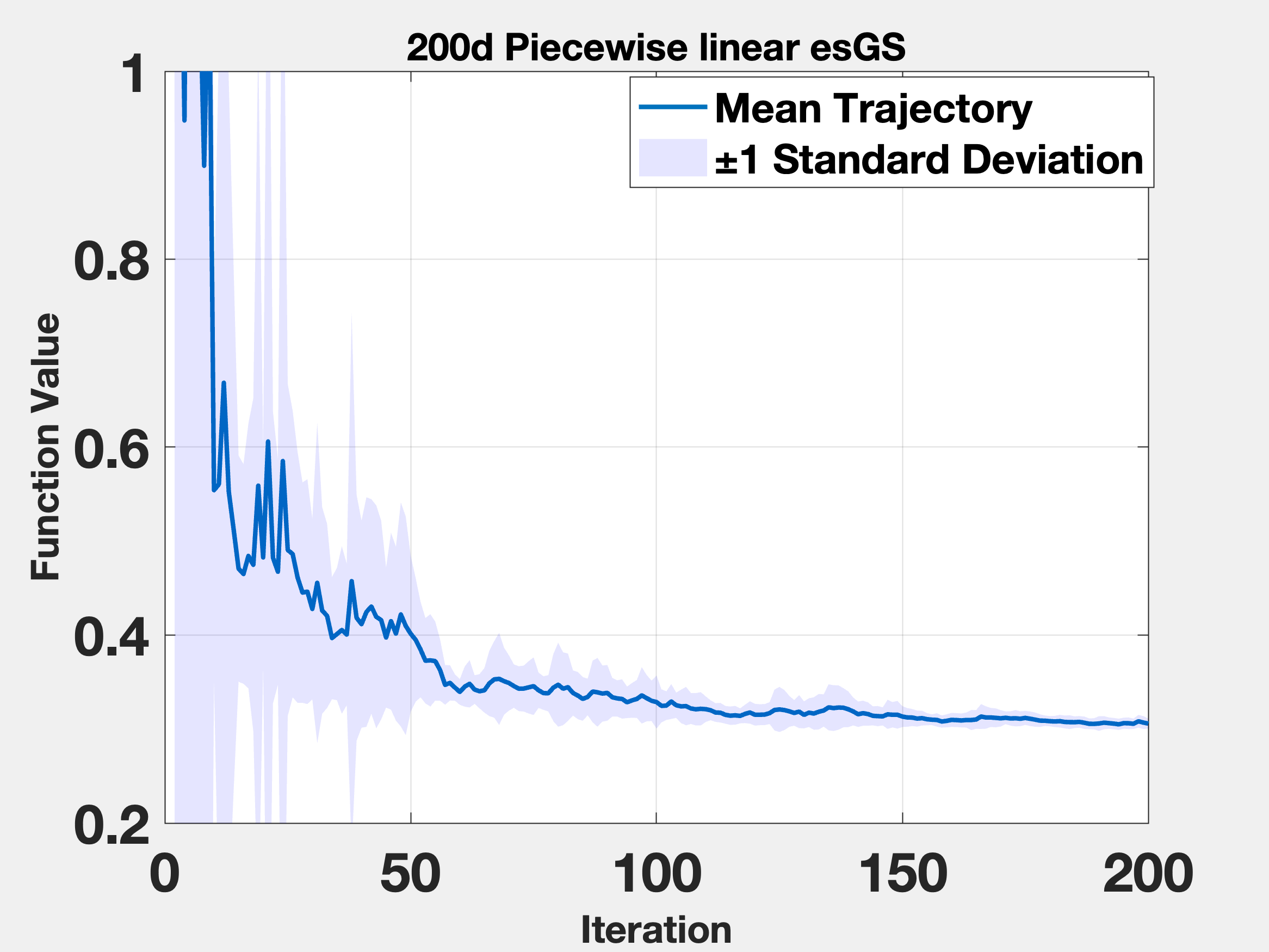}
							\caption{(d)}
							\label{fig:d}
						\end{subfigure}
					\end{minipage}\hfill
					\begin{minipage}{0.51\textwidth}
						\centering
						\captionsetup{width=\textwidth}
						\captionof{table}{{Strongly} Convex Piecewise-Linear Objective}
						\fontsize{5}{6}\selectfont
						\label{tab:PLP}
						\centering
						\begin{tabular}{ |p{0.35cm} |p{0.5cm}|p{0.5cm}|p{0.5cm}|p{0.5cm}|p{0.5cm}|p{0.7cm}| p{0.5cm}| p{0.6cm}|}
							\hline
							&  \multicolumn{2}{|C{1.6cm}|}{\citet{nesterov2017random}} & \multicolumn{2}{|C{1.6cm}|}{\citet{cui2023complexity}}& \multicolumn{2}{|C{1.2cm}|}{\citet{spall1992multivariate} }& \multicolumn{2}{|C{1.2cm}|}{esGs} \\
							\hline
							$n$ &error&time&error&time&error&time&error&time\\
							\hline
							10  & 0.0672 &0.0055  & 0.0844&0.0048&0.1008&0.0067&0.0205&0.0024 \\
							\hline
							100  & 0.2111&0.0938&0.5434&0.0860&0.5292&0.1190&0.0228&0.0241\\
							\hline
							150  & 0.3264&0.1709&0.6346&0.1589&0.6507&0.2206&0.0314&0.0739\\
							\hline
							200  & 0.4014&0.2697&0.7299&0.2539&0.7627&0.3466&0.0400&0.1217\\
							\hline
							500  & 0.9795&1.2842&0.9311&1.3229&0.9448&1.7489&0.1098&0.1431\\
							\hline
							1000  & 1.6041&4.9934&1.8245&4.8908&1.7411&6.3167&0.1870&2.1960\\
							\hline
							2000  &2.4267&16.9147&2.4379&17.1208&2.4572&22.4920&0.3253&2.5243\\
							\hline
							3000  &3.1195&38.0462&3.1155&38.4532&3.0184&50.4454&0.5203&5.8374\\
							\hline
							4000  &3.6107&92.6660&3.5918&101.2859&3.5943&133.5990&0.7237&10.1906\\
							\hline
						\end{tabular}
					\end{minipage}
				\end{figure}
				\begin{figure}
					\begin{minipage}{0.48\textwidth}
						\centering
						\begin{subfigure}{0.48\textwidth}
							\includegraphics[width=\linewidth]{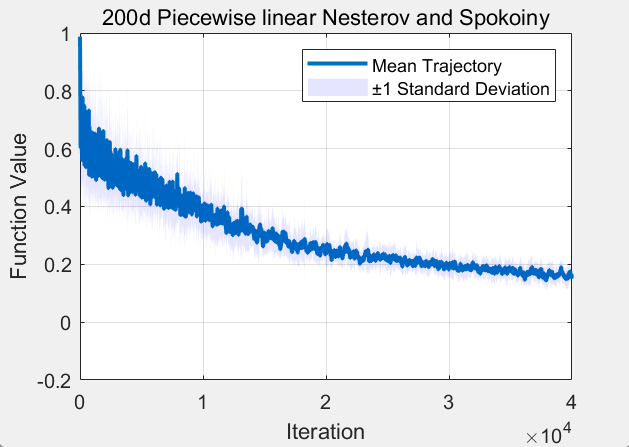}
							\caption{(e)}
							\label{fig:e}
						\end{subfigure}
						\hfill
						\begin{subfigure}{0.48\textwidth}
							\includegraphics[width=\linewidth]{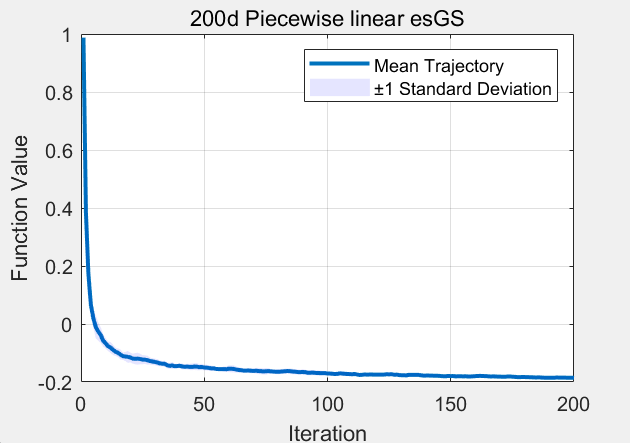}
							\caption{(f)}
							\label{fig:f}
						\end{subfigure}
					\end{minipage}\hfill
					\begin{minipage}{0.51\textwidth}
						\centering
						\captionof{table}{Convex Piecewise-Linear Objective}
						\fontsize{5}{6}\selectfont
						\label{tab:PLNstrong}
						\centering
						\begin{tabular}{ |p{0.35cm} |p{0.5cm}|p{0.5cm}|p{0.5cm}|p{0.5cm}|p{0.5cm}|p{0.7cm}| p{0.5cm}| p{0.6cm}|}
							\hline
							&  \multicolumn{2}{|C{1.6cm}|}{\citet{nesterov2017random}} & \multicolumn{2}{|C{1.5cm}|}{\citet{cui2023complexity}}& \multicolumn{2}{|C{1cm}|}{\citet{spall1992multivariate} }& \multicolumn{2}{|C{1cm}|}{esGs} \\
							\hline
							$n$ &error&time&error&time&error&time&error&time\\
							\hline
							10  & 0.0130 &0.0199  & 0.0104&0.0216&0.0105&0.0236&0.0038&0.0042 \\
							\hline
							100  & 0.1652&0.2184&0.1563&0.2271&0.1618&0.2586&0.0188&0.0148\\
							\hline
							150  & 0.2713&0.3605&0.2702&0.3710&0.2673&0.4319&0.0262&0.0488\\
							\hline
							200  & 0.3900&0.5004&0.3820&0.5193&0.3764&0.6260&0.0350&0.0623\\
							\hline
							500  & 0.9210&1.6481&0.9118&1.6893&0.9246&2.1700&0.0854&0.4169\\
							\hline
							1000  & 1.5934&4.7528&1.5831&4.7925&1.6127&6.4975&0.1516&1.7818\\
							\hline
							2000  &2.4821&15.0903&2.4869&15.4758&2.4473&21.9957&0.2837&6.6433\\
							\hline
							3000  &3.0783&33.3002&3.0420&33.5104&3.0753&47.9169&0.4184&14.4593\\
							\hline
							4000  &3.5804&77.9254&3.5594&85.7464&3.5568&102.9953&0.5313&26.9370\\
							\hline
						\end{tabular}
					\end{minipage}
				\end{figure}
				Tables~\ref{tab:QuadP}-\ref{tab:PLNstrong} represent the comparisons across the ZO schemes and our proposed framework with the ({\bf esGs}) estimator for the quadratic and piecewise-linear problem, respectively. Figures~\nameref{fig:a}-\nameref{fig:f} represent the trajectories and the spread across replications across the two problems, comparing our method with that by Nesterov and Spokoiny.
				\noindent {{\bf Insights.} (a) {\em Empirical error.} The empirical error
					produced by our scheme is \underline{nearly an order of magnitude} better than competing
					schemes in the first example, while similar benefits emerge in the
					piecewise-linear setting.  (b) {\em Computational time.} In terms of
					computational time, the distinctions are even more pronounced, a consequence of
					the iteration complexity of our scheme being $\mathcal{O}(n)$ better than its
					counterparts. For instance, in quadratic settings when $n = 200$, our scheme
					requires $0.609$s while the scheme developed by Nesterov and Spokoiny takes
					approximately $39$s, while producing an empirical error that is \underline{more than $10$
						times worse}. The distinctions are significant, if not as pronounced, in the
					piecewise-linear setting.} We further note that the distinctions in computational time and empirical error distinctions increase
				dramatically with dimension, in alignment with our theory.
				\begin{table}[H]
					\parbox{.54\linewidth}{
						\captionsetup{width=\textwidth}
						\captionof{table}{Strongly Convex Quadratic Objective}
						\tiny
						\label{tab:QuadPvar}
						\begin{tabular}{ |p{1.8cm} |p{1.1cm}|p{1.1cm}|p{1cm}|p{1cm}|}
							\hline
							&  \multicolumn{2}{|C{2.5cm}|}{\citet{nesterov2017random}} & \multicolumn{2}{|c|}{\bf esGs} \\
							\hline
							Var of $(\bxi,B)$ &error&time&error&time\\
							\hline
							$(1,0.01)$& 0.7048  &  77.5523  & 0.0580 & 1.0937\\
							\hline
							$(3,0.01)$& 6.0977 &  77.4978  & 0.4869 & 1.0997\\
							\hline
							$(3,0.09)$  & 5.7965  &  78.6783  & 0.4799 & 1.1006\\
							\hline
							$(10,1)$ &  60.4279 &  78.4632  & 5.5456 & 1.1048\\
							\hline
						\end{tabular}
					}
					\hfill
					\parbox{.45\linewidth}{
						\captionsetup{width=\textwidth}
						\captionof{table}{Strongly Convex Piecewise-Linear Objective}
						\tiny
						\label{tab:PLPvar}
						\centering
						\begin{tabular}{ |p{1cm} |p{1cm}|p{1cm}|p{1cm}|p{1cm}|p{1cm}|p{1cm}|}
							\hline
							&  \multicolumn{2}{|C{2.5cm}|}{\citet{nesterov2017random}} &  \multicolumn{2}{|c|}{\bf esGs} \\
							\hline
							$Var(\bxi)$ &error&time&error&time\\
							\hline
							$1$& 0.3750  &  0.5717  & 0.0299 &0.1383 \\
							\hline
							$3$& 0.8801  & 0.5715  & 0.4287 &0.1482 \\
							\hline
							$5$  & 1.0159  &0.5731   & 0.6405 & 0.1393\\
							\hline
							$10$ & 1.0251  & 0.5754  & 0.7498 & 0.1374\\
							\hline
						\end{tabular}
					}
				\end{table}
				
				\underline{Robustness of scheme.}
				We also {conducted} experiments for noise with {increasing} variance (see ). Under the same settings, we {provide the} same budget of oracle call to each method and {modify} the {variance in the} noise. 
				As {observed in Table~\ref{tab:QuadPvar}-\ref{tab:PLPvar}},  {\bf esGs} {continues to compete well with standard {\bf GS} schemes.}

				\underline{{Stochastic nonconvex optimization}.}	
				Let ${\bxi}\sim U[0,2]$. Define $f: \mathbb{R}^n \mapsto \mathbb{R}$ as 
				$$f(\mathbf{x})=\mathbb{E}\left[\min \left[f_1(\mathbf{x}, \bxi), f_2(\mathbf{x}, \bxi)\right]\right],$$ where $f_1(\mathbf{x}, \bxi)=\sum_i^n\left(x_i-\bxi\right)^2$ and $f_2(\mathbf{x}, \bxi)=\sum_i^n\left(x_i+\bxi\right)^2$. $X \triangleq [-10,10]^n$. The explicit form of $f$ is $f(\mathbf{x}) = ||\mathbf{x}||^2 + 4n/3 - 2|\sum_{i=1}^n x_i|$, which is non-convex. 
				For $\mathbf{x}$ such that $\sum_{i=1}^n x_i\neq 0$, $f$ is differentiable and $\nabla f(x)=2 x-2 \cdot \operatorname{sgn}\left(\sum_{i=1}^n x_i\right) \mathbf{1}$. For $\mathbf{x}$ such that $\sum_{i=1}^n x_i= 0$ we have the subgradient of $f$ is $\partial f(x)=\{2 x+v \cdot \mathbf{1} \mid v \in[-2,2]\}$. It is easy to see that $\mathbf{x}=\mathbf{1}$ and $\mathbf{x}=-\mathbf{1}$ are stationary points of $f$, and all three algorithm tend to converge to this two points. In the following error$=\mathbb{E}\|\nabla f(x_K)\|^2$. We choose $\gamma_k=1/k^{0.9},\eta_k=1/k^{0.3}$, satisfying Remark~\ref{Rk: etagammanoncvx}, and we ran $500$ iterations of our method and $500n$ steps  of the other methods.
				\begin{figure}[htbp]
					\centering
					\begin{minipage}{0.48\textwidth}
						\centering
						\begin{subfigure}[t]{0.49\textwidth}
							\includegraphics[width=\linewidth]{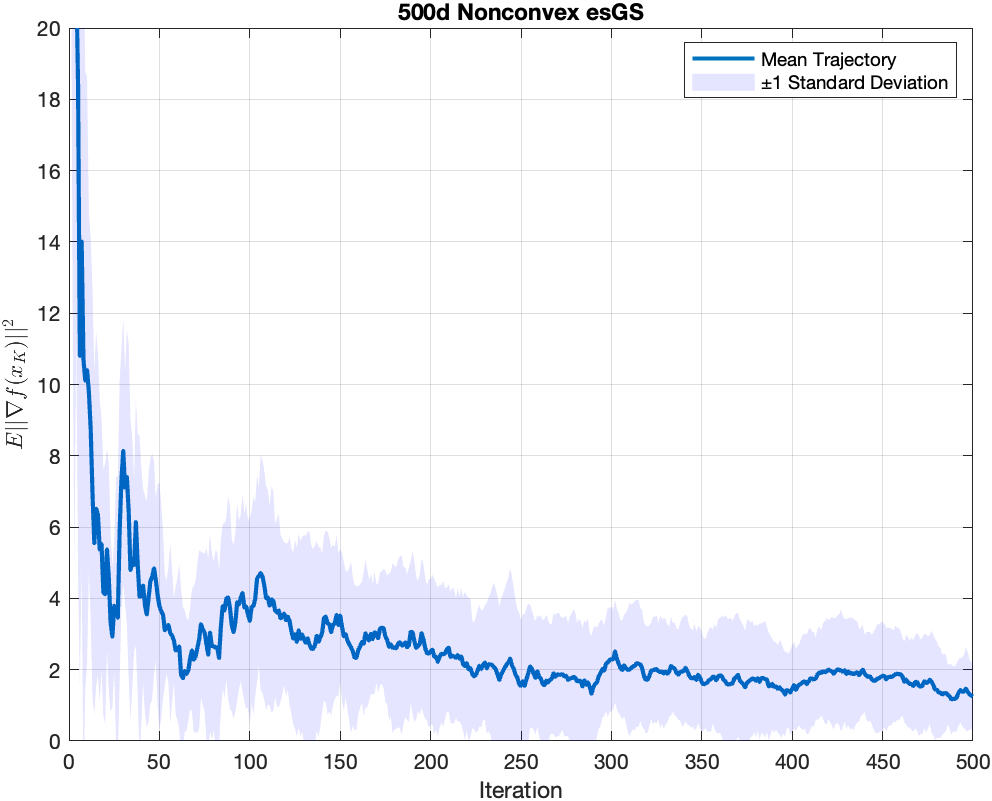}
							\captionsetup{labelformat=empty}
							\caption{\scriptsize esGs (nonconvex)}
							\label{esGsnon}
						\end{subfigure}
						\hfill
						\begin{subfigure}[t]{0.49\textwidth}
							\includegraphics[width=\linewidth]{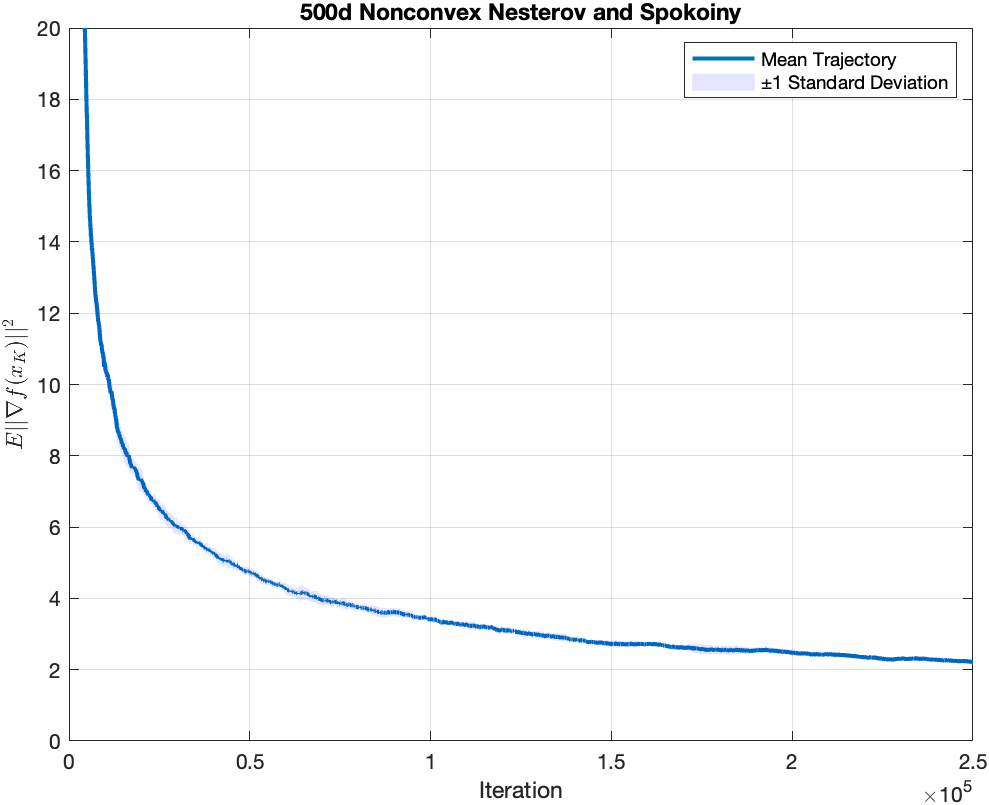}
							\captionsetup{labelformat=empty}
							\caption{\scriptsize Nesterov and Spokoiny (nonconvex)}
							\label{NSnon}
						\end{subfigure}	
					\end{minipage}\hfill
					\begin{minipage}{0.51\textwidth}
						\centering
						\captionof{table}{Nonconvex settings}
						\scriptsize
						\centering
						\begin{tabular}{ |p{0.5cm} |p{0.8cm}|p{1cm}|p{0.8cm}|p{1cm}| p{0.8cm}| p{1cm}|}
							\hline
							&  \multicolumn{2}{|C{1.8cm}|}{\citet{nesterov2017random}} & \multicolumn{2}{|c|}{\citet{cui2023complexity}}& \multicolumn{2}{|c|}{esGs} \\
							\hline
							$n$ &error&time&error&time&error&time\\
							\hline
							5  & 0.2294  &  0.0035 &0.1693  & 0.0038 & 0.1264  &  0.0044  \\
							\hline
							10 &  0.2918 &0.0103   & 0.2908 &  0.0132 &  0.1514 & 0.0086  \\
							\hline
							50  & 0.6392  &  0.0598 & 0.6520 & 0.0603  &  0.5381 &  0.0161  \\
							\hline
							500& 2.2249  &  1.9560 & 2.2505 & 2.0446  & 1.3091  &  0.4324  \\
							\hline
							1000&  3.2148 & 6.9023  & 3.2897 & 7.3248  & 2.1735  &  1.5807  \\
							\hline
							5000 & 7.8538  &  218.7707 & 7.8577 & 237.5124  & 5.1192  & 181.8080   \\
							\hline
						\end{tabular}
					\end{minipage}
				\end{figure}

				\subsection{Decision-dependent {settings}}\label{subsec:DD}
				Here we provide the following numerical experiments for the market problem in Section~\ref{subsec:marketproblem}, with $a=4.5,a_1=0.8, a_2=0.2,L_2=0.5,R_2=2.2,\varepsilon=0.1$ and $\sigma^2=10$. We apply the algorithms for both known and unknown structure of $p(\xi|x)$. The numerical experiments show that both algorithms we proposed converge to the true optimal point, instead of {a performatively stable point}.
				\begin{figure}[H]
					\centering
					\begin{subfigure}{0.2\textwidth}
						\includegraphics[width=\linewidth]{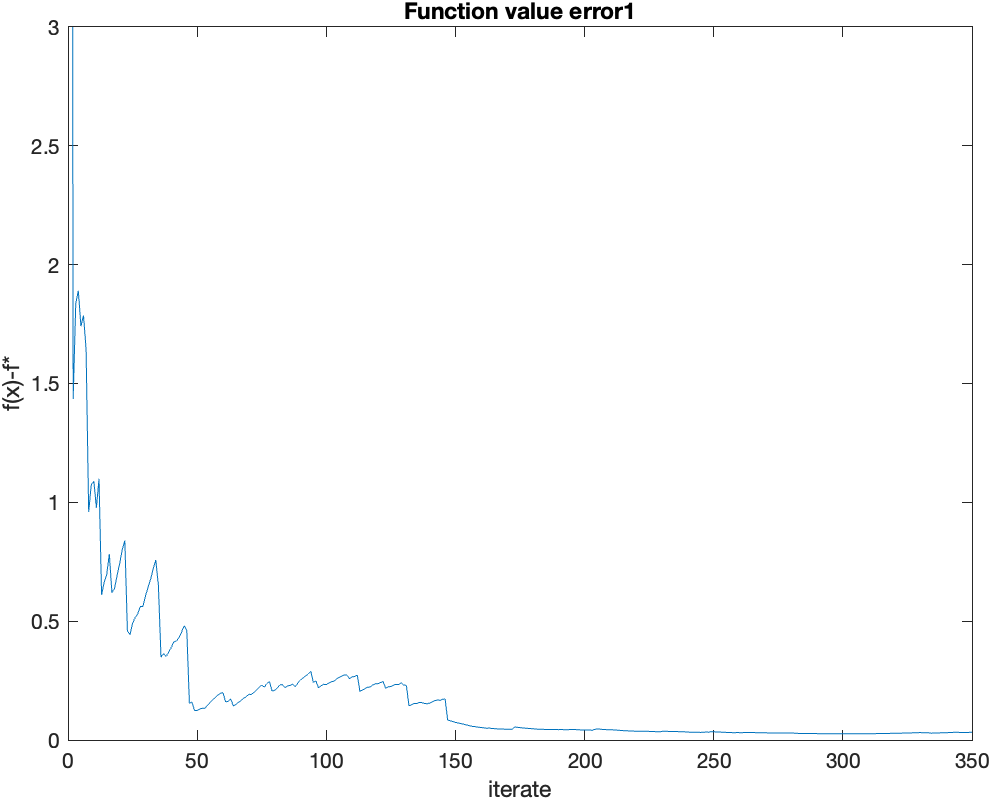}
						\caption{(g)}
						\label{fig:f1}
					\end{subfigure}
					\begin{subfigure}{0.2\textwidth}
						\includegraphics[width=\linewidth]{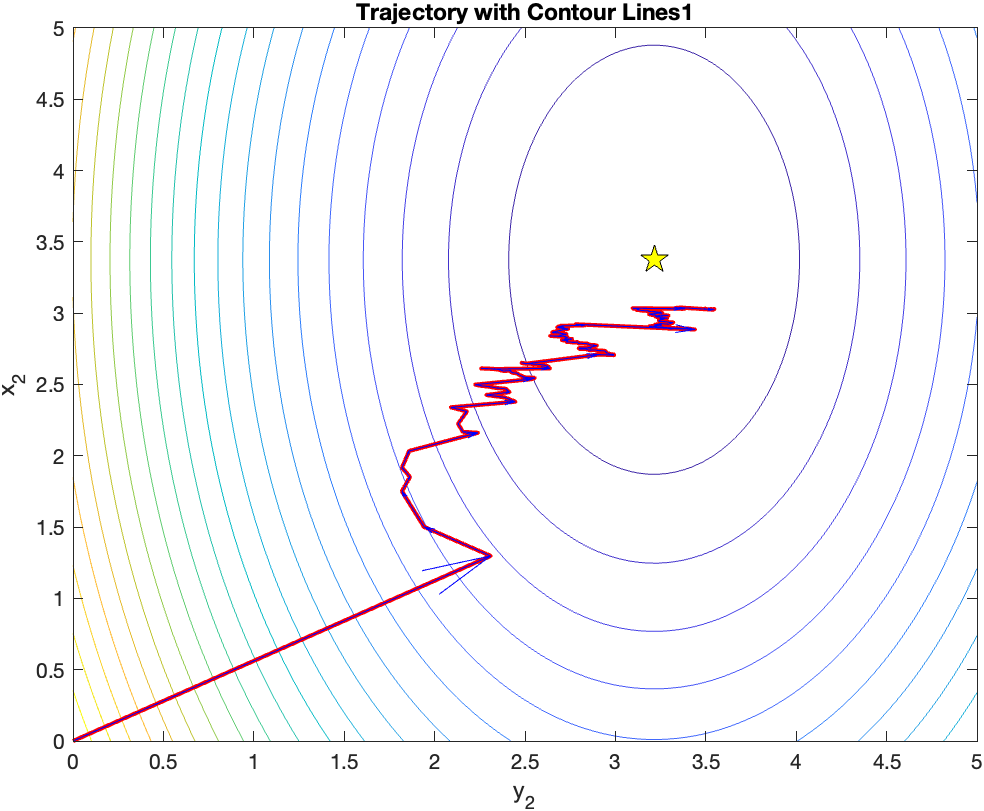}
						\caption{(h)}
						\label{fig:x1}
					\end{subfigure}	
					\begin{subfigure}{0.2\textwidth}
						\includegraphics[width=\linewidth]{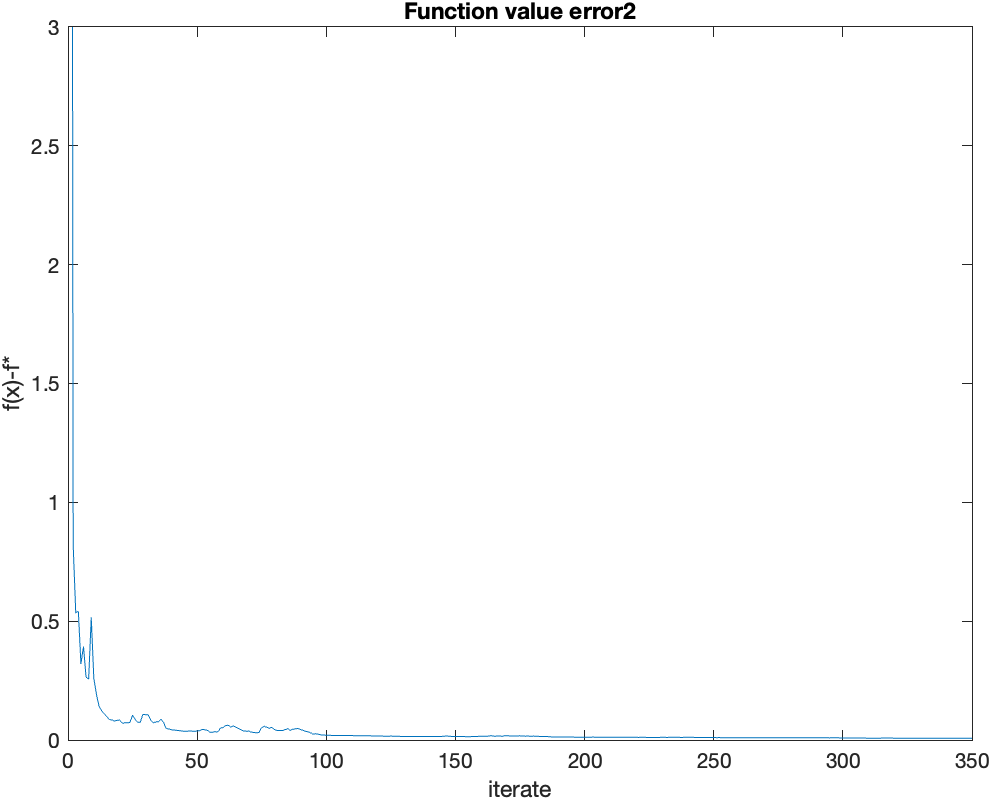}
						\caption{(i)}
						\label{fig:f2}
					\end{subfigure}
					\begin{subfigure}{0.2\textwidth}
						\includegraphics[width=\linewidth]{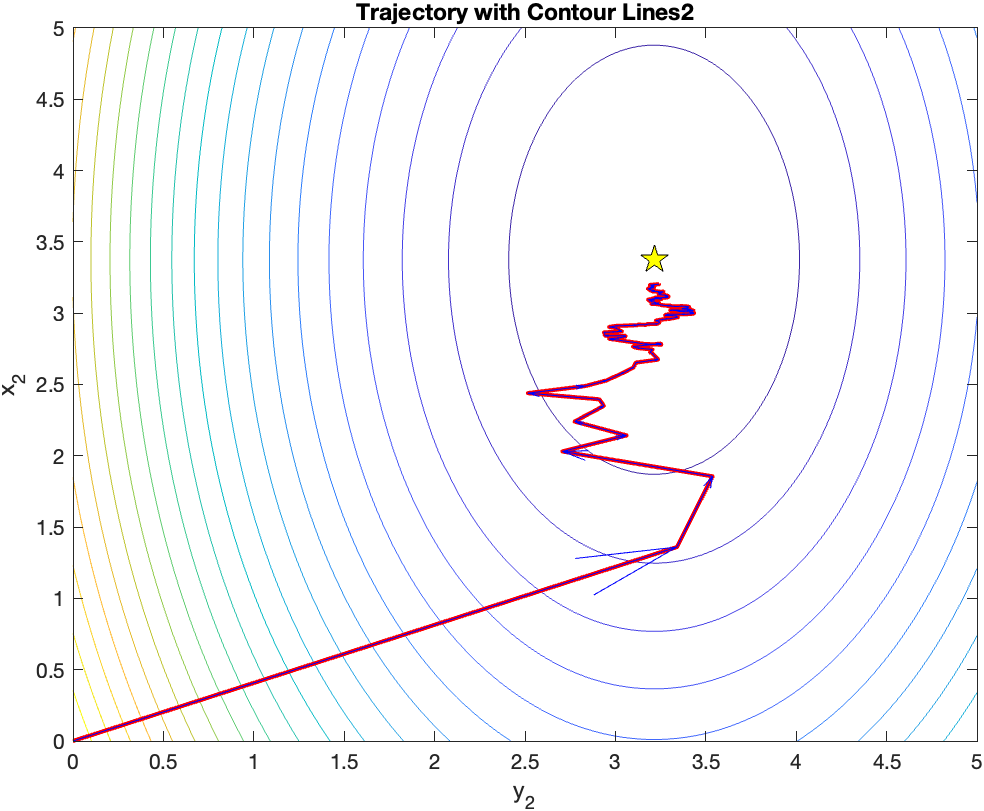}
						\caption{(j)}
						\label{fig:x2}
					\end{subfigure}
					\\
					{\scriptsize Figures \nameref{fig:f1} and \nameref{fig:x1} represent error of function value and the trajectory of $x$ by applying algorithm with known structure of $p(\xi|x)$, respectively. Figures \nameref{fig:f2} and \nameref{fig:x2} represent error of function value and the trajectory of $x$ by applying algorithm with unknown structure of $p(\xi|x)$, respectively.}
				\end{figure}

				\section{Concluding remarks}\label{sec:conc}
				{Extant techniques for addressing  nonsmooth stochastic optimization problems are afflicted by two key challenges. First, when applying stochastic zeroth-order methods enabled by Gaussian smoothing, the moment bounds display a quadratic dependence on dimension and this dependence emerges in the iteration complexity, impeding applicability of such schemes to high-dimensional problems. Second, there appear to be no unified analysis that can capture standard stochastic problems and their decision-dependent counterparts that allow for decision dependence on the probability distribution.} 
				
				{To contend with the first challenge, via a simple change-of-variable argument, we develop an
					exponentially shifted Gaussian smoothing ({\bf esGs}) estimator, reliant on
					shifting via exponential random variables, whose moment bound grows at the rate
					of $\mathcal{O}(n)$, rather that $\mathcal{O}(n^2)$ for Gaussian smoothing estimators. Motivated by the second challenge, we show that this {\bf esGs} estimator can be extended with suitable modifications to contend with two distinct settings of decision-dependence, while exhibiting similar linear dependence on dimension. This provides us with the foundation for developing a unified analysis for convex, strongly convex, and nonconvex regimes. In convex regimes, we provide improved iteration complexity guarantees of $\mathcal{O}(n \varepsilon^{-2})$ (by a factor of $n$) while matching the sample-complexity guarantees of $\mathcal{O}(n^2 \varepsilon^{-2})$ in such schemes. Similar benefits are obtained in nonconvex regimes where we obtain improved iteration complexity guarantees. Notably, in a convex setting, we also provide novel high probability guarantees  and sublinear rate guarantees in an a.s. sense, while a new subsequential a.s. convergence guarantee is provided in nonconvex regimes. Importantly, all of these findings can capture decision-dependent regimes. Preliminary numerics in convex and nonconvex settings support these findings and we observe that our {\bf esGs}-enabled ZO methods provide significant benefits in computational time and empirical accuracy.}
				

\bigskip

\bibliographystyle{plainnat}

\bibliography{refs}

\end{document}